\newcommand{\C}{\mathbb{C}}
\newcommand{\N}{\mathbb{N}}
\newcommand{\R}{\mathbb{R}}
\renewcommand{\H}{\mathbb{H}}
\newcommand{\Z}{\mathbb{Z}}
\newcommand{\boA}{\mathcal{A}}
\newcommand{\boC}{\mathcal{C}}
\newcommand{\boE}{\mathcal{E}}
\newcommand{\boF}{\mathcal{F}}
\newcommand{\boM}{\mathcal{M}}
\newcommand{\boO}{\mathcal{O}}
\newcommand{\boS}{\mathcal{S}}
\newcommand{\boV}{\mathcal{V}}
\newcommand{\gp}{\mathfrak{p}}
\newcommand{\gq}{\mathfrak{q}}
\newcommand{\gr}{\mathfrak{r}}
\newcommand{\gs}{\mathfrak{s}}
\newcommand{\W}{\mathcal{W}}
\newcommand{\ve}{\varepsilon}
\newcommand{\vp}{\varphi}
\renewcommand{\k}{\textup{k}}
\newcommand{\p}{\textup{p}}
\newcommand{\grad}{\nabla}
\newcommand{\wh}{\widehat }
\newcommand{\Emin}{E_\textup{min}}
\newcommand{\ccc}{C_{0}^{\infty}(\mathbb{R})}
\newcommand{\boEE}{\mathcal{E}(\mathbb{R})}
\newcommand{\boEEE}{\mathcal{N}\mathcal{E}(\mathbb{R})}
\newcommand{\q}{\mathfrak{q}}
\newcommand{\Om}{\Omega}
\renewcommand{\d}{\textup{d}}
\renewcommand{\Im}{\mathop{{\rm Im}}\nolimits}
\DeclareMathOperator{\Ker}{{\rm Ker}}
\renewcommand{\Re}{\mathop{{\rm Re}}\nolimits}
\DeclareMathOperator{\supp}{{\rm supp}}
\DeclareMathOperator{\Res}{{\rm Res}}
\newcommand{\loc}{\operatorname{loc}}
\providecommand{\abs}[1]{|#1 |}
\providecommand{\norm}[1]{\lVert#1 \rVert}
\newcommand{\card}{\operatorname{Card}}
\renewcommand{\Re}{\operatorname{Re}}
\renewcommand{\Im}{\operatorname{Im}}
\newcommand{\sech}{\operatorname{sech}}
\newcommand{\wto}{\rightharpoonup}
\newcommand{\lpar}{\left(}
\newcommand{\rpar}{\right)}
\newtheorem{thm:intro}{Theorem}
\newtheorem{thm}{Theorem}[section]
\newtheorem{cor}[thm]{Corollary}
\newtheorem{lem}[thm]{Lemma}
\newtheorem{prop}[thm]{Proposition}
\newtheorem{thrm}[thm]{Theorem}
\newtheorem{lmm}[thm]{Lemma}
\theoremstyle{definition}
\newtheorem{claim}{Claim}
\newtheorem*{merci}{Acknowledgments}
\theoremstyle{definition}
\newtheorem{rem}[thm]{Remark}
\newcommand{\bqq}{\begin{equation*}}
	\newcommand{\eqq}{\end{equation*}}
\newcommand{\bq}{\begin{equation}}
	\newcommand{\eq}{\end{equation}}
\begin{document}
	\title{Traveling waves for some nonlocal 1D Gross--Pitaevskii equations    with nonzero conditions at infinity}                                 
	\author{
		\renewcommand{\thefootnote}{\arabic{footnote}}
		Andr\'e de Laire\footnotemark[1]~ and Pierre Mennuni\footnotemark[2]}
	\footnotetext[1]{
		Univ.\ Lille, CNRS, Inria, UMR 8524, Laboratoire Paul Painlevé, F-59000 Lille, France.\\
		E-mail: {\tt andre.de-laire@univ-lille.fr}}
	\footnotetext[2]{
		Univ.\ Lille, CNRS, Inria, UMR 8524, Laboratoire Paul Painlevé, F-59000 Lille, France.\\
		 E-mail: {\tt pierre.mennuni@univ-lille.fr}}
	\date{}
	\maketitle

	\begin{abstract}
		We consider a nonlocal family of Gross--Pitaevskii equations with nonzero conditions at infinity  in dimension one.
		We provide  conditions on the nonlocal interaction such that there is a branch of
		traveling waves solutions with nonvanishing conditions at infinity. 
		Moreover, we show that the branch is orbitally stable.
		In this manner, this result generalizes known properties for the contact interaction  given by a Dirac delta function. 	Our proof relies on the minimization of the energy at fixed momentum. 
		
		As a by-product of our analysis, we provide a simple condition to ensure that the solution to the Cauchy problem is global in time. 
		\end{abstract}   
	
	\maketitle
	
	\medskip
	\noindent{{\em Keywords:}
		Nonlocal Schr\"odinger equation, Gross--Pitaevskii equation, traveling waves, dark solitons, orbital stability,
		nonzero conditions at infinity
		
		\medskip
	\noindent{2010 \em{Mathematics Subject Classification}:} 
		35Q55; 
	    35J20; 
		35C07; 
		35B35; 
		37K05; 
		35C08; 
		35Q53 


		\section{Introduction}\label{intro}
		\subsection{The problem}
		We consider the one-dimensional nonlocal Gross--Pitaevskii equation 
		for $\Psi: \R\times \R\to \C$ introduced by Gross~\cite{gross1963hydrodynamics} and 
		Pitaevskii \cite{pitaevskii1961vortex} to describe a Bose gas
		\bq
		\label{ngp}
		\tag{NGP}
		i\partial_{t}\Psi={\partial_{xx}\Psi}+\Psi(\W*(1-|\Psi|^{2}))\quad \text{ in }~\mathbb{R}\times\mathbb{R},
		\eq
		with the boundary condition at infinity 
		\bq
		\label{nonzero}
		\lim_{\abs{x}\to \infty}\abs{ \Psi}=1.
		\eq
		Here $*$ denotes the convolution in $\R$, and $\W$ is a real-valued even distribution 
		that describes the interaction between particles.
		The nonzero boundary condition \eqref{nonzero} arises as a background density.
		This model appears naturally in several areas of quantum physics, 
		for instance in the description of superfluids  \cite{berloff2008,abid2003}
		and  in optics when dealing with thermo-optic materials because
		the thermal nonlinearity is usually highly
		nonlocal \cite{vocke15}.
		An important property of equation \eqref{ngp}
		with the boundary condition at infinity \eqref{nonzero},
		is that it allows to study \textit{dark solitons}, i.e.\  localized density notches that propagate  without spreading \cite{kartashov07},
		that have been observed for example in Bose-Einstein condensates \cite{denschlag2000,becker2008}.

		%
		There have  been  extensive studies concerning the dynamics of equation \eqref{ngp}, and the 
		existence and stability of  traveling waves in the case of the {\em contact} interaction $\W=\delta_0$ (see
		\cite{bethuel2,bethuel,orlandiun,BGS-2015,chironmaris,chironstab,gerard,maris2013,delaire2009,gustafson2009,gustafson2007,visan2012} and the references therein). However,
		there are very few mathematical results concerning general 
		nonlocal interactions with nonzero conditions at infinity.
		In \cite{de2010global,pecher2012} the authors gave conditions on $\W$
		to get global well-posedness of the equation and in \cite{de2011nonexistence}
		 conditions were established for the nonexistence of traveling waves (in higher dimensions). 
		Nevertheless, to our knowledge,  there is no result concerning the existence of localized solutions to \eqref{ngp} when $\W$ is not given by a Dirac delta. The aim of this paper is to provide  conditions on $\W$ in order to have stable 
		finite energy traveling wave solutions, more commonly refereed to as dark solitons due to the nonzero boundary condition \eqref{nonzero}. 
		More precisely, we look for  a solution of the form
		$$\Psi_c(x,t)=u(x-ct),$$
		representing a traveling wave propagating at speed $c$. Hence, the profile $u$ satisfies
		the nonlocal ODE
		\begin{equation}\label{ntwc}\tag{TW$_{\mathcal W,c}$}
			ic u'+u''+u(\mathcal  \W*(1-\abs u ^2))=0 \quad  \text{ in } \R.
		\end{equation}
		%
		%
		%
		%
		%
		%
		%
		By taking the conjugate of the function, we  assume 
		without loss of generality that $c\geq 0$.

		Let us remark that  when considering vanishing boundary conditions at infinity, 
		this kind of equation has been studied extensively \cite{ginibre1980,cazenave,moroz2013}
		and long-range dipolar interactions in condensates have received recently
		much attention \cite{lahaye2009,carles2008,antonelli2011,jacopo2016,luo2018}. 
		However, the techniques used in these works cannot be adapted to include solutions satisfying \eqref{nonzero}.

		We recall that \eqref{ngp} is Hamiltonian and its energy
		\begin{equation*}
			E(\Psi(t))=\frac12 \int_{\R}\abs{\partial_x\Psi(t)}^2\,dx +\frac 14 \int_{\R}(W*(1-\abs{\Psi(t)}^2))(1-\abs{\Psi(t)}^2)\,dx,
		\end{equation*}
		is formally conserved, as well as the (renormalized) momentum 
		\bqq
		p(\Psi(t))=\int_{\mathbb{R}}\langle i \partial_x \Psi'(t),\Psi(t) \rangle\left(1-\frac{1}{|\Psi(t)|^2}\right)dx,
		\eqq
		at least as $\inf_{x\in\R}\abs{\Psi(x,t)}>0$, where $\langle z_1,z_2 \rangle =\Re(z_1 \bar z_2)$, for $z_1$, $z_2\in \C$ (see \cite{delaire2009,bogdan1989}).
		In this manner, we seek nontrivial solutions of \eqref{ntwc} in the energy space 
		$$\mathcal{E}(\mathbb{R})=
		\{v \in H^{1}_{\loc}(\mathbb{R}) : 1-|v|^{2}\in L^{2}(\mathbb{R}), \ v' \in L^{2}(\mathbb{R})\},$$
		and more precisely in the nonvanishing energy space
		$$\boEEE=
		\{v \in \boEE  : \inf_{\R}\abs{v}>0\},$$
		where the momentum will be well defined. It is simple to check, using the Morrey inequality, that the functions in $\boEE$ are uniformly continuous
		and satisfy  $\lim_{|x|\to \infty}|v(x)|=1$.
		

		%
		%
		%
		%

		When $\W$ is given by a  Dirac delta function,  equation (TW$_{\delta_0,c}$) corresponds to  
		the classical Gross--Pitaevskii equation, which can be solved explicitly. 
		As explained in \cite{bethuel2008existence}, if  $c\geq \sqrt{2}$ the only solutions in $\boE(\R)$
		are the trivial ones 
		(i.e.\ the constant functions 
		of modulus one) and if  \mbox{$0\leq c<\sqrt{2}$}, the nontrivial solutions are given, 
		up to invariances (translations and a multiplications by constants of modulus one), by
		\bq
		\label{sol:1D}
		u_{c}(x)=\sqrt{\frac{2-c^2}{2}}\tanh\left(\frac{\sqrt{2-c^2}}{2}x\right)-i\frac{c}{\sqrt{2}}.
		\eq
		Thus there is a family of dark solitons belonging to $\boEEE$ for $c\in (0,\sqrt 2)$
		and there is one stationary black soliton associated with the speed $c=0$.
		Notice also that the values of $u_c(\infty)$ and $u_c(-\infty)$ are different, 
		and thus we cannot relax the condition \eqref{nonzero} to $\lim_{\abs{x}\to\infty}{\Psi}=1$,
		as is usually done in higher dimensions.

		The study of  equation (TW$_{\delta_0,c}$) can
		be generalized  to other 
		types of {\em local} nonlinearities such as the  cubic-quintic nonlinearity and
		some cubic-quintic-septic nonlinearities as shown in \cite{chironexistence1d,maris-non}. 
		The techniques used by the authors rely on the analysis of a second-order ODE of Newton type, 
		so that the Cauchy--Lipschitz theorem can be invoked and some explicit formulas can be deduced.
		These arguments cannot be applied to \eqref{ntwc} due to the nonlocal interaction. For this reason, 
		our approach to show existence of traveling waves relies on 
		a priori energy estimates 
		and a concentration-compactness argument, that allow us to prove that there are functions 
that minimize  the energy at fixed momentum. These minimizers  are solutions to \eqref{ntwc} and 		
		 we can also establish that  they are orbitally stable (see Theorem~\ref{thm-existence-general}). These kinds of arguments have been used by several authors to establish existence of solitons 
		for the (local) Gross--Pitaevskii equation in higher dimensions and for some related equations  with zero conditions at infinity (see e.g.\ \cite{bethuel,maris2013,chironmaris,lopes-maris,maris2016,audiard2017,lieb77}).
		The main difficulty in our case is to handle the nonvanishing conditions at infinity, the fact that the constraint  given by the momentum is not a homogeneous function along with the  nonlocal interactions.
		\subsection{The critical speed and  assumptions on $\W$}
		\label{subsec:assumptions}
		Linearizing equation \eqref{ngp} around the constant solution equal to 1 and
		imposing $e^{i(\xi x-wt)}$ as a solution of the resulting equation, 
		we obtain the dispersion relation 
		\bq\label{bogo}
		w(\xi)=\sqrt{\xi^4+2\widehat \W(\xi) \xi^2},
		\eq
		where $\wh \W$ denotes the Fourier transform of $\W$.
		Supposing that $\wh \W$ is positive and continuous  at the origin, 
		we get  the so-called speed of sound 
		\bqq
		c_*(\W)= \lim_{\xi\to 0}\frac{w(\xi)}{\xi}=\sqrt{2\wh \W(0)}.
		\eqq
		The dispersion relation  \eqref{bogo} was first observed by Bogoliubov \cite{bogo}
		in the study of a Bose--Einstein gas. He then argued that the gas should move with a speed less than $c_*(\W)$ to preserve its superfluid 
		properties. This leads to the conjecture that there is no nontrivial 
		solution of \eqref{ntwc} with finite energy when $c> c_*(\W)$. 
		Actually, one of the authors  proved  this conjecture  in \cite{de2011nonexistence} in dimensions greater than 	one, under some conditions on $\W$.
		
		In order to simplify our  computations, we can normalize the equation so that the critical speed is fixed.		Indeed, it is easy to verify that the rescaling $x\mapsto x/\widehat{\W}(0)^{1/2}$ and $t\mapsto t/\widehat{\W}(0)$
		allows us to replace $\wh \W(\xi)$ by $\wh \W(\xi)/\wh \W(0)$ in \eqref{ngp}. Therefore, 
		we assume from now on that $\wh \W(0)=1$ and hence that the critical speed is 
		\bqq
		c_*=\sqrt 2.
		\eqq
		Before going any further, let us state the assumptions that we need on $\W$. 
		\begin{enumerate}[label=({H\arabic*}),ref=\textup{({H\arabic*})}]
			\item\label{H0} $\W$ is an even tempered distribution with $\wh \W\in L^\infty(\R)$,
			and $\wh \W \geq 0$ a.e.\ on $\R$. Moreover $\wh \W$ is continuous at the origin and $\wh \W(0)=1$.
			\item\label{H-coer}$\wh \W$ belongs to $C_b^3(\R)$, $(\wh \W)''(0)>-1$   and $\wh \W(\xi)\geq 1-\xi^2/2$, for all  $\abs{\xi}< 2.$
			\item\label{H-residue}
			$\wh \W$ admits a meromorphic extension to the upper half-plane 
			$\H:=\{z \in \C : \Im(z)>0\}$, and 
			the only possible singularities of $\wh \W$ on $\H$ are simple isolated poles
			belonging to the imaginary axis, i.e.\ they are given by
			$\{i \nu_j : j\in J\},$ with $\nu_j>0$, for all $j\in J$,  $0\leq\card{J}\leq\infty$,
			and their residues  $\Res(\wh \W,i\nu_j)$ are purely imaginary numbers satisfying
			\bq
			\label{resW}
			i\Res(\wh \W,i\nu_j)\leq 0, \quad \textup{ for all }j\in J,
			\eq 
			Also, there exists a sequence of  rectifiable curves $(\Gamma_k)_{k\in \N^*}\subset \H$,
			parametrized by $\gamma_k : [a_k,b_k]\to \C$,   such that 
			$\Gamma_k\cup [-k,k]$ is a closed  positively oriented simple curve that does 
			not pass through any poles. Moreover, 
			\bq
			\label{decayW}
			\lim_{k\to \infty} \abs{\gamma_k(t)}=\infty, \textup{ for all }t\in [a_k,b_k],
			\quad \text{ and }\quad 
			\lim_{k\to\infty } \textup{length}(\Gamma_k)\sup_{t\in[a_k,b_k]}\frac{\wh \W(\gamma_k(t))}{\abs{\gamma_k(t)}^4}=0.
			\eq
			
			%
			%
		\end{enumerate}
		Here $C_b^k(\R)$ denotes the bounded functions of class $C^k$ 
		whose first $k$ derivatives are bounded.
		We have also used the convention that the Fourier transform of (an integrable) function is 
		$$\hat f(\xi)=\int_{\R} e^{-ix\xi}f(x)dx.$$
		In particular,  the Fourier transform of the Dirac delta is $\hat \delta_0=1$ and 
		thus assumptions \ref{H0}--\ref{H-residue} are trivially fulfilled by $\W=\delta_0$.
		Let us make some further remarks about these hypotheses. Assumption  \ref{H0} ensures 
		 that the critical speed exists  and that  the energy functional is nonnegative and well defined	in $\boEE$. Indeed, let us consider $v\in\boEE$, set $\eta=1-\abs{v}^2$ and  write the energy in terms of the kinetic and potential energy as
		\begin{align*}
			E(v)=E_\k(v)+E_\p(v),\quad\text{ where }\ E_\k(v):=\frac{1}{2}\int_{\mathbb{R}}|v'|^{2}dx \ \text{ and }\
			E_\p(v):=\frac{1}{4}\int_{\mathbb{R}}(\W*\eta)\eta.
		\end{align*} 
		By hypothesis~\ref{H0} and the Plancherel theorem, we deduce that 
		\bqq
		0\leq E_p(v)=\frac{1}{8\pi}\int_\R \wh \W\abs{\hat \eta}^2\leq \frac14\norm{\wh \W}_{L^\infty}\norm{\eta}_{L^2}^2,
		\eqq
		so that the functions in $\boEE$ have indeed finite energy and their potential energy is nonnegative. 

Let us recall that for a tempered distribution $\boV\in  S'(\R)$, we can define 
the convolution with a function in $L^p(\R)$,  
through the Fourier transform, as 
the bounded extension on $L^p(\R)$ of the operator
$$\boV*f:=\boF^{-1}(\wh \boV \ \hat f), \quad f\in S(\R).$$
In this manner, the set
$$\boM_{p}(\R)=\{\boV\in S'(\R) : \exists C>0, \norm{\boV *f}_{L^p(\R)}\leq 
C\norm{f}_{L^p(\R)},  \forall f\in L^p(\R) \}$$
is a Banach space endowed with the operator norm denoted by $\norm{\cdot}_{\boM_p}$. 
Thus   \ref{H0}
		implies that $\W\in \boM_{2}(\R)$, with
				$$\norm{\wh \W}_{L^\infty(\R)}=\norm{\W}_{\boM_2}.$$
We refer to  \cite{grafakos} for further details about the properties of
$\boM_{p}(\R)$.

		Hypothesis~\ref{H-coer}, combined with~\ref{H0}, imply that $\wh \W(\xi)\geq (1-\xi^2/2)^+$ a.e.,
that can be seen as
a		coercivity property for the energy. In particular, it will allow us to establish the key energy  estimates in Lemmas~\ref{lem-control-energy}
		and \ref{lmm:generalpE}. 
%
The condition $(\wh \W)''(0)>-1$ will be   crucial to show that  the behavior of 
a solution of \eqref{ntwc} can be formally described in terms of 
the solution of the Korteweg--de Vries equation 
		\bqq
		(1+(\wh \W)''(0))A''-6A^2-A=0,
		\eqq
at least for $c$ close to $\sqrt{2}$ (see Section \ref{sec:curve}).
		
		The more technical and restrictive assumption \ref{H-residue} 
		is used only to prove that the curve associated with the minimizing problem  is concave.
		Indeed, we use some ideas introduced by Lopes and
		 Mari\c s~\cite{lopes-maris} to study 
		the minimization of the nonlocal functional
		$$\int_{\R^N} m(\xi)\abs{\hat w(\xi)}^2d\xi+\int_{\R^N}  F(w(x))dx,$$
		under the constraint $\int_{\R^N}G(w)dx=\lambda$, $\lambda\in \R$, 
		for a class of symbols $m$ (see (2.16) in \cite{lopes-maris}). Here $N\geq 2$,  $F$ and $G$ are local functions, and the minimization is over  
		$w\in H^s(\R)$. The results
in		\cite{lopes-maris} cannot be applied to  the symbol $m(\xi)=\wh \W(\xi)$
		nor to the minimization over functions with nonvanishing conditions at infinity (nor $N=1$).
		However, we can still apply the reflexion argument in  \cite{lopes-maris}, which will lead us to show that 
		\bq 
		\label{ineg-W-f}
		\int_\R (\W*f)f\geq  		\int_\R (\W*\tilde f)\tilde f,
		\eq
		for all  odd functions $f\in C_c^\infty(\R)$, where  $\tilde f$ is given by $\tilde f(x)=f(x)$ for $x\in\R^+$, and $\tilde f(x)=-f(x)$ for $x\in\R^-$.
		Using the sine and cosine transforms
		\bqq
		\hat f_s(\xi)=\int_0^\infty \sin(x\xi)f(x)dx,\quad  
		\hat f_c(\xi)=\int_0^\infty \cos(x\xi)f(x)dx, 
		\eqq
		we will see in Section~\ref{sec:curve} that  inequality \eqref{ineg-W-f} is equivalent to the
		following  assumption.
		
		\begin{enumerate}[label=({H\arabic*'}),ref=\textup{({H\arabic*'})}]
			\setcounter{enumi}{2}
			\item
			\label{H-residue-bis} $\W$ satisfies 
			\bqq \int_0^\infty \wh \W(\xi)(\abs{\hat f_s(\xi)}^2-\abs{\hat f_c(\xi)}^2)d\xi\geq 0,
						\eqq
			for all  odd functions  $f\in C_c^\infty(\R)$.
		\end{enumerate}
		Therefore, we can replace \ref{H-residue} by the weaker (but less explicit) condition \ref{H-residue-bis}. Finally, let us notice that if $\W=\delta_0$, we can verify
		that  condition \ref{H-residue-bis} is satisfied by using the Plancherel formula
		\bqq
		\int_0^\infty \abs{\hat f_s(\xi)}^2d\xi =
		\int_0^\infty \abs{\hat f_c(\xi)}^2d\xi=
		\int_0^\infty \abs{ f(x)}^2dx.
		\eqq
		At the end of this section we will give some examples of potentials satisfying \ref{H0}--\ref{H-residue}. 
				\subsection{Main results}
		\label{statement}
		In the classical minimization problems 
associated with  Schr\"odinger equations with vanishing conditions at infinity, 
the constraint in given by the mass. In our case, the momentum is 
the key quantity that we need to take as a constraint to show the existence of  dark solitons.
		Let us verify that the momentum
		\bq
		\label{def:mom}
		p(v)=\frac{1}{2}\int_{\mathbb{R}}\langle iv',v \rangle\left(1-\frac{1}{|v|^2}\right),
		\eq
		is well defined in the nonvanishing energy space.
		Indeed, a function  $v\in \boEEE$ is continuous and admits a lifting $v=\rho e^{i\phi}$,
		where $\rho=\abs{v}$ and $\phi$ are real-valued functions in $H^1_{\loc}(\R)$ (see e.g.~\cite{gerard3}). Since $v\in \boEEE$, we have $\inf_\R \rho>0$,  
and using that
$$\abs{v'}^2=\rho'^2+\rho^2\phi'^2,$$
		we infer that $\abs{\phi'}\leq \abs{v'}/\inf_\R \rho$, so that $\phi'\in L^2(\R)$.
		Hence, setting $\eta=1-\abs{v}^2\in L^2(\R)$, we get that the
		integrand in \eqref{def:mom} is equal to $\eta \phi'$, and therefore 
\eqref{def:mom} is well-defined since $\eta \phi'\in L^1(\R)$.
In conclusion, for any $v\in\boEEE $, the energy and the momentum
		can be written as
		\bqq
		E(v)=\frac12 \int_\R \rho'^2
		+\frac12 \int_\R \rho^2\phi'^2+
		\frac12 \int_\R (\W*\eta)\eta \quad \text{ and }\quad
		p(v)=\frac12\int_\R \eta\phi',
		\eqq
		under the assumption $\wh \W \in L^{\infty}(\R)$.
				
			Let us now describe our minimization approach for the existence problem, 
			assuming that $\W$ satisfies \ref{H0} and \ref{H-coer}. 
		For $\gq\geq 0$, we consider the minimization curve
		\bqq
		\Emin(\gq):=\inf\{E(v) : v\in \boEEE,\ p(v)=\gq \},
		\eqq
		that is well defined in view of Lemma~\ref{lemmeEminborne}. Moreover, this curve is nondecreasing (see Lemma~\ref{lem:increasing}). 
		We also set 
		\bq
		\label{q_*}
		\gq_*=\sup\{\gq>0~|~\forall v \in \mathcal{E}(\mathbb{R}), E(v)\leq E_{\min}(\gq)\Rightarrow \underset{\mathbb{R}}\inf|v|>0\}.
		\eq
If \ref{H-residue} is also fulfilled and $\q\in(0,\q^*)$, 
we will show that minimum  associated with $\Emin(\q)$  is attained 
and that the corresponding 		 Euler--Lagrange equation 
satisfied by the   minimizers 
		is exactly \eqref{ntwc}, where $c$ appears as a Lagrange multiplier (see Section~\ref{sec:Euler}
		for details). More precisely, our first result establishes the existence of a family of solutions of \eqref{ntwc} parametrized by the momentum.
		\begin{thm:intro}
			\label{thm-existence}
			Assume that \ref{H0},  \ref{H-coer} and \ref{H-residue} hold.
			Then  $\gq_*>0.027$ and for all $\gq\in (0,\gq_*)$ there is a nontrivial solution $u\in \boEEE$ to \eqref{ntwc}
			satisfying  $p(u)=\gq$,
			for some $c \in (0,\sqrt 2)$.
		\end{thm:intro}
			It is important to remark that the constant $\q_*$ is not necessarily small. 
				For instance,  in the case  $\W=\delta_0$,
		the explicit solution \eqref{sol:1D} allows us to compute 
		the momentum of $u_c$,  for $c\in (0,\sqrt 2)$, and to deduce that $\q_*=\pi/2$.
		Moreover  $\Emin$ can be determined and its profile is depicted in Figure~\ref{courbeth}. 
		Notice that $\Emin$ is constant on $(\q_*,\infty)$ and that in this interval the minimum  is not  attained  (see e.g.\ \cite{bethuel2008existence}).
\begin{figure}
\begin{center}
	\begin{tabular}{cc}
		\hspace*{-0.6cm}
{\scalebox{1}{\includegraphics[trim={0.15cm 0.2cm 0.05cm 0cm}, clip]{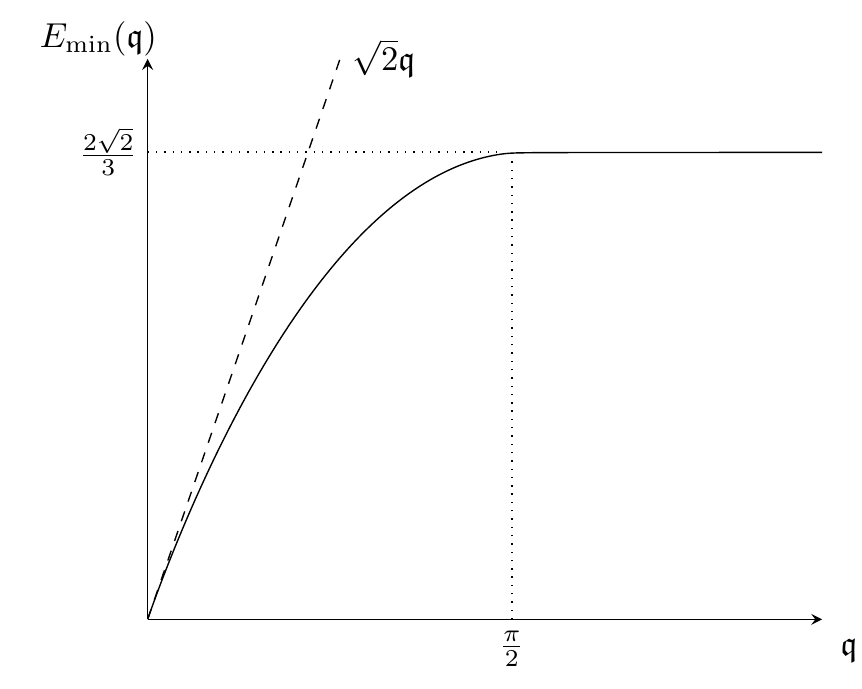}}}    
&
\hspace*{-1.45cm}
{\scalebox{0.93}{\includegraphics[trim={0.2cm 1.1cm 0.3cm 0cm}, clip]{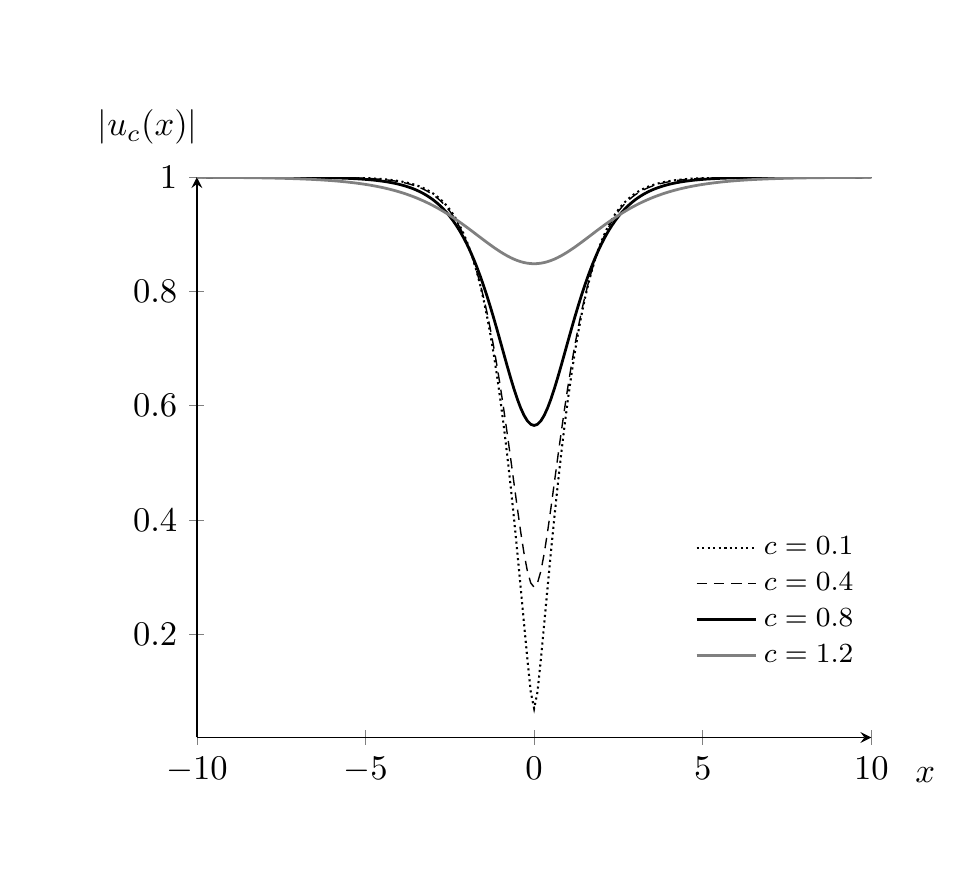}}}
\end{tabular}
\end{center}
\caption{Curve $\Emin$ and solitons in the case $\W=\delta_0$.}
\label{courbeth}
\end{figure}
Since \ref{H0}--\ref{H-residue} are satisfied by $\W=\delta_0$,
and since there is uniqueness (up to invariances) of the solutions to (TW$_{\delta_0,c}$), 
we deduce that the branch of solutions given by Theorem~\ref{thm-existence} corresponds to 
the dark solitons in \eqref{sol:1D},
for $c\in(0,\sqrt2)$.
		In the general case, we do not know if the solution given by Theorem~\ref{thm-existence} is unique (up to invariances).
Actually, the uniqueness for nonlocal equations such as \eqref{ntwc} can be difficult to establish (see e.g.~\cite{albert95,lieb77})
		and goes beyond the scope of this work. Concerning the regularity, the solutions given by
		Theorem~\ref{thm-existence} are smooth and we refer 
		to Lemma~\ref{regularity} for a precise statement.

To establish Theorem~\ref{thm-existence}, we analyze two problems. First, we  provide some general properties 
of the curve $\Emin$. Then, we study the compactness of the minimizing sequences associated with $\Emin$.
The next result summarizes the properties of $\Emin$.
		\begin{thm:intro}
			\label{thm:curve}
			Suppose  that $\W$ satisfies  \ref{H0} and \ref{H-coer}. 
			Then the following statements hold.
			\begin{enumerate}
				\item The function $\Emin$ is even and Lipschitz continuous on $\R$, with 
				$$\abs{\Emin(\gp)-\Emin(\gq)}\leq \sqrt{2}\abs{\gp-\gq},\quad\text{ for all }\gp,\q\in\R.$$
				Moreover, it is nondecreasing and subadditive on $\R^+$.
				\item There exist  constants $\q_1,A_1,A_2,A_3>0$ such that 
				$$\sqrt2\gq- A_1\q^{3/2}\leq 
				\Emin(\q)\leq \sqrt{2}\q -A_2\q^{5/3}+A_3\q^{2},\quad \text{for all }\q\in [0,\q_1].$$
				\item If \ref{H-residue} or \ref{H-residue-bis} is satisfied, then $\Emin$ is concave on $\R^+$.
				\item 	We have $\q_*>0.027$. If  $\Emin$ is concave on $\R^+$, then $\Emin$ is strictly increasing on $[0,\q_*)$,
				and for all $v\in \boEE$ satisfying $E(v)<\Emin(\q_*)$, we have $v\in \boEEE$.
				\item Assume that $\Emin$ is concave on $\R^+$. Then $\Emin(\q)<\sqrt 2\q$, for all $\q>0$,
				$\Emin$ is strictly subadditive  on $\R^+$, and 
				the right and left derivatives of $\Emin$, denoted by $E^{+}_{\min}$ and $E^{-}_{\min}$
				respectively, satisfy
				\bq
				\label{emin:fixed:bounds}
				0 \leq E^{+}_{\min}(\gq)  \leq E^{-}_{\min}(\gq) <\sqrt{2}.
				\eq
				Furthermore, $E^{+}_{\min}(\gq)\to E^{+}_{\min}(0)= \sqrt{2}$, as $\gq \to 0^+$.
			\end{enumerate}
		\end{thm:intro}
		
To prove the existence of solutions we use a concentration-compactness argument.
Applying Theorem~\ref{thm:curve}, we show that the minimum is attained at least for $\q\in(0,\q_*)$,
so that the set 
 $$\boS_\gq=\{ v\in \boEEE : E(v)=\Emin(\gq) \text{ and }p(v)=\gq\}$$
is nonempty, and thus there are nontrivial solutions to \eqref{ntwc} (see Theorem~\ref{thm:euler}).
Hence, we can rely on the Cazenave--Lions~\cite{cazlions} argument to show that the solutions are stable. Let us  remark that the 
Cauchy problem for \eqref{ngp} was studied in \cite{de2010global}. Precisely, using the distance 
\bqq
d_{\boE}(v_1,v_2)=\norm{v_1-v_2}_{L^2(\R)+L^\infty(\R)}+
\norm{v'_1-v'_2}_{L^2(\R)}+
\norm{\abs{v_1}-\abs{v_2}}_{L^2(\R)},
\eqq
 the energy space $\boEE$ is a complete metric space and for every $\Psi_0\in \boEE$
 there is a unique global solution $\Psi\in C(\R,\boEE)$ with initial condition $\Psi_0$,
provided that $\W\in \boM_{3}(\R)$ and that $\W\geq 0$ or that $\inf_{\R} \wh \W>0$ (see Theorem~\ref{global0}). However, these conditions are not 	necessarily fulfilled
by a distribution satisfying \ref{H0}--\ref{H-coer}.  
Nevertheless, using the energy estimates in Section~\ref{sec:estimations}, we can generalize a result 
in \cite{de2010global}  in the following way.
	\begin{thm:intro}
	\label{thm:cauchy0}
	Assume that  $\W\in \boM_{3}(\R)$  is an even distribution, 
	with $\wh \W\geq 0$ a.e.~on $\R$,  and that $\wh \W$ of class $C^2$ in a neighborhood of the origin
	with $\wh \W(0)=1$.	 Then for every $\Psi_0 \in \mathcal{E}(\mathbb{R}),$ there exists a unique $\Psi \in C(\mathbb{R},\mathcal{E}(\mathbb{R}))$  {\em global}  solution to \eqref{ngp} with the initial condition $\Psi_0$. Moreover, the energy is  conserved, as well as the momentum as long as 
	$\inf_{x\in\R}\abs{\Psi(x,t)}>0$.
\end{thm:intro}
\begin{rem}
	As explained before, the condition  $\wh \W(0)=1$ in Theorem~\ref{thm:cauchy0} is due to the normalization, and it can be replaced  by $\wh \W(0)>0$.
\end{rem}
We can also endow $\boE(\R)$ with 
 the pseudometric distance 
\bqq
d(v_1,v_2)=\norm{v'_1-v'_2}_{L^2(\R)}+
\norm{\abs{v_1}-\abs{v_2}}_{L^2(\R)},
\eqq
or with the distance used in \cite{bethuel2008existence}
\bqq
d_A(v_1,v_2)=\norm{v'_1-v'_2}_{L^2(\R)}+
\norm{\abs{v_1}-\abs{v_2}}_{L^2(\R)}+\norm{v_1-v_2}_{L^\infty([-A,A])},
\eqq
for $A>0$. Notice that $d(v_1,v_2)=0$ if and only if $\abs{v_1}=\abs{v_2}$ and $v_1-v_2$ is constant.
We say that the set  $\mathcal{S}_\q$ is orbitally stable in $(\boEE,d)$ if 
for all $\Psi_0\in \boEE$ and for all $\varepsilon>0,$ there exists $\delta >0$ such that if 
 $$ d(\Psi_0,\boS_\q)\leq \delta,$$
then the solution $\Psi(t)$ of \eqref{ngp} associated with the initial condition $\Psi_0$ satisfies 
$$\sup_{t\in\R } d(\Psi(t),\boS_\q)\leq \varepsilon.$$
Similarly, the set  $\mathcal{S}_\q$ is orbitally stable in $(\boEE,d_A)$ if
for all $\Psi_0\in \boEE$ and for all $\varepsilon>0,$ there exists $\delta >0$ such that if 
$ d_A(\Psi_0,\boS_\q)\leq \delta,$ then $\sup_{t\in\R } \inf_{y\in \R}d_A(\Psi(\cdot-y,t),\boS_\q)\leq \varepsilon.$
Here we need to introduce a translation of the flow, since the $d_A$ is not invariant under translations.

Now we can state our main result concerning the existence and stability of traveling waves.		
		\begin{thm:intro}		
			\label{thm-existence-general}
			Suppose  that $\W$ satisfies  \ref{H0} and \ref{H-coer}, and that $\Emin$ is concave on $\R^+$.
			Then the set $\boS_\gq$ is nonempty,  for all $\q\in (0,\q_*)$. 	Moreover, 	every $u\in \boS_\gq$ is a solution of \eqref{ntwc} for some speed $c_\q\in(0,\sqrt 2)$
			satisfying 
			\begin{equation}
				\label{speed0}
				 E^{+}_{\min}(\gq) \leq c_\gq \leq E^{-}_{\min}(\gq).
			\end{equation}
			Also, $c_\gq\to \sqrt{2}$ as $\gq \to 0^+$.
			
			In addition, if  $\W\in \boM_3(\R)$, then  $\boS_\gq$ is orbitally stable in $(\boEE,d)$
			and in   $(\boEE,d_A)$, 	 for all $\q\in(0,\q_*)$. 	Furthermore,
			for all $\Psi_0\in \boEE$ and for all $\varepsilon>0,$ there exists $\delta >0$ such that if 
			$ d(\Psi_0,\boS_\q)\leq \delta,$
			then the solution $\Psi(t)$ of \eqref{ngp} associated with the initial condition $\Psi_0$ satisfies 
			$$\sup_{t\in\R } \inf_{y\in \R}d_A(\Psi(\cdot-y,t),\boS_\q)\leq \varepsilon.$$
					\end{thm:intro}
	In this manner, it is clear that Theorem~\ref{thm-existence} is an immediate corollary of Theorems~\ref{thm:curve} 
	and \ref{thm-existence-general}, and that the branch of solutions given by Theorem~\ref{thm-existence} 
	is orbitally stable provided that $\W\in \boM_3(\R)$. In particular, 
	we recover the orbital stability proved by several authors for the solitons given in~\eqref{sol:1D} 
(see e.g.\ \cite{linbubbles,bethuel2,chironstab} and the references therein).


		We point out that we have not discussed what happens with the minimizing curve for $\q\geq \q_*$. 
As mentioned before, for all $\q>\q_*$, the curve $\Emin(\q)$ is constant for	 $\W=\delta_0$ (see Figure~\ref{courbeth}) and  $\boS_q$ is empty. Moreover,  the critical case $\q=\q_*$ is associated with  the black soliton and its analysis is more involved (see e.g.~\cite{blacksoliton,asymptstabblack}). 
Numerical simulations lead us to conjecture that similar results hold for a potential satisfying \ref{H0}-\ref{H-residue},
i.e.~that $\Emin(\q)$ is constant and that $\boS_q$ is empty on $(\q_*,\infty)$, and that there is a black soliton when $\q=\q_*$.
 		In addition, in the performed simulations the value $\q_*$ 
 		is close to   $\pi/2$ (see Section~\ref{sec:numerics}).
		Furthermore, these simulations also show that \ref{H-coer} and \ref{H-residue-bis}
				are not necessary for the concavity of $\Emin$ nor the existence of solutions of 
				\eqref{ntwc}. We think that \ref{H-coer} could be relaxed, but that the condition 
$(\wh \W)''(0)>-1$ is necessary. As seen from Theorem~\ref{thm:curve}, we have only used
\ref{H-residue-bis} as a sufficient condition to ensure the concavity of $\Emin$.
If for some $\W$ satisfying \ref{H0} and \ref{H-coer}, 
one is capable of showing that $\Emin$ is concave, then the existence and stability of solutions of 
\eqref{ntwc} is a consequence of Theorem~\ref{thm-existence-general}.

		In addition to the smoothness of the obtained solutions (see Lemma~\ref{regularity}), 
		it is possible to study further properties of these solitons such as their 
		decay at infinity and uniqueness (up to invariances).
		Another related open problem is to show the nonexistence of traveling waves for $c>\sqrt{2}$.
		We will study  these questions in a forthcoming paper.
		
We give now some examples of potentials satisfying conditions \ref{H0}, \ref{H-coer}	and \ref{H-residue}
		\begin{enumerate}
			\item 
			For  $\beta >2\alpha>0$, we consider 
$\W_{\alpha,\beta}=\frac{\beta}{\beta-2\alpha}(\delta_0-\alpha e^{-\beta\abs{x}})$,  so its Fourier transform is
			$$\wh \W_{\alpha,\beta}(\xi)=\frac{\beta}{\beta-2\alpha}\Big(1-\frac{2\alpha \beta}{\xi^2+\beta^2}\Big),$$
so that   $\wh \W_{\alpha,\beta}(0)=1$, and it is simple to check that  \ref{H0} and \ref{H-coer}
			are satisfied. To verify \ref{H-residue}, it is enough to notice that 
			the only singularity on $\H$ of the meromorphic function 
			$\wh \W_{\alpha,\beta}$ is the simple pole $\nu_1=i\beta$ and that 
			$$i\Res(\wh \W_{\alpha,\beta},i\beta)=-\frac{\alpha\beta}{\beta-2\alpha}< 0.$$
			Since $\wh \W_{\alpha,\beta}	$	is bounded on $\H$ away from the pole, we conclude that \ref{H-residue} is fulfilled.
			We recall that, by the Young inequality, $L^1(\R)$ is a subset of $\boM_3(\R)$.
			Therefore $\W_{\alpha,\beta}\in  \boM_3(\R)$ and Theorem~\ref{thm-existence-general} applies.
			\item For $\alpha\in [0,1)$, we take the potential 
			$\W_\alpha=\frac{1}{1-\alpha}(\delta_0-\alpha \boV)$,
			where 
			$$\boV(x)=-\frac{3}{\pi}\ln(1-e^{-\pi\abs{x}}), \quad \text{and}\quad \wh \boV(\xi)=\frac{3(\xi\coth(\xi)-1)}{\xi^2}.$$
			It can be seen that $\wh \boV$ is a smooth even positive function on $\R$, 
			decreasing on $\R^+$, with $\wh \boV(0)=1$ and decaying at infinity as $3/\xi$.
			Thus the conditions  \ref{H0} and \ref{H-coer}
			are satisfied. As a function on the complex plane, $\wh \boV$ is a meromorphic function 
			whose only singularities on $\H$ are given by the simple poles
			$\{i  \pi \ell\}_{\ell\in \N^*}$, and 
			$$i\Res(\wh \W_\alpha,i\pi \ell)=i\Res(-\wh \boV,i\pi \ell)=-\frac{3}{\pi \ell}.$$
			To check \ref{H-residue}, we define for $k\geq 2$,
			the functions 
			$\gamma_{1,k}(t)=(k+1/2)\pi+it$, $t\in~[0,(k+1/2)\pi]$, 
			$\gamma_{2,k}(t)=t+i(k+1/2)\pi$, $t\in~[(k+1/2)\pi,-(k+1/2)\pi]$,
			and 
			$\gamma_{3,k}(t)=-(k+1/2)\pi+it$, $t\in~[(k+1/2)\pi,0]$, 
			so that the corresponding curve $\Gamma_k$ is given by the three sides of a square
			and $\Gamma_k$  does not pass through any poles. 
			Using that for $x,y\in\R$ (see e.g.\ \cite{abram})
			$$\abs{\coth(x+iy)}=\Big|
			\frac{\cosh(2x)+\cos(2y)}{\cosh(2x)-\cos(2y)}\Big|^{1/2},$$
			we can obtain a constant $C>0$, independent of $k$, such that 
			$\abs{\wh \boV (\gamma_{j,k}(t) )}\leq C $, for all $t\in[a_{j,k},b_{j,k}]$, 
			for $j\in\{1,2,3\}$, where $[a_{j,k},b_{j,k}]$ is the domain of definition of $\gamma_{j,k}$.
						As a  conclusion, \ref{H-residue} is fulfilled. 
Since $V\in L^{1}(\R)$, we conclude that $\W_{\alpha}\in  \boM_3(\R)$
and therefore we can apply Theorem~\ref{thm-existence-general} to this potential.
			\item We can also construct perturbations of previous examples. For instance, using the function 
			$\boV$ defined above, we 
						set 
			$$\wh\W_{\sigma,m}(\xi)=\frac{2m^2\pi^2}{m^2\pi^2+2\sigma}
			\Big(
			1-\frac{\wh \boV(\xi)}{2}+\frac{\sigma}{\xi^2+m^2\pi^2}
			\Big),$$
for $\sigma\in \R$ and $m\in \N^*$, so that the poles on $\H$ are still $i\pi\N^*$. It follows that for \mbox{$\sigma>-\pi^2m^2/2$}, the potential 
satisfies  \ref{H0}, and  that \ref{H-residue} holds if 
$\sigma\leq 3$. We can also check that for $\sigma\in(-\pi^2m^2/2,3]$, $\wh\W_{\sigma,m}$ satisfies \ref{H-coer}, and therefore Theorem~\ref{thm-existence-general} applies.
				\end{enumerate}
		%
		%
		%
		%
		%
		%
		%
		%
		%
		In Section~\ref{sec:numerics} we perform some numerical simulations
		to illustrate the shape of the solitons and the minimization curves associated with these and other examples. 
		The rest of the paper is organized as follows: we give some energy estimates  in Section~\ref{sec:estimations}. In Section~\ref{sec:curve},
		we establish the properties of the minimizing curve and the proof of Theorem~\ref{thm:curve}, and
in Section~\ref{sec:suites} we show the compactness of the sequences associated with the minimization problem. The orbital stability of the solutions  and Theorem~\ref{thm:cauchy0}  are proved in Section~\ref{sec:stability}.
		We finally complete the proof of Theorem~\ref{thm-existence-general} in Section~\ref{sec:Euler}.
		\numberwithin{equation}{section}
		\section{Some a priori estimates}
		\label{sec:estimations} 
		We start by establishing an $L^\infty$-estimate for the functions in the energy space in terms 
		of their energy. 
				
				\begin{lem}
			\label{lem-control-energy}
	Assume that $\W\in \boM_2(\R)$ satisfies
		\bq\label{cond:kappa}
	\wh	\W(\xi)\geq (1-\kappa\xi^2)^+, \quad \text{a.e.~on } \R,
	\eq
	for some $\kappa\geq 0$.
			Let $v\in \boEE$ and set $\eta:=1-\abs{v}^2$. Then 
			\bq
			\label{est-eta}
			\norm{\eta}_{L^\infty}^2\leq 8\tilde \kappa E(v)(1+8 \tilde\kappa E(v)+2\sqrt{ 2\tilde\kappa E(v)})
			\eq
			and 
			\bq
			\label{est-eta-L2}
			\norm{\eta}_{L^2}^2\leq 8\tilde \kappa E(v)(1+8\tilde \kappa E(v)+2\sqrt{ 2\tilde \kappa E(v)}),
			\eq
			with $\tilde \kappa=\kappa+1$.
		\end{lem}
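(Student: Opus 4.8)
The plan is to reduce the whole estimate to two scalar quantities, $P:=\norm{\eta}_{L^\infty}$ and the product $\norm{\eta}_{L^2}\norm{\eta'}_{L^2}$, and then to close a single self-referential inequality for $P$. First I would record the preliminary facts. Since $\W\in\boM_2(\R)$, the Plancherel identity gives $E_\p(v)=\frac{1}{8\pi}\int_\R\wh\W\,\abs{\hat\eta}^2$, and \eqref{cond:kappa} forces $\wh\W\ge(1-\kappa\xi^2)^+\ge0$, so $E_\p(v)\ge0$ and hence $E_\k(v)=\tfrac12\norm{v'}_{L^2}^2\le E(v)$. As the functions in $\boEE$ are uniformly continuous with $\abs v\to1$ at infinity, $\eta=1-\abs v^2$ is continuous, bounded and vanishes at infinity, $\eta\in L^2(\R)$, and $\eta'=-2\Re(\bar v v')\in L^2(\R)$. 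Writing $\eta(x)^2$ as the average of $\int_{-\infty}^x(\eta^2)'$ and $-\int_x^\infty(\eta^2)'$ and applying Cauchy--Schwarz yields the one-dimensional interpolation inequality $\norm{\eta}_{L^\infty}^2\le\norm{\eta}_{L^2}\norm{\eta'}_{L^2}$, with the sharp constant $1$.

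Next I would establish two a priori bounds that feed into it. For the derivative, from $\abs{\eta'}\le2\abs v\abs{v'}$ and $\norm v_{L^\infty}^2=\norm{1-\eta}_{L^\infty}\le1+P$ one gets $\norm{\eta'}_{L^2}^2\le4\norm v_{L^\infty}^2\norm{v'}_{L^2}^2\le8(1+P)E(v)$. For the function itself, Plancherel together with the coercivity \eqref{cond:kappa}, using the pointwise bound $1-(1-\kappa\xi^2)^+\le\kappa\xi^2$ valid on all of $\R$ (so that the low and high frequencies are handled at once, producing the sharp coefficient $\kappa$ rather than $2\kappa$), gives
\bqq
\norm{\eta}_{L^2}^2=\frac{1}{2\pi}\int_\R\abs{\hat\eta}^2\le 4E_\p(v)+\kappa\norm{\eta'}_{L^2}^2\le 4E(v)+\kappa\norm{\eta'}_{L^2}^2.
\eqq

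Now set $\tilde\kappa=\kappa+1$ and $A=8\tilde\kappa E(v)$. Combining the two previous bounds (and using $P\ge0$, $\tilde\kappa\ge1$) I would deduce the two key inequalities $\norm{\eta}_{L^2}^2\le A(1+P)$ and $\norm{\eta}_{L^2}^2\norm{\eta'}_{L^2}^2\le A^2(1+P)^2$; the verification of the second reduces, after dividing by $E(v)^2(1+P)$, to $32\le64(1+P)(\kappa^2+\kappa+1)$, which is immediate. Feeding the second into the interpolation inequality yields the closed quadratic relation $P^2\le\norm{\eta}_{L^2}\norm{\eta'}_{L^2}\le A(1+P)$. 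Solving $P^2-AP-A\le0$ and using the elementary estimate $\sqrt{A^2+4A}\le A+2\sqrt A$ gives $P\le A+\sqrt A$, hence $1+P\le1+A+\sqrt A$. Substituting this into both $P^2\le A(1+P)$ and $\norm{\eta}_{L^2}^2\le A(1+P)$ produces, for both quantities, the common bound $A(1+A+\sqrt A)=A+A^{3/2}+A^2$, which is exactly $8\tilde\kappa E(v)\big(1+8\tilde\kappa E(v)+2\sqrt{2\tilde\kappa E(v)}\big)$; this is simultaneously \eqref{est-eta} and \eqref{est-eta-L2}.

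The only genuinely delicate point is the coupling: both $\norm{\eta'}_{L^2}$ and, through it, $\norm{\eta}_{L^2}$ carry the unknown $P=\norm{\eta}_{L^\infty}$, so none of the norms can be bounded outright; one must assemble them into the single relation $P^2\le A(1+P)$ and solve it. Matching the stated constants \emph{exactly} hinges on keeping two inputs sharp: the constant $1$ in $\norm{\eta}_{L^\infty}^2\le\norm{\eta}_{L^2}\norm{\eta'}_{L^2}$, and the coefficient $\kappa$ (not $2\kappa$) in the $L^2$ bound coming from the uniform estimate $1-(1-\kappa\xi^2)^+\le\kappa\xi^2$. With weaker constants the final algebra overshoots the claimed right-hand side, so the care lies precisely in these two places.
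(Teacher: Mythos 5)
Your proof is correct and reproduces the stated constants exactly, but it follows a genuinely different route from the paper's. The paper argues additively: it bounds $\eta^2(x)\le\int_\R(\eta^2+\eta'^2)=\frac{1}{2\pi}\int_\R(1+\xi^2)\abs{\hat\eta}^2$, absorbs the whole weight at once via $1+\xi^2\le\wh\W(\xi)+\tilde\kappa\xi^2$, and obtains a pointwise bound in which the self-referential unknown is $\norm{v}_{L^\infty}^2$; closing it requires a case distinction on whether $\norm{v}_{L^\infty}\le 1$ and, in the nontrivial case, an evaluation at the minimum point of $\eta$, giving a quadratic inequality in $a=\min_\R\eta=1-\norm{v}_{L^\infty}^2$. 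You instead use the multiplicative (Agmon-type) inequality $\norm{\eta}_{L^\infty}^2\le\norm{\eta}_{L^2}\norm{\eta'}_{L^2}$, bound the two factors separately --- each carrying the unknown $P=\norm{\eta}_{L^\infty}$ only through $\norm{v}_{L^\infty}^2\le 1+P$ --- and close a single quadratic $P^2\le A(1+P)$ with no case analysis; the final step ($\sqrt{A^2+4A}\le A+2\sqrt{A}$, hence $P\le A+\sqrt{A}$ with $\sqrt{A}=2\sqrt{2\tilde\kappa E(v)}$) coincides with how the paper resolves its own quadratic. I verified your two auxiliary claims: $\norm{\eta}_{L^2}^2\le A(1+P)$ follows from $\norm{\eta}_{L^2}^2\le 4E(v)+8\kappa(1+P)E(v)$ since $4\le 8(1+P)$, and the product bound indeed reduces to $32\le 64(\kappa^2+\kappa+1)(1+P)$, which is immediate. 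Your route has two small advantages: writing $\eta'=-2\langle v,v'\rangle$ sidesteps the paper's digression on the lifting $v=\rho e^{i\theta}$ and the set $\{v=0\}$, and no case splitting on $\norm{v}_{L^\infty}$ is needed. The paper's route, in exchange, isolates the explicit estimate \eqref{dem-ener5} on $\norm{v}_{L^\infty}^2$ as a standalone intermediate, which it then recycles to prove \eqref{est-eta-L2}; in your scheme the same role is played by $1+P\le 1+A+\sqrt{A}$. Both arguments are complete and land on identical right-hand sides in \eqref{est-eta} and \eqref{est-eta-L2}.
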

		\begin{proof}
			Let $\W\in \boM_2(\R)$ and  
			 $v\in \boEE$, and set $\rho=\abs{v}$, $\eta=1-\rho^2$ and $x\in \R$. By Plancherel's identity
			\bq
			\label{dem-ener}
			\eta^2(x)=2\int_{-\infty}^x \eta \eta' \leq \int_\R (\eta^2 +\eta'^2)=\frac{1}{2\pi}\int_\R(1+\xi^2)\abs{\hat \eta}^2d\xi.
			\eq
			By \eqref{cond:kappa}, we have $1\leq \wh \W(\xi)+\kappa\xi^2$ a.e.~on $\R$,
			so that the term on the right-hand side of \eqref{dem-ener}
			can be bounded by 
			\bq
			\label{dem-ener2}
			\frac{1}{2\pi}\int_\R(1+\xi^2)\abs{\hat \eta}^2\leq \frac{1}{2\pi}\int_\R(\wh \W(\xi)+\tilde \kappa \xi^2)\abs{\hat \eta}^2
			=4E_\p(v)+  \tilde \kappa \int_\R\eta'^2,
			\eq
with $\tilde \kappa=\kappa+1$.
			Now we notice that $\eta'=-2\rho \rho'$, so that $\eta'^2\leq 4 \norm{v}_{L^\infty}^2\rho'^2$.
			Also, if  $\abs{v}\neq 0$ in some open set, then we can write $v=\rho e^{i\theta}$ and 
			$\abs{v'}^2=\rho'^2+\rho^2\theta'^2$. On the other hand, 
			 the set $\tilde\Omega:=\{v=0\}$ coincides with the set $\{\eta=1\}$, and $v'=0$ and $\eta'=0$ a.e.~on $\tilde \Omega$. Therefore, we conclude that 
			\bq
			\label{dem-good-ineq}
			\eta'^2\leq 4 \norm{v}_{L^\infty}^2 \abs{v'}^2 \quad \text{ a.e.~on }\R.
			\eq
			Combining \eqref{dem-ener}, \eqref{dem-ener2} and \eqref{dem-good-ineq}, we have
			\bq
			\label{dem-ener3}
			\eta^2(x)\leq 4E_\p(v)+8\tilde \kappa \norm{v}_{L^\infty}^2E_\k(v)\leq \max(4,8\tilde \kappa \norm{v}_{L^\infty}^2)E(v).
			\eq
			If $\norm{v}^2_{L^\infty}\leq1$, 
			inequality \eqref{est-eta} follows, since $\max(4,8\tilde \kappa)=8\tilde \kappa$. Thus we suppose now that 
			\bq
			\label{dem-ener4}
			\norm{v}^2_{L^\infty}>1.
			\eq
			Bearing in mind that 
			$\eta(\pm \infty)=0$, we deduce that there is some $x_0\in \R$ such that 
			$$
			a:=\min_{\R} \eta=\eta(x_0)=1-\norm{v}^2_{L^\infty}.
			$$
			Therefore, using \eqref{dem-ener3} for $x_0$ and \eqref{dem-ener4}, we get
			\bqq
			a^2\leq 8\tilde \kappa(1-a)E(v).
			\eqq
			Solving the associated quadratic equation and using that $\sqrt{a+b}\leq \sqrt{a}+\sqrt{b}$, we conclude that 
			\bqq
			a\geq \frac12(-8\tilde \kappa  E(v)-\sqrt{
				64\tilde \kappa^2 E(v)^2+32\tilde \kappa   E(v) 
			}
			)
			\geq -8\tilde \kappa E(v)-2\sqrt{ 2\tilde \kappa E(v)},
			\eqq
			which implies that 
			\bq
			\label{dem-ener5}
			\norm{v}^2_{L^\infty}\leq 1+8\tilde \kappa E(v)+2\sqrt{ 2\tilde \kappa E(v)}.
			\eq
			By putting together \eqref{dem-ener3}, \eqref{dem-ener4} and \eqref{dem-ener5}, we obtain \eqref{est-eta}.
			
			To prove \eqref{est-eta-L2}, we use 			the Plancherel identity
			and argue as before to get 
			\bqq
			\int_\R\eta^2\leq 
			\frac{1}{2\pi}\int_\R(\wh \W(\xi)+ \kappa\xi^2)\abs{\hat \eta}^2\leq 4E_\p(v)+\kappa\int_\R\eta'^2
			\leq 
			4E_\p(v)+8\kappa \norm{v}_{L^\infty}^2E_\k(v).
			\eqq
			Therefore, using \eqref{dem-ener5},  inequality \eqref{est-eta-L2} is established.
		\end{proof}
		\begin{rem}\label{rem:estimation}
Let us suppose that $\W\in \boM_2(\R)$ is even and that also $\wh \W$ 
is of class $C^2$ in some interval $[-r,r]$, with $r>0$. 
Then $(\wh \W)'(0)=0$, and by the Taylor theorem we deduce that
for any $\xi\in (-r,r)$, there exists $\tilde \xi\in(-r,r)$ such that 
\bqq
\wh \W(\xi)=1+(\wh \W)''(\tilde \xi)\frac{\xi^2}{2}\geq 1-\mu \xi^2,
\eqq
where $\mu=\max_{[-r,r]} \abs{(\wh \W)''}/2$. 
If $1/\mu\leq r^2$, we set $\kappa=\mu$.
If $1/\mu> r^2$, we take $\kappa=1/r^2$.
Assuming also that 
 $\wh \W\geq 0$ a.e.~on $\R$,
 we conclude that in both cases   condition \eqref{cond:kappa} is fulfilled. 
		\end{rem}
%
		
						From now on until the end of this paper,  we assume that  \ref{H0} and \ref{H-coer} are satisfied, 
		so in particular Lemma~\ref{lem-control-energy} holds true with $\kappa=1/2$. 	
		In the sequel, we also use the identity 
		\begin{equation}
		\label{sym}
		\int_\R (\W*f)g=\int_\R (\W*g)f, \quad\text{ for all }f,g\in L^2(\R),
		\end{equation}
		that is a consequence of parity of $\W$ stated in \ref{H0}.

		A key point to obtain the compactness of the sequences in Section~\ref{sec:suites}
		is that the momentum can be controlled by the energy. This kind of inequality is crucial in the arguments 
		when proving the existence of solitons by variational techniques in the case $\W=\delta_0$ (see \cite{bethuel,chironmaris}).
		Moreover, for an open set  $\Omega\subset \R$ and $u={\rho}e^{i\theta}\in \boEEE$,
		we need to be able to control the localized momentum
		$$p_{\Omega}(u):=\frac{1}{2}\int_{\Omega}\eta\theta',$$
		by some localized version of the energy.
		By the Cauchy inequality, setting as usual $\eta=1-\abs{u}^2$, we have 
		\begin{align}
			\label{mom-young}
			\sqrt{2}|p_{\Om}(u)|&\leq \ \frac{1}{4}\int_{\Om}\eta^2+\frac{1}{2}\int_{\Om}\theta'^2 
			\leq \frac{1}{4}\int_{\Om}\eta^2+\frac{1}{2\underset{\Om}\inf{\rho}^2}\int_{\Om}\rho^2\theta'^2, 
		\end{align}
but it is not clear how to define a localized version of energy, due the to the nonlocal 
interactions. We propose to introduce the localized energy
		$$ E_{\Omega}(u):=\frac{1}{2}\int_{\Omega}|u'|^2+\frac{1}{4}\int_{\Omega}(\W\ast{\eta_\Omega}) {\eta_\Omega}, 
		\quad \text{ with } \eta_{\Omega}:=\eta\mathds{1}_{\Omega}.$$
				Notice that if $\Omega=\R$, then $E_{\Omega}(u)=E(u)$ and $p_{\Omega}(u)=p(u)$.
		Since $\eta_\Omega$ can be discontinuous (and thus not  weakly differentiable)
		when $\Omega$ is bounded, 
		we  also need to introduce a smooth cut-off function as follows: 
		for $\Omega_0$ an open set  compactly contained in $\Omega$, i.e.\
		$\Om_0 \subset\subset \Om$, we set a function $\chi_{\Om,\Om_0}\in C^{\infty}(\R)$   taking values in 
		$[0,1]$  and satisfying
		\bq
		\chi_{\Om,\Om_0}(x)=\begin{cases}
			1 &\text{ if }x\in \Om_0, \\
			0 &\text{ if }x \in \R\setminus\Om. 
		\end{cases}
		\label{def:fonctionreg}
		\eq 
		In the case $\Omega=\Omega_0=\R$, we simply set $\chi_{\Om,\Om_0}\equiv 1$.
		\begin{lem}
	\label{lmm:generalpE}
	Let  $\Omega,\Omega_0\subset \R$ be two smooth open sets with  $\Om_0 \subset\subset \Om$
	and let $\chi_{\Om,\Om_0}\in C^{\infty}(\R)$ as above.
	Let   $u \in \mathcal{E}(\mathbb{R})$  and
	assume that there is some $\ve\in  (0,1)$  such that
	$1-\ve\leq \abs{u}^2\leq 1+ \ve$ on $\Omega$. Then
	\bq
	\sqrt{2}|p_{\Omega}(u)|\leq \frac{E_{\Omega}(u)}{1-\ve}+\Delta_{\Om}(u),
	\label{ctrlEPsuromega}
	\eq
	where the remainder term $\Delta_{\Omega}(u)$ satisfies the estimate
	\begin{align}
	|\Delta_{\Omega}(u)|\leq C(
	\Vert \eta \Vert_{L^{2}(\Om\setminus\Om_0)}+ \Vert \eta\chi_{\Om,\Om_0}' \Vert_{L^{2}(\Om\setminus\Om_0)}
	+\Vert \eta\chi_{\Om,\Om_0}' \Vert_{L^{2}(\Om\setminus\Om_0)}^2
	).
	\label{bornedeltaomega}
	\end{align}Here $C=C(E(u),\ve)$ is a constant depending on $E(u)$ and  $\ve$, but not on $\Omega$ nor $\Om_0$.
	In particular, in the case $\Om=\Om_0=\R,$ we have
	\bq
	|p(u)|\leq \frac{E(u)}{\sqrt{2}(1-\ve)}.
	\label{ctrlEPsurR}
	\eq
\end{lem}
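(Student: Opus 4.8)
The plan is to start from the elementary bound \eqref{mom-young} and then recognize, term by term, the localized energy $E_\Omega(u)$ up to boundary contributions supported in $\Om\setminus\Om_0$. Since $1-\ve\le\abs u^2\le 1+\ve$ on $\Om$, the function $u$ admits a lifting $u=\rho e^{i\theta}$ there with $\inf_\Om\rho^2\ge 1-\ve$, so \eqref{mom-young} gives
\bqq
\sqrt2\,\abs{p_\Om(u)}\le \frac14\int_\Om\eta^2+\frac{1}{2(1-\ve)}\int_\Om\rho^2\theta'^2.
\eqq
Writing $\rho^2\theta'^2=\abs{u'}^2-\rho'^2$, the last term equals $\frac{1}{1-\ve}\cdot\frac12\int_\Om\abs{u'}^2-\frac{1}{2(1-\ve)}\int_\Om\rho'^2$. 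The negative term is the crucial gain: since $\eta'=-2\rho\rho'$ and $\rho^2\le 1+\ve$ on $\Om$, it obeys $\frac{1}{2(1-\ve)}\int_\Om\rho'^2\ge\frac{1}{8(1-\ve^2)}\int_\Om\eta'^2\ge\frac18\int_\Om\eta'^2$, and I will use it to absorb the derivative term produced by the coercivity estimate below.

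It then remains to bound $\frac14\int_\Om\eta^2$ by $\frac{1}{1-\ve}\cdot\frac14\int_\R(\W*\eta_\Om)\eta_\Om+\frac18\int_\Om\eta'^2$ plus boundary terms, recalling that the potential part $\frac14\int_\R(\W*\eta_\Om)\eta_\Om=\frac{1}{8\pi}\int_\R\wh\W\,\abs{\wh{\eta_\Om}}^2$ is nonnegative by \ref{H0}. Because $\eta_\Om=\eta\mathds1_\Om$ is generally discontinuous, I replace it by the $H^1$ function $w:=\eta\,\chi_{\Om,\Om_0}$, which agrees with $\eta_\Om$ on $\Om_0$ and off $\Om$, so that $w-\eta_\Om$ is supported in $\Om\setminus\Om_0$ with $\norm{w-\eta_\Om}_{L^2}\le\norm{\eta}_{L^2(\Om\setminus\Om_0)}$. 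Applying the coercivity inequality $1\le\wh\W(\xi)+\tfrac12\xi^2$ (Lemma~\ref{lem-control-energy} with $\kappa=1/2$) together with Plancherel to $w$ yields
\bqq
\int_\R w^2\le\int_\R(\W*w)w+\frac12\int_\R(w')^2.
\eqq
The quadratic terms $\int_\R w^2$ and $\int_\R(\W*w)w$ differ from $\int_\Om\eta^2$ and $\int_\R(\W*\eta_\Om)\eta_\Om$ by quantities controlled by $\norm{\eta}_{L^2(\Om\setminus\Om_0)}$, using $\W\in\boM_2(\R)$ (so $\norm{\W*f}_{L^2}\le\norm{\wh\W}_{L^\infty}\norm{f}_{L^2}$) and the a priori bound $\norm{\eta}_{L^2}\le C(E(u))$ from Lemma~\ref{lem-control-energy}.

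For the derivative term I expand $w'=\eta'\chi_{\Om,\Om_0}+\eta\chi_{\Om,\Om_0}'$; since $\chi_{\Om,\Om_0}'$ is supported in $\Om\setminus\Om_0$ and $\chi_{\Om,\Om_0}^2\le\mathds1_\Om$, one obtains $\frac12\int_\R(w')^2\le\frac12\int_\Om\eta'^2+C\norm{\eta\chi_{\Om,\Om_0}'}_{L^2(\Om\setminus\Om_0)}+\frac12\norm{\eta\chi_{\Om,\Om_0}'}_{L^2(\Om\setminus\Om_0)}^2$, the cross term being handled by Cauchy--Schwarz and $\norm{\eta'}_{L^2}\le C(E(u))$. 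The overall prefactor $\frac14$ in front of $\int_\Om\eta^2$ turns the coefficient of $\int_\Om\eta'^2$ into exactly $\frac18$, which is cancelled by the gain from the first paragraph because $\frac{1}{8(1-\ve^2)}\ge\frac18$. Collecting the remaining terms, all of the form $\norm{\eta}_{L^2(\Om\setminus\Om_0)}$, $\norm{\eta\chi_{\Om,\Om_0}'}_{L^2(\Om\setminus\Om_0)}$ or their squares (squares of $\norm{\eta}_{L^2(\Om\setminus\Om_0)}$ being reabsorbed into the first power via $\norm{\eta}_{L^2}\le C(E(u))$), defines $\Delta_\Om(u)$ and yields \eqref{ctrlEPsuromega}--\eqref{bornedeltaomega} with $C=C(E(u),\ve)$. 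Taking $\Om=\Om_0=\R$ and $\chi_{\Om,\Om_0}\equiv 1$ makes $\Om\setminus\Om_0$ empty and $\chi_{\Om,\Om_0}'=0$, so $\Delta_\R(u)=0$ and \eqref{ctrlEPsurR} follows.

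The main obstacle is the exact matching of constants in the cancellation of $\int_\Om\eta'^2$: the coercivity estimate unavoidably produces a bulk derivative term rather than a boundary one, and it is only through the negative $\rho'$ contribution coming from $\rho^2\theta'^2=\abs{u'}^2-\rho'^2$, combined with the prefactor $\frac14$ and the coercivity constant $\kappa=1/2$, that this term is fully absorbed; the slack $\frac{1}{1-\ve^2}>1$ is precisely what keeps the argument valid. This tight balance reflects the threshold nature of the speed of sound $c_*=\sqrt2$. The secondary technical point is the systematic replacement of the discontinuous $\eta_\Om$ by the smooth truncation $\eta\,\chi_{\Om,\Om_0}$ while keeping every error localized in $\Om\setminus\Om_0$.
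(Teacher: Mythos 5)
Your proof is correct and follows essentially the same route as the paper's: the Cauchy bound \eqref{mom-young}, the coercivity \ref{H-coer} applied via Plancherel to the smooth truncation $\eta\chi_{\Om,\Om_0}$, and the same three boundary error terms on $\Om\setminus\Om_0$ controlled through Lemma~\ref{lem-control-energy}. The only (cosmetic) difference is the balancing of kinetic terms: the paper inserts a free parameter $\sigma$ in the Cauchy inequality and optimizes it to $\sigma=1/\sqrt{1-\ve^2}$ after bounding $\eta'^2\leq 4(1+\ve)\rho'^2$, whereas you keep $\sigma=1$ and use the exact identity $\rho^2\theta'^2=\abs{u'}^2-\rho'^2$ so that the negative $\rho'^2$ term absorbs the $\tfrac18\int_\Om\eta'^2$ produced by coercivity; both land on the same bound $E_\Om(u)/(1-\ve)$.
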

\begin{proof}
	As usual, we write $u=\rho e^{i\theta}$ on $\Omega$.
	As in \eqref{mom-young}, using the Cauchy inequality and that $1-\ve\leq \rho^2\leq 1+\ve_2$ on $\Omega$, we have
	\bq
	\label{dem-mom}
	\sqrt{2}\abs{p_\Omega(u)}\leq \frac{\sigma}{4}\int_{\Om}\eta^2+\frac{1}{2\sigma(1-\ve)}\int_{\Om}\rho^2\theta'^2,
	\eq
	with $\sigma>0$ to be fixed later. Now,  we write
	\begin{align*}
	\frac{\sigma}{4}\int_{\Om}\eta^2+\frac{1}{2\sigma(1-\ve)}\int_{\Om}\rho^2\theta'^2
	=\frac{\sigma}{4}\int_{\Om}\Big(
	\eta_{\Om}^2-{(\W\ast\eta_{\Om})\eta_{\Om}}\Big)+
	{R_{\Om}(u)},
	\end{align*}
	where $$
	R_{\Om}(u):=\frac{\sigma}4\int_{\Om}
	(\W\ast\eta_{\Om})\eta_{\Om}+\frac{1}{2\sigma(1-\ve)}\int_{\Om}\rho^2\theta'^2.
	$$
	Let $\tilde\eta_{\Omega}=\eta\chi_{\Om,\Om_0}$ and 
	\bq
	\label{def-Delta}
	\Delta_{1,\Om}(u):=\frac{\sigma}4\int_{\R}
	\Big(
	\big(
	\eta_{\Omega}^2-\tilde\eta_{\Omega}^2
	\big)
	- (\W\ast\eta_{\Omega})\eta_{\Omega}+(\W\ast\tilde\eta_{\Omega})\tilde\eta_{\Omega} 
	\Big).
	\eq
	Using the  Plancherel theorem and  \ref{H-coer}, we have
	\begin{align*}
	\frac\sigma{4}\int_{\Om}\Big(
	\eta_{\Om}^2-(\W\ast\eta_{\Om})\eta_{\Om}
	\Big)
	&=\frac\sigma{4}\int_{\R}
	\Big(
	\tilde\eta_{\Om}^2-(\W\ast\tilde\eta_{\Om})\tilde\eta_{\Om}\Big)
	+\Delta_{1,\Omega}(u)\\&=
	\frac{\sigma}{8\pi}\int_{\R}|\widehat{\tilde\eta_\Om}|^2\Big( 1-\widehat{\W}(\xi)
	\Big)  +\Delta_{1,\Omega}(u)
	\\&\leq
	\frac{\sigma}{16\pi}\int_{\R}\xi^2|\widehat{\tilde\eta_\Om}|^2
	+\Delta_{1,\Omega}(u)
	\\&=		  \frac{\sigma}{8}\int_{\R}({\tilde\eta_\Om}')^2
	+\Delta_{1,\Omega}(u).
	\end{align*}
	Noticing that 
	$$\tilde\eta_{\Om}'^2=({\eta}'\chi_{\Om,\Om_0})^2+
	2\eta{\eta}'\chi_{\Om,\Om_0}\chi_{\Om,\Om_0}'+({\eta}\chi_{\Om,\Om_0}')^2,$$
	and that $0\leq \chi_{\Om,\Om_0}\leq 1$, by putting together the estimates above, 			we conclude that 
	\bqq
	\sqrt{2}\abs{p_\Omega(u)}\leq
	\frac\sigma{8}\int_{\Omega}\eta'^2+{R_{\Omega}(u)}+\Delta_{\Omega}(u),
	\eqq							
	where the remainder term is given by
	\bqq
	\Delta_{\Omega}(u)=\Delta_{1,\Omega}(u)+\Delta_{2,\Omega}(u), \quad 
	\Delta_{2,\Omega}(u):=\frac{\sigma}8\int_{\Omega}
	\big( 2\eta{\eta}'\chi_{\Om,\Om_0}\chi_{\Om,\Om_0}'+({\eta}\chi_{\Om,\Om_0}')^2\big).
	\eqq
	Therefore, since $\eta'^2\leq 4(1+\ve)\rho'^2$,  taking $\sigma=1/\sqrt{1-\ve^2}$, we obtain
	$$\sqrt{2}\abs{p_\Omega(u)}\leq 
	\frac{\sqrt{1-\ve^2}}{1-\ve}\int_{\Omega}\Big(
	\frac{\rho'^2}{2}+
	\frac{\rho^2\theta'^2}{2}
	\Big)
	+	\frac1{4\sqrt{1-\ve^2}}
	\int_{\Om}(\W\ast\eta_{\Om})\eta_{\Om}
	+					\Delta_{\Omega}(u),$$
	which gives us \eqref{ctrlEPsuromega}.
	It remains to show the estimate in \eqref{bornedeltaomega}.
	For the first term in $\Delta_{1,\Om}$, we see that
	\bq
	\int_{\Omega}\left|\eta_{\Omega}^2-\tilde\eta_{\Omega}^2\right|=
	\int_{\Omega\setminus\Om_0}\eta^2\left|\mathds{1}_{\Om}^2-\chi_{\Om,\Om_0}^2\right|
	\leq \Vert \eta \Vert_{L^{2}(\Om\setminus\Om_0)}^2.
	\label{1stbornedelta}
	\eq
	For the other term in $\Delta_{1,\Om}$, using \eqref{sym},  we have
	\begin{align}
	\left| \int_{\R}(\W\ast\eta_{\Omega})\eta_{\Omega}-(\W\ast\tilde\eta_{\Omega})\tilde\eta_{\Omega} \right|
	&	=
	\left|
	\int_{\R} (\W\ast(\eta_{\Omega}+\tilde\eta_{\Omega}))( \eta_{\Omega}-\tilde\eta_{\Omega} )
	\right|
	\nonumber	\\&	\leq 4\Vert \W \Vert_{\boM_2} \Vert \eta \Vert_{L^{2}(\mathbb{R})}\Vert \eta \Vert_{L^{2}(\Om\setminus\Om_0)}.
	\label{2ndbornedelta}
	\end{align}
	Concerning in $\Delta_{2,\Om}$, we have
	\begin{equation}
	\label{3rdbornedelta}
	\abs{\Delta_{2,\Om}}
	\leq\frac{\sigma}{8}
	\big(
	4\Vert u \Vert_{L^{\infty}(\R)}\Vert u'\Vert_{L^{2}(\R)}\Vert \eta\chi_{\Om,\Om_0}'\Vert_{L^{2}(\Om\setminus\Om_0)} +
	\Vert \eta\chi_{\Om,\Om_0}'\Vert_{L^{2}(\Om\setminus\Om_0)}^2
	\big).
	\end{equation}
	By putting together \eqref{1stbornedelta}, \eqref{2ndbornedelta} and \eqref{3rdbornedelta}, and invoking Lemma~\ref{lem-control-energy}, we obtain \eqref{bornedeltaomega}.
\end{proof}
			From now on, we set for $\q>0,$
		\bq
		\Sigma_\q:=1-\frac{E_{\min}(\q)}{\sqrt{2}\q}.
		\label{defsigmaq}
		\eq
		In this manner, the condition $E_{\min}(\q)<\sqrt{2}\q$ is equivalent to $\Sigma_\q>0$. We also define for $\q>0$ and $\delta>0$, the set
		\bq
		X_{\q,\delta}:=\{v \in \boEEE : |p(v)-\q|\leq \delta~\textup{and}~|E(v)-E_{\min}(\q)|\leq \delta \}.
		\eq
		\begin{lem}
			\label{lmm:evanes2}
			Let  $\q>0$, $L>1$ and suppose that   $\Sigma_\gq>0$.
			Then there is  $\delta_0>0$ such that for all  $\delta \in [0,\delta_0]$ and for all  $v \in X_{q,\delta}$,
			there exists $\bar{x}\in \mathbb{R}$ such that
			\bqq
			\left| 1-|v(\bar{x})|^2\right|\geq \frac{\Sigma_\gq}{L}.
			\eqq
		\end{lem}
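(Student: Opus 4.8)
The plan is to argue by contradiction, converting the control of the momentum by the energy supplied by Lemma~\ref{lmm:generalpE} into the desired pointwise lower bound on $\eta=1-\abs{v}^2$. Assume the conclusion fails for some $v\in X_{\q,\delta}$; then
\[
\abs{1-\abs{v(x)}^2}<\frac{\Sigma_\q}{L}\quad\text{ for every }x\in\R.
\]
Since $\Sigma_\q>0$, $\Sigma_\q\leq 1$ and $L>1$, the quantity $\ve:=\Sigma_\q/L$ belongs to $(0,1)$, and the displayed bound is precisely the global pinching $1-\ve<\abs{v}^2<1+\ve$ on $\R$. I would then invoke the estimate \eqref{ctrlEPsurR} of Lemma~\ref{lmm:generalpE} (the case $\Om=\Om_0=\R$) to obtain
\[
\abs{p(v)}\leq\frac{E(v)}{\sqrt2\,(1-\Sigma_\q/L)}.
\]

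Next I would feed in the constraints defining $X_{\q,\delta}$. Choosing $\delta<\q$ forces $p(v)\geq\q-\delta>0$, so that $\abs{p(v)}=p(v)\geq\q-\delta$, while $E(v)\leq\Emin(\q)+\delta$; substituting these into the previous inequality yields
\[
\q-\delta\leq\frac{\Emin(\q)+\delta}{\sqrt2\,(1-\Sigma_\q/L)}.
\]

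The last step is purely algebraic. Using the definition \eqref{defsigmaq}, which reads $\Emin(\q)=\sqrt2\,\q\,(1-\Sigma_\q)$, I would rearrange this into
\[
\sqrt2\,\q\,\Sigma_\q\Big(1-\tfrac1L\Big)\leq\delta\,\big(1+\sqrt2\,(1-\Sigma_\q/L)\big)\leq(1+\sqrt2)\,\delta.
\]
The left-hand side is a strictly positive constant depending only on $\q$, $\Sigma_\q$ and $L$, precisely because $L>1$ makes $1-\Sigma_\q/L>1-\Sigma_\q$. Hence the choice
\[
\delta_0:=\min\Big\{\tfrac{\q}{2},\ \frac{\sqrt2\,\q\,\Sigma_\q\,(1-1/L)}{2(1+\sqrt2)}\Big\}
\]
makes the preceding inequality impossible for every $\delta\in[0,\delta_0]$, giving the contradiction and thus the lemma.

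The only point that requires genuine care is checking that the hypotheses of Lemma~\ref{lmm:generalpE} actually hold under the negated conclusion; once the uniform bound $1-\ve<\abs{v}^2<1+\ve$ is available, the estimate \eqref{ctrlEPsurR} applies verbatim and everything else is elementary. The role of the assumption $L>1$ is exactly to keep the left-hand side of the last display positive, which is what makes a strictly positive threshold $\delta_0$ available — consistently, the admissible $\delta_0$ degenerates to $0$ as $L\to1^+$.
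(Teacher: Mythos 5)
Your proof is correct and follows essentially the same route as the paper: negate the conclusion, note that $\ve=\Sigma_\q/L\in(0,1)$, apply the global estimate \eqref{ctrlEPsurR} of Lemma~\ref{lmm:generalpE}, and contradict $L>1$ via the definition \eqref{defsigmaq} of $\Sigma_\q$. The only difference is cosmetic: the paper runs the contradiction along a sequence $\delta_n\to 0$ and passes to the limit, whereas you carry out the algebra directly and thereby obtain an explicit admissible threshold $\delta_0$, which is a slight refinement but not a different argument.
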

		\begin{proof}
			We argue by contradiction and  suppose that the statement is false. Hence, for all $\delta_0>0,$ there exists $\delta\in[0,\delta_0]$ and $v \in X_{\q,\delta}$ such that
			\bqq
			\Vert 1-|v|^2 \Vert_{L^{\infty}(\mathbb{R})}<\Sigma_\gq/L.
			\eqq
			Then, taking $\delta_0=1/n,$ there is  $\delta_n \in [0,\frac{1}{n}]$ and $v_n \in X_{\q,\delta_n}$ such that
			\bqq
			\Vert 1-|v_n|^2 \Vert_{L^{\infty}(\mathbb{R})}<\Sigma_\gq/L.
			\eqq
			Since $\Sigma_\gq\in(0,1]$, considering $\ve=\Sigma_\gq/L$, we have $\ve\in (0,1)$. Therefore
			we can apply Lemma~\ref{lmm:generalpE} to conclude that
			\bqq
			\sqrt{2}|p(v_n)|\leq \frac{1}{(1-{\Sigma_\gq}/{L})}E(v_n),
			\label{eq:evanesabsurde}
			\eqq
			and letting $n \to \infty$, we get
			$$ {\sqrt{2}\q\Big(1-\frac{\Sigma_\gq}{L}\Big)}\leq E_{\min}(\q),$$
			which is equivalent to 	$ \Sigma_\gq\leq {\Sigma_\gq}/{L},$	contradicting the fact that $L>1$.
		\end{proof}
		\begin{lem}
			\label{lemmabgs}
			Let $E>0$ and $0<m_0<1$ be two constants. There is $l_{0}\in \N$, depending on $E$ and $m_{0}$, such that 
			for any function $v \in \mathcal{E}(\mathbb{R})$ satisfying $E(v)\leq E$, one of the following
			holds: 
			\begin{enumerate}
				\item\label{cas1} For all $x\in \R$, $|1-|v(x)|^2|<m_{0}$.
				\item\label{cas2} There exist $l$ points $x_1,x_2,\dots,x_l$, with $l\leq l_0$, such that
				$$|1-|v(x_j)|^2|\geq m_{0},\ \forall 1\leq j\leq l,\quad \text{ and }\quad |1-|v(x)|^2|\leq m_{0},\ \forall x \in \mathbb{R}\setminus \bigcup_{j=1}^{l}[x_j-1,x_j+1].$$
			\end{enumerate}
		\end{lem}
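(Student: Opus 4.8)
The plan is to work throughout with $\eta := 1-\abs{v}^2$, which under $v\in\boEE$ is continuous and satisfies $\eta(x)\to 0$ as $\abs{x}\to\infty$ (because $\abs{v}\to 1$), so that its super- and sublevel sets behave well. The whole argument reduces to converting the bound $E(v)\le E$ into two quantitative controls on $\eta$. The first is $\norm{\eta}_{L^2}^2\le M$ with $M=M(E)$, which is exactly \eqref{est-eta-L2} of Lemma~\ref{lem-control-energy} (applicable here with $\kappa=1/2$). The second is a uniform gradient bound $\norm{\eta'}_{L^2}^2\le C(E)$, obtained by integrating the pointwise inequality \eqref{dem-good-ineq}, $\eta'^2\le 4\norm{v}_{L^\infty}^2\abs{v'}^2$, and combining the $L^\infty$-bound \eqref{dem-ener5} on $\norm{v}_{L^\infty}$ with $\norm{v'}_{L^2}^2=2E_\k(v)\le 2E$. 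Together these say precisely that $\eta$ lies in a ball of $H^1(\R)$ whose radius depends only on $E$.

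Next I would quantify the mass forced near a \emph{bad} point. From the gradient bound and Cauchy--Schwarz, $\eta$ is uniformly $1/2$-Hölder, $\abs{\eta(x)-\eta(y)}\le \norm{\eta'}_{L^2}\abs{x-y}^{1/2}$. Hence, if $\abs{\eta(x_0)}\ge m_0$, then $\abs{\eta}\ge m_0/2$ on the whole interval $[x_0-d,x_0+d]$, where $d:=\min\{1/4,\ (m_0/(2\sqrt{C(E)}))^2\}=d(E,m_0)>0$, and therefore $\int_{x_0-d}^{x_0+d}\eta^2\ge d\,m_0^2/2$. So every point at which $\abs{\eta}\ge m_0$ carries a definite, $(E,m_0)$-quantified amount of $L^2$-mass on a fixed-length interval around it.

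The core is then a covering/selection argument. If $\abs{\eta}<m_0$ everywhere we are in alternative~\eqref{cas1}; otherwise the set $S:=\{\abs{\eta}\ge m_0\}$ is compact (closed, and bounded since $\eta\to 0$) and nonempty, and I select points greedily from left to right: set $x_1=\min S$, and having chosen $x_k$, if $S$ contains points to the right of $x_k+1$ let $x_{k+1}$ be the leftmost such point, stopping when none remain. By construction $x_{j+1}-x_j\ge 1$ and no point of $S$ lies strictly between $x_j+1$ and $x_{j+1}$, so the radius-one intervals $[x_j-1,x_j+1]$ cover $S$; this is exactly the covering required in \eqref{cas2}, and each $x_j\in S$ gives $\abs{\eta(x_j)}\ge m_0$. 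Since consecutive centers are at distance $\ge 1>2d$, the intervals $[x_j-d,x_j+d]$ from the previous step are pairwise disjoint, whence
$$ M\ \ge\ \norm{\eta}_{L^2}^2\ \ge\ \sum_{j=1}^{l}\int_{x_j-d}^{x_j+d}\eta^2\ \ge\ l\,\frac{d\,m_0^2}{2}, $$
so $l\le l_0:=\lfloor 2M/(d\,m_0^2)\rfloor$, a bound depending only on $E$ and $m_0$.

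The only delicate points — the ones I would treat carefully, as opposed to the routine estimates of the first step — lie in the selection: verifying that the greedy procedure genuinely covers $S$ (in particular that the leftmost point of $S$ to the right of $x_k+1$ is attained, which follows from compactness of $S$) and that the separation it produces is compatible with the disjointness needed for the count. The tension is structural: covering $S$ wants the radius-one intervals close together, while the $L^2$ counting wants the centers far apart. These reconcile precisely because the greedy rule forces separation $\ge 1$, which comfortably exceeds $2d$. Beyond this I expect no obstacle, everything else being bookkeeping with constants depending only on $E$ and $m_0$.
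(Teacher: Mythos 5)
Your proposal is correct and takes essentially the same route as the paper: both convert $E(v)\leq E$ into the $L^2$ bound \eqref{est-eta-L2} plus a $1/2$-H\"older bound on $\eta=1-|v|^2$ coming from $\norm{v}_{L^\infty}\norm{v'}_{L^2}$, deduce that every point where $|\eta|\geq m_0$ forces a quantified amount of $L^2$ mass on a fixed-length interval around it, and conclude by a counting argument; the only difference is bookkeeping (your greedy selection from $S=\{|\eta|\geq m_0\}$ versus the paper's fixed lattice of unit intervals $I_j=[j-1/2,j+1/2]$, $j\in\Z$, with one point chosen in each $I_j$ meeting $S$). One small repair to your selection step: the leftmost point of $S$ \emph{strictly} to the right of $x_k+1$ need not exist (e.g.\ if $S$ accumulates at $x_k+1$ from the right), so define instead $x_{k+1}:=\min\bigl(S\cap[x_k+1,\infty)\bigr)$, which exists since $S$ is compact; the covering of $S$ and the separation $x_{k+1}-x_k\geq 1>2d$ are unaffected.
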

		\begin{proof}
The proof is a rather standard consequence of the energy estimates. 
For the sake of completeness, we give a proof similar to the one given 
 in~\cite{bethuel2008existence}.
 
			Let us suppose that \ref{cas1} does not hold.  Then 
			the set  $$ \mathcal{C}=\{z \in \mathbb{R} : |\eta(z)|\geq m_{0}\},$$ 
			is nonempty, where $\eta=1-\abs{v}^2$ as usual. Setting $I_j=[j-1/2,j+1/2]$, for $j\in\Z$, 
			the assertion in \ref{cas2} will follow if we show 
			that 
			 $l:=\text{Card}\{j \in \mathbb{Z},~I_j\cap \mathcal{C}\neq \emptyset \}$
			 can be bounded by some $l_0$, depending only on $E$ and $m_{0}$.
			
Using that $\abs{\abs{v}'}=\abs{v'}$ (see Lemma 7.6 in \cite{gilbarg}), the Cauchy--Schwarz inequality 
and \eqref{est-eta}, we deduce that there exists a constant $C$, 
			depending on $E$, such that for all $x,y\in \R$,
			\begin{align*}
			\abs{|v(x)|^2-|v(y)|^2}=2\left|\int_{x}^{y}|v|'|v|\right|\leq 2\Vert v \Vert_{L^{\infty}(\mathbb{R})}\Vert v' \Vert_{L^{2}(\mathbb{R})}|x-y|^{\frac{1}{2}}\leq C|x-y|^{1/2}.
			\end{align*}
			Thus, setting  $r={m_{0}^2}/(4C^2)$, we deduce that for any 		
			 $z \in \mathcal{C}$ and for any $y \in [z-r,z+r]$, 
			 \begin{align*}
			|\eta(y)|  \geq  m_0 -||v(y)|^2-|v(z)|^2|  \geq \frac{m_0}{2}.
			\end{align*}
			Taking $r_0=\min( r,1/2)$ and integrating this inequality, we get,
			for any 		
			$z \in \mathcal{C}$, 
			\begin{align*}
			\int_{z-r_{0}}^{z+r_0}\eta^2(y)dy\geq \frac{{m_{0}^2}r_0}{4}.
			\end{align*} 
			Noticing that $[z-r_0,z+r_0]\subset \tilde{I_{j}}:=[j-1,j+1]$,  
			if $z \in I_{j}\cap{\mathcal{C}}$, we conclude that 
			
			$$ \frac{\tilde l {m_{0}^2}r_0}{4}\leq \sum_{j \in \mathbb{\Z}, \tilde I_j\cap \boC\neq \emptyset }\int_{\tilde{I_j}}\eta^2
			\leq 2\norm{\eta}_{L^2(\R)}^2,$$
			where  $\tilde l:=\text{Card}\{j \in \mathbb{Z} : \tilde I_j\cap \mathcal{C}\neq \emptyset \}$.
			The conclusion follows from \eqref{est-eta-L2}, taking $l_0=2\tilde l$, since 
			$l \leq 2\tilde l$.
					\end{proof}
		
		\section{Properties of the minimizing curve}
		\label{sec:curve}
		For the study of the minimizing curve, it will be useful to use finite energy smooth functions  that are constant far away from the origin. For this purpose we introduce the set
		$$\boE^\infty_0(\R)=\{v\in \boEEE\cap C^\infty(\R) : \exists R>0\text{ s.t.\ }  v \text{ is constant on }B(0,R)^c\}.$$
		Notice that in the functions in the space $\boE^\infty_0(\R)$ can have different values near  $+\infty$ and near $-\infty$. 
		Bearing in mind that the solitons $u_c$ in \eqref{sol:1D} satisfy
		$u_c(+\infty)\neq u_c(-\infty)$,
		we will see that these   kinds of functions are
		 well-adapted to approximate the solutions of \eqref{ntwc}.

		The next result shows that $\Emin$ is well defined and that its graph lies under the line $y=\sqrt{2}x$ on $\R^+$.
		\begin{lem}
			\label{lemmeEminborne}
			For all  $\gq\in\R$, there exists a sequence  
			$v_{n}\in\boE^\infty_0(\mathbb{R})$ satisfying
			\bq
			p(v_n)=\gq \quad \text{ and }\quad E(v_n)\rightarrow \sqrt{2}\abs{\gq}, \quad \text{ as }n\to \infty.
			\label{inegtangente}
			\eq 
			In particular the function $\Emin :\R\to\R$ is well defined, and for all $\gq\geq 0$  
			\bq
			E_{\min}(\gq)\leq \sqrt{2}\gq.
			\label{droiteaudessus}
			\eq 
		\end{lem}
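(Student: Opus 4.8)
The plan is to exhibit, for each fixed $\gq>0$, an explicit one-parameter family of test functions of small amplitude and large wavelength (a long-wave, or rarefaction-pulse, ansatz) whose momentum is pinned to $\gq$ while the energy converges to the sound-cone value $\sqrt2\gq$; the cases $\gq=0$ and $\gq<0$ then follow at once. For $\gq=0$ the constant $v\equiv 1$ already gives $p=E=0$. For $\gq<0$, complex conjugation $v\mapsto\bar v$ preserves $\boE_0^\infty(\R)$ and the energy while reversing the sign of the momentum (since $p(\bar v)=-p(v)$), so it suffices to treat $\gq>0$ and read off the general statement in terms of $\abs{\gq}$.

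First I would fix a nonzero profile $w\in C_c^\infty(\R)$ and, for a small amplitude parameter $A>0$, introduce the width $\ell=\ell(A)>0$ determined by the normalization $A^2\ell\int_\R w^2=2\sqrt2\,\gq$ (so that $\ell\to\infty$ as $A\to0$). I would then set $\eta:=A\,w(\cdot/\ell)$, $\rho:=\sqrt{1-\eta}$, and choose the phase so that the Young inequality bounding the momentum by the energy (as in Lemma~\ref{lmm:generalpE}) is asymptotically saturated, namely $\phi(x):=\tfrac{1}{\sqrt2}\int_{-\infty}^x\eta$, and finally $v_A:=\rho\,e^{i\phi}$. For $A$ small enough one has $\inf_\R\abs{v_A}\ge\sqrt{1-A\norm{w}_{L^\infty}}>0$, and since $\eta$ and $\phi'=\tfrac{1}{\sqrt2}\eta$ are compactly supported, $v_A$ is smooth and constant outside a ball; hence $v_A\in\boE_0^\infty(\R)\subset\boEEE$. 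A direct computation with the formula $p(v)=\tfrac12\int_\R\eta\phi'$ gives $p(v_A)=\tfrac{1}{2\sqrt2}\int_\R\eta^2=\gq$ \emph{exactly}, for every admissible $A$, which is precisely what enforces the constraint.

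The core of the argument is to show $E(v_A)\to\sqrt2\,\gq$ as $A\to0$, by splitting $E(v_A)=\tfrac12\int\rho'^2+\tfrac12\int\rho^2\phi'^2+E_\p(v_A)$. The modulus term equals $\tfrac18\int\eta'^2/(1-\eta)$, which is $O(A^2/\ell)\to0$ by the scaling. The phase term equals $\tfrac14\int\eta^2-\tfrac14\int\eta^3$, where $\tfrac14\int\eta^2=\tfrac14\cdot2\sqrt2\,\gq=\gq/\sqrt2$ is exactly half of the target, while $\tfrac14\int\eta^3=\tfrac14 A^3\ell\int w^3=O(A)\to0$. \textbf{The main obstacle} is the potential energy: it does \emph{not} vanish and must be evaluated in the limit rather than merely bounded, since the naive estimate $E_\p\le\tfrac14\norm{\wh\W}_{L^\infty}\norm{\eta}_{L^2}^2$ only shows it stays bounded. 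I would instead use Plancherel to write $E_\p(v_A)=\tfrac{1}{8\pi}\int_\R\wh\W(\xi)\abs{\hat\eta(\xi)}^2\,d\xi$ with $\hat\eta(\xi)=A\ell\,\hat w(\ell\xi)$, substitute $\xi=u/\ell$, and pass to the limit by dominated convergence (the dominating function $\norm{\wh\W}_{L^\infty}\abs{\hat w(u)}^2$ being integrable because $\hat w$ is Schwartz). Here the continuity of $\wh\W$ at the origin together with the normalization $\wh\W(0)=1$ from \ref{H0} gives $\wh\W(u/\ell)\to1$, whence $E_\p(v_A)\to\tfrac{A^2\ell}{8\pi}\int\abs{\hat w}^2=\tfrac{A^2\ell}{4}\int w^2=\gq/\sqrt2$, the other half of the target. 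Summing the three limits yields $E(v_A)\to\gq/\sqrt2+\gq/\sqrt2=\sqrt2\,\gq$.

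Finally, choosing any sequence $A_n\to0$ produces $v_n:=v_{A_n}$ with $p(v_n)=\gq$ and $E(v_n)\to\sqrt2\,\gq$, proving \eqref{inegtangente}. For the concluding statement: since the admissible set $\{v\in\boEEE:p(v)=\gq\}$ is nonempty (it contains the $v_n$) and the energy is nonnegative on $\boEE$ by \ref{H0}, the infimum $\Emin(\gq)$ is a well-defined finite nonnegative real number; and $\Emin(\gq)\le E(v_n)$ for every $n$ forces $\Emin(\gq)\le\sqrt2\,\gq$ upon letting $n\to\infty$, which is \eqref{droiteaudessus}.
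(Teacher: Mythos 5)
Your proof is correct and follows essentially the same route as the paper: a small-amplitude, long-wavelength test family with the phase slaved to the density perturbation ($\phi'=\eta/\sqrt 2$), the momentum pinned exactly by a normalization, the modulus-gradient term vanishing, and the potential energy evaluated in the limit via Plancherel, dominated convergence, and the continuity of $\wh \W$ at the origin with $\wh \W(0)=1$. The only difference is one of parametrization: the paper perturbs the modulus linearly ($\rho_n=1-\alpha_n\chi'(\beta_n x)$) and must tune the two parameters $\alpha_n,\beta_n$ to enforce $p(v_n)=\q$, whereas you prescribe $\eta=1-\abs{v}^2$ directly so the constraint holds automatically.
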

		\begin{proof}
			The case $\gq=0$ is trivial since it is enough to take $v\equiv 1$. Let us assume that $\gq>0$
			and consider   $\chi \in \ccc$ such that $\int_{\mathbb{R}}\chi'^2={\q}/{\sqrt{2}}$.
			Let us define 
			\bqq
			 a=\frac{\sqrt{2}}{2}\int_{\mathbb{R}}\chi'(y)^3dy,\quad
			 \alpha_{n}=\frac{1}{n} \quad
			 \text{ and }\quad
			\beta_{n}=\frac{1}{n^2}-\frac{a}{\q n^3}.
			\eqq
			Then it is enough to consider 
			\bqq
			v_n=\rho_n e^{i \theta_n}, \quad \text{ where }\rho_{n}(x)=1-\alpha_n\chi'(\beta_n x) \text{ and } \theta_{n}(x)=\sqrt{2}\frac{\alpha_n}{\beta_n}\chi(\beta_n x).
			\eqq
			We can assume that $v_n$ does not vanish since $|v_n|=|\rho_{n}|\geq 1-|\alpha_{n}|\Vert \chi' \Vert_{L^{\infty}(\mathbb{R})}.$
			Thus the momentum of $v_n$ is well defined and we have
			\begin{align*}
				p(v_n)&=\frac{1}{2}\int_{\mathbb{R}}(1-\rho_n^2)\theta_n'=\frac{\sqrt{2}}{2\beta_n}\int_{\mathbb{R}}(2\alpha_n\chi'(y)-\alpha_n^2\chi'(y)^2)\alpha_n\chi'(y)dy\\
				&=\frac{\alpha_{n}^2}{\beta_n}\q-\frac{\alpha_{n}^3}{\beta_n}a=\gq.
			\end{align*}
			It remains to show that $E(v_n)\to\sqrt{2}\q$. For the kinetic part, we have
			\begin{align*}
				E_{\k}(v_n)&= \int_{\mathbb{R}}(1-\alpha_n\chi'(\beta_n x))\alpha_n\chi'(\beta_n x)^2 dx +
				\frac{1}{2}\int_{\mathbb{R}}(\alpha_n\beta_n \chi''(\beta_n x))^2dx\\
				&=\frac{\alpha_n^2}{\beta_n}\int_{\mathbb{R}}(1-\alpha_n\chi'(y))\chi'(y)^2 dy+
				\frac{\alpha_n^2 \beta_n}{2}\int_{\mathbb{R}}\chi''(y)^2dy\\
				&\to \int_{\mathbb{R}}\chi'(y)^2dy=\frac{\gq}{\sqrt2},
			\end{align*}
			since $\alpha_n,\beta_n\to0$ and $\alpha_n^2/\beta_n\to1$.
			For the potential energy, using Plancherel's theorem, the dominated convergence theorem and the continuity of $\widehat{\W}$ at $0$, we get
			\begin{align*}
				E_{\p}(v_n)&= \frac{1}{8\pi}\int_{\mathbb{R}}\widehat{\W}(\xi)|\mathcal{F}(1-\rho_n^2)|^{2}(\xi)d\xi
				= \frac{\alpha_n^2}{8\beta_n\pi}\int_{\mathbb{R}}\widehat{\W}(\beta_n\xi)|\mathcal{F}(2\chi'-\alpha_n\chi'^2)|^{2}\left({\xi}\right)d\xi\\
				&\to  \int_{\mathbb{R}}\chi'^2(y)dy=\frac{\gq}{\sqrt{2}}.
			\end{align*}
			Therefore we conclude that \eqref{inegtangente} holds true for $\gq\geq 0$. In the situation $\gq<0$,
			it is enough to proceed as above taking 
			\bqq
			\int_{\mathbb{R}}\chi'^2=\frac{\abs{\q}}{\sqrt{2}}=-\frac{{\q}}{\sqrt{2}} \quad \text{ and}\quad v_n=\rho_n e^{-i \theta_n}.
			\eqq
			This concludes the proof of \eqref{inegtangente}. By the definition of $\Emin$, we also have $\Emin(\q)\leq E(v_n).$
			Letting $n\to\infty$, we obtain \eqref{droiteaudessus}.
		\end{proof}
		\begin{lmm}
			\label{pariteetsousadd}
			The curve $E_{\min}$ is even on $\R$.
		\end{lmm}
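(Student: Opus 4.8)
The plan is to establish the two inequalities $\Emin(-\q)\le\Emin(\q)$ and $\Emin(\q)\le\Emin(-\q)$ for every $\q\in\R$ by exhibiting a symmetry of the minimization problem that preserves the energy while reversing the sign of the momentum. The natural candidate is complex conjugation, and the whole argument reduces to checking how the admissibility, the momentum, and the energy transform under it.

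Fix $\q\in\R$ and let $v\in\boEEE$ be any competitor with $p(v)=\q$. I would set $\tilde v:=\bar v$. Since $|\tilde v|=|v|$, we have $1-|\tilde v|^2=\eta\in L^2(\R)$ and $\inf_\R|\tilde v|=\inf_\R|v|>0$, while $\tilde v'=\overline{v'}\in L^2(\R)$; hence $\tilde v\in\boEEE$ is again an admissible competitor. For the momentum, writing $\langle z_1,z_2\rangle=\Re(z_1\bar z_2)$ and using that $\Re$ is invariant under conjugation, one checks the pointwise identity $\langle i\tilde v',\tilde v\rangle=-\langle iv',v\rangle$; combined with $|\tilde v|=|v|$ this yields $p(\tilde v)=-p(v)=-\q$. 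For the energy, $|\tilde v'|^2=|v'|^2$ and $1-|\tilde v|^2=\eta$, so both the kinetic term $\frac12\int_\R|\tilde v'|^2$ and the potential term $\frac14\int_\R(\W*\eta)\eta$ are left unchanged, giving $E(\tilde v)=E(v)$.

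Taking the infimum over all such $v$ then gives $\Emin(-\q)\le\Emin(\q)$. Because conjugation is an involution, applying the same construction to the value $-\q$ produces the reverse inequality, and therefore $\Emin(-\q)=\Emin(\q)$ for all $\q\in\R$, i.e.\ $\Emin$ is even. I do not anticipate any genuine obstacle here: the only points requiring verification are that conjugation maps $\boEEE$ into itself and the one-line sign computation for $\langle i\tilde v',\tilde v\rangle$, both of which are immediate. A spatial reflection $v(x)\mapsto v(-x)$ would serve equally well and also reverse the momentum, but its invariance of the potential energy would invoke the evenness of $\W$ from \ref{H0}, whereas conjugation preserves $\eta$ pointwise and so needs no hypothesis on $\W$ in the potential term, which makes it the more economical choice.
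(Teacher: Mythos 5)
Your proof is correct and is essentially the paper's own argument: the paper takes a minimizing sequence $u_n=\rho_n e^{i\phi_n}$ with $p(u_n)=\q$ and passes to $v_n=\rho_n e^{-i\phi_n}$, which is exactly the complex conjugation $\bar u_n$ you use, preserving $E$ and reversing $p$. The only cosmetic difference is that you phrase it as an infimum over all competitors while the paper phrases it via a minimizing sequence; the underlying symmetry and both inequalities are identical.
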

		\begin{proof}
			Let $\gq\in \R$ and  $u_n=\rho_n e^{i\phi_n} \in \boEEE$ be such that 
			$ E(u_n)\to E_{\min}(\q)$ and $ p(u_n)=\q.$
			Setting ${v_n}=\rho_n e^{-i\phi_n}$, it is immediate to verify that 
			$E(v_n)=E(u_n)$ and that $p(v_n)=-p(u_n)=-\q$. Therefore 
			\bqq E(v_n)\geq E_{\min}(-\q),
			\eqq
			and letting $n\to\infty$ we conclude that $E_{\min}(\q)\geq E_{\min}(-\q).$
			Replacing $\q$ by $-\gq$, we deduce that $E_{\min}(-\q)= E_{\min}(\q)$, i.e.\ that $\Emin$ is even. 
		\end{proof}
		\begin{cor}\label{cor:q*}
			The constant defined in \eqref{q_*} satisfies $\q_*>0.027$.
		\end{cor}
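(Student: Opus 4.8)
The plan is to use the $L^\infty$ control of $\eta=1-|v|^2$ by the energy provided by Lemma~\ref{lem-control-energy} in order to show that any $v\in\boEE$ whose modulus vanishes somewhere must have energy bounded below by an explicit constant, and then to combine this with the upper bound $\Emin(\q)\le\sqrt2\,\q$ from Lemma~\ref{lemmeEminborne}.

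First, recall that under \ref{H0} and \ref{H-coer} we may apply Lemma~\ref{lem-control-energy} with $\kappa=1/2$, hence with $\tilde\kappa=3/2$. For $v\in\boEE$, writing $\eta=1-|v|^2$, this gives
\bqq
\norm{\eta}_{L^\infty}^2\le 12\,E(v)\bigl(1+12\,E(v)+2\sqrt{3\,E(v)}\bigr).
\eqq
Next I would observe that if $\inf_\R|v|=0$ then, since the functions in $\boEE$ are uniformly continuous, there is a sequence $x_n$ with $|v(x_n)|\to0$, so that $\eta(x_n)\to1$ and therefore $\norm{\eta}_{L^\infty}\ge1$.

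Introduce the strictly increasing function $f(E)=12E\bigl(1+12E+2\sqrt{3E}\bigr)$ on $[0,\infty)$, and let $E_0>0$ be the unique solution of $f(E_0)=1$. The two displays above show that $\inf_\R|v|=0$ forces $f(E(v))\ge1$, i.e.\ $E(v)\ge E_0$; equivalently, every $v\in\boEE$ with $E(v)<E_0$ satisfies $\inf_\R|v|>0$. Using $\Emin(\q)\le\sqrt2\,\q$ from Lemma~\ref{lemmeEminborne}, for every $\q$ with $\sqrt2\,\q<E_0$ and every $v$ with $E(v)\le\Emin(\q)$ we obtain $E(v)\le\Emin(\q)\le\sqrt2\,\q<E_0$, hence $\inf_\R|v|>0$. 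By the definition \eqref{q_*} of $\q_*$, this shows that $\q_*\ge E_0/\sqrt2$.

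Finally I would verify that $f\bigl(\sqrt2\cdot0.027\bigr)<1$: a direct computation gives $f\bigl(\sqrt2\cdot0.027\bigr)\approx0.978<1$, so that $E_0>\sqrt2\cdot0.027$, and therefore $\q_*\ge E_0/\sqrt2>0.027$. The only delicate point is this last numerical check, which is somewhat tight (the value sits just below $1$); the remaining steps are a routine application of the energy estimate and of the tangent bound on $\Emin$.
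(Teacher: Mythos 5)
Your proposal is correct and follows essentially the same route as the paper: the $L^\infty$ bound of Lemma~\ref{lem-control-energy} with $\tilde\kappa=3/2$, combined with the tangent bound $\Emin(\q)\leq\sqrt2\,\q$ from Lemma~\ref{lemmeEminborne}, and a monotonicity-plus-numerics argument on the same function $f$. The only cosmetic difference is that the paper solves $f=1$ in closed form (root $\approx 0.0274$) whereas you evaluate $f(\sqrt2\cdot 0.027)\approx 0.978<1$ and invoke monotonicity, which is equally valid.
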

				\begin{proof}
					Let $v\in \boEE$, with $E(v)\leq\Emin(\q)$. Then, 
					by combining \eqref{est-eta} and \eqref{droiteaudessus}, with $\tilde \kappa=3/2$,
					we have
														\bqq
		\norm{1-\abs{v}^2}_{L^\infty}^2\leq 12\sqrt{2}\q (1+12\sqrt{2}\q+2( 3\sqrt{2}\q)^\frac12).
		\eqq
		Since the right-hand is an increasing function of $\q$, and  
since the solution of the equation $12\sqrt{2}z (1+12\sqrt{2}z+2( 3\sqrt{2}z)^\frac12)=1$ is 
$$
z=\frac{\sqrt 2}{288}\frac{ \big(    (12\sqrt 3+4\sqrt{31})^{2/3}-4   \big)^2 }{(12\sqrt 3+4\sqrt{31})^{2/3}}\approx 0.0274,
$$
the conclusion follows from the definition of $\q_*$.
		\end{proof}
		In view of Lemma~\ref{pariteetsousadd}, it is enough to study $\Emin$ on $\R^+$.
		Concerning the density of the space $\boE_0^\infty(\R)$ in $\boEEE$, we have the following result.
		\begin{lem}
			\label{lem-density}
			Let $v=\rho e^{i\theta}\in \boEEE$. Then there exists a sequence  functions 
			$v_n=\rho_n  e^{i\theta}$ in $\boE^\infty_0(\R)$,
			with $\rho_n-1, \theta_n'\in C_c^\infty(\R)$, such that 
			\bq\label{dense1}
			\norm{\rho_n-\rho}_{H^1(\R)}+\norm{\theta'_n-\theta'}_{L^2(\R)}\to 0, \quad \text{ as }n\to \infty.
			\eq
			In particular
			\bq
			\label{dense2}
			E(v_n)\to E(v) \quad\text{ and } \quad p(v_n)\to p(v),  \quad \text{ as }n\to \infty.
			\eq
		\end{lem}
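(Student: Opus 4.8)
The plan is to approximate the modulus and the phase separately by compactly supported smooth perturbations, using the density of $C_c^\infty(\R)$ in $H^1(\R)$ and in $L^2(\R)$, and then to pass to the limit in each of the (quadratic and bilinear) terms defining $E$ and $p$.

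First I would record the regularity of the data. Since $v\in\boEEE$, we have $m:=\inf_\R\rho>0$, and $\rho'\in L^2(\R)$ because $\rho'^2\le\abs{v'}^2$. Moreover $\eta:=1-\rho^2\in L^2(\R)$, and Lemma~\ref{lem-control-energy} shows that $\rho=\abs v$ is bounded; hence $1+\rho$ is bounded and bounded below by $1$, so that $1-\rho=\eta/(1+\rho)\in L^2(\R)$. Combining this with $\rho'\in L^2(\R)$ gives $\rho-1\in H^1(\R)$. Likewise $\theta'\in L^2(\R)$, as recalled in the introduction. Now, using density, I would choose $\sigma_n\in C_c^\infty(\R)$ with $\sigma_n\to\rho-1$ in $H^1(\R)$ and $w_n\in C_c^\infty(\R)$ with $w_n\to\theta'$ in $L^2(\R)$, and set
$$\rho_n=1+\sigma_n,\qquad \theta_n(x)=\int_{-\infty}^x w_n(t)\,dt.$$
Then $\rho_n-1=\sigma_n$ and $\theta_n'=w_n$ belong to $C_c^\infty(\R)$, so $\rho_n$ equals $1$ and $\theta_n$ is constant outside a common compact interval; thus $v_n:=\rho_n e^{i\theta_n}$ is smooth and constant near $+\infty$ and near $-\infty$ (with possibly distinct values, which is allowed in $\boE_0^\infty(\R)$). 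The one point needing care is that $v_n$ be nonvanishing: since $H^1(\R)\hookrightarrow L^\infty(\R)$, we have $\rho_n\to\rho$ uniformly, whence $\inf_\R\rho_n\ge m/2>0$ for $n$ large, so that $v_n\in\boE_0^\infty(\R)$. The convergence \eqref{dense1} holds by construction.

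It then remains to deduce \eqref{dense2}. From \eqref{dense1} and $H^1(\R)\hookrightarrow L^\infty(\R)$ one gets $\rho_n\to\rho$ in $L^\infty(\R)$, and writing $\eta_n:=1-\rho_n^2=\eta-(\rho_n-\rho)(\rho_n+\rho)$ with $\rho_n+\rho$ bounded in $L^\infty(\R)$, one obtains $\eta_n\to\eta$ in $L^2(\R)$. The kinetic energy splits as $\tfrac12\int_\R\rho_n'^2\to\tfrac12\int_\R\rho'^2$ and $\tfrac12\int_\R\rho_n^2\theta_n'^2\to\tfrac12\int_\R\rho^2\theta'^2$, the latter because $\rho_n$ is bounded in $L^\infty(\R)$ and $\theta_n'\to\theta'$ in $L^2(\R)$. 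For the potential energy, the continuity of the bilinear form $(f,g)\mapsto\int_\R(\W*f)g$ on $L^2(\R)$, which follows from $\W\in\boM_2(\R)$ together with \eqref{sym}, yields $\int_\R(\W*\eta_n)\eta_n\to\int_\R(\W*\eta)\eta$; hence $E(v_n)\to E(v)$. The very same ingredients give $p(v_n)=\tfrac12\int_\R\eta_n\theta_n'\to\tfrac12\int_\R\eta\theta'=p(v)$, using $\eta_n\to\eta$ and $\theta_n'\to\theta'$ in $L^2(\R)$.

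The main (and essentially only) subtlety is ensuring the positivity $\inf_\R\rho_n>0$ so that the approximants genuinely lie in $\boEEE$; this is resolved by the embedding $H^1(\R)\hookrightarrow L^\infty(\R)$, which upgrades $H^1$-convergence of the moduli to uniform convergence. Every remaining step reduces to the continuity of the quadratic and bilinear expressions under $H^1$- and $L^2$-convergence, the nonlocal term being controlled solely through the $\boM_2(\R)$ bound on $\W$.
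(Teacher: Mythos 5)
Your proposal is correct and follows essentially the same route as the paper: show $\rho-1\in H^1(\R)$ and $\theta'\in L^2(\R)$, approximate each by $C_c^\infty(\R)$ functions, define $\theta_n$ as a primitive of the approximating phase derivative, and pass to the limit in $E$ and $p$ using the Sobolev embedding $H^1(\R)\hookrightarrow L^\infty(\R)$ and the $\boM_2(\R)$ bound on $\W$. Your explicit treatment of the nonvanishing condition $\inf_\R\rho_n\ge m/2$ (left implicit in the paper) and of the continuity of the nonlocal quadratic form are welcome clarifications, not deviations.
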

		\begin{proof}
			Since $v=\rho e^{i\theta}\in \boEEE$, we deduce that  $v\in L^\infty(\R)$ and that $\abs{v(x)}\to 1$, as $\abs{x}\to \infty$.
			Let $$g(x):=\rho(x)-1=\abs{v(x)}-1=\frac{\abs{v(x)}^2-1}{\abs{v(x)}+1}.$$
			Then $g\in L^2(\R)$ and since $g'=\langle v',v\rangle/\abs{v}$,
			we conclude that $g\in H^1(\R)$. Therefore, there exists $g_n\in C_c^\infty(\R)$ such that 
			$g_n \to g$ in $H^1(\R)$. Setting $\rho_n=g_n+1$, we deduce that  $\norm{\rho_n-\rho}_{H^1}\to 0$, as $n\to \infty.$
			
			Concerning $\theta$, using the   density of $C_c^\infty(\R)$ in $L^2(\R)$, we get the existence of a sequence $\phi_n\in C_c^\infty(\R)$
			converging to $\theta'$ in $L^2(\R)$. Hence, taking 
			\bq
			\label{def-theta}
			\theta_n(x)=\int_{-\infty}^x \phi_{n},
			\eq
			we conclude that $\theta_n'-\theta'\to0$ in $L^2(\R)$ and that $v_n:=\rho_n  e^{i\theta_n}$ belongs to $\boE^\infty_0(\R)$.
			The convergences in  \eqref{dense2} are a direct consequence of the convergences in \eqref{dense1} and the Sobolev injection 
			$H^1(\R)\hookrightarrow L^\infty(\R)$.
		\end{proof}
		\begin{rem}
			\label{rem:theta}
			If $v\in \boE^\infty_0(\R)$, then we can write $v=\rho e^{i\theta}$, with $\rho,\theta\in C^\infty(\R)$
			and such that $\rho-1,\theta'\in C_c^\infty(\R)$. Hence the function $\theta$ is constant outside $\supp(\theta')$
			and without loss of generality we can assume that there is $R>0$ such that $\theta(x)\equiv 0$ for all  $x\leq -R$, 
			or  that $\theta(x)\equiv 0$ for all  $x\geq R$ (but we cannot assume  that $\theta(x)\equiv 0$ for all  $\abs{x}\geq R$).
			Therefore, w.l.o.g.\  we can suppose that $v(x)\equiv 1$ for all  $x\leq -R$   or  that $v(x)\equiv 1$ for all  $x\geq R$,
			for some $R>0$ large enough.
		\end{rem}
		To handle the nonlocal interaction term in the  energy in the construction 
		of comparison sequences, we use introduce the functional
		$$B(f):=\int_\R(\W*f)f,$$ 
		for $f\in  L^2(\R;\R).$
		It is clear that if $u\in \boE(\R)$, then $B(1-\abs{u}^2)=4E_\p(u).$
		The following elementary lemma will be useful.
		\begin{lem}
			\label{lem-decompW}
			For all  $f,g\in L^2(\R)$ we have
			\bq
			\label{Wdecomp}
			B(f+g)=B(f)+B(g)+2 \int_{\R}(\W*f)g.
			\eq
			Assume further that $g\in C_c^{\infty}(\R)$ and that there is a sequence of numbers $(y_n)$
			such that $y_n\to \infty$, as $n\to \infty$. 
			Then, setting 	set $g_n(x)=g(x-y_n)$, we have
			\bq
			\label{Wconverge}
			B(f+g_n)-B(f)-B(g_n)=2 \int_{\R}(\W*f)g_n\to0, \quad \text{ as }n\to \infty.
			\eq
		\end{lem}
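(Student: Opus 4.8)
The plan is to prove the two assertions of Lemma~\ref{lem-decompW} directly from the definition of $B$ and the symmetry identity \eqref{sym}, with the convergence statement following from a decay estimate on $\W*f$.

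First I would expand $B(f+g)$ using bilinearity of the map $(h_1,h_2)\mapsto \int_\R(\W*h_1)h_2$. Since $\W$ is even (hypothesis \ref{H0}), the convolution is symmetric in the sense of \eqref{sym}, so writing $B(f+g)=\int_\R(\W*(f+g))(f+g)$ and distributing gives the four terms $\int_\R(\W*f)f$, $\int_\R(\W*g)g$, $\int_\R(\W*f)g$, and $\int_\R(\W*g)f$. The first two are $B(f)$ and $B(g)$; the last two are equal by \eqref{sym}, and their sum is $2\int_\R(\W*f)g$. This yields \eqref{Wdecomp} immediately. All terms are finite because $\W\in\boM_2(\R)$ guarantees $\W*h\in L^2(\R)$ for $h\in L^2(\R)$, and then the integrals are controlled by Cauchy--Schwarz.

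For the second part, I would apply \eqref{Wdecomp} with $g$ replaced by $g_n$, so the identity $B(f+g_n)-B(f)-B(g_n)=2\int_\R(\W*f)g_n$ holds automatically; the only real content is the convergence of the right-hand side to $0$. I would rewrite $\int_\R(\W*f)g_n = \int_\R(\W*f)(x)\,g(x-y_n)\,dx$, and since $g\in C_c^\infty(\R)$ with support in some fixed ball $B(0,M)$, this equals $\int_{|x-y_n|\leq M}(\W*f)(x)g(x-y_n)\,dx$. By Cauchy--Schwarz this is bounded by $\norm{g}_{L^2}\norm{\W*f}_{L^2(\{|x-y_n|\leq M\})}$. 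Because $\W*f\in L^2(\R)$ and $y_n\to\infty$, the tail $\norm{\W*f}_{L^2(\{|x-y_n|\leq M\})}\to 0$ as $n\to\infty$ by the absolute continuity of the $L^2$-norm (dominated convergence applied to $|\W*f|^2\mathds{1}_{\{|x-y_n|\leq M\}}$, which tends to $0$ pointwise a.e.\ and is dominated by $|\W*f|^2$). This gives \eqref{Wconverge}.

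The only step requiring care is the justification that $\W*f$ lies in $L^2(\R)$ so that the tail integral makes sense and vanishes: this is exactly where the hypothesis $\W\in\boM_2(\R)$ (equivalently $\wh\W\in L^\infty$, guaranteed by \ref{H0}) is used, since for a general tempered distribution $\W$ the convolution $\W*f$ need not be a function. Given that, the argument is entirely elementary; no concentration-compactness or regularity input is needed. The identity \eqref{Wconverge} is precisely the translation-to-infinity decoupling that will later let one split the energy of a comparison sequence built by placing a localized bump far from the origin.
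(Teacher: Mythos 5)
Your proposal is correct and follows essentially the same route as the paper: the identity \eqref{Wdecomp} comes from bilinearity together with the symmetry \eqref{sym}, and the convergence \eqref{Wconverge} comes from the fact that $\W*f\in L^2(\R)$ (via $\W\in\boM_2(\R)$) pairs to zero against the far-away translates $g_n$. The only difference is presentational: the paper invokes the weak convergence $g_n\rightharpoonup 0$ in $L^2(\R)$, whereas you prove the same pairing limit directly through the compact support of $g$, Cauchy--Schwarz, and dominated convergence on the tail of $\norm{\W*f}_{L^2}$.
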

		\begin{proof}
			The identity \eqref{Wdecomp} is a direct consequence of \eqref{sym}.
			The convergence in  \eqref{Wconverge} follows from the fact that 
			$g_n\wto 0$ in $L^2(\R)$.
		\end{proof}
		We finally conclude that we can modify a function with energy close to $\Emin(\gq)$
		such that it is constant far away, but the momentum remains unchanged.
		\begin{cor}
			\label{cor:suppcte}
			Let $u=\rho e^{i\theta}\in \boEEE$. There exists a sequence
			$u_n \in  \boE^\infty_0(\R)$ such that 
			\bq
			\label{cor-conv-E-p}
			p(u_n) = p(u)\quad\text{ and } \quad E(u_n) \to E(u),  \quad \text{ as }n\to  \infty.
			\eq
		\end{cor}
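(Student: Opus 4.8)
The plan is to start from the density result of Lemma~\ref{lem-density}, which already produces a sequence $v_n\in\boE^\infty_0(\R)$ with $E(v_n)\to E(u)$ and $p(v_n)\to p(u)=:\q$; the only thing missing is that the momentum \emph{converges} rather than being \emph{exactly} equal to $\q$. Writing $\delta_n:=\q-p(v_n)\to 0$, the whole difficulty is to correct this vanishing momentum defect without spoiling the convergence of the energy. I would do this by gluing onto $v_n$ a small ``bump'' carrying exactly the momentum $\delta_n$, placed far from the region where $v_n$ is non-constant, so that the nonlocal interaction between the two pieces becomes negligible.

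First I would construct the correcting bumps. For parameters $a>0$, $b\in\R$ and fixed real functions $\chi,\xi\in C_c^\infty((0,2))$ normalized by $\int_\R\chi\xi=1$, consider $w=\rho e^{i\theta}$ with $\rho=1-a\chi$ and $\theta'=b\xi$, so that (for $a$ small, so $w$ does not vanish) $w$ equals $1$ to the left of $(0,2)$ and a unit constant to the right, whence $w\in\boE^\infty_0(\R)$. A direct computation from \eqref{def:mom} gives $p(w)=ab-\tfrac{a^2b}{2}\int_\R\chi^2\xi$, which for fixed small $a$ depends affinely and surjectively on $b$; choosing $a=\sqrt{|\delta_n|}$ and solving $p(w)=\delta_n$ for $b$ (possible for $n$ large, with $|b|\sim\sqrt{|\delta_n|}$) yields a bump $w_n$ with $p(w_n)=\delta_n$ exactly. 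Since $a,b=O(\sqrt{|\delta_n|})$, one checks $E_\k(w_n)=O(|\delta_n|)$ and, using $E_\p(w_n)\le\tfrac14\Vert\W\Vert_{\boM_2}\Vert\eta_{w_n}\Vert_{L^2}^2=O(|\delta_n|)$ from \ref{H0}, that $E(w_n)\to 0$ and $\Vert\eta_{w_n}\Vert_{L^2}\to 0$.

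Next I would perform the gluing. By Remark~\ref{rem:theta} I may assume $v_n\equiv 1$ on a half-line $[R_n,\infty)$, and I translate the bump to the right by setting $\tilde w_n(x):=w_n(x-y_n)$ with $y_n\ge R_n$, so that $\tilde w_n$ is non-constant only on $[y_n,y_n+2]\subset[R_n,\infty)$ and equals $1$ on $(-\infty,y_n]$. Defining $u_n:=v_n\,\tilde w_n$, on $[R_n,\infty)$ we have $v_n=1$ and hence $u_n=\tilde w_n$, while on $(-\infty,y_n]$ we have $\tilde w_n=1$ and hence $u_n=v_n$; thus $u_n$ is smooth, coincides with $v_n$ and with $\tilde w_n$ on complementary regions, and belongs to $\boE^\infty_0(\R)$. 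Because the amplitudes and phase derivatives of $v_n$ and of $\tilde w_n$ have disjoint supports, one gets $\eta_{u_n}=\eta_{v_n}+\eta_{\tilde w_n}$ and the integrand $\eta_{u_n}\Theta_n'$ of \eqref{def:mom} splits with no cross terms, so that, using translation invariance of the momentum, $p(u_n)=p(v_n)+p(\tilde w_n)=p(v_n)+\delta_n=\q$ exactly.

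Finally I would check the energy. The kinetic part is additive by disjointness of supports, $E_\k(u_n)=E_\k(v_n)+E_\k(w_n)$. For the potential part I apply the decomposition \eqref{Wdecomp} of Lemma~\ref{lem-decompW} to $\eta_{u_n}=\eta_{v_n}+\eta_{\tilde w_n}$, obtaining $4E_\p(u_n)=B(\eta_{v_n})+B(\eta_{\tilde w_n})+2\int_\R(\W*\eta_{v_n})\eta_{\tilde w_n}$, where $B(\eta_{\tilde w_n})=4E_\p(w_n)\to 0$ by translation invariance, and the cross term is controlled by $\Vert\W\Vert_{\boM_2}\Vert\eta_{v_n}\Vert_{L^2}\Vert\eta_{w_n}\Vert_{L^2}\to 0$ since $\Vert\eta_{w_n}\Vert_{L^2}\to 0$ while $\Vert\eta_{v_n}\Vert_{L^2}$ stays bounded (this is the mechanism behind \eqref{Wconverge}). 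Combining, $E(u_n)=E(v_n)+E(w_n)+\text{(cross)}\to E(u)$, which together with $p(u_n)=\q$ gives \eqref{cor-conv-E-p}. The main obstacle is the construction of the momentum-correcting bump: since the momentum is \emph{quadratic} in the perturbation, realizing an arbitrarily signed small momentum with vanishing energy forces the scaling $a,b\sim\sqrt{|\delta_n|}$ together with the affine-in-$b$ solvability argument; this quadratic nature is also why a naive phase rescaling $\rho_n e^{i\lambda_n\theta_n}$ fails when $\q=0$, making the localized gluing (and hence Lemma~\ref{lem-decompW}) the natural tool.
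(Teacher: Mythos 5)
Your proof is correct, and its core mechanism is exactly the paper's: glue onto the approximating sequence from Lemma~\ref{lem-density} a far-away bump in $\boE_0^\infty(\R)$ carrying precisely the momentum defect $\delta_n$ with energy $o(1)$, and control the nonlocal energy of the glued function via the decomposition of Lemma~\ref{lem-decompW}, the cross term being $O(\norm{\eta_{v_n}}_{L^2}\norm{\eta_{w_n}}_{L^2})\to 0$. The differences are organizational. First, the paper splits into two cases: when $p(u)\neq 0$ it avoids gluing altogether by the phase rescaling $u_n=\rho_n e^{i\alpha_n\theta_n}$ with $\alpha_n=p(u)/p(v_n)\to 1$, and reserves the gluing argument for the degenerate case $p(u)=0$ (where, as you correctly note, rescaling fails because the momentum defect cannot be absorbed multiplicatively); you instead treat every value of $p(u)$ uniformly by gluing. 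Second, the paper produces its correcting bump by invoking Lemma~\ref{lemmeEminborne} with momentum $-\delta_n$, so that $E(w_n)\to 0$ comes for free from the bound $E\leq\sqrt{2}\abs{\delta_n}+o(1)$, whereas you construct the bump explicitly with the scaling $a,b\sim\sqrt{\abs{\delta_n}}$ and the affine-in-$b$ solvability of $p(w)=\delta_n$ --- in effect reproving a weaker version of that lemma. Your route buys uniformity (no case distinction, and a self-contained bump whose momentum is exact by construction rather than by a diagonal extraction); the paper's buys brevity, since in the generic case $p(u)\neq 0$ no gluing is needed at all. Both are complete proofs.
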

		\begin{proof}
			Let $v_n=\rho_n  e^{i\theta} \in  \boE^\infty_0(\R)$ be the sequence given by Lemma~\ref{lem-density} 
			such that 
			\bq
			\label{proof-dense}
			E(v_n)\to E(u) \quad\text{ and } \quad p(v_n)\to p(u),  \quad \text{ as }n\to \infty.
			\eq
			If $p(u)\neq 0$, we set $\alpha_n=p(u)/p(v_n)$. Therefore $\alpha_n\to 1$ and it is straightforward to verify that the sequence 
			$u_n=\rho_n  e^{i \alpha_n\theta_n}$ satisfies \eqref{cor-conv-E-p}.
			
			The case $p(u)=0$ is more involved. 
			In this instance, we may assume that $\delta_n:=p(v_n) \neq 0$ for $n$ sufficiently large. Otherwise, up to a subsequence, the conclusion holds with $u_n =v_n$. 
			By Lemma~\ref{lemmeEminborne}, we get the existence of a sequence $w_n\in \boE_0^\infty(\R)$ such that 
			\bq
			\label{proof-pn}
			p(w_n)=-\delta_n \quad \text{ and }\quad E(w_n)\rightarrow 0, \quad \text{ as }n\to \infty.
			\eq 
			Let $R_n,r_n>0$ be such that the functions 
			$$
			f_{n}:=1-\abs{v_n}^2 \quad \text{ and }\quad g_n:=1-\abs{w_n}^2
			$$
			are supported in the balls $B(0,R_n)$ and $B(0,r_n)$, respectively. 
			Taking into account Remark~\ref{rem:theta}, without loss of generality, we can assume that the following function is continuous and belongs to $\boE^\infty_0(\R)$
			\bq\label{def-u_n}
			u_n =
			\begin{cases}
				v_n, & \text{ on } \ (-\infty,R_n),\\
				1, & \text{ on }  [R_n, -r_n+y_n],\\
				w_{n}(\cdot - y_n), & \text{ on } (-r_n+y_n,\infty),
			\end{cases}
			\eq
			where $y_n$ is a sequence of points such that $R_n< -r_n+y_n$. For simplicity, we set $\tilde w_{n}=w_{n}(\cdot - y_n)$ and 
			$\tilde g_n:=1-\abs{\tilde w_n}^2$.
			It follows that 
			\bq
			\label{proof-dense2}
			p(u_n)=p(v_n)+p(\tilde w_n)=0 \quad \text{ and } \quad E_\k(u_n)= E_\k(v_n)+ E_\k(w_n).
			\eq
			In particular, combining with \eqref{proof-dense} and \eqref{proof-pn}, we infer that 
			$E_\k(u_n)\to E_\k(u)$. In addition, $1-\abs{u_n}^2= f_n+\tilde g_n$, so that \eqref{Wdecomp} leads to
			\begin{align*}
				E_\p(u_n)&=\frac{1}4B(f_n)+\frac{1}4B(g_n)+\frac{1}2 \int_{\R}(\W*f_n)\tilde g_n
				=E_\p(v_n)+E_\p( w_n)+\frac{1}2 \int_{\R}(\W*f_n)\tilde g_n.
			\end{align*}
			Therefore
			\bq
			\label{proof-estimate}
			\abs{E_\p(u_n)-E_\p(v_n)}\leq E_\p( w_n)+\norm{\W}_{\boM_2}\norm{f_n}_{L^2}\norm{g_n}_{L^2}.
			\eq
			Using the estimate \eqref{est-eta-L2}, \eqref{proof-dense} and \eqref{proof-pn}, we conclude 
			that $\norm{f_n}_{L^2}$ is bounded and that  $\norm{g_n}_{L^2}\to~0$, so that 
			$E_\p(u_n)\to E_\p(u)$, which completes the proof of the corollary.
		\end{proof}
		\begin{cor}
			\label{cor:supp2}
			For all $\gq \geq 0$ and $\ve>0$, there is $v\in \boE_0^\infty(\R)$ such that 
			$$p(v)=\gq \quad \text{ and }\quad E(v)<\Emin(\gq)+\ve.$$
			In particular
			\bqq
			\Emin(\gq)=\inf\{E(v) : v\in \boE_0^\infty(\R),\  p(v)=\gq \}.
			\eqq
		\end{cor}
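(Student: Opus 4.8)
The plan is to use the approximation already established in Corollary~\ref{cor:suppcte}, which says that every competitor in $\boEEE$ can be approximated, at fixed momentum, by functions in $\boE_0^\infty(\R)$ whose energies converge. Since $\boE_0^\infty(\R)\subset \boEEE$ by definition, the infimum over the smaller constraint set $\{v\in\boE_0^\infty(\R):p(v)=\gq\}$ is automatically at least $\Emin(\gq)$; the only content of the statement is the matching upper bound, which is exactly an $\ve$-reformulation of this density property.

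First I would fix $\gq\geq 0$ and $\ve>0$. By Lemma~\ref{lemmeEminborne} the constraint set $\{v\in\boEEE:p(v)=\gq\}$ is nonempty and $E\geq 0$ on it, so $\Emin(\gq)$ is a well-defined real number and, by definition of the infimum, there is some $u\in\boEEE$ with $p(u)=\gq$ and $E(u)<\Emin(\gq)+\ve/2$. Applying Corollary~\ref{cor:suppcte} to this $u$ produces a sequence $u_n\in\boE_0^\infty(\R)$ with $p(u_n)=p(u)=\gq$ for every $n$ and $E(u_n)\to E(u)$. Hence for all $n$ large enough $E(u_n)<E(u)+\ve/2<\Emin(\gq)+\ve$, and setting $v=u_n$ gives an element of $\boE_0^\infty(\R)$ with $p(v)=\gq$ and $E(v)<\Emin(\gq)+\ve$, which is the first assertion.

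For the ``in particular'' part I would combine the two inequalities. On one hand, since $\boE_0^\infty(\R)\subset\boEEE$, every admissible $v$ for the restricted problem is admissible for the original one, so $\Emin(\gq)\leq\inf\{E(v):v\in\boE_0^\infty(\R),\,p(v)=\gq\}$. On the other hand, the first part shows that this restricted infimum is strictly less than $\Emin(\gq)+\ve$ for every $\ve>0$, hence it is $\leq\Emin(\gq)$. The two bounds give the claimed identity.

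The honest remark is that there is no real obstacle here: all the work sits in Corollary~\ref{cor:suppcte} (and, behind it, the cut-and-paste construction in its proof together with the energy estimate \eqref{est-eta-L2}), so the present statement is a direct packaging of that result. The only points requiring a line of care are noting the inclusion $\boE_0^\infty(\R)\subset\boEEE$ and checking that the constraint sets are nonempty --- both for $\gq>0$ and for the trivial case $\gq=0$, where $v\equiv 1$ already lies in $\boE_0^\infty(\R)$ with $p(v)=0$.
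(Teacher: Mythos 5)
Your proof is correct and follows essentially the same route as the paper: pick a near-minimizer in $\boEEE$ within $\ve/2$ of $\Emin(\gq)$, apply Corollary~\ref{cor:suppcte} to replace it by an element of $\boE_0^\infty(\R)$ with the same momentum and energy within $\ve/2$, and conclude. Your explicit treatment of the ``in particular'' part via the inclusion $\boE_0^\infty(\R)\subset\boEEE$ is a detail the paper leaves implicit, but the argument is the same.
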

		\begin{proof}
			Let $\gq\geq 0$ and $\ve>0$. By definition of $\Emin$, there is a sequence $v_m\in \boEEE$ such that 
			$p(v_m)=\gq$ and $E(v_m)\to \Emin(\gq)$, as $m\to \infty$.
			Hence there is $m_0$ such that 
			\bq
			\label{proof-cor-emin}
			E(v_{m_0})<\Emin(\gq)+{\ve}/{2}.
			\eq
			By Corollary~\ref{cor:suppcte}, we deduce the existence of $v\in \boE_0^\infty(\R)$ such that 
			$p(v)=p(v_{m_0})=\gq$ and $\abs{E(v_{m_0})-E(v)}\leq \ve/2$. 
			Combining with \eqref{proof-cor-emin}, the conclusion follows.
		\end{proof}

		\begin{prop} 
			\label{Lipschitz}
			$E_{\min}$ is continuous  and 
			\begin{equation}
				\label{E:lipschitz}
				|E_{\min}(\gp) - E_{\min}(\gq)| \leq \sqrt{2}|\gp - \gq|, \quad\text{ for all } \gp, \gq\in\R.
			\end{equation}
		\end{prop}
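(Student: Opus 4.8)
The plan is to prove the single one-sided estimate
$$E_{\min}(\gp) \le E_{\min}(\gq) + \sqrt{2}\,|\gp-\gq|, \qquad \gp,\gq\in\R,$$
and then obtain \eqref{E:lipschitz} by exchanging the roles of $\gp$ and $\gq$; continuity is then immediate from the Lipschitz bound. The mechanism for the one-sided estimate is to take a near-minimizer for $E_{\min}(\gq)$ and to \emph{add} the missing momentum $\gp-\gq$ by gluing on a far-away bump of small energy, exploiting that a function carrying momentum $m$ can be built with energy arbitrarily close to $\sqrt{2}|m|$ (Lemma~\ref{lemmeEminborne}).

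Fix $\ve>0$. First I would invoke Corollary~\ref{cor:supp2} to produce $v\in\boE^\infty_0(\R)$ with $p(v)=\gq$ and $E(v)<E_{\min}(\gq)+\ve$. Next, applying Lemma~\ref{lemmeEminborne} to the real number $\gp-\gq$ gives a sequence $w_n\in\boE^\infty_0(\R)$ with $p(w_n)=\gp-\gq$ and $E(w_n)\to\sqrt{2}\,|\gp-\gq|$; note that the absolute value here is exactly what the lemma delivers, so the sign of $\gp-\gq$ causes no difficulty. The goal is to merge $v$ and a far translate of $w_n$ into one test function of momentum exactly $\gp$ whose energy is close to $E(v)+\sqrt{2}\,|\gp-\gq|$.

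The gluing I would carry out verbatim as in the proof of Corollary~\ref{cor:suppcte}. Since $v$ and $w_n$ are constant near each end, I multiply each by a suitable unimodular constant (which alters neither $E$ nor $p$) so that, using Remark~\ref{rem:theta}, $v\equiv 1$ on a half-line near $+\infty$ and $w_n\equiv 1$ on a half-line near $-\infty$. Setting $\tilde w_n=w_n(\cdot-y_n)$ with $y_n$ large enough that the supports of $1-|v|^2$ and $1-|\tilde w_n|^2$ are disjoint, I define $u_n$ to equal $v$ on the left, the constant $1$ on a long middle interval, and $\tilde w_n$ on the right. Then $u_n\in\boE^\infty_0(\R)$ is continuous, the momentum and kinetic energy are additive, $p(u_n)=p(v)+p(w_n)=\gp$ and $E_\k(u_n)=E_\k(v)+E_\k(w_n)$, while writing $1-|u_n|^2=(1-|v|^2)+(1-|\tilde w_n|^2)$ and using the decomposition \eqref{Wdecomp} together with the vanishing of the cross term \eqref{Wconverge} (for $y_n$ large) yields $E_\p(u_n)=E_\p(v)+E_\p(w_n)+o(1)$. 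Hence $E(u_n)\to E(v)+\sqrt{2}\,|\gp-\gq|$.

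Since $E_{\min}(\gp)\le E(u_n)$ for every $n$, letting $n\to\infty$ gives
$$E_{\min}(\gp)\le E(v)+\sqrt{2}\,|\gp-\gq|<E_{\min}(\gq)+\ve+\sqrt{2}\,|\gp-\gq|,$$
and sending $\ve\to 0$ establishes the one-sided bound; swapping $\gp$ and $\gq$ completes the proof. The only genuinely delicate point is the gluing step, namely guaranteeing that the assembled function is continuous (handled by the phase normalization of Remark~\ref{rem:theta}, since one cannot force the value $1$ at both ends simultaneously) and that the nonlocal convolution produces no residual interaction between the two well-separated pieces. Both issues are precisely those already isolated in Lemma~\ref{lem-decompW} and settled in Corollary~\ref{cor:suppcte}, so no new obstacle arises beyond bookkeeping.
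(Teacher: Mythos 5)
Your proposal is correct and follows essentially the same route as the paper's proof: a near-minimizer from Corollary~\ref{cor:supp2}, a far-away bump of momentum $\gp-\gq$ with energy close to $\sqrt{2}\abs{\gp-\gq}$ from Lemma~\ref{lemmeEminborne}, and the gluing with phase normalization handled exactly by Remark~\ref{rem:theta} and Lemma~\ref{lem-decompW}. The only (minor) difference is that you let the bump's momentum have either sign, so the one-sided bound $E_{\min}(\gp)\le E_{\min}(\gq)+\sqrt{2}\abs{\gp-\gq}$ holds for all real $\gp,\gq$ and the swap of roles is immediate, whereas the paper reduces by evenness to $\gq\ge\gp\ge 0$ and adds only positive momentum, leaving the reverse inequality to monotonicity.
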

		\begin{proof} 
			We assume without loss of generality that $\gq \geq \gp\geq 0$. It is enough to show that
			\begin{equation}
				\label{dem:lip}
				E_{\min}(\gq) \leq E_{\min}(\gp) +  \sqrt{2}(\gq - \gp).
			\end{equation}
			Let $\delta > 0$. By Corollary~\ref{cor:supp2} and Remark~\ref{rem:theta}, 
			there is $v_\delta\in \boE_0^\infty(\R)$ such that
			for some $R_\delta>0$, the function  $1-\abs{v_\delta}^2$ is supported on $B(0,R_\delta)$, $v_\delta=1$ on $[R_\delta,\infty)$,
			\bq\label{lip-1}
			p(v_\delta) = \gp \quad{\rm and } \quad E (v_\delta) \leq \Emin(\gp) + {\delta}/{3}.
			\eq
			Now, setting $\gs = \gq - \gp$ and invoking Lemma~\ref{lemmeEminborne}, we deduce that there is  $w_\delta\in\boE_0^\infty(\R)$ such that 
			for some $r_\delta>0$, $1-\abs{w_\delta}^2$ is supported on $B(0,r_\delta)$,  $w_\delta=1$ on $(-\infty, r_\delta]$,
			\bq\label{lip-2}
			p(w_\delta) = \gs \quad\text{ and }\quad E(w_\delta) \leq  \sqrt{2}\gs+ {\delta}/{3}.
			\eq 
			Let $f_\delta=1-\abs{v_\delta}^2$ and $g_\delta=1-\abs{w_\delta}^2$.  
			Then $f_\delta$ and $g_\delta$ have compact supports and applying  Lemma~\ref{lem-decompW} we can choose $y_\delta \in \R$, 
			large enough, such that their  supports do not intersect.
			Finally, we infer that the function 
			\bq\label{def-v}
			u_\delta =
			\begin{cases}
				v_\delta, & \text{ on } \ (-\infty,R_\delta),\\
				1, & \text{ on }  [R_\delta, -r_\delta+y_\delta],\\
				w_{\delta}(\cdot - y_\delta), & \text{ on } (-r_\delta+y_\delta,\infty),
			\end{cases}
			\eq
			satisfies 
			\bq\label{dem:p-E}
			p(u_\delta)=p(v_\delta)+p(w_{\delta}(\cdot - y_\delta))=\gq \quad \text{ and }\quad
			E_\k(u_\delta)= E_\k(v_\delta)+E_\k(w_{\delta}).
			\eq
			Moreover, since
			$$1-\abs{u_\delta}^2=f_\delta+g_\delta(\cdot-y_\delta),$$
			applying Lemma~\ref{lem-decompW} and increasing $y_\delta$ if necessary, we conclude that
			\bq
			\label{dem:Ep}
			E_\p(u_\delta)\leq E_\p(v_\delta)+E_\p(w_{\delta})+{\delta}/{3}.
			\eq
			Therefore, combining \eqref{lip-1}, \eqref{lip-2}, \eqref{dem:p-E} and \eqref{dem:Ep}, we get
			$$\Emin(\gq)\leq E(u_\delta)\leq \Emin(\gp) +  \sqrt{2}(\gq-\gp)+ \delta. $$
			Letting $\delta\to 0$, we obtain \eqref{dem:lip}.
		\end{proof}
		As noticed by Lions~\cite{lions84}, the  properties established above  are usually sufficient to check   that the minimizing curve is subadditive, as stated in the following result.
		\begin{lem}
			\label{lem:subadd}
			$E_{\min}$ is subadditive  on $\R_+$, i.e.
			\begin{equation}
				\label{E:sub}
				E_{\min}(\gp+\gq)\leq E_{\min}(\gp)+E_{\min}(\gq), \quad\text{ for all } \gp, \gq\geq 0.
			\end{equation}
		\end{lem}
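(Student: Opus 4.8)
The plan is to mimic the gluing construction used in the proof of Proposition~\ref{Lipschitz}, but now combining two near-minimizers, one of momentum $\gp$ and one of momentum $\gq$, rather than a near-minimizer and a tangent-line competitor. Fix $\gp,\gq\geq 0$ and $\delta>0$. By Corollary~\ref{cor:supp2} together with Remark~\ref{rem:theta}, I would first produce $v_\delta\in\boE_0^\infty(\R)$ with $p(v_\delta)=\gp$, $E(v_\delta)\leq\Emin(\gp)+\delta/3$, such that $1-\abs{v_\delta}^2$ is supported in some ball $B(0,R_\delta)$ and $v_\delta\equiv 1$ on $[R_\delta,\infty)$. In the same way I would obtain $w_\delta\in\boE_0^\infty(\R)$ with $p(w_\delta)=\gq$, $E(w_\delta)\leq\Emin(\gq)+\delta/3$, with $1-\abs{w_\delta}^2$ supported in $B(0,r_\delta)$ and $w_\delta\equiv 1$ on $(-\infty,r_\delta]$.

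The second step is to concatenate these profiles far apart. Setting $f_\delta=1-\abs{v_\delta}^2$ and $g_\delta=1-\abs{w_\delta}^2$, I would define $u_\delta$ exactly as in \eqref{def-v}, gluing $v_\delta$ on the left, the constant $1$ in the middle, and the translate $w_\delta(\cdot-y_\delta)$ on the right, for a shift $y_\delta$ large enough that the supports of $f_\delta$ and $g_\delta(\cdot-y_\delta)$ are disjoint. Since the phase of each factor is constant outside its support (Remark~\ref{rem:theta}), the resulting $u_\delta$ is continuous and belongs to $\boE_0^\infty(\R)$; as momentum and kinetic energy are additive over disjoint supports, one has $p(u_\delta)=p(v_\delta)+p(w_\delta)=\gp+\gq$ and $E_\k(u_\delta)=E_\k(v_\delta)+E_\k(w_\delta)$.

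The only point requiring care is the potential energy, where the nonlocality couples the two bumps. Writing $1-\abs{u_\delta}^2=f_\delta+g_\delta(\cdot-y_\delta)$ and applying the decomposition \eqref{Wdecomp}, the potential energy of $u_\delta$ equals $E_\p(v_\delta)+E_\p(w_\delta)$ plus the cross term $\tfrac12\int_\R(\W*f_\delta)\,g_\delta(\cdot-y_\delta)$. By the convergence \eqref{Wconverge} in Lemma~\ref{lem-decompW}, this cross term tends to $0$ as $y_\delta\to\infty$, so after increasing $y_\delta$ if necessary I get $E_\p(u_\delta)\leq E_\p(v_\delta)+E_\p(w_\delta)+\delta/3$. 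Combining the three estimates yields $\Emin(\gp+\gq)\leq E(u_\delta)\leq\Emin(\gp)+\Emin(\gq)+\delta$, and letting $\delta\to 0$ gives \eqref{E:sub}. I expect no genuine obstacle here, since every ingredient has already been prepared in the preceding corollaries and lemmas; the only mild subtlety is precisely the separation of supports that annihilates the nonlocal cross term, which is handled by Lemma~\ref{lem-decompW}.
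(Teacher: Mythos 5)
Your proof is correct and follows essentially the same route as the paper: near-minimizers from Corollary~\ref{cor:supp2} (adjusted via Remark~\ref{rem:theta}), the gluing construction of Proposition~\ref{Lipschitz} with a large shift, additivity of momentum and kinetic energy over disjoint supports, and Lemma~\ref{lem-decompW} to kill the nonlocal cross term in the potential energy. No gaps.
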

		\begin{proof}
			Let $\gp, \gq\geq 0$ and $\delta>0$. By using Corollary~\ref{cor:supp2} and arguing as in the proof of Proposition~\ref{Lipschitz}, 
			we get the existence of $v,w\in \boE_0^\infty(\R)$ such that 
			\bqq
			p(v) =\gp, \quad p(w) =\gq,  \quad E(v) \leq  \Emin(\gp)+ {\delta}/{3}\quad \text{ and } 
			E(w) \leq  \Emin(\gq)+ {\delta}/{3},
			\eqq 
			with  $v$ and $w$ constant on $B(0,R)^c$ and $B(0,r)^c$, respectively, for some $R,r>0$. 
			As in previous proofs, we define
			\bqq
			u =
			\begin{cases}
				v, & \text{ on } \ (-\infty,R),\\
				1, & \text{ on }  [R, -r+y],\\
				w(\cdot - y), & \text{ on } (-r+y,\infty),
			\end{cases}
			\eqq
			with $y$ large enough  such that 
			\bqq
			E_\p(u)\leq E_\p(v)+E_\p(w)+{\delta}/{3}.
			\eqq
			Since $E_\k(u)=E_\k(v)+E_\k(w)$ and $p(u)=p(v)+p(w)=\gp+\q$, we conclude that 
			$$\Emin(\gp+\q)\leq E(u)\leq E(v)+E(w)+\frac{\delta}{3}\leq  \Emin(\gp)+\Emin(\gq)+\delta.$$
			Letting $\delta\to 0$, inequality \eqref{E:sub} is established.
		\end{proof}
		
		In some minimization problems, there is some kind of homogeneity in the functionals 
		that allows to obtain the strict subadditive property. In our case, the homogeneity give us 
		only the monotonicity of the curve.
		\begin{lem}
			\label{lem:increasing}
			$\Emin$ is nondecreasing on $\R^+$.
		\end{lem}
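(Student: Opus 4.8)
The plan is to exploit a simple homogeneity of the functionals: rescaling the phase of a competitor decreases the momentum linearly but decreases the phase part of the kinetic energy quadratically, while leaving the potential energy untouched. Since a quadratic contraction is at least as strong as a linear one on $[0,1]$, shrinking the phase to lower the momentum from $\gq$ to $\gp\le\gq$ cannot raise the energy. Concretely, I would fix $0\le\gp\le\gq$ and aim to show $\Emin(\gp)\le\Emin(\gq)$; the case $\gq=0$ forces $\gp=0$ and is trivial, so I assume $\gq>0$ and set $t=\gp/\gq\in[0,1]$.

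First I would take an arbitrary $v\in\boEEE$ with $p(v)=\gq$. Since $\inf_\R\abs{v}>0$, the function $v$ admits a global lifting $v=\rho e^{i\theta}$ with $\rho,\theta\in H^1_{\loc}(\R)$, and, exactly as recalled when checking that the momentum is well defined, $\eta:=1-\rho^2\in L^2(\R)$ and $\abs{v'}^2=\rho'^2+\rho^2\theta'^2\in L^1(\R)$. I then introduce the phase-rescaled function
$$v_t:=\rho\, e^{it\theta}.$$
Because $\abs{v_t}=\rho=\abs{v}$, we immediately have $v_t\in\boEEE$ and the quantity $\eta$ is unchanged, so $E_\p(v_t)=E_\p(v)$. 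For the remaining quantities, the phase of $v_t$ being $t\theta$ gives
$$p(v_t)=\frac12\int_\R\eta\,(t\theta')=t\,p(v)=t\gq=\gp,$$
while
$$E_\k(v_t)=\frac12\int_\R\rho'^2+\frac{t^2}{2}\int_\R\rho^2\theta'^2\le\frac12\int_\R\rho'^2+\frac12\int_\R\rho^2\theta'^2=E_\k(v),$$
where I used $t^2\le1$ together with the nonnegativity of the integrand $\rho^2\theta'^2$. Adding the two contributions yields $E(v_t)\le E(v)$, so $v_t$ is an admissible competitor for $\Emin(\gp)$ and hence $\Emin(\gp)\le E(v_t)\le E(v)$. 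Taking the infimum over all $v\in\boEEE$ with $p(v)=\gq$ then gives $\Emin(\gp)\le\Emin(\gq)$, which is the desired monotonicity.

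I do not expect a genuine obstacle here. The only items to verify are that $v_t$ remains in $\boEEE$, which is immediate since $\abs{v_t}=\abs{v}$, and the splitting of the kinetic energy into a $\rho'$-part and a $\rho^2\theta'^2$-part, which is already furnished by the lifting. The substance of the argument is the single observation that $p$ is homogeneous of degree one in the phase whereas the kinetic energy it controls is homogeneous of degree two; this is precisely the weaker form of homogeneity alluded to before the statement, and it is exactly why one obtains only monotonicity rather than strict subadditivity at this stage.
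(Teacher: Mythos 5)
Your proof is correct and follows essentially the same route as the paper: both arguments use the phase rescaling $v_\lambda=\rho e^{i\lambda\theta}$ with $\lambda=\gp/\gq\in[0,1]$, noting that the momentum scales linearly while the kinetic energy scales at worst quadratically in the phase and the potential energy is unchanged. The only cosmetic difference is that the paper works with a $\delta$-near-minimizer and lets $\delta\to 0$, whereas you take the infimum over all competitors; these are interchangeable.
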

		\begin{proof}
			Let $0<\gp<\gq$ and $\lambda=\gp/\gq\in(0,1)$.
			As in previous proofs, for $\delta>0$ we take $v=\rho e^{i\theta}$ in $\boEEE$
			such that $E(v)<\Emin(\q)+\delta$ and $p(v)=\q.$ Then we verify that 
			the function $v_\lambda=\rho e^{i\lambda\theta}$ satisfies $p(v_\lambda)=\lambda \q$
			and $E(v_\lambda)\leq E(v)$. Therefore
			$$\Emin(\lambda \q)\leq E(v_\lambda)\leq E(v)<\Emin(\q)+\delta,$$
			so that the conclusion follows letting  $\delta\to 0$.
		\end{proof}
		
Hypothesis \ref{H-residue-bis} provides
a sufficient condition to  ensure the concavity of the function $\Emin$.
As mentioned in the introduction,   the proof  relies  some identities developed  by Lopes and Mari\c s in~\cite{lopes-maris}. 
		\begin{prop}
			\label{prop-concave}
			Assume that \ref{H-residue-bis} holds. Then for all $\gp,\gq\geq 0$, 
			\bq\label{ineq-conc}
			\frac{E_{\min}(\gp) + E_{\min}(\gq)}{2} \leq  E_{\min} \Big( \frac{\gp + \gq}{2} \Big).
			\eq
			In particular $\Emin$ is concave  on $\R^+$.
		\end{prop}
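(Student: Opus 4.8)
The plan is to prove the midpoint inequality \eqref{ineq-conc} and then invoke the continuity of $\Emin$ from Proposition~\ref{Lipschitz}: a continuous midpoint-concave function is concave, which settles the ``in particular'' claim. So I fix $\gp,\gq\geq 0$, write $m=(\gp+\gq)/2$, and for $\delta>0$ use Corollary~\ref{cor:supp2} to pick a near-minimizer $u=\rho e^{i\theta}\in\boE^\infty_0(\R)$ with $p(u)=m$ and $E(u)\leq\Emin(m)+\delta$. Following the reflection idea of Lopes and Mari\c s, I would build from $u$ two competitors $v,w\in\boEEE$ with $p(v)=\gp$, $p(w)=\gq$ and $E(v)+E(w)\leq 2E(u)$; plugging them into the definition of $\Emin$ and sending $\delta\to0$ then gives $\Emin(\gp)+\Emin(\gq)\leq 2\Emin(m)$, i.e.\ \eqref{ineq-conc}.

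For a reflection point $x_0\in\R$, I reflect the modulus evenly and the phase oddly about $x_0$: set $v=\rho_v e^{i\theta_v}$ with $(\rho_v,\theta_v)=(\rho,\theta)$ on $(-\infty,x_0)$ and $(\rho_v(x),\theta_v(x))=(\rho(2x_0-x),\,2\theta(x_0)-\theta(2x_0-x))$ on $(x_0,\infty)$, and define $w$ by interchanging the two half-lines. Reflecting the phase oddly about the \emph{value} $\theta(x_0)$ (rather than conjugating $u$) is precisely what keeps $v$ and $w$ continuous across $x_0$, hence in $\boEEE$, with $\inf|v|=\inf|w|=\inf\rho>0$. A change of variables gives $p(v)=2p_{(-\infty,x_0)}(u)$ and $p(w)=2p_{(x_0,\infty)}(u)$, so that $p(v)+p(w)=2p(u)=\gp+\gq$; since $x_0\mapsto p_{(-\infty,x_0)}(u)$ is continuous and runs from $0$ to $m$ as $x_0$ runs over $\R$, the intermediate value theorem lets me fix $x_0$ with $p(v)=\gp$, whence $p(w)=\gq$. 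The same substitution shows $|v'(x)|^2=|u'(2x_0-x)|^2$ on $(x_0,\infty)$, so the kinetic energies split exactly: $E_\k(v)+E_\k(w)=2E_\k(u)$.

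The heart of the matter, and the step I expect to be the main obstacle, is the nonlocal potential term, where assumption \ref{H-residue-bis} is used. After translating so that $x_0=0$ (legitimate since $E$ and $p$ are translation invariant), I write $\eta=1-\rho^2\in C_c^\infty(\R)$, split it into even and odd parts $\eta=\eta_e+\eta_o$, and note that $\eta_v:=1-\rho_v^2$ and $\eta_w:=1-\rho_w^2$ are exactly the even extensions about the origin of $\eta|_{(-\infty,0)}$ and $\eta|_{(0,\infty)}$. Expanding $B(\eta_v)+B(\eta_w)-2B(\eta)$ with the help of \eqref{Wdecomp}, the reflection invariance of $B$ and the vanishing of the even--odd cross term (both from the parity of $\W$ and \eqref{sym}), and then applying Plancherel in the cosine and sine transforms of $\eta_e$ and $\eta_o$, I expect the identity
$$B(\eta_v)+B(\eta_w)-2B(\eta)=-\frac{2}{\pi}\int_0^\infty\wh\W(\xi)\big(|\hat f_s(\xi)|^2-|\hat f_c(\xi)|^2\big)\,d\xi,$$
where $f:=2\eta_o$ is an odd function in $C_c^\infty(\R)$. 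Assumption \ref{H-residue-bis} makes the integral nonnegative, so the left-hand side is $\leq 0$; since $4E_\p(\cdot)=B(1-|\cdot|^2)$, this reads $E_\p(v)+E_\p(w)\leq 2E_\p(u)$.

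Combining the two splittings, $E(v)+E(w)=\big(E_\k(v)+E_\k(w)\big)+\big(E_\p(v)+E_\p(w)\big)\leq 2E_\k(u)+2E_\p(u)=2E(u)$. As $v,w\in\boEEE$ with $p(v)=\gp$ and $p(w)=\gq$, the definition of $\Emin$ yields $\Emin(\gp)+\Emin(\gq)\leq E(v)+E(w)\leq 2E(u)\leq 2\Emin(m)+2\delta$, and letting $\delta\to0$ proves \eqref{ineq-conc}. The concavity of $\Emin$ on $\R^+$ then follows from \eqref{ineq-conc} together with the continuity from Proposition~\ref{Lipschitz}. The only delicate points to verify carefully are the exact cancellation of the kinetic and momentum contributions under the lifting-based reflection, and the Fourier computation reducing the nonlocal defect to \ref{H-residue-bis}; the latter is where the hypothesis is indispensable, the former being a routine change of variables.
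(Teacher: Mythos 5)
Your proposal is correct and follows essentially the same route as the paper: a near-minimizer at the midpoint momentum, an intermediate-value argument to locate the reflection point, even reflection of the modulus with the phase-continuous ("odd about the value $\theta(x_0)$") reflection of the phase, exact splitting of the kinetic energy and momentum, and a Plancherel computation reducing the potential-energy defect to the sine/cosine-transform condition \ref{H-residue-bis}, with concavity then following from midpoint concavity plus the Lipschitz continuity of Proposition~\ref{Lipschitz}. Your Fourier identity (with $f=2\eta_o$) matches the paper's computation, including the constant, so the sketched step you flagged as the main obstacle does go through exactly as you anticipated.
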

	\begin{proof}
		Let $\gp,\gq>0$ and $\delta>0$. By Corollary~\ref{cor:supp2}, there is $u=\rho e^{i\theta}\in \boE_0^\infty(\R)$ such that
		\bq\label{sum-p}
		p(u) = \frac{\gp + \gq}{2} \quad {\rm and} \quad E(u) \leq E_{\min} \Big( \frac{\gp + \gq}{2} \Big) + \frac{\delta}{2}.
		\eq
		By the  dominated convergence theorem, it follows that the map $G : \R\to\R$
		given by 
		$$G(a):=\frac12\int_a^\infty (1-\rho^2)\theta'$$
		is continuous, with $\lim_{a\to\infty}G(a)=0$ and $\lim_{a\to-\infty}G(a)=p(u)=(\gp+\gq)/2$.
		Hence, by the mean value theorem, there is $a_0$ such that $G(a_0)=\gp/2$. Thus the translation
		$\tilde u(x):=\tilde \rho(x) e^{i\tilde \theta(x)}=\rho(x-a_0) e^{i\theta(x-a_0)}$ satisfies
		\bq\label{sum-moment2}
		\frac12\int_0^\infty (1-\tilde \rho^2)\tilde \theta'=\frac{\gp}{2}\quad \text{ and }\quad
		\frac12\int_{-\infty}^{0} (1-\tilde \rho^2)\tilde \theta'=\frac{\gq}{2}.
		\eq
		For notational simplicity,  we continue to write $u$, $\rho$ and $\theta$ for $\tilde u$, $\tilde \rho$ and $\tilde \theta$.
		Now we introduce the reflexion operators
		$$(T^+\rho)(x)=\begin{cases}
		\rho(x),  & \text{ if } x\geq 0,\\
		\rho(-x), & \text{ if } x<0,
		\end{cases}
		\qquad 
		(T^-\rho)(x)=\begin{cases}
		\rho(-x),& \text{ if } x\geq 0,\\
		\rho(x),  &   \text{ if } x<0,
		\end{cases}
		$$
		and 
		$$(S^+\theta)(x)=\begin{cases}
		\theta(x)-\theta(0), & \text{ if } x\geq 0,\\
		\theta(0)-\theta(-x), & \text{ if } x<0,
		\end{cases}
		\qquad 
		(S^-\theta)(x)=\begin{cases}
		\theta(0)-\theta(-x), & \text{ if } x\geq 0,\\
		\theta(x)-\theta(0), & \text{ if } x<0.
		\end{cases}
		$$
		Since $\rho$ and $\theta$ are continuous and belong to 
		$H^{1}_{\loc}(\R)$, we can check that
		the functions $(T^\pm\rho)$ and $(S^\pm\rho)$ are  continuous on $\R$ and also belong to $H^{1}_{\loc}(\R)$.
		Then it is simple to verify that the functions 
		$$u^{\pm}=(T^\pm\rho)e^{iS^\pm\theta}$$
		belong to $\boEEE$. Bearing in mind \eqref{sum-moment2}, we obtain
		\bqq
		p(u^{+})=\gp \quad\text{ and }\quad p(u^{-})=\gq,
		\eqq
		which implies that 
		\bq\label{Eu+}
		\Emin(\gp)\leq E(u^+) \quad\text{ and }\quad \Emin(\gq)\leq E(u^-).
		\eq
		In addition 
		\bq\label{sum-Es}
		E(u^+)+E(u^-)=2E_\k(u)+E_\p(u^+)+E_\p(u^-).
		\eq
		We claim that 
		\bq\label{ineq-E-p}
		E_\p(u^+)+E_\p(u^-)\leq 2E_\p(u),
		\eq
		which combined with \eqref{sum-Es}, allows us to conclude that $E(u^+)+E(u^-)\leq 2E(u)$. By putting together this inequality, 
		\eqref{sum-p} and \eqref{Eu+}, we get 
		$$E_{\min}(\gp) + E_{\min}(\gq) \leq 2 E(u) \leq 2 E_{\min} \Big( \frac{\gp + \gq}{2} \Big) + \delta,$$
		so that \eqref{ineq-conc} is proved. Since $\Emin$ is a continuous function by Proposition~\ref{Lipschitz}, we conclude that $E$ 
		is concave on $\R^+$.
		
		It remains to prove \eqref{ineq-E-p}.
		Let us set $\eta=1-\abs{u}^2$, $\eta_1=1-\abs{u^+}^2$, $\eta_2=1-\abs{u^-}^2$, 
		$$g(x)=\frac12 (\eta(x)+\eta(-x))\quad\text{ and }\quad f(x)=\frac12 (\eta(x)-\eta(-x)).$$
		Hence $g$ is even, $f$ is odd,
		$$\eta=f+g, \quad \eta_1=g+\tilde f \quad\text{ and }\quad \eta_2=g-\tilde f,$$
		where $\tilde f(x)=f(x)$ for $x\in\R^+$ and 
		$\tilde f(x)=-f(x)$ for $x\in\R^-$.
		By Plancherel's identity, we then can write
		\begin{align*}
		8\pi  (2E_\p(u)-E_\p(u^+)-E_\p(u^-) )
		&=\int_{\R}\wh \W(\xi)(2\abs{\hat\eta}^2 -\abs{\hat\eta_1}^2 -\abs{\hat\eta_2}^2) \\
		&=\int_{\R}\wh \W(\xi)(2\abs{\hat g +\hat f}^2 -\abs{\hat g +\hat {\tilde f}}^2 -\abs{\hat g -\hat {\tilde f}}^2) \\
		&=2\int_{\R}\wh \W(\xi)(\abs{\hat f}^2 -\abs{\hat {\tilde f}}^2 +4\int_{\R}\wh \W(\xi)\langle \hat g, \hat f\rangle \\
		&=4\pi(B(f)-B(\tilde f)),
		\end{align*}
		where we have used the parity of $\wh \W$ to check that $\int_{\R}\wh \W(\xi)\langle \hat g, \hat f\rangle =0.$
		To conclude, we only need to show that 
		$B(f)- B(\tilde f)\geq 0.$
Indeed, since $f$ is odd and $\tilde f$ is even,  we have
		$\hat f(\xi)=-2i\hat f_s(\xi)$ and $\hat{ \tilde f}(\xi)=2\hat f_c(\xi)$.
Therefore, by Plancherel's theorem, \ref{H-residue-bis},
and using that $\wh \W(\xi)(\abs{\hat f_s(\xi)}^2-\abs{\hat  f_c(\xi)}^2)$ is an even function, 
				\bqq
		(2\pi)(	B(f)- B(\tilde f))
		=4\int_\R \wh \W(\xi)(\abs{\hat f_s(\xi)}^2-\abs{\hat  f_c(\xi)}^2)d\xi=
		8\int_0^\infty \wh \W(\xi)(\abs{\hat f_s(\xi)}^2-\abs{\hat  f_c(\xi)}^2)d\xi \geq 0,
		\eqq
		which completes the proof.
	\end{proof}
		%


		The following lemma shows 
		that assumption \ref{H-residue} is stronger than \ref{H-residue-bis}, 
and is a reminiscent of Lemmas~2.1 and 2.6 in \cite{lopes-maris}.

		\begin{lem}
			\label{lem-Bf}
			Assume that \ref{H-residue} holds. Then  \ref{H-residue-bis} is satisfied.
%
		\end{lem}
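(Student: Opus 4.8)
The plan is to reduce \ref{H-residue-bis} to a single contour-integral inequality for $\wh \W$ and then evaluate that integral by residues, using the sign condition \eqref{resW} and the decay \eqref{decayW} to control everything. For an odd function $f\in C_c^\infty(\R)$ I would introduce the entire function
$$F(\xi):=\int_0^\infty e^{ix\xi}f(x)\,dx,$$
which for real $\xi$ satisfies $F(\xi)=\hat f_c(\xi)+i\hat f_s(\xi)$ and $F(-\xi)=\hat f_c(\xi)-i\hat f_s(\xi)$, since $f$ is real. Squaring and adding gives the algebraic identity $\hat f_s(\xi)^2-\hat f_c(\xi)^2=-\tfrac12\big(F(\xi)^2+F(-\xi)^2\big)$. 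Substituting this into the integral in \ref{H-residue-bis} and using that $\wh \W$ is even (after the change of variables $\xi\mapsto-\xi$ in the term involving $F(-\xi)^2$), I obtain
$$\int_0^\infty \wh \W(\xi)\big(|\hat f_s(\xi)|^2-|\hat f_c(\xi)|^2\big)\,d\xi=-\frac12\int_{\R}\wh \W(\xi)F(\xi)^2\,d\xi.$$
Thus \ref{H-residue-bis} becomes exactly the statement $\int_\R \wh \W\,F^2\leq 0$.

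Next I would record the two properties of $F$ that the argument needs. Since $f$ has compact support, $F$ is entire, so $F^2$ is holomorphic and the only singularities of $\wh \W\,F^2$ in $\H$ are the simple poles $i\nu_j$, with $\Res(\wh \W\,F^2,i\nu_j)=\Res(\wh \W,i\nu_j)\,F(i\nu_j)^2$. Crucially, because $f$ is odd we have $f(0)=0$, and integrating by parts twice (all boundary terms vanish) yields $F(\xi)=(i\xi)^{-2}\big(f'(0)+\int_0^\infty f''(x)e^{ix\xi}\,dx\big)$; since $|e^{ix\xi}|\leq 1$ for $x\geq 0$ and $\Im(\xi)\geq 0$, this gives a bound $|F(\xi)|\leq C|\xi|^{-2}$ on the closed upper half-plane for $|\xi|$ large, hence $|F(\xi)^2|\leq C|\xi|^{-4}$. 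This is precisely the power appearing in \eqref{decayW}, which is why the oddness of $f$, and not merely $f\in C_c^\infty(\R)$, is used.

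Then comes the contour step. For each $k$ I integrate $\wh \W\,F^2$ over the closed positively oriented curve $[-k,k]\cup\Gamma_k$ from \ref{H-residue}; the residue theorem gives $\int_{-k}^{k}\wh \W\,F^2+\int_{\Gamma_k}\wh \W\,F^2=2\pi i\sum_{i\nu_j\ \text{enclosed}}\Res(\wh \W,i\nu_j)\,F(i\nu_j)^2$. The arc term is bounded by $\mathrm{length}(\Gamma_k)\,\sup_t|\wh \W(\gamma_k(t))|\,C|\gamma_k(t)|^{-4}$, which tends to $0$ by \eqref{decayW}, while $\int_{-k}^{k}\to\int_\R\wh \W\,F^2$ because $\wh \W\,F^2\in L^1(\R)$ (as $\wh\W\in L^\infty$ and $F^2=O(|\xi|^{-4})$). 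Letting $k\to\infty$ yields $\int_\R\wh \W\,F^2=2\pi i\sum_{j\in J}\Res(\wh \W,i\nu_j)\,F(i\nu_j)^2$. The sign is now immediate: $F(i\nu_j)=\int_0^\infty e^{-\nu_j x}f(x)\,dx$ is real, so $F(i\nu_j)^2\geq 0$; writing $\Res(\wh \W,i\nu_j)=is_j$, the condition \eqref{resW} reads $-s_j\leq 0$, i.e.\ $s_j\geq 0$, and therefore each term equals $2\pi i\cdot is_j F(i\nu_j)^2=-2\pi s_j F(i\nu_j)^2\leq 0$. Hence $\int_\R\wh \W\,F^2\leq 0$, which is what we needed.

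I expect the main obstacle to be the justification of the contour computation rather than the algebra: one must check that the decay bound on $F^2$ holds uniformly on the arcs $\Gamma_k$ throughout the upper half-plane (not only on $\R$), that \eqref{decayW} is exactly calibrated to kill the arc contribution, and, in the case $\card{J}=\infty$, that the resulting series of residues converges. The last point is automatic here, since the left-hand side $\int_\R\wh \W\,F^2$ is finite and every term of the series has the same sign.
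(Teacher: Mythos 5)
Your proof is correct and follows essentially the same route as the paper: both reduce \ref{H-residue-bis} to showing $\int_\R \wh \W(\xi)F(\xi)^2\,d\xi\leq 0$ for $F(z)=\int_0^\infty e^{ixz}f(x)\,dx$ (the paper calls it $H$, with $h=H^2$), obtain the bound $|F(z)^2|\leq C|z|^{-4}$ on $\overline{\H}$ by integrating by parts twice and using $f(0)=0$ from oddness, and conclude via the residue theorem on $[-k,k]\cup\Gamma_k$, where \eqref{decayW} kills the arcs and \eqref{resW} together with $F(i\nu_j)^2\geq 0$ gives the sign. The only cosmetic difference is that you derive the key identity through the symmetrization $F(\xi)^2+F(-\xi)^2$, while the paper uses the Fubini representation of $|\hat f_s|^2-|\hat f_c|^2$; these are equivalent manipulations.
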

		\begin{proof}
We notice that by Fubini's theorem, we have
			\begin{align*}
				\abs{\hat f_s(\xi)}^2=\int_0^\infty\int_0^\infty\sin(x\xi)\sin(y\xi)f(x)f(y)dx dy, \\
				|\hat f_c(\xi)|^2=\int_0^\infty\int_0^\infty\cos(x\xi)\cos(y\xi)f(x)f(y)dx dy.
			\end{align*}
Thus, 			introducing the complex-valued function 
			$$h(\xi)=\int_0^\infty\int_0^\infty e^{i(x+y)\xi}f(x)f(y)dx dy=\left(\int_0^\infty e^{ix\xi}f(x)dx\right)^2,$$
			we conclude that
			\bq
			\label{lemm-res}
						\int_\R \wh \W(\xi)(\abs{\hat f_s(\xi)}^2-\abs{\hat { f_c}(\xi)}^2)d\xi
			=	-\int_\R \wh \W(\xi) h(\xi) d\xi.
			\eq
			Then, using that $\bar h(\xi)=h(-\xi)$ and that $\wh \W$ is even, we conclude that
			\bq
			\label{dem-intW}
	\int_\R \wh \W(\xi)(\abs{\hat f_s(\xi)}^2-\abs{\hat { f_c}(\xi)}^2)d\xi=	-\int_\R \wh \W(\xi)\Re(h(\xi))d\xi=-\int_\R \wh \W(\xi)h(\xi)d\xi.
			\eq
			We will compute the integral in the right-hand side of \eqref{dem-intW} by using Cauchy's residue theorem. First we notice that $h$ is real-valued and nonnegative on the imaginary line since
			\bqq
			h(it)=\left(\int_0^\infty e^{-tx}f(x)dx\right)^2 \geq 0, \quad \text{ for all }t\in \R.
			\eqq
			Also, since $f\in C_c^\infty(\R)$, $h$ is a holomorphic  function on $\C$. 
			To establish the decay of $h$ on the upper half-plane, 
			we use that $h(z)=H(z)^2$, where
			$$H(z)=\int_0^\infty e^{ix z}f(x)dx.$$
			Using the fact that $e^{ixz}=\frac{1}{iz}\frac{d}{dx} e^{ixz}$ and integrating by parts, we get for $z\neq0$,
			\bqq
			H(z)=-\frac{f(0)}{iz}-\frac1{iz}\int_0^\infty e^{ixz}f'(x)dx.
			\eqq
			Since $f$  is odd, $f(0)=0$, so that integrating by parts once more, we have
			\bqq
			H(z)=-\frac{f'(0)}{z^2}-\frac{1}{z^2}\int_0^\infty e^{ixz}f''(x)dx.
			\eqq
			Therefore, 
			\bq
			\label{borne-h}
			\abs{h(z)}\leq \frac{C}{\abs{z}^4}, \quad \text{ for all }z\neq 0,\  \Im(z)\geq 0,
			\eq
			where $C=\left(\abs{f'(0)}+\norm{f''}_{L^1}\right)^2$. 
			Using the curves $\gamma_k$,   Cauchy's residue theorem yields
			\bq
			\label{int-R-W}
			\int_{-k}^{k} \wh \W(\xi)h(\xi)d\xi+
			\int_{a_k}^{b_k} \wh \W(\gamma_k(t))h(\gamma_k(t))\gamma'_k(t)dt=2\pi i \sum_{j\in J_k}h(i \nu_j){\Res}(\wh \W,i\nu_j)\leq 0,
			\eq
			where $J_k$ refers to the poles enclosed by $\Gamma_k$. 
			Taking into account  \eqref{borne-h}, we see that 
			\bqq
			\Big| \int_{a_k}^{b_k} \wh \W(\gamma_k(t))h(\gamma_k(t))\gamma'_k(t)dt \Big| \leq
			C \textup{ length}(\Gamma_k)	 \sup_{t\in[a_k,b_k]}\frac{\abs{\wh \W(\gamma_k(t))}}{\abs{\gamma_k(t)}^4}, 
			\eqq
			so that the decay in \eqref{decayW} gives that the integral goes to $0$ as $k\to\infty$. Therefore,  using the  dominated convergence theorem, we can pass to the limit in \eqref{int-R-W}, and using 
			\eqref{dem-intW}, we conclude that  condition \ref{H-residue-bis} is satisfied.
		\end{proof}

		The following propositions provide estimates for the curve $\Emin$  near the origin.
		\begin{prop}
			\label{prop:minoration}
			There are constants  $\gq_{0}>0$ and $K_0>0$  
			such that 
			\bq
			\label{Emin-inf}
			\sqrt 2\q-K_0 \q^{3/2}\leq \Emin(\gq), \quad \text{for all }\gq \in [0,\gq_{0}).
			\eq
		\end{prop}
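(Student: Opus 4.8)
The plan is to establish \eqref{Emin-inf} as the natural counterpart of the upper bound $\Emin(\q)\le\sqrt2\q$ from \eqref{droiteaudessus}, by reversing the momentum--energy inequality of Lemma~\ref{lmm:generalpE}. Indeed, inequality \eqref{ctrlEPsurR} reads $|p(v)|\le E(v)/(\sqrt2(1-\ve))$ whenever $1-\ve\le|v|^2\le1+\ve$ on $\R$, which rearranges to $E(v)\ge\sqrt2(1-\ve)\,p(v)$. Thus for every admissible $v$ the energy falls short of $\sqrt2\q$ by at most $\sqrt2\,\ve\,\q$, and the whole point is that $\ve$ can be taken of order $\q^{1/2}$: this is exactly what produces the $\q^{3/2}$ correction.

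Concretely, I would fix $\q\in[0,\q_0)$ and take any $v\in\boEEE$ with $p(v)=\q$. If $E(v)\ge\sqrt2\q$ there is nothing to prove, so I may assume $E(v)\le\sqrt2\q$. Setting $\eta=1-|v|^2$ and applying estimate \eqref{est-eta} of Lemma~\ref{lem-control-energy} with $\kappa=1/2$ (hence $\tilde\kappa=3/2$), together with $E(v)\le\sqrt2\q$ and the monotonicity of the right-hand side in $E(v)$, I obtain
\[
\norm{\eta}_{L^\infty}^2\le 12\sqrt2\,\q\bigl(1+12\sqrt2\,\q+2\sqrt{3\sqrt2\,\q}\bigr),
\]
which is precisely the bound already used in Corollary~\ref{cor:q*}. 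For $\q$ small the bracketed factor is bounded by an absolute constant, so there is $C_1>0$ with $\norm{\eta}_{L^\infty}\le C_1\q^{1/2}$ for all $\q\in[0,\q_0)$.

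Next I would choose $\q_0>0$ small enough that $\ve:=C_1\q^{1/2}<1$ on $[0,\q_0)$. Then $|1-|v|^2|\le\norm{\eta}_{L^\infty}\le\ve$ gives $1-\ve\le|v|^2\le1+\ve$ on all of $\R$, so Lemma~\ref{lmm:generalpE} applies with $\Om=\Om_0=\R$ and yields $|p(v)|\le E(v)/(\sqrt2(1-\ve))$. Since $p(v)=\q\ge0$, this rearranges to
\[
E(v)\ge\sqrt2(1-\ve)\,\q=\sqrt2\,\q-\sqrt2\,C_1\,\q^{3/2}.
\]
Taking the infimum over all admissible $v$ and setting $K_0=\sqrt2\,C_1$ gives \eqref{Emin-inf}.

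The argument is essentially routine once Lemmas~\ref{lem-control-energy} and \ref{lmm:generalpE} are in hand; the only point requiring care is the choice of $\q_0$, which must simultaneously keep the bracketed factor in \eqref{est-eta} bounded and guarantee $\ve<1$ so that \eqref{ctrlEPsurR} is legitimately applicable. I do not expect any genuine difficulty here beyond the bookkeeping of these constants.
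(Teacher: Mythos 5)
Your proof is correct and follows essentially the same route as the paper: both combine the $L^\infty$ bound \eqref{est-eta} of Lemma~\ref{lem-control-energy} (giving $\varepsilon\sim\q^{1/2}$) with the momentum--energy inequality \eqref{ctrlEPsurR} of Lemma~\ref{lmm:generalpE}. The only cosmetic difference is that the paper applies this to a near-minimizer with $E(v)<\Emin(\q)+\delta\leq\sqrt2\q+\delta$ and lets $\delta\to0$, whereas you split on whether $E(v)\leq\sqrt2\q$ and take the infimum directly, which is an equally valid bookkeeping choice.
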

		\begin{proof}
			Invoking 
			Corollary~\ref{cor:supp2} 
			and  \eqref{droiteaudessus}, for $\delta\in(0,1/2)$, we have the existence of a function $v\in \boEEE$ such that $p(v)=\gq$
			and $E(v)<\Emin(\gq)+\delta\leq \sqrt{2}\q+\delta$.
			Then, using the estimate $\eqref{est-eta}$, we conclude that there is some $\q_0>0$
			small and a constant $K>0$, such that if $\q\leq \q_0$, then $E(v)\leq 1$ and also
			\bq 
			\label{cond-satis}
			\abs{1-\abs{v}^2}\leq K( \sqrt{2}\q+\delta).
			\eq
			Since we can assume that $K( \sqrt{2}\q_0+\delta)<1$, 
			we can apply  the inequality  \eqref{ctrlEPsurR} in Lemma~\ref{lmm:generalpE} to  conclude that
			$\sqrt2(1-(K( \sqrt{2}\q+\delta)^{1/2}) p(v)\leq E(v)$. 
			Inequality \eqref{Emin-inf} follows letting $\delta\to0$.
				\end{proof}
		The rest of the section is devoted to establish the following upper bound for $\Emin$. So far, we have assumed that \ref{H0} and \ref{H-coer}
		hold, but we have not used the $C^3$ regularity nor the condition 
		$(\wh \W)''(0)>-1$. These hypotheses are going to be essential to prove the following proposition.
				\begin{prop}
			\label{prop-kdv}
			There exist
			constants  $\gq_1,K_1, K_2>0$,  depending on $\norm{\wh \W}_{C^3}$, such that
			\bq
			\Emin(\gq)\leq \sqrt{2}\gq-K_1 \gq^{5/3}+K_2\gq^{2}, \quad\text{ for all }\gq \in [0,\gq_1],
			\label{eminqkdv}
			\eq
		\end{prop}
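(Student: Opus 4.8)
The plan is to build an explicit comparison function $v_\ve$ from a fixed long-wave (Korteweg--de Vries) profile rescaled at amplitude $\ve^2$, for which the two leading $O(\ve^3)$ contributions to $E(v_\ve)$ and to $\sqrt2\,p(v_\ve)$ cancel, leaving a strictly negative remainder of order $\ve^5$. Since $p(v_\ve)$ itself is of order $\ve^3$, imposing the constraint $p(v_\ve)=\gq$ forces $\ve\asymp\gq^{1/3}$, so that the $\ve^5$ gain becomes $-K_1\gq^{5/3}$ and the error becomes $O(\gq^2)$. Once $p(v_\ve)=\gq$ is arranged, the definition of $\Emin$ (via Corollary~\ref{cor:supp2}) gives $\Emin(\gq)\le E(v_\ve)=\sqrt2\,\gq+\big(E(v_\ve)-\sqrt2\,p(v_\ve)\big)$, which is precisely \eqref{eminqkdv}.

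Concretely, fix a nonnegative Schwartz profile $N$ (for instance a suitably rescaled $\sech^2$, the solitary-wave profile of the limiting KdV equation $(1+(\wh\W)''(0))A''-6A^2-A=0$ from the introduction, whose existence as a decaying solution requires $(\wh\W)''(0)>-1$ from \ref{H-coer}), and set $v_\ve=\rho_\ve e^{i\theta_\ve}$ with
$$\rho_\ve^2=1-\ve^2N(\ve\,\cdot),\qquad \theta_\ve'=\frac1{\sqrt2}\,\frac{\ve^2N(\ve\,\cdot)}{1-\ve^2N(\ve\,\cdot)}.$$
For $\ve$ small one has $\ve^2\norm{N}_{L^\infty}<1$, so $\rho_\ve>0$ and $v_\ve\in\boEEE$. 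Writing $\eta=1-\abs{v_\ve}^2=\ve^2N(\ve\,\cdot)$ and using $\abs{v_\ve'}^2=\rho_\ve'^2+\rho_\ve^2\theta_\ve'^2$, a direct computation gives $p(v_\ve)=\frac1{2\sqrt2}\int_\R\frac{\eta^2}{1-\eta}$ and
$$E(v_\ve)-\sqrt2\,p(v_\ve)=\frac18\int_\R\frac{\eta'^2}{1-\eta}+\frac1{8\pi}\int_\R\wh\W\,\abs{\hat\eta}^2-\frac14\int_\R\frac{\eta^2}{1-\eta}.$$
The specific constant $1/\sqrt2$ in $\theta_\ve'$ is chosen exactly so that the phase-kinetic energy combines with the momentum to produce the term $-\frac14\int\frac{\eta^2}{1-\eta}$, whose leading part will cancel that of the potential energy.

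The heart of the proof is to expand these three terms in $\ve$ by the change of variables $y=\ve x$. The first is $\frac{\ve^5}8\int N'^2+O(\ve^7)$ and the third is $\frac{\ve^3}4\int N^2+\frac{\ve^5}4\int N^3+O(\ve^7)$, using the geometric expansion of $(1-\ve^2N)^{-1}$. The delicate term is the nonlocal one: by Plancherel, $\abs{\hat\eta(\xi)}^2=\ve^2\abs{\hat N(\xi/\ve)}^2$, whence $\frac1{8\pi}\int_\R\wh\W\,\abs{\hat\eta}^2=\frac{\ve^3}{8\pi}\int_\R\wh\W(\ve s)\abs{\hat N(s)}^2\,ds$. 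Here I would use the $C^3$ regularity of $\wh\W$ together with $\wh\W(0)=1$ and $(\wh\W)'(0)=0$ (evenness) to Taylor expand $\wh\W(\ve s)=1+\tfrac12(\wh\W)''(0)\ve^2s^2+R(\ve s)$ with $\abs{R(\ve s)}\le\frac16\norm{\wh\W'''}_{L^\infty}\ve^3\abs s^3$; since $N$ is Schwartz, $\int\abs s^3\abs{\hat N}^2<\infty$ and the remainder contributes only $O(\ve^6)$. This yields $\frac{\ve^3}4\int N^2+\frac{(\wh\W)''(0)}8\ve^5\int N'^2+O(\ve^6)$. Collecting the three terms, the $O(\ve^3)$ parts cancel and
$$E(v_\ve)-\sqrt2\,p(v_\ve)=\ve^5\Big(\frac{1+(\wh\W)''(0)}8\int_\R N'^2-\frac14\int_\R N^3\Big)+O(\ve^6).$$

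Finally, since $N\ge0$ gives $\int N^3>0$, the bracket is made strictly negative by the optimal KdV soliton choice (or simply a tall enough bump); this is exactly where $1+(\wh\W)''(0)>0$ enters, guaranteeing a finite negative minimum and hence $K_1>0$. Denoting the bracket by $-c_0<0$ and using $p(v_\ve)=\frac{\ve^3}{2\sqrt2}\int N^2\,(1+O(\ve^2))$, for every small $\gq>0$ the intermediate value theorem supplies $\ve=\ve(\gq)\asymp\gq^{1/3}$ with $p(v_\ve)=\gq$; then $\ve^5\asymp\gq^{5/3}$, $\ve^6\asymp\gq^2$, and \eqref{eminqkdv} follows after absorbing the $O(\gq^{7/3})$ cross terms into $+K_2\gq^2$. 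I expect the main obstacle to be the rigorous control of the nonlocal potential term: isolating the $(\wh\W)''(0)$ coefficient and bounding the remainder by $O(\ve^6)=O(\gq^2)$ is precisely the step that forces the $C^3$ hypothesis and produces the correction $+K_2\gq^2$, while the sign $1+(\wh\W)''(0)>0$ is what secures $K_1>0$.
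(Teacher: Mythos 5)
Your proof is correct, and it rests on the same underlying strategy as the paper's: a comparison function in the KdV long-wave scaling whose $O(\ve^3)$ contributions to $E$ and to $\sqrt2\,p$ cancel, leaving a strictly negative $O(\ve^5)$ term, followed by the inversion $\ve\asymp\q^{1/3}$; both arguments also hinge on the same Fourier-side Taylor expansion of $\wh\W$ (the paper's Lemma~\ref{lem-W-ve}), which is exactly where the $C^3$ hypothesis and $(\wh\W)'(0)=0$ enter. Still, your implementation differs in three genuine respects. First, the paper fixes the explicit ansatz $v_\ve=(1+\ve^2A(\ve x))e^{i\ve\vp(\ve x)}$, where $A=-\frac14\sech^2(\cdot/(2\omega))$ is the exact solitary wave of the limit KdV equation and $\vp'=-\sqrt2 A$, and it uses the ODE satisfied by $A$ to evaluate the $\ve^5$ coefficient of $E_\p$ (Lemma~\ref{lem:kdv}); you instead prescribe the modulus through an arbitrary nonnegative Schwartz bump $N$ and take the phase $\theta_\ve'=\frac{1}{\sqrt2}\frac{\eta}{1-\eta}$, which is precisely the pointwise minimizer of $\frac12\rho^2\theta'^2-\frac{\sqrt2}{2}\eta\theta'$, so that $E-\sqrt2\,p$ has the clean exact form you display, with no cross terms to estimate. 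Second, because of this generality you never need the soliton: the bracket $\frac{1+(\wh\W)''(0)}{8}\int N'^2-\frac14\int N^3$ is made negative simply by taking $N$ tall (the cubic term wins under $N\mapsto\lambda N$), and with the profile implicit in the paper's choice, $N\approx\frac12\sech^2(\cdot/(2\omega))$, your bracket evaluates to $-\omega/20$, in exact agreement with Lemma~\ref{lem:kdv}. Third, your $p(v_\ve)$ is only asymptotically explicit in $\ve$, so you invoke the intermediate value theorem to enforce the constraint $p(v_\ve)=\q$, whereas the paper's ansatz makes $p(v_\ve)$ an exact polynomial in $\ve$, inverted by Taylor's theorem; both routes give $\ve\asymp\q^{1/3}$ and an error $O(\q^{7/3})+O(\q^2)$ absorbed into $K_2\q^2$. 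The trade-off is that the paper's computation is fully explicit (it yields $K_1=(3\sqrt2/\omega)^{5/3}\omega/20$), while yours is more flexible and makes transparent that only the sign condition $1+(\wh\W)''(0)>0$, not the precise KdV profile, is responsible for $K_1>0$.
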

		As an immediate consequence of Propositions \ref{prop:minoration}
		and \ref{prop-kdv}, is  that  $\Emin$ is right differentiable at the origin, with  $\Emin^+(0)=\sqrt{2}$. 
		Moreover, if $\Emin$ is concave we also deduce  that 
		$\Emin$ is strictly subadditive as a consequence of the following  elementary
		lemma  (see e.g.~\cite{bethuel,chironmaris}).
		\begin{lmm}
			\label{lmm:sousad}
			Let $f:[0,\infty)\to\R$ be continuous concave function, with $f(0)=0$, 
			and with  right derivative at the origin $a:=f^+(0)$. Then 
			for any $\gs>0$, the following alternative holds: 
			\begin{enumerate}
				\item $f$ is linear on $[0,\gs]$, with $f(\gp)=a\gp$, for all $\gp \in [0,\gs]$, or
				\item $f$ is strictly subadditive on $[0,\gs]$.
			\end{enumerate}
		\end{lmm}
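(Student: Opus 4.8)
The plan is to reduce everything to one structural fact about concave functions vanishing at the origin: the difference quotient from the origin
$$\phi(t):=\frac{f(t)}{t}$$
is nonincreasing on $(0,\infty)$. First I would establish this by writing, for $0<s<t$, the point $s$ as the convex combination $s=\tfrac{s}{t}\,t+(1-\tfrac{s}{t})\cdot 0$ and using concavity together with $f(0)=0$ to get $f(s)\geq \tfrac{s}{t}f(t)$, i.e.\ $\phi(s)\geq\phi(t)$. Letting $t\to 0^+$ then identifies $a=f^+(0)=\lim_{t\to0^+}\phi(t)=\sup_{t>0}\phi(t)$, so that $f(t)\leq a t$ for all $t\geq 0$, with equality throughout $[0,\gs]$ precisely when $\phi\equiv a$ there. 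I would also record the (automatic) subadditivity: for $\gp,\gq>0$, the inequalities $\phi(\gp)\geq\phi(\gp+\gq)$ and $\phi(\gq)\geq\phi(\gp+\gq)$ add up to $f(\gp)+f(\gq)\geq f(\gp+\gq)$.

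I would prove the dichotomy in contrapositive form: assuming strict subadditivity fails on $[0,\gs]$ (in the form relevant to the concentration--compactness argument downstream, namely that $f(\gs)=f(t_0)+f(\gs-t_0)$ for some $t_0\in(0,\gs)$), I would deduce that $f$ is linear on $[0,\gs]$. Indeed, subadditivity applied to the decomposition $\gs=t_0+(\gs-t_0)$ gives $f(t_0)\geq\tfrac{t_0}{\gs}f(\gs)$ and $f(\gs-t_0)\geq\tfrac{\gs-t_0}{\gs}f(\gs)$, and these two lower bounds sum to exactly $f(\gs)$. Hence both are equalities, so $\phi(t_0)=\phi(\gs)=:k$; since $\phi$ is nonincreasing and $t_0<\gs$, this forces $\phi\equiv k$, i.e.\ $f(t)=kt$, on the whole interval $[t_0,\gs]$.

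The crux is then to propagate this linearity back down to the origin and to identify $k$ with $a$. Here I would invoke the monotonicity of chord slopes for concave functions: for any $t\in(0,t_0)$ the slope of the chord over $[t,t_0]$ is at least the slope over $[t_0,\gs]$, which equals $k$. Writing that slope as $\frac{f(t_0)-f(t)}{t_0-t}=\frac{k t_0-\phi(t)t}{t_0-t}$ and comparing with $k$, one sees that $\phi(t)>k$ would force it to be strictly less than $k$, a contradiction; hence $\phi(t)=k$ for every $t\in(0,t_0)$. Thus $\phi\equiv k$ on all of $(0,\gs]$, and letting $t\to0^+$ gives $k=\phi(0^+)=a$. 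Therefore $f(t)=at$ on $[0,\gs]$, which is the first alternative; the contrapositive yields the stated dichotomy.

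The only genuinely delicate point is this last propagation, i.e.\ showing that a single equality in the subadditivity relation forces linearity \emph{down to $0$} and not merely on the subinterval $[t_0,\gs]$ (a piecewise-linear concave $f$ that is affine through the origin only near its right endpoint shows one cannot skip this step); everything else follows directly from the monotonicity of $\phi$ and of the chord slopes, which are the standard tools for concave functions.
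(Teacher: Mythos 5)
Your proof is correct, but note that there is no internal proof to compare it against: the paper states Lemma~\ref{lmm:sousad} as elementary and attributes it to the references \cite{bethuel,chironmaris}. Your argument is a complete, self-contained justification. The monotonicity of $\phi(t)=f(t)/t$, the identification $a=\lim_{t\to 0^+}\phi(t)=\sup_{t>0}\phi(t)$, the equality analysis forcing $\phi(t_0)=\phi(\gs-t_0)=\phi(\gs)=:k$, and the chord-slope argument propagating the constancy of $\phi$ from $[t_0,\gs]$ down to the origin are all sound; and you correctly isolated the propagation step as the only nontrivial point, since constancy of the nonincreasing $\phi$ on $[t_0,\gs]$ alone says nothing about $(0,t_0)$.

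One comment on your parenthetical hedge about what ``strictly subadditive on $[0,\gs]$'' means. Your reading --- strictness only for decompositions $\gp+\gq=\gs$ with $\gp,\gq>0$ --- is not merely the convenient one; it is the only reading under which the dichotomy is true. Under the stronger reading (strictness for all $\gp,\gq>0$ with $\gp+\gq\le\gs$), the concave function $f(t)=\min\big(t,\tfrac{t+1}{2}\big)$ with $\gs=2$ is a counterexample: $f(0)=0$, $a=1$, $f$ is not linear on $[0,2]$, yet $f(1/2)+f(1/2)=1=f(1)$. Moreover, the endpoint reading is exactly what the paper consumes: in Corollary~\ref{coro:strictadd} the linear alternative is ruled out for every $\gs>0$ (since $\Emin(\gq)<\sqrt{2}\,\gq$ for small $\gq>0$ by Proposition~\ref{prop-kdv}), so applying the dichotomy at $\gs=\gp+\gq$ for arbitrary $\gp,\gq>0$ gives strict subadditivity on all of $\R^+$, which is the form invoked in Claim~\ref{pasdichoto} of the compactness proof. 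So your interpretation is the right one, and with it your proof closes the lemma completely.
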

		\begin{cor}
			\label{coro:strictadd}
			The right derivative of $\Emin$ at the origin exists and $\Emin^+(0)=\sqrt{2}$. 
			In particular, if $\Emin$ is concave on $\R^+$, then  
			$\Emin$ is strictly subadditive on $\R^+$. 
		\end{cor}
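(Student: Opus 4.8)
The plan is to combine the two-sided estimates for $\Emin$ near the origin, provided by Propositions~\ref{prop:minoration} and \ref{prop-kdv}, with the abstract concavity–subadditivity dichotomy of Lemma~\ref{lmm:sousad}. The whole statement is a clean consequence of results already in hand, so no new analysis is needed; the work is in assembling the pieces carefully.

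First I would pin down the right derivative. Note that $\Emin(0)=0$: the bound \eqref{droiteaudessus} gives $\Emin(0)\leq 0$, while the nonnegativity of the energy gives $\Emin(0)\geq 0$ (and the value $0$ is realized by $v\equiv 1$). Hence the difference quotient at the origin is simply $\Emin(\q)/\q$. Dividing the lower estimate of Proposition~\ref{prop:minoration} by $\q>0$ yields $\sqrt2-K_0\q^{1/2}\leq \Emin(\q)/\q$ for $\q\in(0,\q_0)$, and dividing the upper estimate of Proposition~\ref{prop-kdv} yields $\Emin(\q)/\q\leq \sqrt2-K_1\q^{2/3}+K_2\q$ for $\q\in(0,\q_1]$. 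Both sides tend to $\sqrt2$ as $\q\to0^+$, so by the squeeze theorem the right derivative exists and $\Emin^+(0)=\sqrt2$.

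For the strict subadditivity, I would assume $\Emin$ is concave on $\R^+$ and apply Lemma~\ref{lmm:sousad} to $f=\Emin$. This is legitimate since $\Emin$ is continuous by Proposition~\ref{Lipschitz}, satisfies $\Emin(0)=0$, and has right derivative $a:=\Emin^+(0)=\sqrt2$ by the first part. Fix any $\gs>0$. To rule out the linear alternative, I would observe that Proposition~\ref{prop-kdv} gives $\Emin(\q)\leq \sqrt2\q+\q^{5/3}(-K_1+K_2\q^{1/3})$, and since $K_1>0$ the bracket is strictly negative for $\q>0$ small enough, so $\Emin(\q)<\sqrt2\q$ near the origin. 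This contradicts the identity $\Emin(\q)=\sqrt2\q$ required by alternative~(i) of Lemma~\ref{lmm:sousad}; hence alternative~(ii) must hold, i.e.\ $\Emin$ is strictly subadditive on $[0,\gs]$.

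Finally, since $\gs>0$ was arbitrary, strict subadditivity holds on $[0,\gs]$ for every $\gs$; given any $\gp,\gq>0$, taking $\gs=\gp+\gq$ gives $\Emin(\gp+\gq)<\Emin(\gp)+\Emin(\gq)$, so $\Emin$ is strictly subadditive on all of $\R^+$. I do not expect any genuine obstacle: the only points requiring attention are verifying $\Emin(0)=0$ and checking that the strict upper bound $\Emin(\q)<\sqrt2\q$ near the origin is precisely what excludes the linear branch of the dichotomy; everything else is a direct invocation of the preceding results.
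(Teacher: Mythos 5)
Your proof is correct and follows essentially the same route as the paper: the two-sided bounds of Propositions~\ref{prop:minoration} and \ref{prop-kdv} squeeze the difference quotient $\Emin(\q)/\q$ to $\sqrt{2}$, and Lemma~\ref{lmm:sousad} applied with $a=\sqrt{2}$ yields strict subadditivity, the linear alternative being excluded precisely because \eqref{eminqkdv} forces $\Emin(\q)<\sqrt{2}\q$ near the origin. Your write-up merely makes explicit the details (the value $\Emin(0)=0$, the squeeze, and the exclusion of the linear branch) that the paper leaves to the reader.
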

		The proof of Proposition \ref{prop-kdv} is inspired  on the fact that 
		the Korteweg--de Vries (KdV) equation provides a good approximation of solutions of the 
		Gross--Pitaveskii equation when $\W=\delta_0$ in the 
		long-wave regime  \cite{zakharov86,bethuel-kdv-2009,chiron-rousset}.
		Our aim  is to extend this idea to the nonlocal  equation \eqref{ngp}.
		Let us explain how this works in the case of solitons, performing first some formal computations.
		We are looking to describe a solution of \eqref{ntwc} with $c\sim \sqrt{2}$, so we consider 
		$$c=\sqrt{2-\ve^2},$$
		and use the ansatz 
		$$u_\ve(x)=(1+\ve^2A_\ve(\ve x))e^{i\ve \vp_\ve(\ve x)}.$$
		Therefore, setting 
		\bq
		\label{def:Weps}
		\wh \W_\ve(\xi):=\wh \W(\ve \xi),
		\eq
		i.e.\ $\W_{\ve}(x)=\W( x/\ve)/\ve$ in the sense of distributions, we deduce that $u_\ve$ is a solution to \eqref{ntwc} if $(A_\ve,\vp_\ve)$ 
		satisfies
		\begin{gather}
			\label{eq-kdv1}
			\ve^2A_\ve''-\ve^2(1+\ve^2 A_\ve)\vp_\ve'^2-c\vp_\ve'(1+\ve^2A_\ve)-(1+\ve^2A_\ve)\big(\W_\ve*(2A_\ve+\ve^2A_\ve^2)\big)=0,\\
			\label{eq-kdv2}
			2\ve^2A_\ve'\vp_\ve'+(1+\ve^2A_\ve)\vp_\ve''+cA_\ve'=0.
		\end{gather}
		To handle the nonlocal term, we use the following lemma.
		\begin{lem}
			\label{lem-W-ve}
			For all $f\in H^3(\R)$, we have 
			\bq
			\label{exp-W-ve}
			\W_\ve*f=f-\frac{\ve^2}{2}(\wh \W)''(0)f''+\ve^3 R_\ve(f),
			\eq
			where 
			\bqq
			\norm{R_\ve(f)}_{L^2(\R)}\leq \frac{1}{6}\norm{\wh \W'''}_{L^\infty(\R)}\norm{f'''}_{L^2(\R)}.
			\eqq
		\end{lem}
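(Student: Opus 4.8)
The plan is to work entirely on the Fourier side, since $\W_\ve*f$ is by definition the Fourier multiplier $\boF^{-1}\big(\wh\W(\ve\,\cdot)\,\hat f\big)$, so that $\widehat{\W_\ve*f}(\xi)=\wh\W(\ve\xi)\hat f(\xi)$. By \ref{H-coer} the symbol $\wh\W$ belongs to $C_b^3(\R)$, and since $\W$ is even (hence $\wh\W$ is even and real-valued) we have $(\wh\W)'(0)=0$, together with the normalization $\wh\W(0)=1$ from \ref{H0}. First I would apply Taylor's theorem with Lagrange remainder to $\wh\W$ at the origin, evaluated at $\ve\xi$: for each $\xi$ there is $\theta_\xi$ lying between $0$ and $\ve\xi$ with
\bqq
\wh\W(\ve\xi)=1+\tfrac12(\wh\W)''(0)\,\ve^2\xi^2+\tfrac16(\wh\W)'''(\theta_\xi)\,\ve^3\xi^3.
\eqq

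Next I would translate the two leading terms back into differential operators. With the convention $\hat f(\xi)=\int_\R e^{-ix\xi}f(x)\,dx$ one has $\widehat{f''}(\xi)=-\xi^2\hat f(\xi)$, so the constant term $1$ returns $f$ upon inversion, while multiplication of $\hat f$ by $\tfrac12(\wh\W)''(0)\ve^2\xi^2$ returns $-\tfrac{\ve^2}2(\wh\W)''(0)f''$. This motivates defining the remainder through
\bqq
\ve^3 R_\ve(f):=\W_\ve*f-f+\tfrac{\ve^2}2(\wh\W)''(0)f'',
\eqq
whose Fourier transform is then exactly $r_\ve(\xi)\hat f(\xi)$, where $r_\ve(\xi):=\wh\W(\ve\xi)-1-\tfrac12(\wh\W)''(0)\ve^2\xi^2=\tfrac16(\wh\W)'''(\theta_\xi)\,\ve^3\xi^3$ is precisely the Taylor remainder above. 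Since $f\in H^3(\R)$, every function appearing here lies in $L^2(\R)$, so these multiplier manipulations are legitimate.

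Finally I would estimate $R_\ve$ via Plancherel. Bounding the symbol pointwise gives $|r_\ve(\xi)|\le\tfrac16\norm{\wh\W'''}_{L^\infty(\R)}\ve^3|\xi|^3$, and recognizing $|\xi|^3|\hat f(\xi)|=|\widehat{f'''}(\xi)|$ yields the pointwise bound $|\widehat{\ve^3R_\ve(f)}(\xi)|\le\tfrac16\norm{\wh\W'''}_{L^\infty(\R)}\ve^3\,|\widehat{f'''}(\xi)|$. Applying Plancherel's identity (whose normalizing constant cancels on the two sides) and dividing by $\ve^3$ then gives exactly $\norm{R_\ve(f)}_{L^2(\R)}\le\tfrac16\norm{\wh\W'''}_{L^\infty(\R)}\norm{f'''}_{L^2(\R)}$. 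I do not expect any serious obstacle in this lemma: the only points that genuinely require care are the use of the evenness of $\wh\W$ to annihilate the first-order term, and the careful tracking of the Fourier convention and the Plancherel constant, so that the factor $1/6$ and the $L^2$ norms come out precisely as stated.
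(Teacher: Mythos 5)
Your proof is correct and follows essentially the same route as the paper: both define $R_\ve(f)$ by the same formula, expand $\wh\W(\ve\xi)$ by Taylor's theorem at the origin using $(\wh\W)'(0)=0$, and conclude with a pointwise bound on the symbol together with Plancherel's identity, recognizing $|\xi|^3|\hat f(\xi)|=|\widehat{f'''}(\xi)|$. The only cosmetic difference is that you justify $(\wh\W)'(0)=0$ via the evenness of $\wh\W$ and track the sign conventions explicitly, which the paper leaves implicit.
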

		\begin{proof}
			Let us set $$R_\ve(f):=\frac{1}{\ve^3}
			\big(\W_\ve*f-f+\frac{\ve^2}{2}(\wh \W)''(0)f'' \big).$$
			By Plancherel's theorem, we have
			\begin{align}
				\label{dem-R}
				2\pi\norm{R_\ve(f)}^2_{L^2(\R)}=\norm{\boF( R_\ve(f))}^2_{L^2(\R)}=
				&\frac{1}{\ve^6}\int_{\R}
				\Big| \wh \W(\ve\xi)-1 -\frac{\ve^2 \xi^2}2(\wh \W)''(0) \Big|^2 \abs{\hat f(\xi)}^2   d\xi.
			\end{align}
			Now, by Taylor's theorem and the fact that $(\wh \W)'(0)=0$, we deduce  that  for all $\xi \in \R$ and $\ve>0$,
			there exists $z_{\ve,\xi}\in\R$ such that
			$$\wh \W(\ve \xi)=1+\frac{\ve^2\xi^2}{2}
			(\wh \W)''(0)+\frac{\ve^3\xi^3}{6}(\wh \W''')(z_{\ve,\xi}).$$
			Replacing this equality into \eqref{dem-R}, we conclude that
			$$\sqrt{2\pi}\norm{R_\ve(f)}_{L^2(\R)}\leq \frac{1}{6}\norm{\wh \W'''}_{L^\infty(\R)}\norm{\boF(f''')}_{L^2(\R)}=
			\frac{\sqrt{2\pi}}{6}\norm{\wh \W'''}_{L^\infty(\R)}\norm{f'''}_{L^2(\R)},$$
			which completes the proof of the lemma.
		\end{proof}
		In this manner, applying Lemma \ref{lem-W-ve}, we formally deduce from \eqref{eq-kdv1}--\eqref{eq-kdv2} that
		\begin{gather}
			\label{eq-kdv3}
			-c\vp_\ve'-2A_\ve+\ve^2(-c\vp_\ve' A_\ve-3A_\ve^2+(1+\wh \W''(0)) A_\ve''-\vp_\ve'^2)=\boO(\ve^3),\\
			\label{eq-kdv4}
			\vp_\ve''+cA_\ve'+\ve^2(2\vp_\ve' A_\ve'+A_\ve\vp_\ve'')=0.
		\end{gather}
		Therefore for the speed $c=\sqrt{2-\ve^2}$, \eqref{eq-kdv3} implies that 
		\bq
		\label{phi-A}
		\vp_\ve'=-2A_\ve+\boO(\ve^2).
		\eq
		Differentiating \eqref{eq-kdv3}, adding \eqref{eq-kdv4} multiplied by $c$, using \eqref{phi-A}, 
		and supposing that $A_\ve$ and $\vp_\ve$ converge to some functions $A$ and $\vp$, respectively, 
		as  $\ve\to 0$,  we obtain the limit equation
		\bqq
		-A'-12AA'+(1+\wh \W''(0))A'''=0.
		\eqq
		Thus, imposing that $A,A',A''\to 0$ as $\abs{x}\to \infty$, by integration, we get 
		\bq
		\label{eq-kdv}
		(1+\wh \W''(0))A''-6A^2-A=0.
		\eq
		By hypothesis \ref{H-coer}, we have $(\wh \W)''(0)>-1$, so that  setting 
		\bqq
		\omega:= (1+(\wh \W)''(0))^{1/2},
		\eqq
		so that the solution to \eqref{eq-kdv} (up to translations) corresponds to a soliton for the KdV equation given explicitly by
		\bq
		\label{def-A}
		A(x):=-\frac{1}{4}\sech^2\big(\frac{x}{2 \omega} \big).
		\eq
		Moreover, \eqref{phi-A} reads in the limit $\vp'=-\sqrt{2}A$, so that we choose $\vp$ as 
		\bq
		\label{def-vp}
		\vp(x):=\frac{ \omega}{\sqrt{2}}\tanh\big(\frac{x}{2 \omega} \big).
		\eq
		In this manner, we should expect that $u_\ve(x)\sim(1+\ve^2A(\ve x))e^{i\ve \vp(\ve x)}$.
		This is the motivation of the following result.
		\begin{lem}\label{lem:kdv}
			Let $v_\ve(x)=(1+\ve^2A(\ve x))e^{i\ve \vp(\ve x)}$, where $A$ and $\vp$ are given by \eqref{def-A} and 
			\eqref{def-vp}. Then 
			\bq
			\label{kdvmomentum}
			E(v_\ve)=\frac{\omega}{3}\left(\ve^3-\frac{\ve^5}{4}\right)+\mathcal{O}(\varepsilon^6)
			\quad\text{ and }\quad 
			p(v_\ve)=\frac{\sqrt{2} \omega}{6}\left(\ve^3-\frac{\ve^5}{10} \right),
			\eq
			where $\mathcal{O}(\varepsilon^7)/\ve^7$ is a function that is bounded in terms of $\norm{\wh \W}_{W^{3,\infty}}$, uniformly  for all $\ve\in (0,1]$.
		\end{lem}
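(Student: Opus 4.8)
The plan is to write $v_\ve=\rho_\ve e^{i\theta_\ve}$ with $\rho_\ve(x)=1+\ve^2A(\ve x)$ and $\theta_\ve(x)=\ve\vp(\ve x)$, and to reduce every quantity to one-dimensional integrals of the fixed profiles $A$ and $\vp$ of \eqref{def-A}--\eqref{def-vp} after the change of variables $y=\ve x$. First I would check that $v_\ve\in\boEEE$: since $A$ takes values in $[-1/4,0]$ we have $\rho_\ve\geq 1-\ve^2/4\geq 3/4$ for $\ve\in(0,1]$, so $\inf_\R\abs{v_\ve}>0$, while the exponential decay of $A,A',\vp'$ guarantees $\rho_\ve-1,\theta_\ve'\in L^2(\R)$ and $v_\ve'\in L^2(\R)$. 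Setting $\eta_\ve=1-\rho_\ve^2=-2\ve^2A(\ve x)-\ve^4A(\ve x)^2$ and $\theta_\ve'(x)=\ve^2\vp'(\ve x)$, the momentum $p(v_\ve)=\tfrac12\int_\R\eta_\ve\theta_\ve'$ becomes, after rescaling, the \emph{exact} expression $-\ve^3\int_\R A\vp'-\tfrac{\ve^5}{2}\int_\R A^2\vp'$; using the identity $\vp'=-\sqrt2\,A$ this collapses to $\sqrt2\,\ve^3\int_\R A^2+\tfrac{\sqrt2}{2}\ve^5\int_\R A^3$, with no error term, which is why the momentum formula in \eqref{kdvmomentum} is exact.

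Next I would treat the kinetic energy $E_\k(v_\ve)=\tfrac12\int_\R\rho_\ve'^2+\tfrac12\int_\R\rho_\ve^2\theta_\ve'^2$ in the same elementary way. Since $\rho_\ve'(x)=\ve^3A'(\ve x)$, the first term gives $\tfrac{\ve^5}{2}\int_\R A'^2$; expanding $\rho_\ve^2=1+2\ve^2A+\ve^4A^2$ and again using $\vp'^2=2A^2$, the second term yields $\ve^3\int_\R A^2+2\ve^5\int_\R A^3$ up to an $\mathcal{O}(\ve^7)$ tail. Thus $E_\k(v_\ve)=\ve^3\int_\R A^2+\tfrac{\ve^5}{2}\int_\R A'^2+2\ve^5\int_\R A^3+\mathcal{O}(\ve^7)$.

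The crux is the potential energy $E_\p(v_\ve)=\tfrac14\int_\R(\W*\eta_\ve)\eta_\ve$, where the nonlocality forbids a naive rescaling of $\W$. Writing $\eta_\ve(x)=\zeta_\ve(\ve x)$ with $\zeta_\ve=-2\ve^2A-\ve^4A^2$, Plancherel's theorem and the substitution $\mu=\xi/\ve$ give the key identity $\int_\R(\W*\eta_\ve)\eta_\ve=\tfrac1\ve\int_\R(\W_\ve*\zeta_\ve)\zeta_\ve$, with $\W_\ve$ the rescaled potential of \eqref{def:Weps}. I would then invoke Lemma~\ref{lem-W-ve} to expand $\W_\ve*\zeta_\ve=\zeta_\ve-\tfrac{\ve^2}{2}(\wh\W)''(0)\zeta_\ve''+\ve^3R_\ve(\zeta_\ve)$; integrating the second term by parts turns $\int\zeta_\ve''\zeta_\ve$ into $-\int\zeta_\ve'^2$, and Cauchy--Schwarz together with the bound $\norm{R_\ve(\zeta_\ve)}_{L^2}\leq\tfrac16\norm{\wh\W'''}_{L^\infty}\norm{\zeta_\ve'''}_{L^2}=\mathcal{O}(\ve^2)$ and $\norm{\zeta_\ve}_{L^2}=\mathcal{O}(\ve^2)$ shows the remainder contributes only $\mathcal{O}(\ve^7)$ to $\int(\W_\ve*\zeta_\ve)\zeta_\ve$, hence $\mathcal{O}(\ve^6)$ to $E_\p$. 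Collecting the surviving orders produces $E_\p(v_\ve)=\ve^3\int_\R A^2+\ve^5\int_\R A^3+\tfrac{\ve^5}{2}(\wh\W)''(0)\int_\R A'^2+\mathcal{O}(\ve^6)$.

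Finally I would add the three contributions and substitute the explicit integrals $\int_\R A^2=\omega/6$, $\int_\R A^3=-\omega/30$, and $\int_\R A'^2=1/(30\omega)$, all obtained from the standard values $\int_\R\sech^4=4/3$ and $\int_\R\sech^6=16/15$ after the substitution $u=x/(2\omega)$ (using $\sech^4u\tanh^2u=\sech^4u-\sech^6u$ for the last one). Recalling $\omega^2=1+(\wh\W)''(0)$, the $\ve^5$ coefficient of $E(v_\ve)$ becomes $\tfrac12\omega^2\int A'^2+3\int A^3=-\omega/12$, which after factoring $\omega/3$ gives exactly $\tfrac{\omega}{3}(\ve^3-\tfrac{\ve^5}{4})+\mathcal{O}(\ve^6)$, and likewise for $p(v_\ve)$. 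Uniform control of all remainders in terms of $\norm{\wh\W}_{W^{3,\infty}}$ follows from the explicit constant in Lemma~\ref{lem-W-ve} and the fact that $A,\vp$ are fixed Schwartz-type profiles independent of $\ve$. The main obstacle is precisely this potential term: one must track the $\ve$-scaling correctly through the convolution and verify that the third-order remainder, once paired with $\zeta_\ve$, is genuinely of lower order than the $\ve^5$ terms we wish to retain; the momentum and kinetic computations, though tedious, are elementary once $\vp'=-\sqrt2\,A$ is used to eliminate the phase integrals.
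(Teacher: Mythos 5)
Your proposal is correct and follows essentially the same route as the paper's proof: polar coordinates, the rescaling identity $E_\p(v_\ve)=\frac{1}{4\ve}\int_\R(\W_\ve*\zeta_\ve)\zeta_\ve$ with $\W_\ve$ as in \eqref{def:Weps}, the expansion of Lemma~\ref{lem-W-ve} with its $L^2$ remainder bound, and the explicit $\sech$ integrals. The only (immaterial) difference is in the $\ve^5$ coefficient of the potential energy: the paper eliminates $A''$ using the KdV equation \eqref{eq-kdv}, whereas you integrate by parts and evaluate $\int_\R A'^2=1/(30\omega)$ directly --- both give the same total $\ve^5$ coefficient $-\omega/12$.
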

		\begin{proof}
			Let us first compute the  momentum. Bearing in mind that  $\vp'=-\sqrt{2}A$, we have
			\begin{align*}
				p(v_\varepsilon)&=-\frac{1}{2}\int_{\mathbb{R}}\left(2\varepsilon^2 A(\varepsilon x)+\varepsilon^4A(\varepsilon x)^2\right)
				\varepsilon^2\varphi'(\varepsilon x)dx \\
				&= \frac{\sqrt{2}\varepsilon^3}{2}\int_{\mathbb{R}}\left(2 A( x)^2+\varepsilon^2A( x)^3\right)dx \\
				&= {\sqrt{2}\omega\varepsilon^3}
				\int_{\mathbb{R}}\left(\frac{1}{8}\sech(x)^{4}-
				\frac{\varepsilon^2}{64}\sech(x)^{6}\right)dx,
			\end{align*} 
			so  using that $\int_{\mathbb{R}}\sech^{4}(x)dx=4/3$ and that $\int_{\mathbb{R}}\sech^{6}(x)dx=16/15$,
			we obtain the expression  for $p(v_\ve)$ in \eqref{kdvmomentum}.
			For the kinetic energy we can proceed in the same manner. Indeed, using that 
			$$A'(x)=\frac{1}{4\omega}\tanh\left(\frac{x}{2\omega}\right)\sech^{2}\left( \frac{x}{2\omega} \right),\quad \text{ and }
			\int_{\mathbb{R}}\sech(x)^4\tanh(x)^2=\frac{4}{15},$$
			we get 
			\begin{align*}
				E_{\k}(v_\varepsilon)&=\frac{1}{2}\int_{\mathbb{R}}\left( \varepsilon^6A'(\varepsilon x)^2 + \varepsilon^4(1+\varepsilon^2A(\varepsilon x))^2\varphi'(\varepsilon x)^2\right)dx \\
				&= {\varepsilon^3}\int_{\mathbb{R}}A(x)^2 dx+\frac{\varepsilon^5}{2}\int_{\mathbb{R}}A'(x)^2+4A(x)^3 dx\\
				&=\frac{\varepsilon^3\omega}{8}\int_{\mathbb{R}}\sech(x)^4 dx+\frac{\varepsilon^5}{16\omega}\int_{\mathbb{R}}\sech^{4}(x)\tanh^2(x)dx-\frac{\varepsilon^{5}\omega}{16}\int_{\mathbb{R}}\sech^6(x) dx\\
				&=\frac{\varepsilon^3\omega}{6}+\varepsilon^5\left(\frac{1}{60\omega}-\frac{\omega}{15}\right).
			\end{align*}
			Now, for the potential energy, invoking  Lemma \ref{lem-W-ve} and \eqref{eq-kdv},  we have
			\begin{align*}
				E_{\p}(v_{\varepsilon})&=
				\frac{1}{4\varepsilon}
				\int_{\mathbb{R}}\big(
				\W_{\varepsilon}\ast(2\varepsilon^2A
				+\varepsilon^4A^2) 
				\big)(x)
				(2\varepsilon^2A(x)+\varepsilon^4A(x)^2)dx\\
				&=\varepsilon^3\int_{\mathbb{R}}A(x)^2dx+\varepsilon^5
				\int_{\mathbb{R}}\left(A(x)^3-\frac{\widehat{\W}''(0)}{2}A(x)A''(x)\right)dx+\mathcal{O}(\varepsilon^6) \\
				&= \varepsilon^3\int_{\mathbb{R}}A(x)^2dx+\varepsilon^5
				\int_{\mathbb{R}}\left(A(x)^3-\frac{\widehat{\W}''(0)}{2\omega^2}\left(A(x)^2+6A(x)^3\right)\right)dx+\mathcal{O}(\varepsilon^6)
				\\
								&= \frac{\varepsilon^3\omega}{6}-\frac{\varepsilon^5}{60}\left(\omega+\frac{1}{\omega}\right)+\mathcal{O}(\varepsilon^6),
			\end{align*}
			where we have also used that $\widehat{\W}''(0)=\omega^2-1.$ 
			Adding the expressions for $E_\k$ and $E_\p$, we obtain the estimate for the energy in \eqref{kdvmomentum}.
		\end{proof}
		\begin{proof}[Proof of Proposition~\ref{prop-kdv}]
			For $\q$ small, we can parametrize $\q$ as a function of $\ve$ as 
			$$
			\q_\ve=\frac{\sqrt{2}\omega}{6}{\left(\ve^3-\frac{\ve^5}{10}\right)},
			$$
			so $\q_\ve$ is a strictly increasing  function  of $\ve \in [0,1]$.
			The idea is to express $\ve$ in terms of $\gq_\ve$  
			in order to obtain $E(v_\ve)$ in  \eqref{kdvmomentum} as a function  of $\gq_\ve.$
			Then  \eqref{eminqkdv} will follow from the facts that $p(v_\ve)=\gq_\ve$ and that
			$E_{\min}(\gq_\ve)\leq E(v_\ve)$.
			For notational simplicity, we set
			\bq
			\label{def:s}
			\gs_\ve:=\frac{3\sqrt{2}}{\omega}\gq_\ve=\ve^3-\frac{\ve^5}{10},
			\eq
			so that 
			\bq
			\label{est:ve}
			\ve^3/2\leq \gs_\ve\leq \ve^3\leq 1,\quad\text{ for all }\ve\in[0,1].
			\eq
			Applying Taylor's theorem and noticing that $\ve^5/10\leq \gs_\ve$, 
			we infer that there is some $\gp_\ve \in (\gs_\ve,2\gs_\ve)$ such that 
			\begin{align*}
				\ve^5&=\left( \gs_\ve+\frac{\ve^5}{10}\right)^{5/3} 
				= \gs_\ve^{5/3}+\frac{5\ve^5}{30}\gp_\ve^{2/3}.
			\end{align*}
			Using again \eqref{est:ve}, we conclude that 
			\begin{align*} \ve^5&=\gs_\ve^{5/3}+\mathcal{O}(\gs_\ve^{7/3})=\left(\frac{3\sqrt{2}}{\omega}\right)^{5/3} \q_\ve^{5/3}+\boO(\q_\ve^{7/3}). 
			\end{align*}
			Combining this  asymptotics with  \eqref{kdvmomentum}, \eqref{def:s} and \eqref{est:ve}, we get
			\begin{align*}
				E(v_\ve)=
				\frac{\omega}{3}\left(
				\frac{3\sqrt{2}}{\omega}\gq_\ve-\frac{3\ve^5}{20}\right)+\boO(\ve^{6}) 				=\sqrt{2}\q_\ve-K_1\q_\ve^{5/3}+\boO(\q_\ve^{2}),
			\end{align*}
			where $K_1=(3\sqrt{2}/\omega)^{5/3}\omega/20$. Since $E_{\min}(\gq_\ve)\leq E(v_\ve)$, 
			we conclude that \eqref{eminqkdv} holds true.
		\end{proof}
		We are now in position to prove  Theorem \ref{thm:curve}.
		\begin{proof}[Proof of Theorem \ref{thm:curve}]
			Statement (i) follows from 
			Lemma~\ref{pariteetsousadd},  Proposition~\ref{Lipschitz} and 
			Lemmas~\ref{lem:subadd} and \ref{lem:increasing}. From Propositions~\ref{prop:minoration} and \ref{prop-kdv}, we obtain (ii). Proposition~\ref{prop-concave} and Lemma~\ref{lem-Bf} establish  (iii). 
			 
By Corollary~\ref{cor:q*}, $\q_*>0.027$.  			 
Let us proof now the rest  of the statement in (iv).
Since $\Emin$ is nondecreasing on $[0,\q_*)$, if we suppose that $\Emin$ is not strictly increasing,
then  $\Emin$ is constant in some interval $[a,b]$, with $0\leq a<b< \q_*$. Since $\Emin$ is concave, 
this implies that $\Emin$ is constant on $[a,\infty)$ and therefore $\Emin(a)=\Emin(\q_*)$, which 
contradicts the definition of $\q_*$ in \eqref{q_*}. Finally, we remark that if 
$E(v)< \Emin(\q_*)$, for some $v\in \boEE$, using the fact that $\Emin(0)=0$, the 
intermediate value theorem gives us the existence of some $\tilde \q\in [0,\q_*)$ such that 
$E(v)=\Emin(\tilde \q)$. Since $\tilde \q<\q_*$, the definition of $\q_*$ implies that $\abs{v}$ does not
vanish.
 
We now establish (v). Arguing by contradiction, we show that  $\Emin(\q)<\sqrt2 {\q}$, for all $\q>0$. Indeed, in view of \eqref{droiteaudessus}, let us suppose that for some $\gp>0$ we have $\Emin(\gp)=\sqrt2 {\gp}$. Since $\Emin$ is concave, 
the function $\q\mapsto \Emin(\q)/\q$ nonincreasing, thus
\bqq
\sqrt 2=\frac{\Emin(\gp)}{\gp}\leq \frac{\Emin(\q)}{\q}\leq \sqrt2, \quad\text{ for all }\q\in (0,\gp).
\eqq
Therefore $\Emin(\q)= \sqrt2\q$, for all $\q\in (0,\gp)$, which contradicts (ii).

At this point, we recall that the concavity of $\Emin$ implies that $\Emin^+$
is right-continuous, so that, by Corollary~\ref{coro:strictadd}, 
we have  $E^+_{\min}(\q) \to E^+_{\min}(0)=\sqrt{2}$, as $\q \to 0^+$.  
 Using also  that $\Emin$ is nondecreasing, \eqref{droiteaudessus} and Corollary~\ref{coro:strictadd}, we deduce the other statements in~(v). 
		\end{proof}

\section{Compactness of the minimizing sequences}
\label{sec:suites}
We start now the study of the minimizing sequences associated with the curve $\Emin$.
The following result shows that the set $\boS_\q$ in Theorem~\ref{thm-existence-general}
is nonempty, and also allows us to establish the orbital stability in the next section.
\begin{thm}
	\label{thm:compacite}
	Assume that  $\W$ satisfies \ref{H0} and \ref{H-coer}, and that 
 $E_{\min}$ is concave on $\mathbb{R}^+$.
 	Let  $\gq\in (0,\gq_*)$ and  $(u_n)$ in $\boEEE$ be a sequence  satisfying
	\bq
	\label{mseq}
	p(u_n)\to \gq\quad \textup{ and }\quad E(u_n)\to E_{\min}(\gq),
	\eq
	as $n\to\infty$.
	Then there exists
	$v \in \boEEE$,
	a sequence of points $(x_n)$
	such that, up to a subsequence that we still denote by  $u_n$,
	the following convergences hold
	\begin{alignat}{2}
		\label{cvuniforme} 
		u_{n}(\cdot+x_n)&\to  v(\cdot),  &\quad \text{ in }&L^\infty_{\loc}(\R),\\
		\label{cvforteepot}
		1-|u_{n}(\cdot+x_n)|^{2} &\to 1-|v(\cdot)|^{2},   &\quad\text{ in }&L^2(\R),\\
		\label{cvfortegradient}
		u_{n}'(\cdot+x_n)&\to v'(\cdot),  &\quad\text{ in }&L^2(\R),
	\end{alignat}
	as $n\to\infty$. In addition, there is a constant $\nu>0$ such that 
	\bq\label{inf-u_n}
	\inf_{\R}\abs{u_n(\cdot +x_n)}\geq \nu, \quad \text{ for all }n.
	\eq
		In particular 	$p(v)=\gq$, $E(v)=E_{\min}(\gq),$ and  $v\in\boS_\q$.
\end{thm}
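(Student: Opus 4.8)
The plan is to run a concentration--compactness argument in the spirit of Lions~\cite{lions84}, exploiting the fine properties of $\Emin$ from Theorem~\ref{thm:curve} to exclude both vanishing and dichotomy; throughout I write $\eta_n:=1-\abs{u_n}^2$. \emph{A priori bounds and a uniform non-vanishing.} Since $E(u_n)$ is bounded, Lemma~\ref{lem-control-energy} gives uniform bounds for $\norm{\eta_n}_{L^2(\R)}$, $\norm{\eta_n}_{L^\infty(\R)}$, $\norm{u_n'}_{L^2(\R)}$ and $\norm{u_n}_{L^\infty(\R)}$. Fixing $\q'\in(\q,\q_*)$ and using that $\Emin$ is strictly increasing on $[0,\q_*)$ (Theorem~\ref{thm:curve}(iv)), we have $E(u_n)\le\Emin(\q')\le\sqrt2\,\q'$ for $n$ large; inserting this into \eqref{est-eta} exactly as in Corollary~\ref{cor:q*} yields $\norm{\eta_n}_{L^\infty(\R)}^2<1$ with a bound independent of $n$, hence a constant $\nu>0$ with $\inf_\R\abs{u_n}^2=1-\sup_\R\eta_n\ge\nu^2$. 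This bound is translation invariant, so it will give \eqref{inf-u_n} once the $x_n$ are chosen. Writing the global lifting $u_n=\rho_n e^{i\theta_n}$ (available since $\inf_\R\rho_n\ge\nu$), we have $\rho_n-1$ bounded in $H^1(\R)$ and $\theta_n'$ bounded in $L^2(\R)$.

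\emph{Concentration structure and a single cluster.} Because $\Emin$ is concave, $\Emin(\q)<\sqrt2\,\q$, so $\Sigma_\q>0$ by \eqref{defsigmaq}, and Lemma~\ref{lmm:evanes2} provides, for $n$ large, a point $\bar x_n$ with $\abs{\eta_n(\bar x_n)}\ge\Sigma_\q/L$ for a fixed $L>1$. Applying Lemma~\ref{lemmabgs} with $m_0<\Sigma_\q/L$, the first alternative is excluded by $\bar x_n$, so for each $n$ there are at most $l_0$ points where $\abs{\eta_n}\ge m_0$, and $\abs{\eta_n}\le m_0$ away from their unit neighbourhoods. Passing to a subsequence and relabelling, I group these points into clusters whose mutual distances stay bounded while distinct clusters drift infinitely apart, and I translate by some $x_n$ in the first cluster, setting $v_n:=u_n(\cdot+x_n)$. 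To see there is only one cluster, I build comparison functions in $\boEEE$ that coincide with $v_n$ near each cluster and equal $1$ elsewhere, using smooth cut-offs; Lemma~\ref{lem-decompW} shows that the nonlocal cross-terms between pieces supported at diverging distances tend to $0$, so $\q=\sum_j\q_j$ with $\q_j:=\lim_n p(\text{piece }j)$ and $\liminf_n E(v_n)\ge\sum_j\Emin(\q_j)$. Since every cluster contains a point where $\abs{\eta_n}\ge m_0$, Lemma~\ref{lem-control-energy} bounds the energy of each piece below by a fixed positive constant, so no piece can have $\q_j=0$; and if two pieces had $\q_j>0$, the strict subadditivity in Theorem~\ref{thm:curve}(v) would give $\sum_j\Emin(\q_j)>\Emin(\q)$, contradicting $E(v_n)\to\Emin(\q)$. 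Hence there is a single cluster, all the momentum is retained, and no mass escapes.

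\emph{Weak limits, strong convergence and conclusion.} Along a further subsequence $\rho_n-1\rightharpoonup\rho-1$ in $H^1(\R)$ and $\theta_n'\rightharpoonup\theta'$ in $L^2(\R)$, so that $v:=\rho e^{i\theta}\in\boEEE$ with $\inf_\R\abs v\ge\nu$, and the compact embedding $H^1_{\loc}\hookrightarrow C_{\loc}$ gives $v_n\to v$ in $L^\infty_{\loc}(\R)$ (this is \eqref{cvuniforme}) together with $\eta_n\to 1-\abs v^2$ locally uniformly; the concentration point kept at the origin forces $v$ to be nontrivial. From the previous step $p(v)=\q$, and the weak lower semicontinuity of $E_\k$ and of the nonnegative quadratic form $E_\p$ gives $\Emin(\q)\le E(v)\le\liminf_n E(v_n)=\Emin(\q)$, whence $E(v)=\Emin(\q)$, $v\in\boS_\q$, and moreover $E_\k(v_n)\to E_\k(v)$ and $E_\p(v_n)\to E_\p(v)$ separately. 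The convergence of $E_\k$ with $v_n'\rightharpoonup v'$ upgrades to $v_n'\to v'$ in $L^2(\R)$, proving \eqref{cvfortegradient}; this in turn yields $\eta_n'\to\eta'$ in $L^2(\R)$, and combining the bound $2\pi\norm{\eta_n-\eta}_{L^2}^2\le\int_\R(\wh\W+\tfrac12\xi^2)\abs{\hat\eta_n-\hat\eta}^2$ (valid by the coercivity in~\ref{H-coer}) with the convergence $\int_\R\wh\W\abs{\hat\eta_n}^2\to\int_\R\wh\W\abs{\hat\eta}^2$, the strong convergence $\eta_n'\to\eta'$, and the fact that $\wh\W\hat\eta\in L^2(\R)$ pairs against the weakly convergent $\hat\eta_n$, gives \eqref{cvforteepot}.

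\emph{Main obstacle.} The delicate point is the single-cluster step: unlike the local case, cutting $v_n$ into pieces does not split $E_\p$ exactly, so one must simultaneously control the cut-off remainders (via the localized estimates behind Lemma~\ref{lmm:generalpE}) and show, through Lemma~\ref{lem-decompW}, that the convolution cross-terms between pieces at diverging distances vanish. A secondary subtlety is the final upgrade of $\eta_n\rightharpoonup 1-\abs v^2$ to strong $L^2$ convergence, which relies crucially on the coercivity built into~\ref{H-coer}.
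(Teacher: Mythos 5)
Your proposal follows the same concentration--compactness scheme as the paper, and your final step (splitting $E=E_\k+E_\p$, upgrading $u_n'\rightharpoonup v'$ to strong $L^2$ convergence from the convergence of the kinetic part, then recovering \eqref{cvforteepot} from the coercivity $1\le\wh\W(\xi)+\xi^2/2$ in \ref{H-coer}) is essentially the paper's. However, there are two genuine gaps earlier in the argument. First, your opening claim of a \emph{uniform} bound $\inf_\R|u_n|\ge\nu>0$ is unjustified. The computation of Corollary~\ref{cor:q*} gives $\norm{1-|v|^2}_{L^\infty}^2\le 12\sqrt2\,\q'\,(1+12\sqrt2\,\q'+2(3\sqrt2\,\q')^{1/2})$, and this right-hand side is below $1$ only for $\q'\lesssim 0.027$; for general $\q\in(0,\q_*)$ (recall $\q_*=\pi/2$ when $\W=\delta_0$) it is useless. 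The definition \eqref{q_*} of $\q_*$ yields only the qualitative statement that each individual $v$ with $E(v)\le E_{\min}(\q')$ satisfies $\inf_\R|v|>0$, not a bound uniform along the sequence. This gap propagates: your global $L^2(\R)$ bound on $\theta_n'$, the weak convergence $\theta_n'\rightharpoonup\theta'$ in $L^2(\R)$, the conclusion $\inf_\R|v|\ge\nu$, and \eqref{inf-u_n} itself all rest on it. The paper avoids this entirely: it only uses local phase bounds (valid because the weak limit is non-vanishing, by Theorem~\ref{thm:curve}(iv) applied to $E(v)\le E_{\min}(\q)<E_{\min}(\q_*)$), and obtains \eqref{inf-u_n} only at the very end, by combining \eqref{compaceq3} (modulus close to $1$ away from the single cluster) with the local uniform convergence to the non-vanishing limit.

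Second, and more seriously, your single-cluster step asserts $\q=\sum_j\q_j$ ("no mass escapes") as a consequence of the vanishing of the nonlocal cross-terms. But Lemma~\ref{lem-decompW} (and the estimate \eqref{boutrestantpotence} in the paper) only governs the splitting of the \emph{potential energy}; it says nothing about momentum. The region away from the clusters, where $|\eta_n|\le m_0$, is unbounded and may a priori carry a nonzero residual momentum, so the correct identity is $\q=\sum_j\q_j+\tilde\q$. Excluding $\tilde\q\neq 0$ is the heart of the paper's proof (Claims~\ref{claim:ineq} and \ref{pasdichoto}): Lemma~\ref{lmm:generalpE}, applied with cut-offs on that exterior region, gives $\sqrt2\,(1-\tilde\Sigma_\q)\,|\tilde\q|\le\tilde E$ where $\tilde E$ is the residual energy, and then the concavity of $\Emin$ --- through the chord-slope inequality and $E_{\min}(\q)/\q=\sqrt2(1-\Sigma_\q)<\sqrt2(1-\tilde\Sigma_\q)$ --- combined with subadditivity forces $\tilde E=\tilde\q=0$. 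In your proposal, concavity enters only via $\Sigma_\q>0$ and strict subadditivity between clusters; it is never used to rule out momentum (and energy) leaking to infinity, and your mention of Lemma~\ref{lmm:generalpE} as a tool to "control cut-off remainders" misses its actual role. A minor additional slip in the same step: "each piece has energy bounded below, hence no piece can have $\q_j=0$" is a non sequitur (positive energy does not imply nonzero momentum); the correct argument is that a zero-momentum cluster would waste a fixed amount of energy, contradicting $E(u_n)\to E_{\min}(\q)\le\sum_j E_{\min}(\q_j)$ --- and this, too, can only be run after the residual terms $\tilde\q,\tilde E$ have been disposed of.
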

In the rest of the section  we will assume that the hypotheses in Theorem~\ref{thm:compacite}
are satisfied and therefore the conclusion in Theorem~\ref{thm:curve}-(v) holds. Thus, in the sequel, $\Emin$
is strictly subadditive and $\Emin(\q)<\sqrt{2}\q$, for all
$\q>0$.

For the sake of clarity, we state first the following elementary lemma.
\begin{lem}
	\label{Thsolitonw}
	Let $(u_n)$ be a sequence as in Theorem~\ref{thm:compacite}.
	Then there is function  $u \in \boEEE$
	such that, up to a subsequence, 
	\begin{alignat}{2}
		\label{cvl2}
		u_n &\to u,&\quad \text{ in }&L^{\infty}_{\loc}(\R),\\
		\label{cv:grad:weak}
		u'_n &\rightharpoonup u', &\quad \text{ in }&L^2(\mathbb{R}),\\
		\label{weak-eta}
		\eta_n:=1-|u_n|^2&\rightharpoonup \eta:=1-|u|^2, &\quad\text{ in }&L^2(\mathbb{R}).
	\end{alignat}
In addition,
$E(u)\leq E_{\min}(\gq)$, 
and writing  $u=\rho e^{i\phi}$ and $u_n=\rho_n e^{i\phi_n}$,
 the following relations hold, up to a subsequence,
 for all $A>0$,
	\begin{align}
		\label{convergencespropcin}
		&\int_{-A}^{A}|u'|^2\leq \underset{n \to \infty}\liminf\int_{-A}^{A}|u_{n}'|^2,\\
		\label{convergencesproppot}
		&\int_{-A}^{A}(\W\ast\eta)\eta=
		\lim_{n\to\infty}\int_{-A}^{A}(\W\ast\eta_n)\eta_n ,\\
		\label{convergencesprop2}
		&\int_{-A}^{A}\eta\phi'=\underset{n \to \infty}\lim\int_{-A}^{A}\eta_n\phi_{n}'.
	\end{align}

\end{lem}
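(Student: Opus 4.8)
The plan is to combine the a priori estimates of Section~\ref{sec:estimations} with standard weak-compactness and lower-semicontinuity arguments, and only afterwards to upgrade the limit to the nonvanishing space by means of Theorem~\ref{thm:curve}.

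First I would extract the limit. Since $(u_n)$ is a minimizing sequence, $E(u_n)$ is bounded, so $E_\k(u_n)=\frac12\int_\R|u_n'|^2$ is bounded (the potential energy being nonnegative by \ref{H0}), and Lemma~\ref{lem-control-energy} bounds $\norm{\eta_n}_{L^2}$ and $\norm{\eta_n}_{L^\infty}$, hence also $\norm{u_n}_{L^\infty}$. Thus $(u_n)$ is bounded in $H^1([-A,A])$ for every $A>0$, and by the compact embedding $H^1([-A,A])\hookrightarrow C([-A,A])$ together with a diagonal extraction, up to a subsequence $u_n\to u$ in $L^\infty_{\loc}(\R)$, which is \eqref{cvl2}. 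Since $(u_n')$ is bounded in $L^2(\R)$ and $u_n\to u$ in the sense of distributions, any weak limit of $u_n'$ must be $u'$, so $u_n'\wto u'$ in $L^2(\R)$ and $u'\in L^2(\R)$, giving \eqref{cv:grad:weak}; likewise $(\eta_n)$ is bounded in $L^2(\R)$ and converges pointwise to $1-|u|^2$ by \eqref{cvl2}, so its weak limit is $\eta=1-|u|^2\in L^2(\R)$, which is \eqref{weak-eta}. Hence $u\in\boEE$.

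Next I would pass to the infimum. Weak lower semicontinuity of the $L^2$ norm gives $E_\k(u)\le\liminf E_\k(u_n)$, while $E_\p(v)=\frac1{8\pi}\int_\R\wh\W\,|\hat\eta|^2$ is a nonnegative bounded quadratic form of $\eta$, hence convex and weakly lower semicontinuous, so $E_\p(u)\le\liminf E_\p(u_n)$; adding, $E(u)\le\liminf E(u_n)=\Emin(\gq)$. As $\Emin$ is assumed concave, Theorem~\ref{thm:curve}(iv) says $\Emin$ is strictly increasing on $[0,\gq_*)$, so from $\gq<\gq_*$ we get $E(u)\le\Emin(\gq)<\Emin(\gq_*)$; the final clause of Theorem~\ref{thm:curve}(iv) then forces $u\in\boEEE$. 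In particular $m:=\inf_\R|u|>0$, so the lifting $u=\rho e^{i\phi}$ is well defined. Finally I would prove the three local relations on a fixed interval $[-A,A]$. Restriction to $[-A,A]$ preserves weak $L^2$ convergence, so \eqref{convergencespropcin} is again weak lower semicontinuity of the norm. For \eqref{convergencesproppot} I use that uniform convergence $u_n\to u$ on $[-A,A]$ forces $\eta_n\mathds{1}_{[-A,A]}\to\eta\mathds{1}_{[-A,A]}$ strongly in $L^2(\R)$, whereas $\W*\eta_n\wto\W*\eta$ in $L^2(\R)$ since $\W*$ is bounded (and self-adjoint) on $L^2$ by \ref{H0}; the weak–strong pairing then yields the stated limit. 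For \eqref{convergencesprop2} I write $\eta_n\phi_n'=\Im(u_n'\,g_n)$ with $g_n=\frac{\eta_n}{|u_n|^2}\bar u_n$, observe that $|u_n|\ge m/2$ on $[-A,A]$ for $n$ large so that $g_n\to \frac{\eta}{|u|^2}\bar u$ strongly in $L^2([-A,A])$, and combine this with $u_n'\wto u'$ to obtain $\int_{-A}^A\eta_n\phi_n'\to\int_{-A}^A\Im\!\big(u'\tfrac{\eta}{|u|^2}\bar u\big)=\int_{-A}^A\eta\phi'$.

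The step I expect to be most delicate is making \eqref{convergencesprop2} rigorous: it rests on the uniform lower bound $|u_n|\ge m/2$ on compact sets, which is not available directly from the energy bound (the infima $\inf_\R|u_n|$ could a priori degenerate) but becomes available only after $u\in\boEEE$ has been identified through Theorem~\ref{thm:curve}(iv). For this reason the logical order—a priori bounds, weak limit, the energy inequality $E(u)\le\Emin(\gq)$, nonvanishing of $u$, and only then the momentum convergence—is essential, and reversing it would leave the phase $\phi$ and the density $\eta_n\phi_n'$ without a controlled denominator.
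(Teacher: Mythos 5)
Your proof is correct and follows essentially the same route as the paper: a priori bounds from Lemma~\ref{lem-control-energy}, weak compactness plus Rellich--Kondrachov for \eqref{cvl2}--\eqref{weak-eta}, weak lower semicontinuity of the kinetic part and of the convex potential functional for $E(u)\leq\Emin(\gq)$, Theorem~\ref{thm:curve}(iv) to place $u$ in $\boEEE$, and weak--strong pairing for the three local limits. The only (cosmetic) difference is in \eqref{convergencesprop2}: the paper uses the uniform lower bound on $|u_n|$ over $[-A,A]$ to extract weak $L^2([-A,A])$ convergence of the phase derivatives $\phi_n'\rightharpoonup\phi'$, whereas you avoid lifting the phases of $u_n$ by writing $\eta_n\phi_n'=\Im(u_n'g_n)$ and pairing $u_n'\rightharpoonup u'$ against the strongly convergent factor $g_n$ --- an equivalent argument resting on exactly the same lower bound.
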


\begin{proof}
In view of \eqref{mseq},   $E(u_n)$ is bounded, 
so that, using also Lemma~\ref{lem-control-energy}, 
	we deduce that  $u'_n$ and that $\eta_n:=1-\abs{u_n}^2$ are bounded in $L^2(\R)$ and that $u_n$ is bounded in $L^\infty(\R)$. 
	Therefore,  by weak compactness in Hilbert spaces and the Rellich--Kondrachov theorem,  there is a function $u \in H^{1}_{\loc}(\mathbb{R})$ 
	such that, up to a subsequence, the convergences in \eqref{cvl2}--\eqref{weak-eta} hold, 
as well as  \eqref{convergencespropcin}, and also
	\bq
	\label{liminfsurtoutrkin}
	\Vert u'\Vert_{L^{2}(\mathbb{R})}\leq \underset{n\to\infty}\liminf \Vert u_n'\Vert_{L^{2}(\mathbb{R})}.
		\eq 
		At this point we remark that the function $B(f)=\int_{\R} (\W*f)f$ is continuous and convex in $L^2(\R)$, since $\wh \W\geq 0$ a.e.
	Thus it is  weakly lower semi-continuous, so that 
	\bq
	\label{for-contrad}
B(u) \leq \underset{n\to\infty}\liminf B(u_n).
	\eq 
Combing with \eqref{liminfsurtoutrkin}, we deduce  that  $E(u)\leq E_{\min}(\gq)$. 
Using \eqref{weak-eta} and the fact that  $\W \in \mathcal{M}_{2}(\mathbb{R})$, 
we get 
	\bq
	\label{weak-W}
	\W\ast \eta_n\rightharpoonup \W\ast \eta \quad\textup{ in } L^{2}(\mathbb{R}), 
	\eq 
which together with  \eqref{cvl2} lead to \eqref{convergencesproppot}.
	
	Since $\q\in (0,\q_*)$, Theorem~\ref{thm:curve} and the fact that $E(u)\leq E_{\min}(\gq)<\Emin(\gq_*)$ imply that  $u\in \boEEE$, so that we can write $u=\rho e^{i\phi}$.
	Then, setting $u_n=\rho_n e^{i\phi_n}$ and by using that $E_{\k}(u_n)$ is bounded and \eqref{cvl2}, we get for $A>0$,
	$$ \int_{-A}^A\phi_{n}'^2\leq \frac{1}{\underset{[-A,A]}\inf|u_n|^2}\int_{\mathbb{R}}\rho_{n}^2\phi_{n}'^2 \leq  \frac{4}{\underset{[-A,A]}\inf|u|^2}E_{\k}(u_n),$$
	 so that, up to a subsequence, 
	$ \phi_n' \rightharpoonup \phi'$ in $L^{2}([-A,A]).$
	Using again  \eqref{cvl2}, we then establish \eqref{convergencesprop2}.
	\end{proof}
\begin{proof}[Proof of Theorem \ref{thm:compacite}] 
	By hypothesis, we can assume that 
	\bq\label{borneEn}
	E(u_n)\leq 2E_{\min}(\q).
	\eq
	 Since   $\Emin(\q)<\sqrt{2}\q$,
	we have 	$\Sigma_\gq\in (0,1),$ so that applying Lemma~\ref{lmm:evanes2} 
	with $L=1+\Sigma_\gq$, and Lemma~\ref{lemmabgs}
	with $E=2E_{\min}(\q)$ and $m_0=\tilde \Sigma_\gq:=\Sigma_\gq/L$, we deduce that 
	there exist an integer $l_\q$, depending on $E$ and $\q$, but not on $n$,
	and  points  $x_1^n,x_2^n,\dots,x_{l_n}^n$, with $l_n \leq l_\q$
	such that 
	\bq
	\label{compacteq} 
	|1-|u_{n}(x_j^n)|^2|\geq \tilde\Sigma_\gq,\quad \forall 1\leq j\leq l_n 
	\eq
	and
	\bq
	|1-|u_{n}(x)|^2|\leq \tilde\Sigma_\gq,\quad \forall x\in \mathbb{R}\setminus\bigcup_{j=1}^{l_n}[x_j^n-1,x_j^n+1]. 
	\label{compacteq2}
	\eq
	Since the sequence $(l_n)$ is bounded, we can assume that, up to a subsequence, $l_n$ does not depend on $n$ and set $l_*=l_n$. Passing again to a
	further subsequence and relabeling the points 
	$(x_j^n)_{j}$ if necessary, there exist some integer $\ell$,  with $1\leq  \ell\leq l_*$, and some number $R>0$ such that
	
	\bq
	\label{xi:xk:inf}
	|x_{k}^{n}-x_{j}^{n}|\underset{n\to \infty}\longrightarrow \infty,~\forall 1\leq k\neq j\leq \ell
	\eq
	and
	\bqq
	x_{j}^{n} \in \displaystyle\mathop{\cup}_{k=1}^{\ell}B(x_{k}^n,R),~\forall \ell<j \leq l_*.
	\eqq
	Hence, by \eqref{compacteq2}, we deduce that
	\bq 1-\tilde \Sigma_\gq\leq  |u_{n}|^2\leq 1+\tilde \Sigma_\gq,\quad\text{on } \mathbb{R}\setminus\bigcup_{j=1}^{\ell}B(x_{j}^n,R+1). 
	\label{compaceq3}
	\eq
	Applying Lemma~\ref{Thsolitonw} to the translated sequence $u_{n,j}(\cdot)=u_{n}(\cdot+x_{j}^{n})$, 
	we infer that there exist functions $v_{j}=\rho_j e^{i\phi_j}\in \boEEE$, $j\in\{1,\dots,\ell\}$,  satisfying
	the following convergences 
	\begin{alignat}{2}
		\label{cvuniformedemo}
		u_{n,j} &\to v_j,\quad &\text{ in }&L^{\infty}_{\loc}(\R),\\
		\label{cv:weak:grad}
		u'_{n,j} &\rightharpoonup v_j', \quad &\text{ in }&L^2(\mathbb{R}),\\
		\label{weak:eta:j}
		\eta_{n,j}:=1-|u_{n,j}|^2&\rightharpoonup \eta_j:=1-|v_j|^2, &\ \text{ in }&L^2(\mathbb{R}),
	\end{alignat}
	as $n\to\infty$, and also 
	\begin{gather}
		\label{evjeminq}
\Emin(\q_j)\leq 		E(v_j)\leq E_{\min}(\gq),
		\\
		\label{cvfaibleenergiekinsegment}
		\int_{-A}^{A}|v_j'|^2\leq \underset{n\to \infty}\liminf\int_{-A}^{A}|u_{n,j}'|^2,
		\\
		\underset{n\to \infty}\lim\int_{-A}^{A}(\W\ast\eta_{n,j})\eta_{n,j}=\int_{-A}^{A}(\W\ast\eta_j)\eta_j,
		\label{cvfaibleenergiepotsegment}
		\\
		\label{cvfaiblemomentsegment}
\underset{n \to \infty}\lim	\int_{-A}^{A}\eta_{n,j}\phi_{n,j}'=\int_{-A}^{A}\eta_j\phi_{j}',
	\end{gather}
	where $u_{n,j}=\rho_{n,j}e^{i\phi_{n,j}}$ and $\q_j=p(v_j)$.
%
%
	Moreover, using  \eqref{compacteq} and 
	\eqref{cvuniformedemo}, we infer that
		\bq |1-|v_{j}(0)|^2| \geq \tilde\Sigma_{q}.
	\label{ineqpas0u2}
	\eq
		In particular, $v_j$ cannot be a constant function of modulus one.
%
Now we focus on proving the following claim.
	\begin{claim}
		\label{step2}
		There exist $\tilde{\gq}\in \R$ and $\tilde{E}\geq 0$ such that
		\begin{align}
		E_{\min}(\gq)&\geq \sum_{j=1}^{\ell}E_{\min}(\gq_j)+\tilde{E}\quad \text{ and}
		\label{inegEqfinal}
		\\
		\gq&=\sum_{j=1}^{\ell}\gq_j+\tilde{\gq}.
		\label{inegqfinal}
		\end{align}
	\end{claim}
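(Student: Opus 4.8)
The plan is to localize energy and momentum around the $\ell$ bubbles and to absorb everything else into nonnegative residuals. The momentum identity is mere bookkeeping: since $p(u_n)\to\gq$ and, for each fixed $A>0$, rewriting \eqref{cvfaiblemomentsegment} through the translation $u_{n,j}=u_n(\cdot+x_j^n)$ gives $\tfrac12\int_{B(x_j^n,A)}\eta_n\phi_n'\to\tfrac12\int_{-A}^A\eta_j\phi_j'$ as $n\to\infty$ and then $\to p(v_j)=\gq_j$ as $A\to\infty$, I would simply set $\tilde\gq:=\gq-\sum_{j=1}^\ell\gq_j$, so that \eqref{inegqfinal} holds by construction. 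The whole content therefore lies in \eqref{inegEqfinal}. Setting $\tilde E:=E_{\min}(\gq)-\sum_{j}E_{\min}(\gq_j)$, it suffices to prove the bubble lower bound $\liminf_n E(u_n)\ge\sum_{j=1}^\ell E(v_j)$: indeed $E_{\min}(\gq)=\lim_n E(u_n)\ge\sum_j E(v_j)\ge\sum_j E_{\min}(\gq_j)$ by \eqref{evjeminq}, which gives $\tilde E\ge0$ and hence \eqref{inegEqfinal}.

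For the kinetic part this is immediate. Fixing $A>0$ and using that the balls $B(x_j^n,A)$ are pairwise disjoint for $n$ large by \eqref{xi:xk:inf}, I would discard the nonnegative contribution outside the bubbles and apply the translated lower semicontinuity \eqref{cvfaibleenergiekinsegment} to obtain $\liminf_n E_\k(u_n)\ge\sum_j\tfrac12\int_{-A}^A|v_j'|^2$; letting $A\to\infty$ yields $\liminf_n E_\k(u_n)\ge\sum_j E_\k(v_j)$.

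The delicate point, and the main obstacle, is the nonlocal potential energy $E_\p(u_n)=\tfrac14 B(\eta_n)$, which cannot be split by merely restricting the domain of integration. Here I would use that $B$ is a nonnegative, continuous, symmetric bilinear form on $L^2(\R)$ (positivity from $\wh\W\ge0$; continuity and symmetry from $\|\W\|_{\boM_2}=\|\wh\W\|_{L^\infty}$ and \eqref{sym}). Choose $\chi_A\in C_c^\infty(\R)$ with $0\le\chi_A\le1$, $\chi_A\equiv1$ on $[-A,A]$ and $\supp\chi_A\subset[-2A,2A]$, and decompose, for $n$ large so that the translated supports are disjoint,
$$\eta_n=\sum_{j=1}^\ell\eta_n^{(j)}+\eta_n^{(0)},\qquad \eta_n^{(j)}=\eta_n\,\chi_A(\cdot-x_j^n),\quad \eta_n^{(0)}=\eta_n\Big(1-\sum_{j=1}^\ell\chi_A(\cdot-x_j^n)\Big).$$
Expanding $B(\eta_n)$ by bilinearity, the diagonal terms $B(\eta_n^{(j)})$ converge to $B(\eta_j\chi_A)$ because $\eta_{n,j}\chi_A\to\eta_j\chi_A$ strongly in $L^2(\R)$ (this strong convergence is the crucial gain: it follows from the local uniform convergence \eqref{cvuniformedemo} and the compact support of $\chi_A$), while $B(\eta_n^{(0)})\ge0$ may be discarded. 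Writing $\langle f,g\rangle_\W=\int_\R(\W*f)g$, the off-diagonal bubble terms $\langle\eta_n^{(j)},\eta_n^{(k)}\rangle_\W$, $j\ne k$, vanish in the limit because, after centering at $x_j^n$, they pair the strongly convergent $\eta_{n,j}\chi_A$ with $\W*\zeta_n$, where $\zeta_n$ is supported on sets escaping to infinity and hence $\rightharpoonup0$; the bubble--remainder terms instead satisfy $\langle\eta_n^{(j)},\eta_n^{(0)}\rangle_\W\to\int_\R(\W*(\eta_j\chi_A))\,\eta_j(1-\chi_A)$, which is not zero but tends to $0$ as $A\to\infty$ since $\eta_j(1-\chi_A)\to0$ in $L^2(\R)$.

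Collecting these facts gives, for every fixed $A$,
$$\liminf_{n\to\infty}B(\eta_n)\ge\sum_{j=1}^\ell B(\eta_j\chi_A)+2\sum_{j=1}^\ell\int_\R(\W*(\eta_j\chi_A))\,\eta_j(1-\chi_A),$$
and letting $A\to\infty$ (so $\eta_j\chi_A\to\eta_j$ in $L^2$ and the last integrals vanish) produces $\liminf_n B(\eta_n)\ge\sum_j B(\eta_j)$, that is $\liminf_n E_\p(u_n)\ge\sum_j E_\p(v_j)$. Adding the kinetic bound yields $E_{\min}(\gq)\ge\sum_j E(v_j)$ and closes the argument. I expect the bookkeeping of the cross terms---establishing the strong $L^2$ convergence $\eta_{n,j}\chi_A\to\eta_j\chi_A$ and the escaping-support, weak-null argument that kills the off-diagonal interactions---to be the technical heart; the remainder is the standard interplay of weak lower semicontinuity, nonnegativity of the discarded pieces, and the double limit $n\to\infty$ then $A\to\infty$.
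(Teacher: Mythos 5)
Your argument is correct as a proof of the claim as literally stated, but it is not the paper's argument, and the difference matters for how the claim is used afterwards. You exploit (implicitly) that the statement is formally weak: since $\tilde{\gq}$ is an arbitrary real number appearing only in \eqref{inegqfinal}, and $\tilde{E}\geq 0$ appears only in \eqref{inegEqfinal}, the claim reduces to the superadditivity bound $E_{\min}(\gq)\geq\sum_{j}E_{\min}(\gq_j)$, which you then prove by a clean bubble lower bound: smooth cutoffs, bilinearity of $B$, strong $L^2$ convergence of $\eta_{n,j}\chi_A$ coming from \eqref{cvuniformedemo}, weak-null pairing for the escaping cross terms, and nonnegativity of $\wh \W$ to discard the exterior piece $B(\eta_n^{(0)})$. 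These estimates are sound, and your treatment of the nonlocal term is close in spirit to the paper's own (which uses the sharp cutoffs $f_{n,\mu}$, $g_{n,\mu}$ and the limit \eqref{boutrestantpotence}). The paper, however, proves more than the statement records: it \emph{constructs} $\tilde{\gq}$ and $\tilde{E}$ as the double limits \eqref{def:qtilde} and \eqref{def:tildeE} of the momentum and the energy of $u_n$ localized on the exterior region $\boA_\mu$, instead of discarding the exterior energy. This extra structure is invisible in the claim's wording but is exactly what Claim~\ref{claim:ineq} needs: the inequality $\sqrt{2}(1-\tilde\Sigma_\gq)|\tilde{\gq}|\leq \tilde{E}$ is obtained by applying Lemma~\ref{lmm:generalpE} to $u_n$ on $\boA_{\mu_m}$, i.e.\ by comparing the localized momentum with the localized energy \emph{of the same region}. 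With your definitions ($\tilde{\gq}:=\gq-\sum_j\gq_j$ by fiat, $\tilde{E}:=$ the slack in superadditivity), that proof cannot be run, and without Claim~\ref{claim:ineq} one cannot rule out dichotomy ($\tilde{\gq}\neq 0$) in Claim~\ref{pasdichoto}. So your route buys a shorter, self-contained proof of the stated inequalities, but it defers rather than eliminates the localization work: to complete the compactness theorem one must still produce the localized residual quantities, and that construction is the real content of the paper's proof of this claim.
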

For this purpose, we fix $\mu>0$. By the dominated convergence theorem,
there exists
\bq
R_{\mu}\geq \max\left(R+1,\frac{1}{\mu}\right),
\label{defrayonmu}
\eq  
such that, for $1 \leq j\leq \ell,$ 
\bq 
\frac{1}{2}\int_{-R_\mu}^{R_\mu}|v_j'|^2\geq E_{\textup{kin}}(v_j)-\frac{\mu}{2\ell}.
\label{ekinvjsurboules}
\eq 
By \eqref{xi:xk:inf}, we can assume that $B(x_{k}^n,R_{\mu})\cap B(x_{j}^{n},R_{\mu})=\emptyset$, for all 
$1 \leq k\neq j\leq \ell.$ Hence, using \eqref{cvfaibleenergiekinsegment} and \eqref{ekinvjsurboules}, we deduce that there exists $N_{\mu}\geq 1$, such that for all $n\geq N_{\mu}$ and for all $1 \leq k\neq j \leq \ell,$ 
\bq 
\frac{1}{2}\int_{-R_\mu}^{R_\mu}|u_{n,j}'|^2\geq E_{\textup{kin}}(v_j)-\frac{\mu}{\ell}.
\label{ekinunversusekinvj}
\eq
By adding the inequality \eqref{ekinunversusekinvj} from $j=1$ to $j=\ell,$ we conclude that 
\bq
			\label{inegenergkinetape2}
\frac{1}{2}\sum_{j=1}^{\ell}\int_{-R_\mu}^{R_\mu}|u_{n,j}'|^2\geq \sum\limits_{j=1}^{\ell}E_{\k}(v_{j})-\mu, 
\quad \text{ for all }n\geq N_\mu.
\eq
 Similarly, using again the dominated convergence theorem and possibly increasing  $R_\mu$, we obtain for all $1\leq j\leq \ell,$ 
\bq
\left| \frac{1}{4}\int_{-R_{\mu}}^{R_{\mu}}(\W\ast \eta_j)\eta_j-E_{\p}(v_j)\right| \leq \frac{\mu}{2\ell}. 
\label{epotvjsurboules}
\eq 
By \eqref{cvfaibleenergiepotsegment}, and increasing  $N_\mu$ if necessary, we have for $n\geq N_{\mu},$ 
\bq
\left| \frac{1}{4}\int_{-R_{\mu}}^{R_{\mu}}(\W\ast \eta_j)\eta_j-\frac{1}{4}\int_{-R_{\mu}}^{R_{\mu}}(\W\ast\eta_{n,j})\eta_{n,j}\right| \leq \frac{\mu}{2\ell}.
\label{epotvjunsurboules}
\eq 
Combining \eqref{epotvjsurboules}, \eqref{epotvjunsurboules}
and adding from $j=1$ to $j=\ell,$ we deduce that
\bq
\label{inegenergpotetape2}
\left| \frac{1}{4}\sum_{j=1}^{\ell}\int_{-R_\mu}^{R_\mu} (\W\ast \eta_{n,j})\eta_{n,j}-\sum_{j=1}^{\ell}E_{\p}(v_j)\right|\leq \mu,
\quad \text{ for all }n\geq N_\nu.  
\eq
Applying the same argument to $\eta_{n,j}\phi_{n,j}'$ and $\eta_{j}\phi_{j}'$
instead of $(\W\ast\eta_{n,j})\eta_{n,j}$ and $(\W\ast\eta_j)\eta_j$, we get
\bq
			\label{inegmomentetape2}
			\left| \frac{1}{2}\sum_{j=1}^{\ell}\int_{-R_{\mu}}^{R_{\mu}}\eta_{n,j}\phi_{n,j}'-\sum\limits_{j=1}^{\ell}\gq_j \right| \leq \mu.
\eq	
	Now we handle the integrals on
	\bqq
	\boA_\mu:=\mathbb{R}\setminus\bigcup_{j=1}^{\ell}B(x_{j}^n,R_\mu).
	\eqq
		 Let us start with the momentum.	
We split $p(u_n)$ as
	\begin{align}
		p(u_n) =\frac{1}{2}\sum_{j=1}^{\ell}\int_{-R_\mu}^{R_\mu}\eta_{n,j}\phi_{n,j}'+
		{p_{\boA_{\mu}}(u_n)},\quad  \text{ with  }\
{p_{\boA_{\mu}}(u_n)}:=\frac{1}{2}\int_{\boA_{\mu}}\eta_n\phi_{n}'. 
		\label{decoupagepun}
	\end{align}
	By \eqref{est-eta-L2}, \eqref{mom-young}, \eqref{borneEn} and \eqref{compaceq3}, we obtain
	\begin{align*}
\sqrt{2}\abs{p_{\boA_{\mu}}(u_n)}
		 &\leq  \frac{1}{4}\int_{\boA_{\mu}}\eta_n^2+\frac{1}{2(1-{\tilde\Sigma_\gq})}\int_{\boA_{\mu}}\rho_n^2
		 \phi'^2_n \leq  C(\gq).
	\end{align*}  
	Hence, $p_{\boA_{\mu}}(u_n)$ is is uniformly bounded with respect to  $n$ and $\mu,$  so that,
	passing possibly to a subsequence (in $n$ and $\mu$), we infer that there exists $\tilde{\gq} \in \mathbb{R}$ such that
	\bq 
	\underset{\mu\to 0}\lim~\underset{n\to\infty}\lim  
	p_{\boA_{\mu}}(u_n) =\tilde{\q}. \label{def:qtilde}
	\eq 
	Hence, passing to the limit $n\to \infty$ and  then letting $\mu\to 0$ in \eqref{inegmomentetape2}, and
	using  \eqref{decoupagepun}, we obtain \eqref{inegqfinal}.
To prove \eqref{inegEqfinal}, we first remark that 
since $E_{\k}(u_n)$ and $E_{\p}(u_n)$ are bounded, passing possibly to a subsequence, 
there are constants $E_{k}, \tilde E_{k},E_p\geq  0$ such that $E_{\min}(\gq)=E_k+E_p$, 
\begin{align*}
 E_{\k}(u_n) \to E_{k},\quad 
 E_{\p}(u_n) \to E_{p},
 \end{align*}
 and 
 \bqq
\lim_{\mu\to 0} \lim_{n\to\infty}  \frac{1}{2}\int_{\boA_{\mu}}|u_n'|^2= \tilde E_k.
  \eqq
Thus, decomposing  the kinetic part as
	\begin{align*}
		E_{\k}(u_n)&
		=\frac{1}{2}\int_{\boA_{\mu}}|u_n'|^2+\frac{1}{2}\sum_{j=1}^{\ell}\int_{-R_{\mu}}^{R_{\mu}}|u_{n,j}'|^2,
	\end{align*}
and using  \eqref{inegenergkinetape2}, we deduce as before that
	\bq
	E_{k}\geq \sum_{j=1}^{\ell}E_{\k}(v_j)+\tilde{E_{k}}. 
	\label{ineqekinenmu}
	\eq 
To prove \eqref{inegEqfinal}, it remains to study the potential energy.
However, $E_\p(u)$ is more involved  because of the nonlocal interactions.
To make the decomposition,  we introduce the functions 
 $$g_{n,\mu}(x):=\eta_n(x)\mathds{1}_{\mathop{\cup}\limits_{j=1}^{\ell}B(x_{j}^n,R_{\mu})}(x)\quad \text{  and }\quad f_{n,\mu}(x):=\eta_n(x)\mathds{1}_{\boA_{\mu}}(x),$$
so that 
	\begin{align}
		E_{\p}(u_n)&=\frac{1}{4}\int_{\mathbb{R}}(\W\ast\eta_n)(f_{n,\mu}+g_{n,\mu})\nonumber  =\frac{1}{4}\int_{\mathbb{R}}(\W\ast\eta_n)f_{n,\mu}+\frac{1}{4}\int_{\mathop{\cup}\limits_{j=1}^{\ell}B(x_{j}^n,R_\mu)}(\W\ast\eta_n)\eta_n \\&= \frac{1}{4}\int_{\mathbb{R}}(\W\ast g_{n,\mu})f_{n,\mu}+\frac{1}{4}\int_{\mathbb{R}}(\W\ast f_{n,\mu})f_{n,\mu}+\frac{1}{4}\sum_{j=1}^{\ell}\int_{-R_{\mu}}^{R_{\mu}}(\W\ast\eta_{n,j})\eta_{n,j}.
		\label{decoupageepot}
	\end{align}
	Using Plancherel's identity, the Cauchy--Schwarz inequality and \eqref{est-eta-L2}, we deduce that
	\begin{align*}
		\left| \int_{\mathbb{R}}(\W\ast g_{n,\mu})f_{n,\mu} \right| &\leq {\Vert \widehat{\W} \Vert_{L^{\infty}(\mathbb{R})}}\Vert g_{n,\mu} \Vert_{L^{2}(\mathbb{R})} \Vert f_{n,\mu} \Vert_{L^{2}(\mathbb{R})} \leq C(E_{\min}(\gq)),
	\end{align*}
	and the same argument shows that  $\int_{\mathbb{R}}(\W\ast f_{n,\mu})f_{n,\mu}$
can also be bounded in terms of $E_{\min}(\gq)$.
	Passing possibly to a subsequence, we conclude that there exists $\tilde{E_p}\geq 0$ such that
	\bq
	{\underset{\mu\to 0}\lim}~\underset{n\to \infty}\lim \int_{\mathbb{R}}(\W\ast f_{n,\mu})f_{n,\mu}=4\tilde{E_p}.
	\label{boutrestantpositifepot}
	\eq
We will show   that 
	\bq
	{\underset{\mu\to 0}\lim}~\underset{n\to \infty}\lim \int_{\mathbb{R}}(\W\ast g_{n,\mu})f_{n,\mu}=0.
	\label{boutrestantpotence}
	\eq 
Assuming \eqref{boutrestantpotence}, we can now establish inequality \eqref{inegEqfinal}.
Indeed, letting $n\to\infty$ and then $\mu\to 0$ in \eqref{decoupageepot}, and using  \eqref{boutrestantpositifepot} and \eqref{boutrestantpotence}, we obtain
		$${\underset{\mu\to 0}\lim}~\underset{n\to \infty}\lim\left(\frac{1}{4}\sum_{j=1}^{\ell}\int_{-R_{\mu}}^{R_{\mu}}\left(\W\ast\eta_{n,j}\right)\eta_{n,j}\right)=E_{p}-\tilde{E_p}.$$
	Combining with \eqref{inegenergpotetape2}, we have
	\bq
	E_{p}= \sum_{j=1}^{\ell}E_{\p}(v_j)+\tilde{E_{p}}. 
	\label{dichoenepot}
	\eq 
Therefore, setting
\bq
\label{def:tildeE}
\tilde{E}:=\tilde{E_{k}}+\tilde{E_{p}}=\lim_{\mu\to 0} \lim_{n\to \infty}E_{\boA_{\mu}}(u_n),
\eq
	and bearing in mind that  $E_{\min}(\gq)=E_k+E_p$ and that $E(v_j)\geq \Emin(\q_j)$, 
	inequality \eqref{inegEqfinal} follows by adding \eqref{ineqekinenmu} and \eqref{dichoenepot}.

		It remains to show \eqref{boutrestantpotence}.
	By definition of $g_{n,\mu}$, we obtain
	\begin{align*}
		\int_{\mathbb{R}}(\W\ast f_{n,\mu})(x)g_{n,\mu}(x)dx &=
		  \sum_{j=1}^{\ell}\int_{B(x_j^n,R_{\mu})}\lpar \W\ast f_{n,\mu}\rpar (x)\eta_n(x)dx \\&= \sum_{j=1}^{\ell}\int_{B(0,R_{\mu})} \lpar \W\ast f_{n,\mu}\rpar (x+x_{j}^n)\eta_{n,j}(x)dx.
	\end{align*}
	Using also  \eqref{sym} and the fact that convolution commutes with translations, we get
	$$\int_{\mathbb{R}}(\W\ast g_{n,\mu})(x)f_{n,\mu}(x)dx=\sum_{j=1}^{\ell}\int_{\mathbb{R}\setminus\mathop{\cup}\limits_{k=1}^{\ell}B(x_{k}^n-x_{j}^n,R_{\mu})}\left(\W\ast(\eta_{n,j}\mathds{1}_{B(0,R_{\mu})})\right)(x)\eta_{n,j}(x)dx.$$
Noticing that $B(0,R_{\mu})$ is a subset of $\cup_{k=1}^{\ell}B(x_{k}^n-x_{j}^n,R_{\mu})$, 
we conclude that
	\begin{align}
	\label{ineq-w-n} \left|\int_{\mathbb{R}}(\W\ast g_{n,\mu})f_{n,\mu} \right| & 
\leq \sum_{j=1}^{\ell} \int_{\mathbb{R}\setminus B(0,R_{\mu})}\left|\W\ast(\eta_{n,j}\mathds{1}_{B(0,R_{\mu})})\right||\eta_{n,j}|.
		 \end{align}
To study the limit of the right-hand side of \eqref{ineq-w-n}, we  first remark 
that \eqref{cvuniformedemo} and the fact that 
 $\W\in \boM_2(\R)$ imply that 
\bq
\label{w-inl2}
 \W\ast(\eta_{n,j}\mathds{1}_{B(0,R_{\mu})})\to  \W\ast (\eta_j\mathds{1}_{B(0,R_{\mu})})\quad\text{~in~}L^{2}(\R),
 \eq
as $n\to\infty$. At this point we also notice that
 \eqref{cvuniformedemo} and  the same argument  leading to \eqref{weak:eta:j}, 
also give us  that $\abs{\eta_{n,j}}\wto\abs{\eta_j}$ in $L^2(\R)$. 
Combining with \eqref{w-inl2}, we thus get
	$$  \int_{\mathbb{R}\setminus B(0,R_{\mu})}\left|\W\ast(\eta_{n,j}\mathds{1}_{B(0,R_{\mu}))}\right||\eta_{n,j}|\to\int_{\mathbb{R}\setminus B(0,R_{\mu})}\left|\W\ast (\eta_j\mathds{1}_{B(0,R_{\mu})})\right||\eta_j|,$$
	as $n\to\infty$.
Finally, by the Cauchy--Schwarz inequality, 
	\begin{align}
	\label{proof-compa}
		\int_{\mathbb{R}\setminus B(0,R_{\mu})}\left|\W\ast \eta_j\mathds{1}_{B(0,R_{\mu})}\right||\eta_j|\leq \Vert \W \Vert_{2} \Vert \eta_j \Vert_{L^{2}(\mathbb{R})}\norm{\eta_j}_{L^2(\mathbb{R}\setminus B(0,R_{\mu})},
	\end{align}
so that the definition of $R_\mu$ in \eqref{defrayonmu} and the dominated convergence theorem 
allow us to conclude that the right-hand side of \eqref{proof-compa} goes to $0$ as $\mu\to0$. 
In view of \eqref{ineq-w-n} and \eqref{proof-compa}, this proves \eqref{boutrestantpotence}, completing the proof
of Claim~\ref{step2}.

Now we establish an inequality between $\tilde{\gq}$ and $\tilde{E}$ 
that will be key to conclude that both quantities are equal to zero. 
\begin{claim}
	\label{claim:ineq}
	We have
	\bq
	\label{inegqEfinal}
	\sqrt{2}\left(1-\tilde\Sigma_\gq\right)|\tilde{\gq}|\leq \tilde{E}.
	\eq
\end{claim}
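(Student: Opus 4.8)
The proof rests on the localized momentum--energy inequality of Lemma~\ref{lmm:generalpE}, applied on the far region $\boA_\mu$ where $\abs{u_n}$ is already close to $1$. The plan is to take $\Omega$ equal to $\boA_\mu$ (whose boundary has measure zero, so it does not affect $p_{\boA_\mu}$ and $E_{\boA_\mu}$) and to choose as inner set
$$\Omega_0:=\R\setminus\bigcup_{j=1}^{\ell}\overline{B(x_j^n,R_\mu+1)},$$
which satisfies $\Omega_0\subset\subset\Omega$ with $\mathrm{dist}(\partial\Omega_0,\partial\Omega)\geq 1$. One can then fix a cut-off $\chi_{\Omega,\Omega_0}$ as in \eqref{def:fonctionreg} whose derivative is bounded by a universal constant $C_0$ independent of $n$ and $\mu$ (the transition width being exactly $1$). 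Since $\tilde\Sigma_\gq=\Sigma_\gq/(1+\Sigma_\gq)\in(0,1)$ and \eqref{compaceq3} gives $1-\tilde\Sigma_\gq\leq\abs{u_n}^2\leq 1+\tilde\Sigma_\gq$ on $\boA_\mu$, the hypotheses of Lemma~\ref{lmm:generalpE} hold with $\ve=\tilde\Sigma_\gq$. Multiplying \eqref{ctrlEPsuromega} by $(1-\tilde\Sigma_\gq)$ then yields
$$\sqrt{2}\,(1-\tilde\Sigma_\gq)\,\abs{p_{\boA_\mu}(u_n)}\leq E_{\boA_\mu}(u_n)+(1-\tilde\Sigma_\gq)\,\abs{\Delta_{\boA_\mu}(u_n)}.$$

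Next I would pass to the double limit $\lim_{\mu\to 0}\lim_{n\to\infty}$. By \eqref{def:qtilde} the left-hand side converges to $\sqrt{2}(1-\tilde\Sigma_\gq)\abs{\tilde{\q}}$, and by \eqref{def:tildeE} the first term on the right converges to $\tilde E$. Hence \eqref{inegqEfinal} will follow at once, provided the remainder satisfies $\lim_{\mu\to 0}\lim_{n\to\infty}\abs{\Delta_{\boA_\mu}(u_n)}=0$.

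This last vanishing is the heart of the matter and I expect it to be the main obstacle. By the estimate \eqref{bornedeltaomega} together with the uniform bounds $\abs{\chi_{\Omega,\Omega_0}'}\leq C_0$ and $C=C(E(u_n),\tilde\Sigma_\gq)\leq C(\q)$ (using $E(u_n)\leq 2\Emin(\q)$ from \eqref{borneEn}), it suffices to control $\norm{\eta_n}_{L^2(\Omega\setminus\Omega_0)}$, since the two $\chi'$-terms are dominated by $C_0\norm{\eta_n}_{L^2(\Omega\setminus\Omega_0)}$ and its square. The transition set $\Omega\setminus\Omega_0$ is contained in the union over $j$ of the annuli $\{R_\mu<\abs{x-x_j^n}\leq R_\mu+1\}$, which for fixed $\mu$ and $n$ large are pairwise disjoint by \eqref{xi:xk:inf}. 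Translating by $x_j^n$ and invoking the local uniform convergence \eqref{cvuniformedemo}, one gets, for each fixed $\mu$,
$$\lim_{n\to\infty}\norm{\eta_n}_{L^2(\Omega\setminus\Omega_0)}^2=\sum_{j=1}^{\ell}\norm{\eta_j}_{L^2(\{R_\mu<\abs{y}\leq R_\mu+1\})}^2.$$
Finally, letting $\mu\to 0$ forces $R_\mu\to\infty$ by \eqref{defrayonmu}, and since each $\eta_j\in L^2(\R)$ its $L^2$-mass over these far-out annuli tends to $0$. Combining the two limits shows that the remainder vanishes in the double limit, which completes the proof of \eqref{inegqEfinal}. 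The only delicate points are the consistent ordering of the limits (first $n\to\infty$, then $\mu\to 0$, matching the definitions of $\tilde\q$ and $\tilde E$) and the uniformity, in $n$ and $\mu$, of the constant in \eqref{bornedeltaomega} and of $\abs{\chi'}$.
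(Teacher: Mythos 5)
Your proof is correct, and it follows the same skeleton as the paper's: apply Lemma~\ref{lmm:generalpE} with $u=u_n$, $\Om=\boA_{\mu}$, $\ve=\tilde\Sigma_\gq$ (legitimate by \eqref{compaceq3}), multiply through and pass to the double limit $\lim_{\mu\to0}\lim_{n\to\infty}$, matching \eqref{def:qtilde} and \eqref{def:tildeE}, so that everything reduces to killing the remainder $\Delta_{\boA_\mu}(u_n)$. Where you genuinely differ from the paper is in the cut-off. The paper shrinks the transition width to $\mu_m\to0$, so it must use the special cut-off of Lemma~\ref{lmm:cutoff}, whose derivative blows up like $e^{2/\mu_m}$, and then compensate by pushing $R_m$ far enough out that the \emph{pointwise} bound \eqref{bornevjexpmu}, $|\eta_j|\leq e^{-2/\mu_m}$, holds on the transition annuli; the product of the two exponentials is then $O(1)$ and the width $\mu_m$ does the rest. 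You instead fix the transition width equal to $1$, so the cut-off derivative is bounded by a universal constant, and the whole remainder is dominated by $\norm{\eta_n}_{L^2(\Om\setminus\Om_0)}$ plus its square; the local uniform convergence \eqref{cvuniformedemo} converts this, for fixed $\mu$, into the $L^2$-mass of the limits $\eta_j$ over the annuli $\{R_\mu\leq|y|\leq R_\mu+1\}$, which vanishes as $\mu\to0$ simply because $R_\mu\geq1/\mu\to\infty$ and $\eta_j\in L^2(\R)$. Your variant is slightly simpler and more robust: it dispenses with the appendix cut-off and with the pointwise decay estimate \eqref{bornevjexpmu}, using only square-integrability of the limit profiles, and all the uniformity issues you flag (the constant $C(E(u_n),\tilde\Sigma_\q)\leq C(\q)$ via \eqref{borneEn}, disjointness of the annuli via \eqref{xi:xk:inf}, and the ordering of the limits) are handled correctly.
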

This inequality is  a consequence of Lemma~\ref{lmm:generalpE}.
To choose our cut-off function, we take the sequence $\mu_m=1/m$, and we notice that since
$\lim|v_j(x)|\to1$  as $\abs{x}\to\infty$,
there exists $R_{j}>0$ such that, for every $|x|\geq R_{j},$ we have
\bq
\label{bornevjexpmu}
|\eta_j(x)|\leq e^{-2/{\mu_m}}.
\eq
Moreover, 	without loss of generality we can assume that $R_m:=R_{\mu_m}\geq  R_j$, for all $1\leq j\leq \ell.$
Now we use the function $\chi$ given by Lemma~\ref{lmm:cutoff} to define
$$ \chi_{j,n}(x):=\chi(x-x_j^n)=\begin{cases}
1 &\text{ if }|x-x_j^n|\leq R_m,  \\
0 &\text{ if } |x-x_j^n| \geq R_m+\mu_m,
\end{cases}
\quad
\text{ and }\quad
\tilde\chi_{n,m}:=1-\sum_{j=1}^{\ell}\chi_{j,n}.
$$
To establish \eqref{inegqEfinal}, we apply Lemma \ref{lmm:generalpE} with
$u=u_n$, $\Om=\boA_{\mu_m}$, $\ve=\tilde\Sigma_\gq$ and $\chi_{\Om,\Om_0}=\tilde{\chi}_{n,m}$,  where $\Omega_0$ is given by
$$\Om\setminus\Om_0= \bigcup_{j=1}^{\ell}[x_j^n-R_m-\mu_m,x_j^n-R_m]\cup [x_j^n+R_m,x_j^n+R_m+\mu_m].$$
Using \eqref{compaceq3}, the definitions of $\tilde{\gq}$ and $\tilde{E}$ in \eqref{def:qtilde} and \eqref{def:tildeE}, and letting $n\to\infty$ and $m\to \infty$ in \eqref{ctrlEPsuromega}, we obtain
\[
\sqrt{2}|\tilde{\gq}|\leq \frac{\tilde{E}}{1-\tilde\Sigma_\gq}+\lim_{m\to \infty}\limsup_{n\to\infty}\Delta_{n,m},
\]
with
{\bq
	|\Delta_{n,m}|\leq C(\q)\lpar \Vert \eta_n \Vert_{L^{2}(\Om\setminus\Om_0)}+ \Vert \eta_n\tilde\chi_{n,\mu_m}' \Vert_{L^{2}(\Om\setminus\Om_0)}
	+\Vert \eta_n\tilde\chi_{n,\mu_m}' \Vert_{L^{2}(\Om\setminus\Om_0)}^2
	\rpar.
	\label{bornedeltanmu}
	\eq}
Notice that we omit the dependence on $m$ and $n$ in $\Om\setminus\Om_0$
for notational simplicity.
Therefore, to prove \eqref{inegqEfinal} we only need to show that 	the right-hand side of \eqref{bornedeltanmu} goes to zero.
For the first term,  we have
\bqq
\Vert \eta_n \Vert_{L^{2}(\Om\setminus\Om_0)}^2=\sum_{j=1}^{\ell}\lpar \int_{-R_{m}-\mu_m}^{-R_m}\eta_{n,j}^2+\int_{R_{m}}^{R_{m}+\mu_m}\eta_{n,j}^2\rpar.
\eqq
Using \eqref{cvl2} and the dominated convergence theorem, 
we get
\bq
\underset{m\to \infty}\lim~\underset{n\to\infty}\limsup{\Vert \eta_n \Vert^2_{L^{2}(\Om\setminus\Om_0)}}=
\lim_{m\to\infty}	\sum_{j=1}^{\ell}\lpar \int_{-R_{m}-\mu_m}^{-R_m}\eta_{j}^2+\int_{R_{m}}^{R_{m}+\mu_m}\eta_{j}^2\rpar
=0.
\label{deltamuto01}
\eq

To bound the  term $\Vert\eta_n\tilde\chi_{n,\mu_m}'\Vert_{L^{2}(\Om\setminus\Om_0)}$
in \eqref{bornedeltanmu}, we notice that
$$ \left(\tilde{\chi}_{n,\mu_m}'\right)^2=\Big(\sum_{j=1}^{\ell}\chi_{j,n}'\Big)^2=\sum_{j=1}^{\ell}(\chi_{j,n}')^2,$$
since $\chi_{j,n}'\chi_{k,n}'=0$ for all $j\neq k.$ Hence,
\begin{align*}
\Vert \eta_n\tilde\chi_{n,\mu_m}' \Vert_{L^{2}(\Om\setminus\Om_0)}^2 
&\leq \sum_{j=1}^{\ell}\int_{\mathbb{R}}\eta_n^{2}|\chi_{j,n}'|^2 \\
&\leq \sum_{j=1}^{\ell}\lpar \int_{-{R_m}-\mu_m}^{-{R_m}}\eta_{n,j}^{2}|\chi'|^2+\int_{R_m}^{R_m+\mu_m}\eta_{n,j}^{2}|\chi'|^2\rpar.
\end{align*}
Invoking again \eqref{cvl2}, we obtain
\begin{align*}
\underset{n\to \infty}\limsup\Vert \eta_n\tilde\chi_{n,\mu_m}' \Vert_{L^{2}(\Om\setminus\Om_0)}^2 
&\leq \sum_{j=1}^{\ell}\lpar\int_{-{R_m}-\mu_m}^{-{R_m}}\eta_j^{2}|\chi'|^2+\int_{R_m}^{R_m+\mu_m}\eta_j^{2}|\chi'|^2\rpar
\\&\leq 32\ell e^{-4}\mu_m,
\end{align*}
where we have used  \eqref{bornevjexpmu} and
that $	|\chi'(x)|\leq 4e^{-2}e^{\frac{2}{\mu_m}}$
for the last inequality. Then, we conclude that
\bq
\lim_{m\to \infty}\limsup_{n\to\infty}\Vert \eta_n\chi_{n,\mu_m}'\Vert_{L^{2}(\Om\setminus\Om_0)}=0.
\label{deltamuto03}
\eq
Combining \eqref{deltamuto01} and \eqref{deltamuto03}, we obtain 	\bqq
\lim_{m\to \infty} \limsup_{n\to\infty}\Delta_{n,\mu_m}=0,
\eqq which completes the proof of Claim~\ref{claim:ineq}.

\begin{claim}
	We have $\tilde{E}=\tilde{\q}=0$ and $\ell=1$.
	\label{pasdichoto}
\end{claim}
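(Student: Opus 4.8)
The plan is to combine the two relations from Claim~\ref{step2} with the energy--momentum inequality of Claim~\ref{claim:ineq}, exploiting the concavity of $\Emin$ to force every leftover quantity to vanish. First I would record two consequences of the definition \eqref{defsigmaq} of $\Sigma_\gq$: on one hand $\Emin(\gq)/\gq=\sqrt2\,(1-\Sigma_\gq)$, and on the other hand, since $L=1+\Sigma_\gq$ and $\tilde\Sigma_\gq=\Sigma_\gq/L$, one has $1-\tilde\Sigma_\gq=1/(1+\Sigma_\gq)$. Because $\Sigma_\gq\in(0,1)$, this yields the strict inequality
\[
\sqrt2\,(1-\tilde\Sigma_\gq)=\frac{\sqrt2}{1+\Sigma_\gq}>\sqrt2\,(1-\Sigma_\gq)=\frac{\Emin(\gq)}{\gq},
\]
which is the crucial gain: the bound \eqref{inegqEfinal} on the leftover energy $\tilde E$ is \emph{strictly better} than what the average slope $\Emin(\gq)/\gq$ of the curve provides.

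Next I would control the bump terms. Since each $v_j$ satisfies $\Emin(\gq_j)\le E(v_j)\le\Emin(\gq)$ and $\gq\in(0,\gq_*)$, the strict monotonicity of $\Emin$ on $[0,\gq_*)$ (Theorem~\ref{thm:curve}\,(iv)) together with its evenness yields $|\gq_j|\le\gq$ for every $j$; concavity then makes $s\mapsto\Emin(s)/s$ nonincreasing, whence $\Emin(\gq_j)=\Emin(|\gq_j|)\ge\frac{\Emin(\gq)}{\gq}|\gq_j|$. Feeding this and \eqref{inegqEfinal} into \eqref{inegEqfinal}, and using \eqref{inegqfinal} with $\sum_{j}|\gq_j|+|\tilde\gq|\ge\big|\sum_{j}\gq_j+\tilde\gq\big|=\gq$, I obtain
\[
\Emin(\gq)\ge\frac{\Emin(\gq)}{\gq}\sum_{j=1}^{\ell}|\gq_j|+\sqrt2\,(1-\tilde\Sigma_\gq)\,|\tilde\gq|.
\]
If $\tilde\gq\neq0$, the strict inequality of the first paragraph turns this into $\Emin(\gq)>\Emin(\gq)$, a contradiction; hence $\tilde\gq=0$. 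Reinserting $\tilde\gq=0$ gives $\Emin(\gq)\ge\frac{\Emin(\gq)}{\gq}\sum_{j}|\gq_j|+\tilde E\ge\Emin(\gq)+\tilde E$, so $\tilde E=0$ and, since equality must hold throughout, $\sum_{j}|\gq_j|=\big|\sum_{j}\gq_j\big|$; thus all $\gq_j$ share the same sign, forcing $\gq_j\ge0$ with $\sum_{j}\gq_j=\gq$.

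Finally I would rule out $\ell\ge2$. If two of the $\gq_j$ were positive, grouping them and combining subadditivity with the strict subadditivity of $\Emin$ (Corollary~\ref{coro:strictadd}) would give $\Emin(\gq)\le\Emin(\gq_1+\gq_2)+\sum_{j\ge3}\Emin(\gq_j)<\sum_{j=1}^{\ell}\Emin(\gq_j)\le\Emin(\gq)$, a contradiction; hence exactly one $\gq_j$ is positive (equal to $\gq$) and the others vanish. But a bump $v_j$ with $\gq_j=0$ is nontrivial by \eqref{ineqpas0u2}, so it is not a constant of modulus one and therefore $E(v_j)>0$ (otherwise $v_j'\equiv0$ and, since $|v_j(x)|\to1$, $|v_j|\equiv1$). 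As $\Emin(\gq)\ge\sum_{j}E(v_j)+\tilde E$ with $E(v_1)\ge\Emin(\gq)$ and $\tilde E=0$, any additional bump would yield $\Emin(\gq)>\Emin(\gq)$; hence $\ell=1$. The main obstacle is the second paragraph: a priori the leftover momentum carries no matching energy bound of the clean form $\tilde E\ge\Emin(\tilde\gq)$, and the whole argument hinges on the strict slope gain coming from the specific choice $L=1+\Sigma_\gq$ in Lemma~\ref{lmm:evanes2}, which is exactly what makes the comparison with $\Emin(\gq)/\gq$ strict.
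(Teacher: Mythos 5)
Your proof is correct and rests on the same ingredients as the paper's — the relations \eqref{inegEqfinal}--\eqref{inegqfinal}, the bound \eqref{inegqEfinal}, the slope monotonicity $\Emin(s)/s$ coming from concavity, the strict gain $\sqrt{2}(1-\tilde\Sigma_\gq)>\Emin(\gq)/\gq$, and strict subadditivity — but it organizes them differently at two points. First, you dispose of $\tilde\gq\neq 0$ for either sign in a single stroke, bounding each $\Emin(\gq_j)$ below by $\frac{\Emin(\gq)}{\gq}|\gq_j|$ and using the triangle inequality $\sum_j|\gq_j|+|\tilde\gq|\geq\gq$; the paper instead first excludes $\tilde\gq>0$ by a comparison at $\gs=\gq-\tilde\gq$, and then, for $\tilde\gq\leq 0$, uses evenness, monotonicity and subadditivity to force the equality \eqref{egaliteinegEEj}, from which $\tilde E=\tilde\gq=0$ follow; your version is arguably cleaner since the strict slope gain does all the work at once. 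Second, concerning $\ell=1$: the paper only rules out two \emph{nonzero} momenta $\gq_j$ via strict subadditivity and then writes ``without loss of generality $\ell=1$'', which at the level of the stated claims leaves open the possibility of extra bumps with $\gq_j=0$; you eliminate these explicitly using $E(v_j)>0$ (a consequence of \eqref{ineqpas0u2}) together with the inequality $\Emin(\gq)\geq\sum_j E(v_j)+\tilde E$. Be aware, however, that this last inequality is strictly stronger than \eqref{inegEqfinal} and cannot be deduced from the statement of Claim~\ref{step2} alone (with only the stated claims, a zero-momentum, positive-energy bump produces no contradiction); it is exactly what the proof of Claim~\ref{step2} establishes, namely the sum of \eqref{ineqekinenmu} and \eqref{dichoenepot} before the weakening $E(v_j)\geq\Emin(\gq_j)$, so you should cite that derivation rather than assert the inequality outright.
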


	We suppose first that $\tilde{\gq}> 0.$ By definition of $\Sigma_\gq$ in \eqref{defsigmaq}, and using that $\tilde \Sigma_\q=\Sigma_\q/L<\Sigma_\q$,
	we have
	\bq
	\frac{E_{\min}(\q)}{\q}=\sqrt{2}(1-\Sigma_\q)<\sqrt{2}\left(1-\tilde\Sigma_\q\right).
	\label{rapportconcave1}
	\eq
In addition,  since $\Emin$ is concave, we obtain for all $0<\gp<\q$,
	\bq
	E_{\min}(\gp)\geq \gp\frac{E_{\min}(\q)}{\q}=\gp\sqrt{2}(1-\Sigma_\q).
	\label{rapportconcave2}
	\eq
Then, setting $\gs:=\gq-\tilde{\gq}=\sum\limits_{j=1}^{\ell}\gq_j$, the assumption
 $\tilde{\gq}>0$ implies that $\gs<\q$, and combining with 
 \eqref{inegqEfinal},  \eqref{rapportconcave1} and \eqref{rapportconcave2}, we also obtain
	\begin{align*}
		E_{\min}(\gs)&\geq \gs\frac{E_{\min}(\gq)}{\gq}=E_{\min}(\gq)-\tilde{\gq}\frac{E_{\min}(\gq)}{\gq}> E_{\min}(\gq)-\sqrt{2}\tilde{\gq}\left(1-{\tilde \Sigma_\gq}\right)\geq E_{\min}(\gq)-\tilde{E}.
	\end{align*}
Hence, using  \eqref{inegEqfinal}, we get
	\bq
	E_{\min}( \gs)>\sum\limits_{j=1}^{\ell}E_{\min}(\gq_j).
	\label{contradictionwidehatq}
	\eq 
		Since $E_{\min}$ is even, nondecreasing and subadditive, 
the inequality		 $\gs\leq \sum_{j=1}^{\ell}|\gq_j|$ yields
\bqq
	E_{\min}(\gs)\leq E_{\min}\Big(\sum_{j=1}^{\ell}|\gq_j|\Big)
	\leq \sum\limits_{j=1}^{\ell}{E}_{\min}(\gq_j).
	\eqq
	which contradicts \eqref{contradictionwidehatq}. Thus $\tilde{\gq}\leq 0$ 
	 and  \eqref{inegqfinal} gives
$	\gq \leq \sum_{j=1}^{\ell}|\gq_j|.
$
As before, this implies that 
	\bqq
	E_{\min}(\gq)\leq E_{\min}\Big(\sum_{j=1}^{\ell}|\gq_j|\Big)
	\leq \sum\limits_{j=1}^{\ell}{E}_{\min}(\gq_j).
	\eqq
	On the other hand, since  $\tilde{E}\geq 0$, we see from  \eqref{inegEqfinal} that
	\bqq
	{E}_{\min}(\q)\geq \sum\limits_{j=1}^{\ell}{E}_{\min}(\gq_j).
	\eqq
	Therefore 
	\bq
	{E}_{\min}(\q)=\sum\limits_{j=1}^{\ell}{E}_{\min}(\gq_j).
	\label{egaliteinegEEj}
	\eq 
In view of  \eqref{inegEqfinal} and \eqref{inegqEfinal},
\eqref{egaliteinegEEj}
	yields  $\tilde{E}=0$  and $\tilde{\gq}=0$. 
	Finally, if there are at least two nonzero values ${\gq_{k}}$
	and ${\gq_{m}}$, with $1\leq k\neq m\leq \ell$, then the strictly subadditivity of $\Emin$  implies that 
	$$ E_{\min}(\gq)= E_{\min}\Big(\sum_{j=1}^{\ell}|\gq_j|\Big)<\sum_{j=1}^{\ell}E_{\min}(\gq_j),$$ 
	contradicting \eqref{egaliteinegEEj}. Therefore we can suppose without loss of generality that $\ell=1$, 
which finishes the proof of Claim~\ref{pasdichoto}.

Setting $v=v_1$, the convergence in \eqref{cvuniforme} and the estimate in \eqref{inf-u_n} follow from \eqref{cvuniformedemo} and \eqref{compaceq3} (with $\ell=1$).
We now show the convergences in \eqref{cvforteepot} and \eqref{cvfortegradient} (with $v=v_1$) to   complete the proof of the theorem. Indeed, since $\ell=1$ and $\tilde{\gq}=0$, by Claim~\ref{pasdichoto}, \eqref{inegqfinal} shows that $\gq=\gq_1$,  and using also \eqref{mseq} and  \eqref{evjeminq}, we get
\bq
p(u_{n,1})\to \q=p(v) \quad \text{ and } \quad E(u_{n,1}) \to \Emin(\q)=E(v).
\label{q:pv1}
\eq 

We now establish \eqref{cvfortegradient}.
Since $u'_{n,1}\wto v'$ in $L^2(\R)$, 
it is enough to prove that 	\bq 
\label{cv:norme:grad}
\limsup_{n\to\infty }\Vert u_{n,1}'\Vert_{L^2(\mathbb{R})}\leq  \Vert v'\Vert_{L^2(\mathbb{R})}. 
\eq 
Arguing by contradiction, taking a subsequence that we still denote by $u_{n,1}$,
we suppose that 
$$M:=\lim_{n\to\infty }\norm{u_{n,1}}_{L^2(\R)}^2=2E_{\k}(u_{n,1}), \quad \text{ with } \quad M>\Vert v'\Vert_{L^2(\mathbb{R})}^2=2E_{\k}(v).$$
Hence, using \eqref{q:pv1},
\begin{align*}
\underset{n \to \infty}\lim~ E_{\p}\left(u_{n,1}\right)  &= \underset{n \to \infty}\lim  ~\big( E(u_{n,1} )-E_{\k}(u_{n,1}) \big) 
= E(v)-\frac{M}{2} 
< E(v)-E_{\k}(v)
= E_{\p}(v),
\end{align*}
which contradicts \eqref{for-contrad}. Therefore  $u'_{n,1}\to v'$ in $L^2(\R)$. 
In particular $E_k(u_{n,1})\to E_k(v)$, so that 
\eqref{q:pv1} implies that 
	\bq
	\label{cv:normeW}
	\lim_{n\to\infty}\int_{\R} \left(\W\ast \eta_{n,1}\right)\eta_{n,1}= \int_{\R} \left(\W\ast \eta\right)\eta,
	\eq
where $\eta=1-\abs{v}^2$ as usual.	Using Plancherel's identity and \ref{H-coer}, we have
	\begin{align}
\Vert \eta_{n,1}-\eta \Vert_{L^2(\R)}^{2}  
		 &\leq \frac{1}{2\pi}\int_{\R}\widehat{\W}(\xi)|\widehat{\eta_{n,1}}-\widehat{\eta}|^2
		+\frac{1}{4\pi}\int_{\R}\xi^2|\widehat{\eta_{n,1}}-\widehat{\eta}|^2 
		\nonumber
		\\
		\label{dernier1} &= \int_{\R}\W\ast(\eta_{n,1}-\eta)(\eta_{n,1}-\eta)
		+ \frac14\norm{\eta_n'-\eta'}_{L^2(\R)}^2.
	\end{align}
Since $\W \in \mathcal{M}_{2}(\R)$, it follows from \eqref{weak:eta:j} and \eqref{cv:normeW} that
	\bq
	\label{dernier2} \int_{\R}\W\ast(\eta_{n,1}-\eta)(\eta_{n,1}-\eta)\to 0.\eq
	It remains to prove that 
\bq
\label{eta1-etan1}
\norm{\eta_{n,1}'-\eta'}_{L^2(\R)}\to 0.
\eq 
Noticing that  $\eta'-\eta_{n,1}'=2(\langle v,v' \rangle- \langle u_{n,1},u_{n,1}' \rangle)$,
we have
\begin{align}
\label{dernier3}
\norm{\eta_{n,1}'-\eta'}_{L^2(\R)} 
	&\leq 2 \Vert (v-u_{n,1})v_1'\Vert_{L^2(\R)}
	+ 2\Vert (v'-u_{n,1}')u_{n,1}\Vert_{L^2(\R)} .
\end{align}
From inequality \eqref{est-eta}, we obtain 
\bq\label{bound-un}
\Vert u_{n,1} \Vert_{L^{\infty}(\R)}\leq C(\q).
\eq 
Thus, using \eqref{cvfortegradient}, we deduce that
 $$\Vert (v'-u_{n,1}')u_{n,1}\Vert_{L^2(\R)}
\leq  C(\q)\Vert v'-u_{n,1}'\Vert_{L^2(\R)} \to 0,$$
Moreover, \eqref{bound-un}  allows us to use the dominated convergence theorem to infer 
that the other term in the right-side of \eqref{dernier3} also converges to zero. Therefore, combining with 
\eqref{dernier1} and  \eqref{dernier2}, we obtain
\eqref{cvforteepot}, which finishes the proof
of the theorem.
\end{proof} 
\section{Stability}
	\label{sec:stability}
	We start recalling the following result concerning the Cauchy problem.
	\begin{thm}[\cite{de2010global}]\label{global0}
Let $\phi_0\in \boEE$, with $\grad \phi\in H^2(\R)\cap C(\R)$.
Let $\W\in \boM_3(\R)$ be an even distribution.
 Assume that one of the following is satisfied.
		\begin{itemize}
				\item[$(i)$]  $\W\in\boM_{1}(\R)$\textrm{ and }$\W\geq 0$\textrm{ in a distributional sense}.
			\item[$(ii)$] There exists $\sigma>0$ such that $\widehat{\W}\geq\sigma$ a.e.\ on $\R$.
							\end{itemize}
			Then,  for every $w_0\in H^1(\R)$  there
			exists a unique solution  $\Psi\in  C(\R,\phi_0+H^1(\R))$
			to \eqref{ngp} with the initial condition $\Psi_0=\phi_0+w_0$.
			 Moreover, the energy is  conserved, as well as the momentum 
			as long as $\inf_{x\in\R}\abs{\Psi(x,t)}>0$.
			
			In the case (ii), we also have 	the growth estimate
			\bq\label{linear} \norm{\Psi(t)-\phi_0}_{L^2(\R)}\leq C \abs{t}+\norm{\Psi_0-\phi_0}_{L^2(\R)},\eq
			for any $t\in \R$, where $C$ is a positive constant that depends only on $E(\Psi_0)$,
			$\norm{\wh \W}_{L^\infty},$ $\phi_0$ and $\sigma$.
	\end{thm}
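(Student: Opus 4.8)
The plan is to reduce the problem to a semilinear equation for the perturbation $w:=\Psi-\phi_0\in H^1(\R)$ and then to combine a standard fixed-point construction with \emph{a priori} bounds coming from the conservation laws. Writing $\Psi=\phi_0+w$, equation \eqref{ngp} becomes
\begin{equation*}
  i\partial_t w=\partial_{xx}w+\partial_{xx}\phi_0+(\phi_0+w)\big(\W*(1-\abs{\phi_0+w}^2)\big),
\end{equation*}
an equation for $w$ driven by the fixed forcing $\partial_{xx}\phi_0\in H^1(\R)$ (here I use $\grad\phi_0\in H^2(\R)$) and a nonlinearity that is polynomial in $w$ and $\phi_0$ together with the convolution operator. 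First I would set up the Duhamel formulation with the Schrödinger group and run a contraction argument in $C([-T,T],H^1(\R))$ for $T$ small. The two ingredients that make the nonlinearity locally Lipschitz on $H^1$ are the one-dimensional embedding $H^1(\R)\hookrightarrow L^\infty(\R)$, which controls all the pointwise products, and the multiplier bound $\wh\W\in L^\infty(\R)$ (which holds since $\W\in\boM_3(\R)\subset\boM_2(\R)$ by duality and interpolation); one also notes that $\eta:=1-\abs{\Psi}^2\in H^1(\R)$ whenever $w\in H^1(\R)$, since $\eta'=-2\Re(\bar\Psi\,\partial_x\Psi)\in L^2(\R)$, so $\W*\eta\in H^1(\R)$ and its product with $\phi_0+w$ stays in $H^1$. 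Uniqueness on the existence interval follows from the same Lipschitz estimate applied to the difference of two solutions in $L^2(\R)$ and Gronwall.

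Next I would justify the conservation laws. Differentiating $E(\Psi(t))$ and $p(\Psi(t))$ along the flow and integrating by parts, using \eqref{sym} for the nonlocal term, shows formally that $E$ is conserved and that $p$ is conserved on any time interval on which $\inf_{x}\abs{\Psi(x,t)}>0$, the restriction being forced by the factor $1/\abs{\Psi}^2$ in \eqref{def:mom}. These computations are made rigorous by working first with smooth data (or a regularized equation) and passing to the limit, exploiting the continuity of the flow in $\phi_0+H^1(\R)$.

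The heart of the matter, and the step I expect to be the main obstacle, is to promote the local solution to a global one by bounding $\norm{w(t)}_{H^1}$ on every finite interval, which rules out finite-time blow-up. The kinetic part is read off from the conserved energy, $\norm{\partial_x\Psi}_{L^2}^2=2E_0-\tfrac12\int_\R(\W*\eta)\eta$ with $E_0:=E(\Psi_0)$, so everything hinges on controlling $\norm{\eta}_{L^2}$. In case $(ii)$ the hypothesis $\wh\W\geq\sigma$ gives coercivity directly: by Plancherel, $\int_\R(\W*\eta)\eta\geq\sigma\norm{\eta}_{L^2}^2\geq0$, whence $\norm{\eta}_{L^2}^2\leq 4E_0/\sigma$ and $\norm{\partial_x\Psi}_{L^2}^2\leq 2E_0$; together with the $L^\infty$-bound on $\Psi$ obtained from the inequality $\eta^2(x)\leq\norm{\eta}_{L^2}^2+\norm{\eta'}_{L^2}^2$ used in the proof of Lemma~\ref{lem-control-energy}, this closes the estimate for $\norm{\partial_x w}_{L^2}$. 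For $\norm{w}_{L^2}$ I would compute
\begin{equation*}
  \frac{d}{dt}\norm{w}_{L^2}^2=2\Im\int_\R\bar w\,\partial_{xx}\phi_0+2\Im\int_\R\bar w\,\Psi(\W*\eta),
\end{equation*}
the term $\Im\int_\R\bar w\,\partial_{xx}w$ vanishing; bounding the right-hand side by $2\norm{w}_{L^2}\big(\norm{\partial_{xx}\phi_0}_{L^2}+\norm{\Psi}_{L^\infty}\norm{\wh\W}_{L^\infty}\norm{\eta}_{L^2}\big)$ and inserting the uniform bounds on $\norm{\eta}_{L^2}$ and $\norm{\Psi}_{L^\infty}$ yields $\tfrac{d}{dt}\norm{w}_{L^2}\leq C(E_0,\sigma,\phi_0,\wh\W)$, which integrates to the linear growth estimate \eqref{linear}. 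In case $(i)$ the energy is no longer coercive, since a positive $\W\in\boM_1(\R)$ is a finite positive measure whose symbol $\wh\W$ need not be nonnegative; here I would instead close the bound by a Gronwall argument on $\norm{w}_{H^1}^2$, writing $\eta=\eta_0-2\Re(\bar\phi_0 w)-\abs{w}^2$ with $\eta_0:=1-\abs{\phi_0}^2\in L^2(\R)$ and estimating $\norm{\eta}_{L^2}$ through $\norm{\eta_0}_{L^2}$, $\norm{w}_{L^2}$ and the Gagliardo–Nirenberg bound $\norm{w}_{L^4}^4\lesssim\norm{w}_{L^2}^3\norm{\partial_x w}_{L^2}$, absorbing the top-order contribution into the energy identity. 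The genuine difficulty is precisely this coupling in case $(i)$: without coercivity one must prevent the interpolation inequalities from producing a superlinear feedback, so the resulting control is only local-in-time growth of $\norm{w}_{H^1}$, which nonetheless suffices for global existence.
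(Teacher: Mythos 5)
Two preliminary remarks on the comparison itself: the paper does not prove Theorem~\ref{global0} at all — it is imported verbatim from \cite{de2010global} — so there is no in-paper proof to measure you against; the closest material is the paper's proof of its extension, Theorem~\ref{global}. Judged on its own terms, your skeleton (perturbative unknown $w=\Psi-\phi_0$, Duhamel and contraction in $C([-T,T],H^1(\R))$ using $H^1(\R)\hookrightarrow L^\infty(\R)$, $\wh\W\in L^\infty$, and the observation that $\eta\in H^1(\R)$ so $\W*\eta\in H^1(\R)\hookrightarrow L^\infty(\R)$, then conservation laws and a priori bounds) is the standard and correct route, and your case $(ii)$ argument does close: coercivity gives $\norm{\eta}_{L^2}^2\leq 4E(\Psi_0)/\sigma$, the energy then controls $\norm{\partial_x\Psi}_{L^2}$ and (via the quadratic trick of Lemma~\ref{lem-control-energy}) $\norm{\Psi}_{L^\infty}$, and your differential inequality for $\norm{w}_{L^2}$ integrates to \eqref{linear}. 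This runs parallel to the proof the paper gives for Theorem~\ref{global}: the identity you differentiate is the one quoted there as ``equation (63) in \cite{de2010global}'', with the refinement that $\Im\int|w|^2(\W*\eta)=0$ lets one replace your factor $\norm{\Psi}_{L^\infty}$ by $\norm{\phi_0}_{L^\infty}$.

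The genuine gap is case $(i)$. Without coercivity, the conserved energy controls $\norm{\partial_x\Psi}_{L^2}^2$ only up to $\tfrac12\norm{\wh\W}_{L^\infty}\norm{\eta}_{L^2}^2$, since the potential energy may be negative. Substituting your decomposition $\norm{\eta}_{L^2}\lesssim \norm{1-|\phi_0|^2}_{L^2}+\norm{w}_{L^2}+\norm{w}_{L^2}^{3/2}\norm{\partial_x w}_{L^2}^{1/2}$ and absorbing the term $\norm{w}_{L^2}^{3}\norm{\partial_x w}_{L^2}$ into the kinetic part by Young's inequality leaves $\norm{\partial_x w}_{L^2}^2\lesssim C+\norm{w}_{L^2}^2+\norm{w}_{L^2}^{6}$, so that your $L^2$ differential inequality becomes $\tfrac{d}{dt}\norm{w}_{L^2}\lesssim C+\norm{w}_{L^2}^{3}$. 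This feedback is superlinear: Gronwall does not close it, the comparison ODE blows up in finite time, and therefore the scheme does not exclude finite-time blow-up — which is exactly what global existence requires; a bound that degenerates after a finite time is no better than what the local theory already provides. Note moreover that, as written, your case $(i)$ argument never actually uses the hypotheses $\W\geq 0$ and $\W\in\boM_1(\R)$; its only structural input is $\wh\W\in L^\infty(\R)$, under which global existence is precisely what is in doubt (this is why the paper needs the separate condition \eqref{cond:global} in Theorem~\ref{global}). A correct treatment of case $(i)$ must make essential, one-sided use of the positivity of the finite measure $\W$ together with the pointwise bound $1-|\Psi|^2\leq 1$; identifying how that positivity enters the a priori estimate is the missing idea, and it is why \cite{de2010global} handles this case by a separate argument rather than by interpolation.
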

Let us remark that the author in \cite{de2010global} uses a sightly different definition of the momentum to allow a possible vanishing of $\Psi(t)$. However, the proof of the 
conservation of momentum in \cite{de2010global} also applies to our renormalized momentum  as long as  $\Psi(t)\in \boEEE$.
We also notice that other statements for Cauchy problem for the Gross--Pitaevskii equation have been established  in different topologies when $\W=\delta_0$ (see e.g.~\cite{zhidkov2001korteweg,gerard,gallo,bethuel2008existence,delaire-gravejat-sine,deLaGra1} and the reference there in), and  these results
can probably be adapted to our nonlocal framework.

For the proof of Theorem~\ref{global0}, the author proves first 
a local well-posedness result for $\W\in \boM_3(\R)$.
Then  conditions (i) and (ii)  are used to show that the solution is global.
In \cite{de2010global}, it is also established that the solution is global in dimensions
greater than 1, provided that $\wh\W\geq \sigma>0$ a.e. However, the proof given by the author 
does not apply in the one-dimensional case.  Using Lemma~\ref{lem-control-energy}, we can partially fill
this gap.
	\begin{thm}\label{global}
Let $\phi_0$ and $\W$ as in Theorem~\ref{global0}, but 
instead of (i) or (ii), we 	
assume that there exists $\kappa\geq 0$ such that
	\bq\label{cond:global}
\wh	\W(\xi)\geq (1-\kappa\xi^2)^+, \quad \text{a.e.~on } \R.
	\eq
Then we have the same conclusion as in  Theorem~\ref{global}, 
including the growth estimate \eqref{linear}, 
with a constant $C$  depending only on $E(\Psi_0)$,
$\norm{\wh \W}_{L^\infty}$, $\phi_0$ and $\kappa$.
\end{thm}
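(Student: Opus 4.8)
The plan is to obtain the global solution by combining the local well-posedness theory of \cite{de2010global} with the a priori bounds of Section~\ref{sec:estimations}. Since $\W\in\boM_3(\R)$, the construction in \cite{de2010global} provides, for any $w_0\in H^1(\R)$, a unique maximal solution $\Psi\in C((-T_-,T_+),\phi_0+H^1(\R))$ to \eqref{ngp} with $\Psi_0=\phi_0+w_0$, together with a blow-up alternative controlled by the $H^1$-norm of $w(t):=\Psi(t)-\phi_0$: if $T_+<\infty$ (resp.\ $T_->-\infty$), then $\norm{w(t)}_{H^1(\R)}\to\infty$ as $t\to T_+$ (resp.\ $t\to -T_-$). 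It therefore suffices to show that $\norm{w(t)}_{H^1(\R)}$ remains bounded on every finite time interval, so that $T_\pm=\infty$. First I would record that \eqref{cond:global} is exactly condition \eqref{cond:kappa} and that it forces $\wh\W\geq 0$ a.e.\ on $\R$; together with the embedding $\boM_3(\R)\subset\boM_2(\R)$ this ensures that the hypotheses of Lemma~\ref{lem-control-energy} are met.

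Next I would use conservation of the energy, $E(\Psi(t))=E(\Psi_0)$ on $(-T_-,T_+)$, which holds by the local theory. Since $E_\p(\Psi(t))\geq 0$, this yields $\norm{\partial_x\Psi(t)}_{L^2}^2=2E_\k(\Psi(t))\leq 2E(\Psi_0)$, so that $\norm{\partial_x w(t)}_{L^2}\leq \norm{\partial_x\Psi(t)}_{L^2}+\norm{\partial_x\phi_0}_{L^2}$ is bounded uniformly in $t$. Moreover, applying Lemma~\ref{lem-control-energy} to $v=\Psi(t)$ with the constant $\kappa$ from \eqref{cond:global}, the quantities $\norm{\eta(t)}_{L^\infty}$ and $\norm{\eta(t)}_{L^2}$, with $\eta(t)=1-\abs{\Psi(t)}^2$, are bounded by constants depending only on $E(\Psi_0)$ and $\kappa$; in particular $\norm{\Psi(t)}_{L^\infty}$ is uniformly bounded.

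The heart of the proof is the growth estimate for $\norm{w(t)}_{L^2}$. Differentiating in time and using the equation gives $\frac{d}{dt}\norm{w}_{L^2}^2=2\Im\langle \partial_{xx}\Psi+\Psi(\W\ast\eta),w\rangle$. The decisive observation is that the pairing against $\Psi$ is purely real, so that $\Im\langle\partial_{xx}\Psi+\Psi(\W\ast\eta),\Psi\rangle=0$; indeed $\int(\partial_{xx}\Psi)\bar\Psi=-\int\abs{\partial_x\Psi}^2$ and $\int\abs{\Psi}^2(\W\ast\eta)$ are real, using that $\W$ is even so that $\W\ast\eta$ is real. This reduces the computation to pairings against $\phi_0$:
\bqq
\frac{d}{dt}\norm{w}_{L^2}^2=-2\Im\int(\partial_{xx}\Psi)\bar\phi_0-2\Im\int\Psi(\W\ast\eta)\bar\phi_0.
\eqq
After an integration by parts the first term is bounded by $2\norm{\partial_x\Psi}_{L^2}\norm{\partial_x\phi_0}_{L^2}$. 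For the second, writing $\Psi=w+\phi_0$, the contribution $\int\abs{\phi_0}^2(\W\ast\eta)$ is real and drops out under $\Im$, while the remaining term satisfies $\abs{\Im\int w(\W\ast\eta)\bar\phi_0}\leq \norm{w}_{L^2}\norm{\wh\W}_{L^\infty}\norm{\eta}_{L^2}\norm{\phi_0}_{L^\infty}$ by $\W\in\boM_2(\R)$. Invoking the bounds of the previous paragraph, this gives
\bqq
\frac{d}{dt}\norm{w(t)}_{L^2}^2\leq C_1+C_2\norm{w(t)}_{L^2},
\eqq
with $C_1,C_2$ depending only on $E(\Psi_0)$, $\norm{\wh\W}_{L^\infty}$, $\phi_0$ and $\kappa$, and a Gronwall argument yields the linear bound \eqref{linear}.

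Combining the uniform bound on $\norm{\partial_x w(t)}_{L^2}$ with the at most linear growth of $\norm{w(t)}_{L^2}$, I conclude that $\norm{w(t)}_{H^1}$ stays finite on bounded time intervals, so the blow-up alternative is never triggered and the solution is global; the conservation of energy and of the momentum (as long as $\inf_\R\abs{\Psi(t)}>0$) then follow exactly as in \cite{de2010global}. The main obstacle is to make the time-differentiation rigorous: the identity $\frac{d}{dt}\norm{w}_{L^2}^2=2\Re\langle\partial_t w,w\rangle$ must be read in the $H^{-1}$--$H^1$ duality, and the integrations by parts must be justified even though $\phi_0\notin L^2(\R)$. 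Both issues are resolved by the cancellation structure above, which confines all non-$L^2$ factors to terms that are real and hence disappear under $\Im$, together with a standard density argument as in the local theory of \cite{de2010global}.
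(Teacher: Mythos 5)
Your strategy is essentially the one the paper follows: invoke the local well-posedness and blow-up alternative of \cite{de2010global} (valid since $\W\in\boM_3(\R)\subset\boM_2(\R)$), observe that \eqref{cond:global} is exactly condition \eqref{cond:kappa} so that Lemma~\ref{lem-control-energy}, combined with conservation of energy on the maximal interval, yields a uniform bound on $\norm{1-\abs{\Psi(t)}^2}_{L^2}$ in terms of $E(\Psi_0)$ and $\kappa$, and then close a Gr\"onwall-type estimate on $\norm{w(t)}_{L^2}$. The one structural difference is that the paper does not re-derive the differential identity: it quotes equation (63) of \cite{de2010global}, which reads
\bqq
\tfrac12\Big|\tfrac{d}{dt}\norm{w(t)}_{L^2}^2\Big|\leq \norm{\phi_0''}_{L^2}\norm{w(t)}_{L^2}+\norm{\phi_0}_{L^\infty}\norm{\wh\W}_{L^\infty}\norm{\eta(t)}_{L^2}\norm{w(t)}_{L^2},
\eqq
in which every term on the right-hand side carries a factor $\norm{w(t)}_{L^2}$.

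This is where your proof has a genuine, though fixable, gap. By pairing $\partial_{xx}\Psi$ against $\bar\phi_0$ and bounding the result by the \emph{constant} $2\norm{\partial_x\Psi}_{L^2}\norm{\partial_x\phi_0}_{L^2}$, you arrive at $\frac{d}{dt}\norm{w}_{L^2}^2\leq C_1+C_2\norm{w}_{L^2}$ with $C_1>0$. This suffices for global existence, but it cannot yield the growth estimate \eqref{linear}, which is part of the claimed conclusion: if $w_0=0$, your inequality only gives $\norm{w(t)}_{L^2}\lesssim\sqrt{C_1 t}$ for small $t$, and no bound of the form $C\abs{t}$ with $C$ independent of the initial data can be extracted from it, since $\sqrt{C_1 t}/t\to\infty$ as $t\to0^+$. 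The repair is to keep a factor of $w$ in that term: write $\partial_x\Psi=\partial_x w+\partial_x\phi_0$, discard $\Im\int\abs{\partial_x\phi_0}^2=0$, and integrate by parts once more to get $\abs{\Im\int \partial_x w\,\partial_x\bar\phi_0}=\abs{\Im\int w\,\bar\phi_0''}\leq\norm{w}_{L^2}\norm{\phi_0''}_{L^2}$; this recovers the displayed inequality, which, after the regularization $(\norm{w}_{L^2}^2+\delta)^{1/2}$, division, integration and $\delta\to0$, gives \eqref{linear} exactly. A further caution on your derivation: the intermediate pairings against $\bar\Psi$ (for instance $\int\abs{\Psi}^2(\W*\eta)$) are not absolutely convergent integrals, since $\abs{\Psi}^2\to1$ at infinity while $\W*\eta$ is only in $L^2(\R)$; the cancellation must therefore be performed pointwise under the integral sign (i.e.\ as $\int\Im(\cdots)$) before integrating. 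Your closing remark gestures at this, but note that the paper avoids the issue altogether by citing the inequality already established rigorously in \cite{de2010global}.
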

\begin{proof}
In view of the local well-posedness established in Theorem~1.10 in~\cite{de2010global}, 
to prove that the solution is global, 
we only need to show that the solution $\Psi(t)=\phi_0+w(t)$ defined $(T_{\min},T_{\max})$, 
satisfies $T_{\max}=\infty$ and $T_{\min}=-\infty$. In view of the blow-up alternative in 
 the mentioned theorem, it is sufficient to prove that  
$\norm{w(t)}_{L^2(\R)}$ remains bounded in any bounded interval of  $(T_{\min},T_{\max})$.
Indeed, from \eqref{ngp}, we have (see equation (63) in \cite{de2010global})
\begin{align*}
	\frac12 {\left|\frac{d}{dt}\norm{w(t)}_{L^2(\R)}^2\right|} 
	&\leq \norm{ \phi_0''}_{L^2(\R)}\norm{w(t)}_{L^2(\R)}+
	\norm{\phi_0}_{L^\infty(\R)}\int_{\R} \abs{\W*(1-\abs{u(t)}^2)} \, \abs{w(t)}\,dx\\
		&\leq \norm{ \phi_0''}_{L^2(\R)}\norm{w(t)}_{L^2(\R)}+
	\norm{\phi_0}_{L^\infty(\R)}\norm{\wh \W}_{L^\infty(\R)} \norm{\eta(t)}_{L^2(\R)} \norm{w(t)}_{L^2(\R)},
\end{align*}
where $\eta(t)=1-\abs{u(t)}^2$. From Lemma~\ref{lem-control-energy}, we deduce
from the conservation of energy on  $(T_{\min},T_{\max})$, that there exists 
a constant $K>0$, depending on $\kappa$ and $E(\Psi_0)$, such that 
$$\norm{\eta(t)}_{L^2(\R)}\leq K,\quad  \text{for all }t\in~(T_{\min},T_{\max}).
$$
Therefore, we have  for any $\delta>0$, 
$$\frac12 \left|{\frac{d}{dt}(\norm{w(t)}_{L^2(\R)}^2+\delta)}\right|\leq
 (\norm{w(t)}^2_{L^2}+\delta)^\frac12
\left(
\norm{ \phi_0''}_{L^2(\R)}+K\norm{\wh \W}_{L^\infty(\R)}\norm{\phi_0}_{L^\infty(\R)}\right).$$
Dividing by $(\norm{w(t)}_{L^2(\R)}^2+\delta)^{\frac12}$, integrating and letting $\delta\to 0$,
we obtain \eqref{linear}, for any $t\in~(T_{\min},T_{\max})$. As mentioned above, 
this estimate implies that the solution is global.
\end{proof}

As 	explained in Section~6 in~\cite{de2010global}, Theorem~\ref{global} allows us to show that the solutions 
in the energy space are global. 
	\begin{thrm}
		\label{thm:cauchy}
		Assume that  $\W\in \boM_{3}(\R)$  is an even distribution satisfying \eqref{cond:global}. Then for every $\Psi_0 \in \mathcal{E}(\mathbb{R}),$ there exists a unique $\Psi \in C(\mathbb{R},\mathcal{E}(\mathbb{R}))$  global  solution to \eqref{ngp} with the initial condition $\Psi_0$. Moreover, the energy is  conserved, as well as the momentum as long as 
 $\inf_{x\in\R}\abs{\Psi(x,t)}>0$.
	\end{thrm}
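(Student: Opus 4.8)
The plan is to deduce Theorem~\ref{thm:cauchy} from Theorem~\ref{global} by reducing an arbitrary energy-space datum to the setting of a smooth reference profile, exactly as in Section~6 of \cite{de2010global}. The role of the energy space is only to ensure, through Lemma~\ref{lem-control-energy} and the conservation of energy, that the $L^2$-norm of the perturbation cannot blow up in finite time, which is precisely what Theorem~\ref{global} already encodes under the sole assumption \eqref{cond:global}. Hence the heart of the matter is to produce, for an arbitrary $\Psi_0\in\boEE$, an admissible reference $\phi_0$ (that is, $\phi_0\in\boEE$ with $\grad\phi_0\in H^2(\R)\cap C(\R)$, as required in Theorem~\ref{global0}) such that $\Psi_0-\phi_0\in H^1(\R)$, to which Theorem~\ref{global} then applies verbatim.

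First I would construct such a reference by mollification. Fix $\zeta\in C_c^\infty(\R)$ with $\int_\R\zeta=1$ and set $\phi_0:=\Psi_0*\zeta$. Since $\Psi_0'\in L^2(\R)$, Young's inequality gives $\phi_0'=\Psi_0'*\zeta\in C^\infty(\R)$ with $(\phi_0')^{(k)}=\Psi_0'*\zeta^{(k)}\in L^2(\R)$ for every $k$, so in particular $\grad\phi_0=\phi_0'\in H^2(\R)\cap C(\R)$. The standard estimate $\norm{\Psi_0-\Psi_0*\zeta}_{L^2(\R)}\leq C\norm{\Psi_0'}_{L^2(\R)}$, together with $(\Psi_0-\phi_0)'=\Psi_0'-\Psi_0'*\zeta\in L^2(\R)$, shows that $w_0:=\Psi_0-\phi_0\in H^1(\R)$. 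Finally, writing $1-\abs{\phi_0}^2=(1-\abs{\Psi_0}^2)+(\abs{\Psi_0}-\abs{\phi_0})(\abs{\Psi_0}+\abs{\phi_0})$ and using that $\abs{\abs{\Psi_0}-\abs{\phi_0}}\leq\abs{w_0}\in L^2(\R)$ while $\abs{\Psi_0}+\abs{\phi_0}\in L^\infty(\R)$, one gets $1-\abs{\phi_0}^2\in L^2(\R)$, hence $\phi_0\in\boEE$. Thus $\phi_0$ is admissible and $\Psi_0=\phi_0+w_0$ with $w_0\in H^1(\R)$.

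Applying Theorem~\ref{global} with this $\phi_0$ and $w_0$ yields a unique global solution $\Psi\in C(\R,\phi_0+H^1(\R))$ of \eqref{ngp} with $\Psi(0)=\Psi_0$, for which the energy is conserved and the momentum is conserved as long as $\inf_{x\in\R}\abs{\Psi(x,t)}>0$. Since the sum of an element of $\boEE$ and an $H^1(\R)$ function again lies in $\boEE$, we have $\phi_0+H^1(\R)\subset\boEE$, and convergence in $\phi_0+H^1(\R)$ is stronger than convergence for the metric $d_{\boE}$; therefore $\Psi\in C(\R,\boEE)$ and the two conservation laws transfer immediately. This settles existence together with the conservation statements, and also shows that the construction does not depend on the choice of $\phi_0$, since two admissible references differ by an $H^1(\R)$ function and hence give rise to the same affine space $\phi_0+H^1(\R)$.

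It remains to upgrade uniqueness from $C(\R,\phi_0+H^1(\R))$ to the full space $C(\R,\boEE)$, and this is the step I expect to be the main obstacle. The delicate structural fact is that any two solutions $\Psi,\tilde\Psi\in C(\R,\boEE)$ sharing the datum $\Psi_0$ must differ by a continuous $H^1(\R)$-valued curve: writing $W:=\tilde\Psi-\Psi$ with $W(0)=0$, the difference solves, on a short time interval, a Schr\"odinger-type equation whose inhomogeneity $\Psi(\W*(1-\abs{\Psi}^2))-\tilde\Psi(\W*(1-\abs{\tilde\Psi}^2))$ is controlled in $L^2(\R)$ by $\W\in\boM_3(\R)$ together with the $L^\infty$- and $L^2$-bounds of Lemma~\ref{lem-control-energy}; a Duhamel/contraction argument then keeps $W(t)\in H^1(\R)$ locally in time, so that $\tilde\Psi$ lies in the reference class $C(I,\phi_0+H^1(\R))$ and the uniqueness of Theorem~\ref{global} forces $\tilde\Psi=\Psi$ there. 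A continuation argument over the connected set $\R$ propagates the identity to all times. This is exactly the reasoning carried out in Section~6 of \cite{de2010global}, to which I would refer for the remaining technical details.
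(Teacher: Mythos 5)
Your proposal is correct and follows essentially the same route as the paper: the paper proves Theorem~\ref{thm:cauchy} simply by invoking Theorem~\ref{global} together with the reduction argument of Section~6 of \cite{de2010global}, which is exactly what you carry out — your mollification construction of the reference profile $\phi_0$ and the verification that $\Psi_0-\phi_0\in H^1(\R)$ merely make explicit the decomposition that the paper leaves to the cited reference. The uniqueness step you flag as the delicate point is likewise deferred to \cite{de2010global} by the paper itself, so there is no discrepancy between the two arguments.
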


\begin{proof}[Proof of Theorem~\ref{thm:cauchy0}]
In view of Remark~\ref{rem:estimation}, we deduce if $\W\in \boM_3(\R)$ is an even distribution, 
with $\wh \W\geq 0$ a.e.~on $\R$, and $\wh \W$ of class $C^2$ in a neighborhood of the origin, then 
$\W$ satisfies  \eqref{cond:global}, for some $\kappa\geq 0$. Therefore, we can apply Theorem~\ref{thm:cauchy}
and the conclusion follows.
\end{proof}

 The rest of the section is devoted to prove that the set $\boS_\q$ 
is orbitally stable in the energy space.
Using the Cazenave--Lions approach \cite{cazlions} and  Theorem~\ref{thm:compacite},
we obtain the following result. 
\begin{thm}
	\label{thstab}
	Assume that  $\W\in \boM_{3}(\R)$ satisfies \ref{H0} and \ref{H-coer}.
	Suppose also that  $E_{\min}$ is concave on $\mathbb{R}^+$. 
Then, $\boS_\q$ is orbitally stable 
for $(\boEE,d)$ and  for $(\boEE,d_A)$, for all $\q \in (0,\q_*)$.
Moreover,
for all $\Psi_0\in \boEE$ and for all $\varepsilon>0,$ there exists $\delta >0$ such that if 
\bq\label{stable}
 d(\Psi_0,\boS_\q)\leq \delta,\quad  \text{ then } \quad \sup_{t\in\R } \inf_{y\in \R}d_A(\Psi(\cdot-y,t),\boS_\q)\leq \varepsilon,
 \eq
where $\Psi(t)$ is the solution of \eqref{ngp} associated with the initial condition $\Psi_0$.
\end{thm}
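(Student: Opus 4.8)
The plan is to run the Cazenave--Lions contradiction argument, feeding the conserved quantities into the compactness result of Theorem~\ref{thm:compacite}. Since $d\leq d_A$, it suffices to prove the strongest assertion \eqref{stable} together with the $d$-stability; the $d_A$-stability then follows from \eqref{stable} because $d_A$-closeness of the datum implies $d$-closeness. So suppose \eqref{stable} fails: there are $\ve_0>0$, a sequence $\Psi_0^n\in\boEE$ with $d(\Psi_0^n,\boS_\q)\to0$, and times $t_n$ such that, writing $\Psi_n(t)$ for the global solution of \eqref{ngp} with datum $\Psi_0^n$ furnished by Theorem~\ref{thm:cauchy} (here we use $\W\in\boM_3(\R)$ and that \ref{H0}--\ref{H-coer} imply \eqref{cond:global}), one has $\inf_{y\in\R}d_A(\Psi_n(\cdot-y,t_n),\boS_\q)\geq\ve_0$ for all $n$. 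The goal is to show that $(\Psi_n(t_n))$ is a minimizing sequence for $\Emin(\q)$ and to contradict this lower bound via Theorem~\ref{thm:compacite}.

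First I would transfer the closeness of the data to the conserved quantities. Picking $u_n\in\boS_\q$ with $d(\Psi_0^n,u_n)\to0$ (the set being nonempty by Theorem~\ref{thm:compacite}), the kinetic energies converge because $\|(\Psi_0^n)'-u_n'\|_{L^2}\to0$; the potential energies converge because $\|(1-|\Psi_0^n|^2)-(1-|u_n|^2)\|_{L^2}\to0$, which follows from $\||\Psi_0^n|-|u_n|\|_{L^2}\to0$, the uniform $L^\infty$ bound on the moduli provided by Lemma~\ref{lem-control-energy} (together with $\big||v|'\big|=|v'|$, yielding an $H^1$ bound on $|\Psi_0^n|-|u_n|$), and the $L^2$-continuity of $B$. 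Hence $E(\Psi_0^n)\to\Emin(\q)$. The convergence $p(\Psi_0^n)\to\q$ is obtained from the continuity of the momentum under $d$ on configurations with a uniform positive lower bound on the modulus: writing $v=\rho e^{i\phi}$ as in \eqref{def:mom}, the lifting turns $d$-convergence into convergence of $\eta=1-\rho^2$ in $L^2$ and of $\phi'$ (weakly) in $L^2$, and the bound $\inf_\R|u_n|>0$ (valid since each $u_n$ is a minimizer, hence lies in $\boEEE$ by Theorem~\ref{thm:curve}) controls the phase.

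The decisive point is that the flow cannot vanish. Since $\q\in(0,\q_*)$, Theorem~\ref{thm:curve} gives $\Emin(\q)<\Emin(\q_*)$, so for $n$ large $E(\Psi_0^n)<\Emin(\q_*)$. Energy being conserved along the flow, $E(\Psi_n(t))=E(\Psi_0^n)<\Emin(\q_*)$ for every $t$, and Theorem~\ref{thm:curve}(iv) forces $\Psi_n(t)\in\boEEE$, i.e.\ $\inf_\R|\Psi_n(t)|>0$, for all $t$. It is exactly this non-vanishing that makes the momentum a conserved quantity for all time, so $p(\Psi_n(t_n))=p(\Psi_0^n)\to\q$. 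Together with $E(\Psi_n(t_n))=E(\Psi_0^n)\to\Emin(\q)$, this shows that $(\Psi_n(t_n))$ satisfies the hypotheses \eqref{mseq} of Theorem~\ref{thm:compacite}.

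Applying that theorem, up to a subsequence there exist points $x_n$ and $v\in\boS_\q$ with $\Psi_n(t_n)(\cdot+x_n)\to v$ in the senses \eqref{cvuniforme}--\eqref{cvfortegradient} and $\inf_\R|\Psi_n(t_n)(\cdot+x_n)|\geq\nu$ by \eqref{inf-u_n}. These convergences yield $d_A(\Psi_n(t_n)(\cdot+x_n),v)\to0$: the $L^\infty([-A,A])$ part is \eqref{cvuniforme}, the derivative part is \eqref{cvfortegradient}, and the modulus part follows from \eqref{cvforteepot} after dividing by $|\Psi_n(t_n)(\cdot+x_n)|+|v|\geq2\nu$. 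Choosing $y=-x_n$ gives $\inf_y d_A(\Psi_n(\cdot-y,t_n),\boS_\q)\to0$, contradicting the bound $\ve_0$; this proves \eqref{stable}, and a fortiori the $d_A$-stability. For the $d$-stability one argues identically but uses that $d$ is translation-invariant and that $\boS_\q$ is invariant under translations and under multiplication by unit-modulus constants, so that $d(\Psi_n(t_n),v(\cdot-x_n))=d(\Psi_n(t_n)(\cdot+x_n),v)\to0$ with $v(\cdot-x_n)\in\boS_\q$, again a contradiction. I expect the main obstacle to lie not in the compactness (already secured in Theorem~\ref{thm:compacite}) but in the two points that keep the conserved quantities meaningful along the flow: guaranteeing the time-uniform non-vanishing through the threshold $\Emin(\q)<\Emin(\q_*)$, and establishing the continuity of the momentum under $d$ via the lifting and the uniform lower bound on the modulus.
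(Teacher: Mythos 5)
Your overall strategy coincides with the paper's: a Cazenave--Lions contradiction argument, global well-posedness from Theorem~\ref{thm:cauchy} (using that \ref{H0}--\ref{H-coer} imply \eqref{cond:global}), the threshold $\Emin(\q)<\Emin(\q_*)$ to keep the flow in $\boEEE$ and hence conserve the momentum, and a final application of Theorem~\ref{thm:compacite} to $(\Psi_n(t_n))$; the reduction of both stability statements to \eqref{stable} is also the paper's remark after the theorem. However, there is a genuine gap in the step where you pass the momentum constraint to the data. You claim $p(\Psi_0^n)\to\q$ by a continuity-of-$p$-under-$d$ argument run against the \emph{varying} minimizers $u_n\in\boS_\q$, and you justify the needed ``uniform positive lower bound on the modulus'' only by observing that each $u_n$ lies in $\boEEE$, i.e.\ $\inf_\R|u_n|>0$ for each fixed $n$. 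This is not enough: the phase estimate $\int_\R\theta_n'^2\leq 2E(u_n)/\inf_\R|u_n|^2$ (and the corresponding control of the lifting of $\Psi_0^n$, whose modulus must also be bounded below) degenerates if $\inf_\R|u_n|\to 0$ along the sequence, and nothing you prove excludes this. Lemma~\ref{lem-control-energy} only yields an upper bound on $\|1-|u_n|^2\|_{L^\infty}$, which need not be smaller than $1$ once $\q$ exceeds the small value covered by Corollary~\ref{cor:q*} (roughly $0.027$), whereas $\q_*$ can be as large as $\pi/2$; and the definition \eqref{q_*} of $\q_*$ is purely qualitative, so it provides no $n$-uniform bound on elements of $\boS_\q$.

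The paper closes exactly this hole by inserting one more compactness step that you skipped: it first applies Theorem~\ref{thm:compacite} to the sequence $(v_n)\subset\boS_\q$ of minimizers chosen near the data (trivially a minimizing sequence satisfying \eqref{mseq}), obtaining translations $a_n$, a \emph{fixed} limit $v\in\boS_\q$, and---crucially---the uniform bound \eqref{inf-u_n}, $\inf_\R|v_n(\cdot+a_n)|\geq\nu>0$. This yields $d(u_0^n(\cdot+a_n),v)\to0$, and then all continuity statements (Lemma~\ref{lmmstabi}) are run against the fixed target $v$, whose fixed positive infimum $\inf_\R|v|>0$ supplies the uniform lower bound for the whole sequence. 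Your energy argument survives as written (it needs only uniform upper bounds), but your momentum argument becomes correct only if you restore this first compactness application, or otherwise produce an $n$-uniform lower bound on $\inf_\R|u_n|$ and $\inf_\R|\Psi_0^n|$. Note also an ordering issue: you should establish $\Psi_0^n\in\boEEE$ (via the energy threshold and Theorem~\ref{thm:curve}~(iv)) \emph{before} speaking of $p(\Psi_0^n)$ at all, since the momentum is only defined on $\boEEE$; in your write-up the momentum continuity is discussed one paragraph before the non-vanishing is secured.
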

Notice that for $u,v\in \boEE$, we have  $d(u,v)\leq d_A(u,v)$, and thus
$$
d(u,\boS_\q)=\inf_{y\in \R} d(u(\cdot-y),\boS_\q)\leq \inf_{y\in \R} d_A(u(\cdot-y),\boS_\q).
$$
Therefore, the implication in \eqref{stable} shows the orbital stability
for the distance $d$ and $d_A$.

In order to prove Theorem \ref{thstab}, we will use the following lemma.
\begin{lmm}
	\label{lmmstabi}
Let $v_{n}, v\in \mathcal{E}(\mathbb{R})$ such that
$	d(v_n,v)\to 0$. 
	Then,
	\bq
		\label{convstablemmeenergie2}
	\Vert |v_n|-|v| \Vert_{L^{\infty}(\R)} \to 0
	\quad \text{and}	\quad \Vert |v_n|^2-|v|^2 \Vert_{L^{2}(\R)} \to 0.
		\eq 
In particular, we have the continuity of the energy $E(v_n)\to E(v)$ (with respect to $d$). In addition, if $v_n,v\in \boEEE$, then we also have the continuity of the momentum
$p(v_n)\to p(v)$.
\end{lmm}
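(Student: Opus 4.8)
The plan is to treat the four assertions in turn, extracting from $d(v_n,v)\to 0$ the two convergences $\norm{v_n'-v'}_{L^2(\R)}\to 0$ and $\norm{\abs{v_n}-\abs{v}}_{L^2(\R)}\to 0$. Write $\rho_n=\abs{v_n}$, $\rho=\abs{v}$ and $g_n=\rho_n-\rho$. Since $\abs{\,\abs{v_n}'\,}\le \abs{v_n'}$ (again by Lemma~7.6 in~\cite{gilbarg}), the derivatives $\rho_n'$ are bounded in $L^2(\R)$, so $g_n\in H^1(\R)$ with $\norm{g_n'}_{L^2}$ bounded. Hence the one-dimensional Sobolev inequality $\norm{g_n}_{L^\infty}^2\le 2\norm{g_n}_{L^2}\norm{g_n'}_{L^2}$ gives $\norm{\rho_n-\rho}_{L^\infty}\to 0$, which is the first convergence in \eqref{convstablemmeenergie2}. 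For the second, I would factor $\abs{v_n}^2-\abs{v}^2=g_n(\rho_n+\rho)$ and use that $\norm{\rho_n}_{L^\infty}\le \norm{\rho}_{L^\infty}+\norm{g_n}_{L^\infty}$ stays bounded, so that $\norm{\abs{v_n}^2-\abs{v}^2}_{L^2}\le \norm{\rho_n+\rho}_{L^\infty}\norm{g_n}_{L^2}\to 0$.

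For the energy, the kinetic part is immediate since $2E_\k(v_n)=\norm{v_n'}_{L^2}^2\to \norm{v'}_{L^2}^2=2E_\k(v)$. For the potential part, set $\eta_n=1-\abs{v_n}^2$ and $\eta=1-\abs{v}^2$; the previous step gives $\norm{\eta_n-\eta}_{L^2}\to 0$, and using \eqref{Wdecomp} together with \eqref{sym} one obtains $4\abs{E_\p(v_n)-E_\p(v)}=\abs{B(\eta_n)-B(\eta)}=\big|\int_\R(\W*(\eta_n+\eta))(\eta_n-\eta)\big|\le \norm{\W}_{\boM_2}\norm{\eta_n+\eta}_{L^2}\norm{\eta_n-\eta}_{L^2}$, which tends to $0$. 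Adding the two contributions yields $E(v_n)\to E(v)$.

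For the momentum, assume now $v_n,v\in\boEEE$ and lift $v=\rho e^{i\phi}$, $v_n=\rho_n e^{i\phi_n}$. Since $\inf_\R\rho>0$ and $\rho_n\to\rho$ uniformly, we have $\inf_n\inf_\R\rho_n>0$, so $\norm{\phi_n'}_{L^2}^2\le (\inf_\R\rho_n)^{-2}\int_\R\rho_n^2\phi_n'^2\le C\,E_\k(v_n)$ is bounded. Writing $p(v_n)-p(v)=\tfrac12\int_\R(\eta_n-\eta)\phi_n'+\tfrac12\int_\R\eta(\phi_n'-\phi')$, the first integral tends to $0$ by the Cauchy--Schwarz inequality (strong $L^2$ convergence of $\eta_n$ against the bounded sequence $\phi_n'$). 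For the second integral, since $\eta\in L^2(\R)$ is fixed, it is enough to prove the weak convergence $\phi_n'\wto\phi'$ in $L^2(\R)$. The sequence $(\phi_n')$ being bounded, I would extract a weak limit $\psi$ and identify it by passing to the limit in the pointwise identity $\rho_n^2\phi_n'=\langle iv_n,v_n'\rangle$, using the strong convergence $v_n'\to v'$ in $L^2(\R)$ and the local uniform convergence of $v_n$ to $v$.

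The step I expect to be the real obstacle is precisely this last identification. The distance $d$ is only a pseudometric: it controls $v_n-v$ solely through its derivative and through $\abs{v_n}-\abs{v}$, so $v_n$ is determined only up to an additive constant, and one must rule out that $\phi_n'$ converges weakly to something other than $\phi'$. Concretely, $\norm{v_n'-v'}_{L^2}\to 0$ yields $v_n\to v$ locally uniformly only up to the constant $v_n(0)-v(0)$, and the role of the modulus convergence $\norm{\abs{v_n}^2-\abs{v}^2}_{L^2}\to 0$ is to constrain that constant so that $\langle iv_n,v_n'\rangle$ does converge to $\langle iv,v'\rangle=\rho^2\phi'$. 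Carrying out this identification carefully—passing to the limit in $\rho_n^2\phi_n'$ in the sense of distributions and checking that the weak limit is indeed $\phi'$—is the delicate heart of the argument; once it is secured, the remaining convergences follow as above.
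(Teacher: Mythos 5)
Your proofs of \eqref{convstablemmeenergie2} and of the continuity of the energy are correct and essentially identical to the paper's (same Gagliardo--Nirenberg estimate combined with $\abs{\abs{w}'}=\abs{w'}$, same factorization of $\abs{v_n}^2-\abs{v}^2$, same use of the $\boM_2$ bound for the potential term). The genuine problem is the momentum, and it sits exactly at the step you flagged and left open: the identification of the weak limit of $\phi_n'$. Unfortunately that step is not merely delicate --- it is impossible, because the momentum claim itself is false as stated. The convergence of the moduli does \emph{not} force the constants $c_n=v_n(0)-v(0)$ to vanish. Take
\bqq
v(x)=\tfrac12+i\,\tfrac{\sqrt3}{2}\tanh (x),\qquad v_n=v-1=-\bar v\quad\text{ for every }n.
\eqq
Both functions belong to $\boEEE$ (their common modulus is bounded below by $\tfrac12$), $v_n'=v'$, and $\abs{v_n}=\abs{v}$ pointwise because $v$ takes its values on the line $\Re z=\tfrac12$; hence $d(v_n,v)=0$. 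Yet $p(v_n)=-p(v)$, since conjugation reverses the sign of the momentum while multiplication by the unimodular constant $-1$ leaves it unchanged, and here
\bqq
p(v)=\frac12\int_\R \frac34\sech^2(x)\,\frac{\sqrt3\,\sech^2(x)}{1+3\tanh^2(x)}\,dx>0 .
\eqq
So $p(v_n)\not\to p(v)$. The same phenomenon occurs for the paper's explicit solitons \eqref{sol:1D}: $\bar u_c=u_c+i\sqrt2\,c$ satisfies $d(\bar u_c,u_c)=0$ but $p(\bar u_c)=-p(u_c)$. The pseudometric $d$ simply cannot see the sign of the momentum, so no passage to the limit can close the gap in the generality stated; your hope that the modulus convergence ``constrains the constant'' is exactly what fails.

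For what it is worth, the paper's own proof is defective at the very same spot: after bounding $\theta_n'$ in $L^2(\R)$ it asserts, ``as in Lemma~\ref{Thsolitonw}'', that $\theta_{n_k}'\wto\theta'$; but in Lemma~\ref{Thsolitonw} the weak limit is identified through the local uniform convergence \eqref{cvl2} of the functions themselves, which is available there and is precisely what $d$-convergence does not provide here. So your diagnosis of where the difficulty lies is correct, and your proposal reproduces the paper's argument up to and including its weak point. The statement (and both proofs) become correct if one adds the hypothesis $v_n\to v$ in $L^\infty_{\loc}(\R)$: then, since $\inf_n\inf_\R\abs{v_n}>0$, the phases converge locally uniformly up to constants in $2\pi\Z$, every weak $L^2$ limit of $\phi_n'$ equals $\phi'$, and your decomposition $p(v_n)-p(v)=\frac12\int_\R(\eta_n-\eta)\phi_n'+\frac12\int_\R\eta\,(\phi_n'-\phi')$ finishes the proof.
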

\begin{proof}
	First, we remark that since  $d(v_n,v)\to 0$, there is some $M>0$ such that $$\norm{v_n'}_{L^2(\R)}+\norm{v'}_{L^2(\R)}+\norm{v_n}_{L^2(\R)}+\norm{v}_{L^2(\R)}\leq M,$$
	 for all $n\in \N$.
	By the sharp Gagliardo--Nirenberg interpolation inequality and using that
	$||w|'|= |w'|$, for $w\in H^1_{\loc}(\R)$,  we have
	\begin{align*}
	\norm{ |v_n|-|v| }_{L^{\infty}(\R) }
			\leq \Vert |v_n|-|v| \Vert_{L^{2}(\R)}
				\Vert |v_n|'-|v|' \Vert_{L^{2}(\R)}
					&\leq 2M\Vert |v_{n}|-|v|\Vert_{L^{2}(\R)},
	\end{align*}
	so the first convergence in \eqref{convstablemmeenergie2} follows.
Similarly, we deduce the second one noticing that
	\begin{align*}
		\Vert |v|^{2}-|v_{n}|^{2} \Vert_{L^{2}(\R)} 
		\leq \left( \Vert v \Vert_{L^{\infty}(\R)} + \Vert v_n \Vert_{L^{\infty}(\R)}\right)\Vert |v|-|v_{n}| \Vert_{L^{2}(\mathbb{R})}\leq 2M\Vert |v_{n}|-|v|\Vert_{L^{2}}.
	\end{align*} 
	Therefore \eqref{convstablemmeenergie2} is proved. In particular, we have
	$v_n\to v$ in $L^2(\R)$ and $\eta_n=1-|v_n|^2\to \eta=1-|v|^2$ in $L^2(\R)$,
	so that $E(v_n)\to E(v)$. For the momentum, writing $v_n=\abs{v_n}e^{i\theta_n}$ as usual,
		we have $p(v_n)=\frac12\int_\R \eta_n \theta_n'$, so it suffices to prove that 
		$\theta_n \wto \theta$ in $L^2(\R)$ to conclude that $p(v_n)\to p(v)$, 
		where $v=\abs{v}e^{i\theta}$. To establish the weak convergence of $\theta_n$, 
		we notice that since 
			$|v_n|\to |v|$ in $L^{\infty}(\R)$, there exists $C>0$ such that 
$$\inf_{\R}{\abs{v_n}}\geq C, \quad \text{for all }n\in \N.$$
Hence,
	$$ \int_{\mathbb{R}}\theta_n'^{2} \leq  \frac{1}{C^2}\int_{\mathbb{R}}\rho_n^2\theta_n'^2 \leq \frac{2}{C^{2}}E(v_{n}).$$
	Since $E(v_{n})$ is bounded, we conclude as in Lemma~\ref{Thsolitonw} that for a subsequence, 
$\theta_{n_k}'\wto \theta'$ in $L^2(\R)$, as $k\to\infty$. Therefore, we conclude that 
$p(v_{n_k})\to p(v)$. Since the limit  does not depend on the subsequence, 
we deduce that $p(v_{n})\to p(v)$.
\end{proof}

\begin{proof}[Proof of Theorem \ref{thstab}]
	Arguing by contradiction,
we suppose that there exist $\varepsilon_{0}>0$, $(\delta_n)$,  $(t_n)$ and $(u_0^n) \subset \boE(\R)$ such that $\delta_{n} \to 0,$
	\bq
	d(u_{0}^n,\boS_\q)<\delta_n
	\label{ineqdistance}
	\eq 
	and
	\bq
	\label{ineq:absurd:Sq}
\inf_{y\in \R}	d_A(u^{n}(\cdot-y,t_n),\boS_\q) \geq \varepsilon_{0},
	\eq
	where $u^n$ denotes the solution to \eqref{ngp} with initial data $u_{0}^n$.
In particular, from \eqref{ineqdistance}	we deduce that there is   $v_n \in \boS_\q$ such that
	\bq
	\label{d:vn:u0}
	d(u_0^n,v_n)<2\delta_n.
	\eq 
Since $E(v_n)=\Emin(\q)$ and $p(v_n)=\q$, applying Theorem \ref{thm:compacite} to $(v_n)$, 
we infer that there exists $v \in \boS_\q$ and points $(a_n)$  such that, up to a subsequence,
the function $\tilde v_n(x)=v_n(x+a_n)$ satisfies 
	\begin{align}
		\label{cv:stab:unif} 
\tilde 		v_n\to  v,\ \text{ in }L^\infty_{\loc}(\R),
\quad \text { and	}\quad 
		1-|\tilde v_n|^{2} \to 1-|v|^{2}, \   
\tilde		v_n'\to v' \ \text{ in }L^2(\R).
	\end{align}
Using also the estimate \eqref{inf-u_n} in Theorem~\ref{thm:compacite}, 
we conclude that
	\bqq
\norm{ |\tilde v_n|-|v| }_{L^2(\R)}\leq 
	\frac{1}{\nu+\inf_{\R}\abs{v}}
\Vert |\tilde v_n|^2-|v|^2 \Vert_{L^2(\R)}\to 0,
	\eqq 
so that 
\bq 
\label{conv-d}
d(\tilde v_n,v)\to 0,
\eq
and also $d_A(\tilde v_n,v)\to 0$. On the other hand, by the triangle inequality and \eqref{d:vn:u0},
	\bqq
	d(u_0^n(\cdot+a_n),v)\leq d(u_0^n(\cdot+a_n),\tilde v_n)+d(\tilde v_n,v)<2\delta_n+d(\tilde v_n,v).
	\eqq
	Combining with \eqref{conv-d}, we conclude that $d(u_0^n(\cdot+a_n),v)\to 0$.
	Applying the conservation of energy in Theorem~\ref{thm:cauchy} and Lemma~\ref{lmmstabi}, 
	we thus get, for all $t\in \R$,
	\bq
		\label{E:u0=vn}
	E(u^n(t))=E(u_{0}^n)=E(u_0^n(\cdot+a_n))\to E(v)=\Emin(\q). 
	\eq 
	At this point we claim that 
	\bq\label{claim-p}
	\inf_{\R}\abs{u^n(t)}>0, \quad \text{ for all }\abs{t}\leq \abs{t_n}. 
	\eq
	Otherwise, there are values $s_n$, with $\abs{s_n}\leq \abs{t_n}$, such that   
	$\inf_{\R}\abs{u^n(s_n)}=0$. By	\eqref{E:u0=vn}, we conclude that $E(u^n(s_n))\to\Emin(\q)$
	and thus, using that $\Emin$ is strictly increasing on $(0,\q_*)$, we can find $n_0$
	such that  $E(u^n(s_n))<\Emin(\q_*)$, for all $n\geq n_0$.
This is a contradiction because, by Theorem~\ref{thm:curve}, this implies that 
$u^n(s_n)\in \boEEE$.

	In view of \eqref{claim-p}, we can proceed as before 
invoking the conservation of momentum in Theorem~\ref{thm:cauchy} and Lemma~\ref{lmmstabi}, 
to obtain
	\bq
\label{p:u0=vn}
p(u^n(t_n))=p(u_{0}^n)=p(u_0^n(\cdot+a_n))\to p(v)=\q. 
\eq 
	
By  \eqref{E:u0=vn} and \eqref{p:u0=vn}, we can apply Theorem~\ref{thm:compacite} to $(u^n(t_n))$. Then,   reasoning as before, we deduce that there exist $w \in S_\q$ and $(b_n)$ such that, up to a subsequence,
	\bq
	d_A(u^n(\cdot+b_n,t_n),w(\cdot))\to 0,
	\eq 
	which contradicts \eqref{ineq:absurd:Sq}.
\end{proof}
\section{Euler--Lagrange equations and proof of Theorem~\ref{thm-existence-general}}
In this section we establish the Euler--Lagrange equations associated with the minimization problem, 
which will allow us to complete the proof of Theorem~\ref{thm-existence-general}.
Since the energy and momentum functional are not defined on a vector space, the notion of differential is not trivial. 
For our purposes, it suffices consider the directional derivatives using only smooth functions with compact support. More precisely, 
for $u\in \boEE$  we define 
\bqq
\d E(u)[h]:=\underset{t \to 0}\lim\frac{E(u+th)-E(u)}{t} \quad \text{ and }\quad
\d p(u)[h]:=\underset{t \to 0}\lim\frac{p(u+th)-p(u)}{t},
\eqq
for all $h\in C_c^\infty(\R)$, where we also suppose that $u\in\boEEE$ for the definition of $\d p(u)$
so that  $p(u+th)$ is actually well defined for $t$ small enough.

\begin{lem}
	\label{diffmoment}
	Assume that  $\W$ satisfies \ref{H0}. 
	Then for all $ h \in C_c^\infty(\R)$, we have
	\begin{align}
		\label{marloo}
		\d E(u)[h]&=
		\int_{\mathbb{R}}\langle u',h' \rangle - \int_{\mathbb{R}}\W\ast (1-|u|^2)\langle u,h \rangle, \quad\text{ if }u\in \boEE,
		\\
		\label{marlo}
		\d p(u)[h]&=\int_{\mathbb{R}}\langle ih',u \rangle, \quad\text{ if }u\in \boEEE.
	\end{align}
	In particular, for all $c\in \R$,   $\d E(u)=c\,\d p(u)$ if and only if  $u$ satisfies \eqref{ntwc}. 
\end{lem}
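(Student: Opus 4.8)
The plan is to establish the two expansions \eqref{marloo} and \eqref{marlo} by differentiating $E$ and $p$ directly, and then to read off the equivalence with \eqref{ntwc} through integration by parts and the fundamental lemma of the calculus of variations.

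For the energy, I would simply expand $E(u+th)=E_\k(u+th)+E_\p(u+th)$ in powers of $t$. The kinetic part gives $\frac12\int_\R|u'+th'|^2=E_\k(u)+t\int_\R\langle u',h'\rangle+O(t^2)$, so its derivative is $\int_\R\langle u',h'\rangle$. For the potential part, set $\eta=1-|u|^2$ and note $1-|u+th|^2=\eta-2t\langle u,h\rangle-t^2|h|^2$; plugging this into $B(f)=\int_\R(\W*f)f$ and using the bilinear expansion \eqref{Wdecomp} (itself a consequence of the symmetry \eqref{sym}), the first-order term of $\frac14B(\,\cdot\,)$ is $-\int_\R(\W*\eta)\langle u,h\rangle$. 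Adding the two contributions gives \eqref{marloo}. The only thing to check is that the $O(t^2)$ remainders are genuinely quadratic, which is immediate since $h\in C_c^\infty(\R)$ and $\W*\eta\in L^2(\R)$ by \ref{H0}.

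The delicate computation is $\d p(u)[h]$, because of the renormalizing factor $1/|u|^2$ in \eqref{def:mom}; this is where I expect the main difficulty. First, since $u\in\boEEE$ we have $\inf_\R|u|>0$, and as $h$ is compactly supported we get $\inf_\R|u+th|>0$ for $|t|$ small, so that $p(u+th)$ is well defined and we may lift $u+th=\rho_t e^{i\phi_t}$. The cleanest route is to work in the frame of $u=\rho e^{i\phi}$: writing $h=(\alpha+i\beta)e^{i\phi}$ with real $\alpha,\beta\in C_c^\infty(\R)$, a short computation yields $\dot\rho:=\partial_t\rho_t|_{t=0}=\alpha$ and $\dot\phi:=\partial_t\phi_t|_{t=0}=\beta/\rho$. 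Using the identity $p(v)=\frac12\int_\R(1-\rho^2)\phi'$ valid in $\boEEE$, differentiation at $t=0$ gives $\d p(u)[h]=\frac12\int_\R\big(-2\rho\alpha\phi'+(1-\rho^2)(\beta/\rho)'\big)$. Integrating the second term by parts (boundary terms vanish as $\alpha,\beta$ are compactly supported and $\rho$ is bounded below) produces $\int_\R(\rho'\beta-\rho\alpha\phi')$, and comparing with the direct evaluation $\langle ih',u\rangle=-\rho(\beta'+\alpha\phi')$, followed by one further integration by parts, shows this equals $\int_\R\langle ih',u\rangle$, which is \eqref{marlo}. The care required here is precisely to see the several first-order terms recombine into this simple expression.

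Finally, to obtain the equivalence I would integrate \eqref{marloo} and \eqref{marlo} by parts. Using $\int_\R\langle u',h'\rangle=-\int_\R\langle u'',h\rangle$ in the sense of distributions on $\R$, together with the antisymmetry relation $\int_\R\langle ih',u\rangle=\int_\R\langle iu',h\rangle$ (another integration by parts, using $\Re(\bar z)=\Re z$), the condition $\d E(u)[h]=c\,\d p(u)[h]$ reads
\[
\int_\R\big\langle\, u''+u\,(\W*(1-|u|^2))+ic\,u',\,h\,\big\rangle=0 .
\]
Since this holds for all complex-valued $h\in C_c^\infty(\R)$, equivalently for both $h$ and $ih$, the fundamental lemma of the calculus of variations forces $u''+u\,(\W*(1-|u|^2))+ic\,u'=0$ in the sense of distributions, i.e.\ \eqref{ntwc}; conversely, if $u$ solves \eqref{ntwc}, the same identity shows $\d E(u)[h]=c\,\d p(u)[h]$ for every $h$. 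This proves both implications.
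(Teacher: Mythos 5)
Your proposal is correct. For the energy differential \eqref{marloo} and for the equivalence with \eqref{ntwc} your argument coincides with the paper's: expand $B(1-|u+th|^2)$ using the symmetry \eqref{sym} to isolate the first-order term, then integrate by parts and test against both $h$ and $ih$ to invoke the fundamental lemma. For the momentum differential \eqref{marlo}, however, you take a genuinely different route. The paper differentiates the renormalized formula \eqref{def:mom} directly in $t$, obtaining three terms containing the factors $1-1/|u|^2$ and $1/|u|^4$, and then collapses them by a single integration by parts together with the pointwise algebraic identity $-\langle ih,u\rangle\langle u,u'\rangle+\langle iu',u\rangle\langle u,h\rangle=\langle iu',h\rangle\,|u|^2$; no lifting is required. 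You instead pass to the lifted frame $u=\rho e^{i\phi}$, use the reduced expression $p=\tfrac12\int_\R(1-\rho^2)\phi'$, decompose $h=(\alpha+i\beta)e^{i\phi}$, and compute the first variations $\dot\rho=\alpha$, $\dot\phi=\beta/\rho$; the cancellation of the renormalizing factor then becomes transparent, at the price of introducing the lifting and checking that $\int(\rho'\beta-\rho\alpha\phi')$ matches $\int_\R\langle ih',u\rangle$. Both computations are valid; the paper's is shorter and stays at the level of the complex-valued function, while yours explains structurally why the answer is so simple. One small inaccuracy to fix in your write-up: since $u\in\boEEE$ is only in $H^1_{\loc}(\R)$, the phase $\phi$ is continuous and $H^1_{\loc}$ but not smooth, so $\alpha=\Re(he^{-i\phi})$ and $\beta=\Im(he^{-i\phi})$ are compactly supported $H^1$ functions rather than elements of $C_c^\infty(\R)$ as you claimed; this is harmless, because every integration by parts you perform (including the one producing $\int\rho'\beta$) is valid for compactly supported $H^1$ functions, but it should be stated at that level of regularity.
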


Notice that the elliptic regularity theory shows that if $u$ is a solution of \eqref{ntwc}, then 
 $u$
is smooth. More precisely, the following result stated in higher dimensions
 in \cite{de2011nonexistence} applies without changes in dimension 1.
\begin{lem}[\cite{de2011nonexistence}]
	\label{regularity} 
	Let $u\in \boEE$ be a solution of \eqref{ntwc}, with $\W\in\boM_2(\R)$. Then $u$ is bounded and of class $C^\infty(\R)$.
	Moreover,  $\eta:=1-\abs{u}^2$ and $\grad{u}$ belong to $W^{k,p}(\R)$,
	for all $k\in\N$ and for all  $p\in[2, \infty)$.
\end{lem}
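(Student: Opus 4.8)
The plan is to prove the stronger statement that $u'\in H^m(\R)$ and $\eta:=1-\abs{u}^2\in H^m(\R)$ for every $m\in\N$ (recall that $\grad u=u'$ in dimension one), from which all the assertions will follow through the one-dimensional Sobolev embedding $H^1(\R)\hookrightarrow L^\infty(\R)$. Boundedness of $u$ is immediate: since $u\in\boEE$, it is uniformly continuous with $\abs{u(x)}\to 1$ as $\abs{x}\to\infty$, so $u\in L^\infty(\R)$. The observation that lets us control the nonlocal term is that convolution with $\W$ preserves every Sobolev space $H^m(\R)$: by Plancherel and $\widehat{\W*f}=\wh\W\,\hat f$ one has $\norm{\W*f}_{H^m}\leq\norm{\wh\W}_{L^\infty}\norm{f}_{H^m}$, and $\norm{\wh\W}_{L^\infty}=\norm{\W}_{\boM_2}<\infty$ by \ref{H0}. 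This is exactly where the hypothesis $\W\in\boM_2(\R)$ enters and where the nonlocality is tamed, so that the classical elliptic bootstrap of \cite{de2011nonexistence} applies verbatim in dimension one.

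First I would rewrite \eqref{ntwc} as $u''=-icu'-u\,(\W*\eta)$ and run a bootstrap starting from the base level. Since $\eta\in L^2(\R)$ we have $\W*\eta\in L^2(\R)$, and because $u\in L^\infty(\R)$ and $u'\in L^2(\R)$ the right-hand side of the equation lies in $L^2(\R)$; hence $u''\in L^2(\R)$, that is $u'\in H^1(\R)$. Then $\eta'=-2\langle u',u\rangle$ is a product of $u'\in L^2(\R)$ and $u\in L^\infty(\R)$, so $\eta'\in L^2(\R)$ and $\eta\in H^1(\R)$.

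The inductive step assumes $u'\in H^k(\R)$ and $\eta\in H^k(\R)$ for some $k\geq1$ and upgrades both to $H^{k+1}(\R)$. Applying the Leibniz rule to $\partial^k\big(u\,(\W*\eta)\big)$, each summand $u^{(j)}(\W*\eta)^{(k-j)}$ is a product of one factor in $L^\infty(\R)$ and one in $L^2(\R)$: for $j=0$ the factor $u$ is bounded while $(\W*\eta)^{(k)}\in L^2(\R)$, and for $1\leq j\leq k$ we have $u^{(j)}=(u')^{(j-1)}\in H^{k-j+1}(\R)\hookrightarrow L^\infty(\R)$ together with $(\W*\eta)^{(k-j)}\in L^2(\R)$, using that $\W*\eta\in H^k(\R)$. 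Thus $u\,(\W*\eta)\in H^k(\R)$, and the equation gives $u''\in H^k(\R)$, i.e.\ $u'\in H^{k+1}(\R)$. An entirely analogous Leibniz expansion of $\eta'=-2\Re(\bar u\,u')$, again writing every product as an $L^\infty(\R)$ factor times an $L^2(\R)$ factor via $H^1(\R)\hookrightarrow L^\infty(\R)$, shows $\eta'\in H^k(\R)$ and hence $\eta\in H^{k+1}(\R)$, closing the induction.

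Once $u',\eta\in\bigcap_m H^m(\R)$, smoothness of $\eta$ is immediate, and $u$ is of class $C^\infty(\R)$ because $u'$ is smooth and $u$ is bounded, so $u(x)=u(0)+\int_0^x u'$. Finally, for the $W^{k,p}$ statement with $p\in[2,\infty)$: from $u'\in H^{k+1}(\R)$, each derivative $\partial^j u'$ with $0\leq j\leq k$ lies in $H^{1}(\R)\hookrightarrow L^2(\R)\cap L^\infty(\R)\subset L^p(\R)$, so $u'\in W^{k,p}(\R)$, and identically $\eta\in W^{k,p}(\R)$. The main point to keep in mind, and the only real subtlety, is that $u\notin L^2(\R)$, so one must never place $u$ itself in a Sobolev space; the argument controls only $u'$ and $\eta$, exploiting $u\in L^\infty(\R)$ together with the one-dimensional embedding to keep every mixed product in $L^2(\R)$.
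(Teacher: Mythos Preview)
Your proof is correct. The paper does not actually prove this lemma but simply cites \cite{de2011nonexistence}, noting that the higher-dimensional argument there carries over unchanged to dimension one; your explicit $H^k$ bootstrap is precisely that standard argument, carried out cleanly with the key observation that $\W\in\boM_2(\R)$ makes convolution by $\W$ bounded on every $H^m(\R)$.
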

\begin{proof}[Proof of Lemma~\ref{diffmoment} ]
	Using \eqref{sym}, the differential in \eqref{marloo} is a straightforward consequence of the definition of $\d E$.
	To show \eqref{marlo}, let us fix $u\in \boEEE$ and $h\in C_c^\infty(\R)$. Then 
	\begin{align*}
		\d p(u)[h]&= 
		\left.\frac{d}{dt}p(u+th)\right|_{t=0} \\
		&= \frac{1}{2}\int_{\mathbb{R}}\langle ih',u \rangle \left( 1-\frac{1}{|u|^2}\right)+ 
		\frac{1}{2}\int_{\mathbb{R}}\langle iu',h \rangle \left( 1-\frac{1}{|u|^2}\right)+
		\int_{\mathbb{R}}\langle iu',u \rangle \left( \frac{\langle u,h \rangle}{|u|^4}\right)\\
		&=\int_{\mathbb{R}}\langle iu',h \rangle \left( 1-\frac{1}{|u|^2}\right)-\int_{\mathbb{R}} \frac{\langle ih,u \rangle \langle u',u \rangle}{|u|^4}+
		\int_{\mathbb{R}}\frac{\langle iu',u \rangle  \langle u,h \rangle}{|u|^4}.
	\end{align*}
	Therefore we obtain \eqref{marlo} noticing that 
	\bqq
	-\langle ih,u \rangle \langle u,u' \rangle 
	+\langle iu',u \rangle\langle u,h \rangle=
	\langle iu',h\rangle |u|^2. 
	\eqq
	The last assertion in the statement follows from the fact that if for some $v\in \boEE$ we have 
	$\int_\R\langle v,h\rangle=0$, for all $h\in  C_c^\infty(\R)$, then $v\equiv 0$.
\end{proof}
\label{sec:Euler}
\begin{thm}
	\label{thm:euler}
	Suppose that $\Emin$ is concave on $\R^+$ and that  $u\in \boS_\gq$, with $\gq>0$. Then there exists $c_\q$
	satisfying 
	\bq
	\label{cq:bound}
\Emin^+(\q) \leq c_\q \leq \Emin^-(\q),
		\eq 
such that $u$ is a solution of \eqref{ntwc} with of speed $c=c_\q$. 
\end{thm}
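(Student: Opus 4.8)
The plan is to produce the Lagrange multiplier $c_\q$. By Lemma~\ref{diffmoment}, it suffices to find a real number $c_\q$ with
\bq
\label{euler-goal}
\d E(u)[h]=c_\q\,\d p(u)[h],\quad\text{ for all }h\in C_c^\infty(\R),
\eq
since this is exactly the weak form of \eqref{ntwc} with speed $c=c_\q$. First I would check that $\d p(u)$ does not vanish identically. Indeed, integrating \eqref{marlo} by parts gives $\d p(u)[h]=-\int_\R\langle ih,u'\rangle$, so $\d p(u)\equiv 0$ would force $u'\equiv 0$ by the fundamental lemma of the calculus of variations; as $u$ is continuous this makes $u$ constant, whence $p(u)=0$, contradicting $\q>0$. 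I therefore fix $h_1\in C_c^\infty(\R)$ with $\beta:=\d p(u)[h_1]\neq 0$ and set $c_\q:=\d E(u)[h_1]/\beta$.

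To obtain \eqref{euler-goal} for arbitrary $h$, I would exploit that $u$ minimizes $E$ under the constraint $p=\q$. Since $\inf_\R\abs{u}>0$, for $(t,s)$ near the origin the function $u_{t,s}:=u+th+sh_1$ lies in $\boEEE$, and the maps $\mathcal{E}(t,s):=E(u_{t,s})$ and $\mathcal{P}(t,s):=p(u_{t,s})$ are $C^1$: indeed $\mathcal{E}$ is a polynomial in $(t,s)$ with coefficients given by $L^2$-integrals of $u,h,h_1$, and $\mathcal{P}$ is smooth because $u_{t,s}$ stays bounded away from $0$. Their derivatives at the origin are $\partial_t\mathcal{P}(0,0)=\d p(u)[h]$ and $\partial_s\mathcal{P}(0,0)=\beta\neq 0$, so the implicit function theorem yields a $C^1$ curve $s=\sigma(t)$ with $\sigma(0)=0$, $\sigma'(0)=-\d p(u)[h]/\beta$, and $\mathcal{P}(t,\sigma(t))=\q$ for small $t$. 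Then $u_{t,\sigma(t)}\in\boEEE$ has momentum $\q$, so $g(t):=\mathcal{E}(t,\sigma(t))\geq\Emin(\q)=E(u)=g(0)$ and $t=0$ is a local minimum of $g$. Hence $0=g'(0)=\d E(u)[h]+\sigma'(0)\,\d E(u)[h_1]=\d E(u)[h]-\tfrac{\d p(u)[h]}{\beta}\,\d E(u)[h_1]$, which is \eqref{euler-goal}; Lemma~\ref{diffmoment} then gives that $u$ solves \eqref{ntwc} with $c=c_\q$.

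It remains to prove \eqref{cq:bound}, and here the concavity of $\Emin$ enters: by Theorem~\ref{thm:curve} the one-sided derivatives $\Emin^+(\q)$, $\Emin^-(\q)$ exist with $\Emin^+(\q)\leq\Emin^-(\q)$. After replacing $h_1$ by $-h_1$ if needed I may assume $\beta>0$, and I consider $w(s):=u+sh_1\in\boEEE$ with $\q(s):=p(w(s))$, so that $\q(0)=\q$, $\q'(0)=\beta>0$ and $\left.\frac{d}{ds}E(w(s))\right|_{s=0}=\d E(u)[h_1]=c_\q\beta$. By definition of $\Emin$ and of $u$,
\bqq
E(w(s))-E(w(0))\geq \Emin(\q(s))-\Emin(\q),\quad\text{ for }s\text{ small}.
\eqq
Since $\q'(0)=\beta\neq 0$, one has $\q(s)\neq\q$ for small $s\neq0$, with $\q(s)\to\q^+$ as $s\to0^+$. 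Dividing by $\q(s)-\q>0$ and letting $s\to0^+$ (the left quotient tending to $c_\q\beta/\beta=c_\q$ and the right one to $\Emin^+(\q)$) gives $c_\q\geq\Emin^+(\q)$; dividing instead by $\q(s)-\q<0$ for $s\to0^-$ reverses the inequality and yields $c_\q\leq\Emin^-(\q)$, proving \eqref{cq:bound}.

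The main obstacle I expect is the rigorous justification of the implicit function argument in the nonlinear set $\boEEE$: one must verify that $\mathcal{E}$ and $\mathcal{P}$ are genuinely $C^1$ in $(t,s)$ and that $u_{t,\sigma(t)}$ stays nonvanishing so that the constrained momentum is well defined. Both follow from $\inf_\R\abs{u}>0$ together with the polynomial (respectively smooth) dependence of the integrands on $(t,s)$, but they must be handled with care; once in place, the multiplier identity and the concavity bounds are routine.
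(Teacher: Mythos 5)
Your proposal is correct, but it reaches the Lagrange multiplier by a genuinely different route than the paper. The paper never invokes the implicit function theorem: it first uses the concavity of $\Emin$ to derive, for every test function $h$, the two-sided inequalities $\Emin^+(\q)\,\d p(u)[h]\leq \d E(u)[h]\leq \Emin^-(\q)\,\d p(u)[h]$ (when $\d p(u)[h]>0$, and reversed when $\d p(u)[h]<0$) by comparing $E(u+th)\geq \Emin(p(u+th))$ with the one-sided difference quotients of $\Emin$; it then deduces the purely algebraic inclusion $\Ker\d p(u)\subset\Ker\d E(u)$ by noting that $\d E(u)[\psi+n\phi]=\d E(u)[\psi]+n\,\d E(u)[\phi]$ stays bounded in $n$ whenever $\d p(u)[\phi]=0$ and $\d p(u)[\psi]\neq 0$, and invokes a standard linear-algebra lemma (Lemma~3.2 in~\cite{brezis-book}) to produce $c_\q$ with $\d E(u)=c_\q\,\d p(u)$; the bound \eqref{cq:bound} is then immediate by testing against $h_0$ with $\d p(u)[h_0]=1$. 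Your IFT construction of the multiplier uses no concavity at all, and concavity enters only in your final difference-quotient argument for \eqref{cq:bound} — which is essentially the same computation as the paper's two-sided inequalities, specialised to the direction $h_1$. In fact the authors explicitly acknowledge your route in the remark following the theorem, saying an implicit-function-theorem argument works and is more general, but that they chose the concavity argument because it is simpler. Your worry about the $C^1$ regularity of $(t,s)\mapsto(\mathcal{E}(t,s),\mathcal{P}(t,s))$ is the right technical crux, but it resolves cleanly: $\mathcal{E}$ is a degree-four polynomial in $(t,s)$, and by formula \eqref{marlo} of Lemma~\ref{diffmoment} the partial derivatives of $\mathcal{P}$ at any nearby point $u_{t,s}\in\boEEE$ are $\int_\R\langle ih',u_{t,s}\rangle$ and $\int_\R\langle ih_1',u_{t,s}\rangle$, which are affine in $(t,s)$, so $\mathcal{P}$ is $C^1$ (indeed a quadratic polynomial) near the origin. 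So your proof is complete modulo writing out that observation; the trade-off is that the paper's argument is shorter under the stated hypotheses, while yours would survive in settings (e.g.\ local minimizers, or no concavity of $\Emin$) where the paper's argument does not apply.
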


\begin{proof}[Proof]
	Let $u \in \boS_\gq$, so that $p(u)=\q$ and $E(u)=\Emin(\q)$. Notice that since $\q>0$,  $u$  is not a constant function.		Let $h \in C_c^{\infty}(\R)$. From the definition of $\Emin$ we have, 
for all $t>0$,
	$$ \frac{E(u+th)-E(u)}{t}\geq \frac{\Emin(p(u+th))-\Emin(\q)}{t}.$$
	If $\d p(u)[h]> 0$, then $p(u+th)\geq p(u)=\q$ for $t>0$ small enough, so that letting $t\to 0^+$, we obtain
	\bqq 
	\d E(u)[h]\geq \Emin^+(\q)\d p(u)[h].
	\eqq 
	Likewise, if $\d p(u)[h]< 0$, we get
	\bqq 
	\d E(u)[h]\geq\Emin^-(\q)\d p(u)[h].
	\eqq
	Replacing $h$ by $-h$, we obtain the following inequalities
	\bq
	\label{bound1:dE:dp}
	\Emin^+(\q) \d p(u)[h] \leq \d E(u)[h] 
	\leq \Emin^-(\q) \d p(u)[h],\quad\text{ if }\d p(u)[h]> 0,
	\eq  
	and
	\bq
	\label{bound2:dE:dp}
	\Emin^-(\q) \d p(u)[h] \leq \d E(u)[h] \leq \Emin^+(\q) \d p(u)[h],
	\quad\text{ if }\d p(u)[h]< 0.
	\eq  
Since the functionals $\d p(u),\d E(u) : C_c^\infty(\R)\to \R $ are linear, 
to establish the Euler--Lagrange equations,  it is enough to show that 
	\bq
	\label{inclusion}
	\Ker\d p(u)\subset \Ker\d E(u).
		\eq
		Indeed, by Lemma~3.2 in~\cite{brezis-book}, this implies that 
 there exists some $c_\q\in \R$ such that 
	\bq\label{eq-c}
	\d E(u)=c_\q\d p(u),
	\eq and therefore, by Lemma~\ref{diffmoment},
	 $u$ is a solution of \eqref{ntwc} with  $c=c_\q$
	
 To prove \eqref{inclusion}, let us consider $\phi \in \Ker\d p(u)$.
 	Since $u$ is nonconstant, there exists some function $\psi \in C_{c}^{\infty}(\mathbb{R})$
	such that  $\d p(u)[\psi]\neq 0$. Thus, for all $n \in \N$, we have
	$$ \d p(u)[\psi+n\phi]=\d p(u)[\psi]\neq 0.$$
	From \eqref{bound1:dE:dp} and \eqref{bound2:dE:dp}, we conclude  that $\d E(u)[\psi+n\phi]=\d E(u)[\psi]+n\d E(u)[\phi]$ is bounded. Hence $\d E(u)[\phi]=0$ i.e.\ $\phi \in \Ker \d E(u)$, which establishes
	\eqref{inclusion}.
	
 It remains to show \eqref{cq:bound}.
	Let $h_0\in C_c^{\infty}(\R)$ such that $\d p(u)[h_0]=1$.
Then \eqref{eq-c} implies that $\d E(u)[h_0]=c_\q$.
It follows from \eqref{bound1:dE:dp} that
	\bq
	\label{bound:c}
	\Emin^+(\q) \leq c_\q \leq \Emin^-(\q),
	\eq 
which finishes the proof.
\end{proof}

\begin{rem}
	It is possible to establish the Euler--Lagrange equations using an argument based on the implicit function theorem, without invoking the concavity of $\Emin$. Even thought the former argument is more general, we gave 
	the proof using the concavity because it is simpler.
\end{rem}

\begin{proof}[Proof of Theorem \ref{thm-existence-general}]
	Combining Theorems~\ref{thm:compacite},  \ref{thstab} and \ref{thm:euler}, we obtain that the set $S_\q$ is nonempty,  orbitally stable and that any $u \in \boS_\q$ is a solution of \eqref{ntwc}. 
	Using \eqref{cq:bound} and Theorem~\ref{thm:curve}-(v), we get
the properties for $c_\q$, except that $c_\q>0$. Arguing by contradiction, 
we suppose that there exists $\gp\in (0,\q_*)$ such that $c_\gp=0$. 
Thus, by \eqref{emin:fixed:bounds} and \eqref{speed0}, we get $\Emin^+(\gp)=0$.
Since 
$\Emin$ is concave, we have for all $\gr<\gs$,
\bqq
\Emin^-(\gr)\geq \Emin^+(\gr)\geq \Emin^-(\gs)\geq \Emin^+(\gs)\geq 0,
\eqq
which implies that $\Emin^-=\Emin^+=0$ on $[\gp,\infty)$, so that $\Emin$ is constant on 
$[\gp,\infty)$, which contradicts that $\Emin$ is strictly increasing on $[\gp,\q_*)$.
This completes the proof of the theorem.
\end{proof}

\section{Some numerical simulations}
\label{sec:numerics}
In this section, we numerically illustrate the properties of the minimizing curve through
some simulations. The numerical method is based on the projected gradient descent and the convolution is computed by the fast Fourier transform algorithm.  
Given $\W$ (or $\wh \W$) and some $\q>0$ close to~$0$,
we  compute the corresponding soliton $u_\q$ (i.e.\ $p(u_\q)=\q$) and its energy $E(u_\q)$.
We then increase the value of $\q>0$ until we obtain enough points to plot $\Emin$.

First, we show our results for the examples (i) and (ii) in Section \ref{intro}.
In Figures~\ref{fig:exp} and~\ref{fig:coth},   we can see $\Emin$  and the modulus of the solitons 
associated with $\q=0.05$, $\q=0.55$, $\q=1.1$ and $\q=1.5$, for the potentials 
\bq\label{W-exp}
\W_{\alpha,\beta}=\frac{\beta}{\beta-2\alpha}(\delta_0-\alpha e^{-\beta\abs{x}}), 
\eq
with $\alpha=0.05,$  $\beta=0.15$, and 
\bq\label{W-coth}
\W_{\alpha}=\frac{1}{1-\alpha}\big(\delta_0+\frac{3\alpha}{\pi}\ln(1-e^{-\pi\abs{x}})\big),
\eq
with $\alpha=0.8$. 
\begin{figure}[h]
	\begin{center}
		\begin{tabular}{lc}
{\scalebox{0.85}{\includegraphics[trim={0.5cm 0.5cm 0cm 0.5cm}, clip]{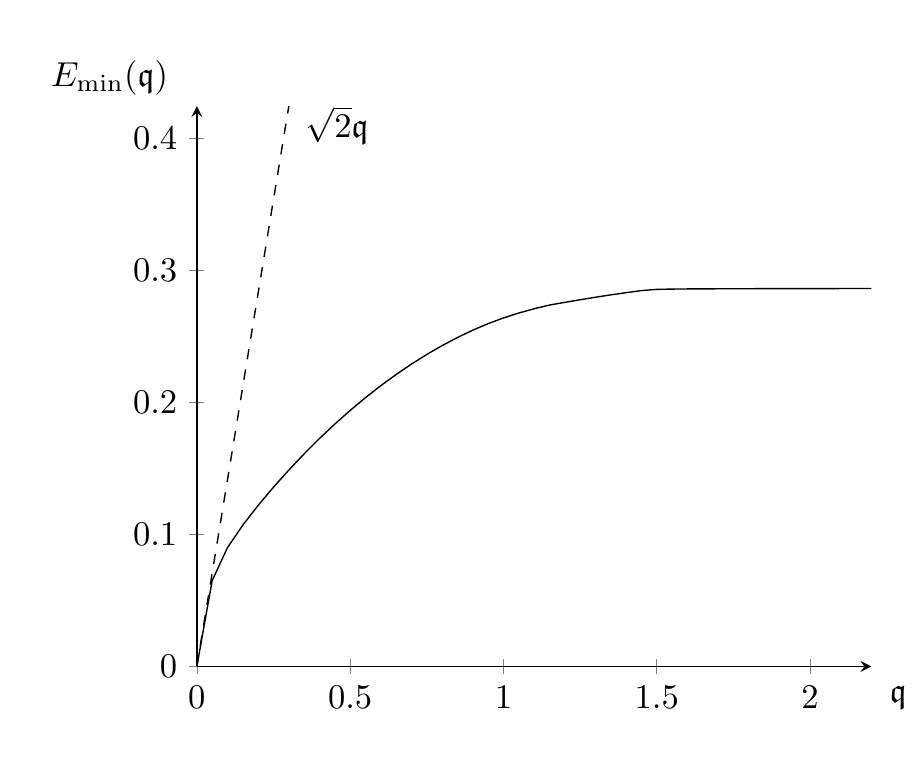}}}    
			&\hspace*{-0.7cm}
{\scalebox{0.85}{\includegraphics[trim={0cm 0.6cm 0.3cm 0.5cm}, clip]{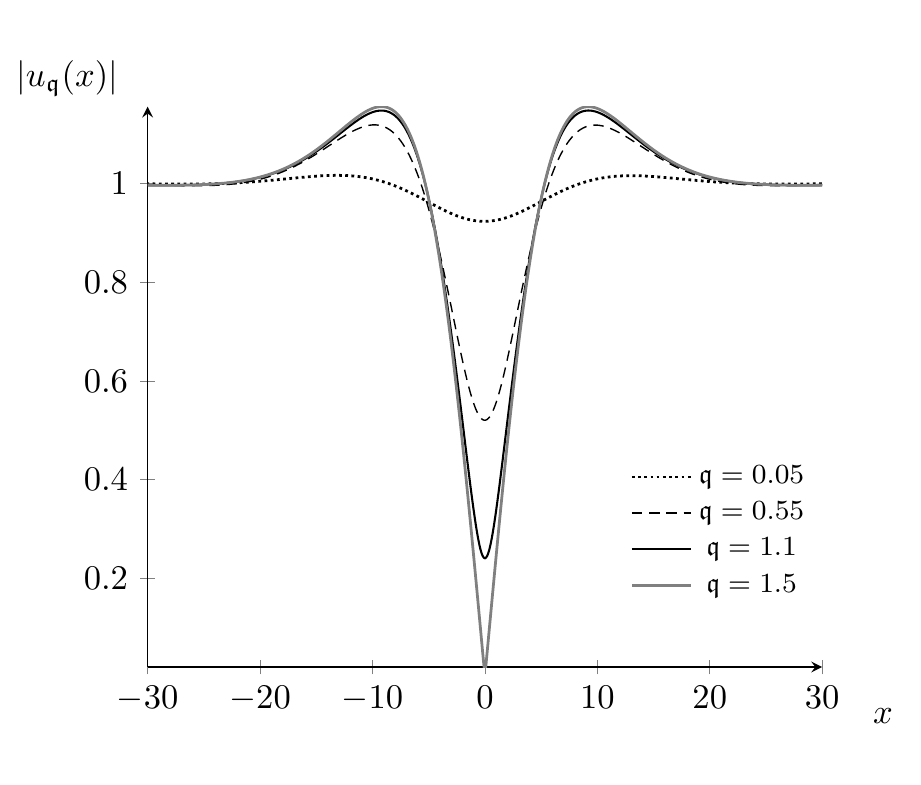}}}
		\end{tabular}
	\end{center}
	\caption{Curve $\Emin$ and solitons for the potential in \eqref{W-exp}, with $\alpha=0.05$ and $\beta=0.15$.}
	\label{fig:exp}
\end{figure}
 In both cases, we observe that  $\Emin$ is concave and that the  line $\sqrt{2}\q$ is a tangent to the curve. We notice that the shapes of the solitons in Figure~\ref{fig:coth} and the solitons in Figure~\ref{courbeth} are quite similar. On the other hand, the solitons in Figure~\ref{fig:exp} are very different, they have  values greater  than $1$ and exhibit a bump on $\R^+$. 
Notice also that the curves $\Emin$ for both potentials  seem to be constant for $\q>1.55$. 
\begin{figure}[h]
	\begin{center}
		\begin{tabular}{cl}
{\scalebox{0.85}{\includegraphics[trim={0.5cm 0.9cm 0.2cm 0.5cm}, clip]{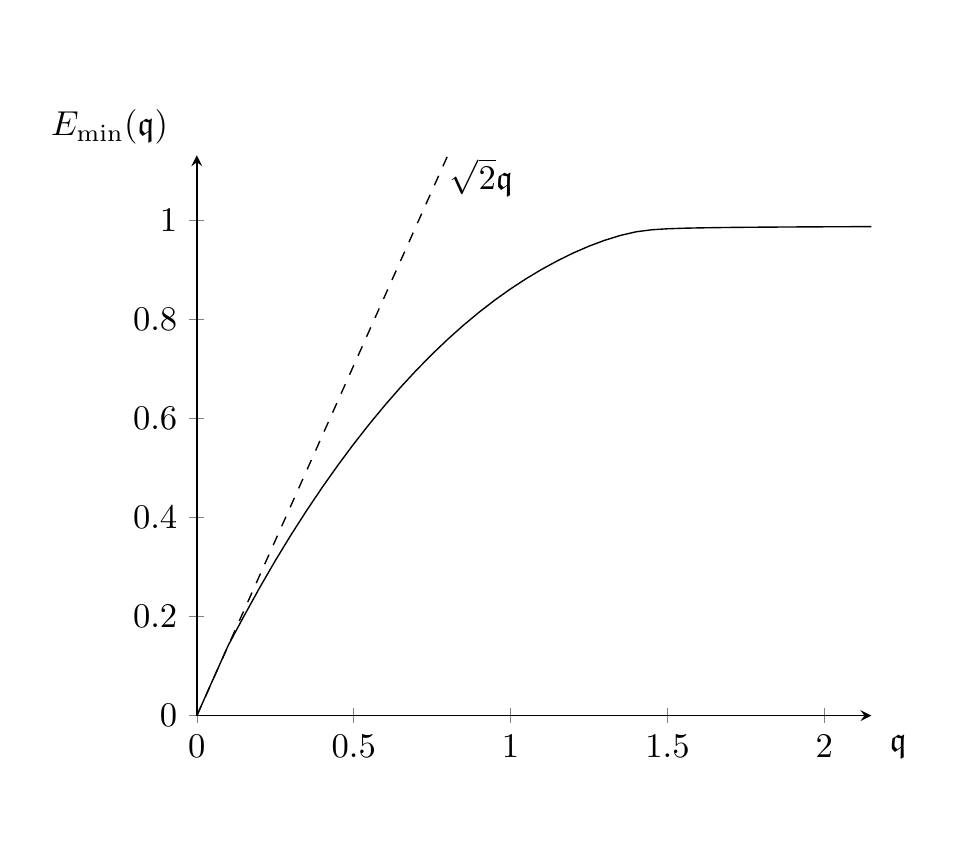}}}    
			&
			\hspace*{-0.7cm}
{\scalebox{0.85}{\includegraphics[trim={0.2cm 1cm 0.3cm 0.5cm}, clip]{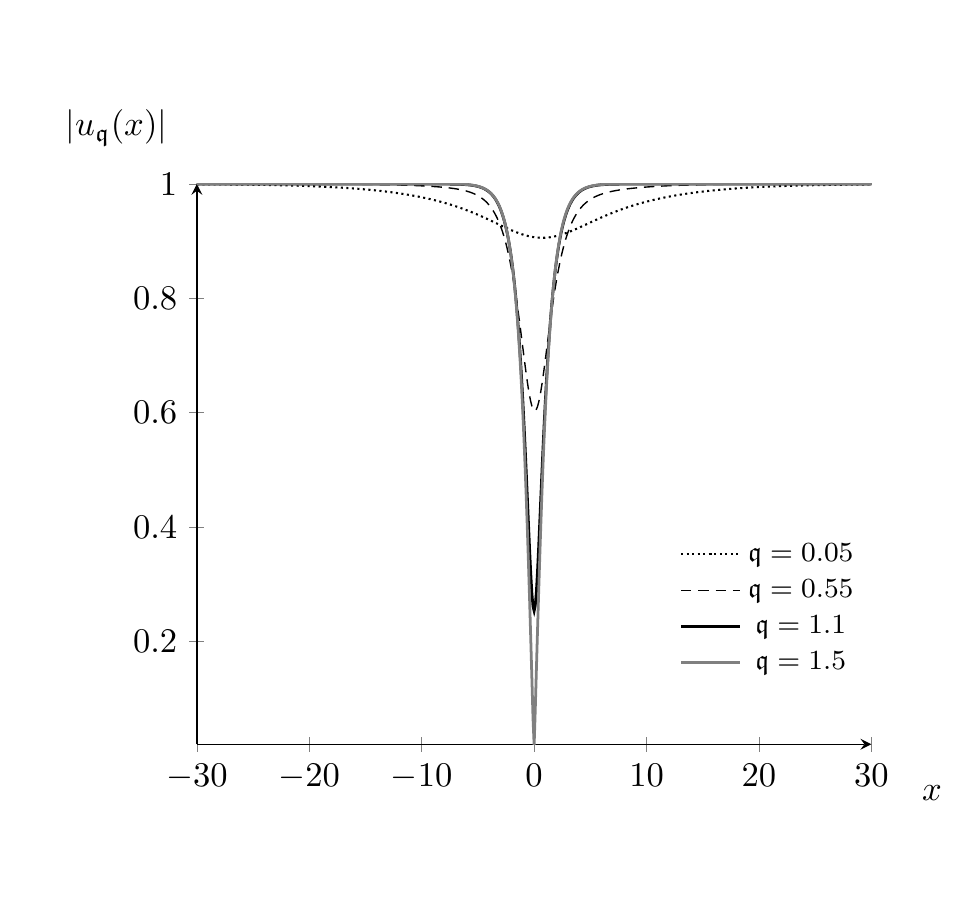}}}
	\end{tabular}
\end{center}
\caption{Curve $\Emin$ and solitons for the potential in  \eqref{W-coth},  with $\alpha=0.8$. }
\label{fig:coth}
\end{figure}

We end this section showing some numerical simulations for two interesting potentials.
The first one has been proposed in \cite{veskler2014} as simple model for interactions in a Bose--Einstein condensate. It is given  by a contact interaction $\delta_0$ and two Dirac delta functions centered at $\pm \sigma$,
\bq
\label{W-cos}
\W_\sigma=2\delta_{0}-\frac{1}2\left(\delta_{\sigma}+\delta_{-\sigma}\right).
\eq
Noticing that  $\wh \W_\sigma(\xi)=2-\cos(\sigma\xi)$, we see that for $\sigma>0$, 
$\W_\sigma$ fulfills \ref{H0}, \ref{H-coer}, and that $\wh \W_\sigma$ is analytic in $\C$, but is exponentially growing on $\H$. Thus, $\W_\sigma$ does not satisfy the assumption \eqref{decayW} in \ref{H-residue}. 
We can also check that \ref{H-residue-bis} is not fulfilled.
Nevertheless,  the results of the simulation depicted
in Figure~\ref{fig:cos} show that $\Emin$ is concave, and in that case Theorem~\ref{thm-existence-general} gives the orbital stability of the solitons illustrated in   
Figure~\ref{fig:cos}.
\begin{figure}[h]
	\begin{center}
		\begin{tabular}{cc}
{\scalebox{0.85}{\includegraphics[trim={0.5cm 1cm 0.4cm 1.1cm}, clip]{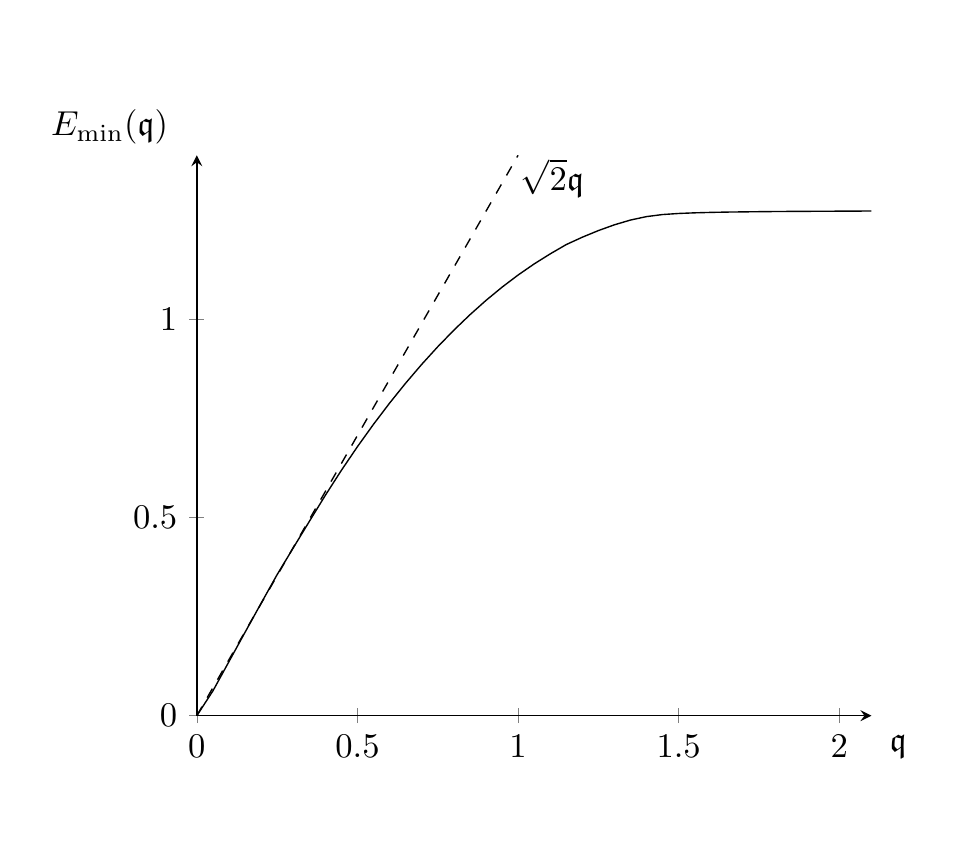}}}    
			&\hspace*{-0.7cm}
{\scalebox{0.85}{\includegraphics[trim={0.1cm 1.1cm 0.3cm 1.1cm}, clip]{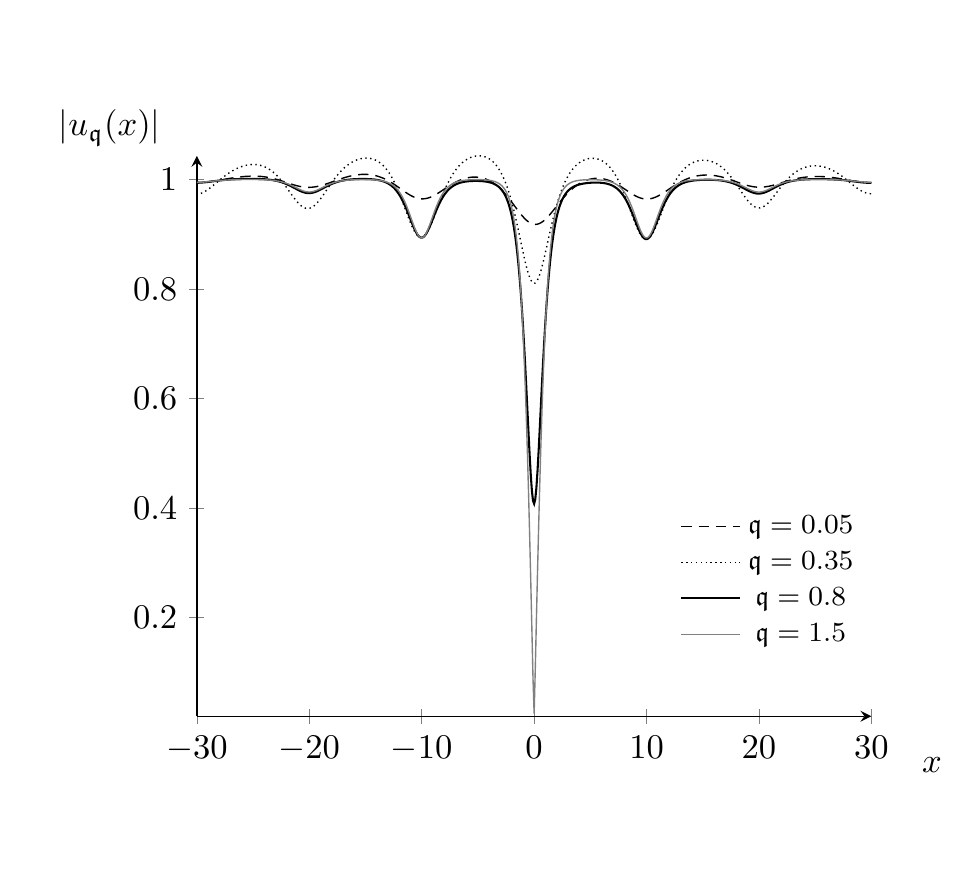}}}
			 		\end{tabular}
				\end{center}
		\caption{Curve $\Emin$ and solitons for the potential in \eqref{W-cos}, with  $\sigma=10$.}
	\label{fig:cos}
\end{figure}

Finally, we consider the potential 
\bq
\label{W-roton}
\wh \W_{a,b,c}(\xi)=(1+a\xi^2+b\xi^4)e^{-c\xi^2},
\eq
that it has been proposed in   \cite{berloff0,reneuve2018}
  to describe a quantum fluid exhibiting a roton-maxon spectrum such as Helium~4.
Indeed, as predicted by the Landau theory, in such a fluid, the dispersion curve \eqref{bogo}
cannot be monotone and it should have a local maximum and a local minimum, that are the so-called maxon and roton, respectively. In Figure~\ref{disp-curve}, we see the dispersion curve associated with potential 
  \eqref{W-roton},   with 
   $a=-36$, $b=2687$, $c=30$. In this case, there is a maxon at $\xi_m\sim 0.33$ and a roton at $\xi_r\sim 0.53$.
     \begin{figure}[h]
   	\centering
   	{\scalebox{0.85}{\includegraphics[trim={0.75cm 1.1cm 0.3cm 1.1cm}, clip]{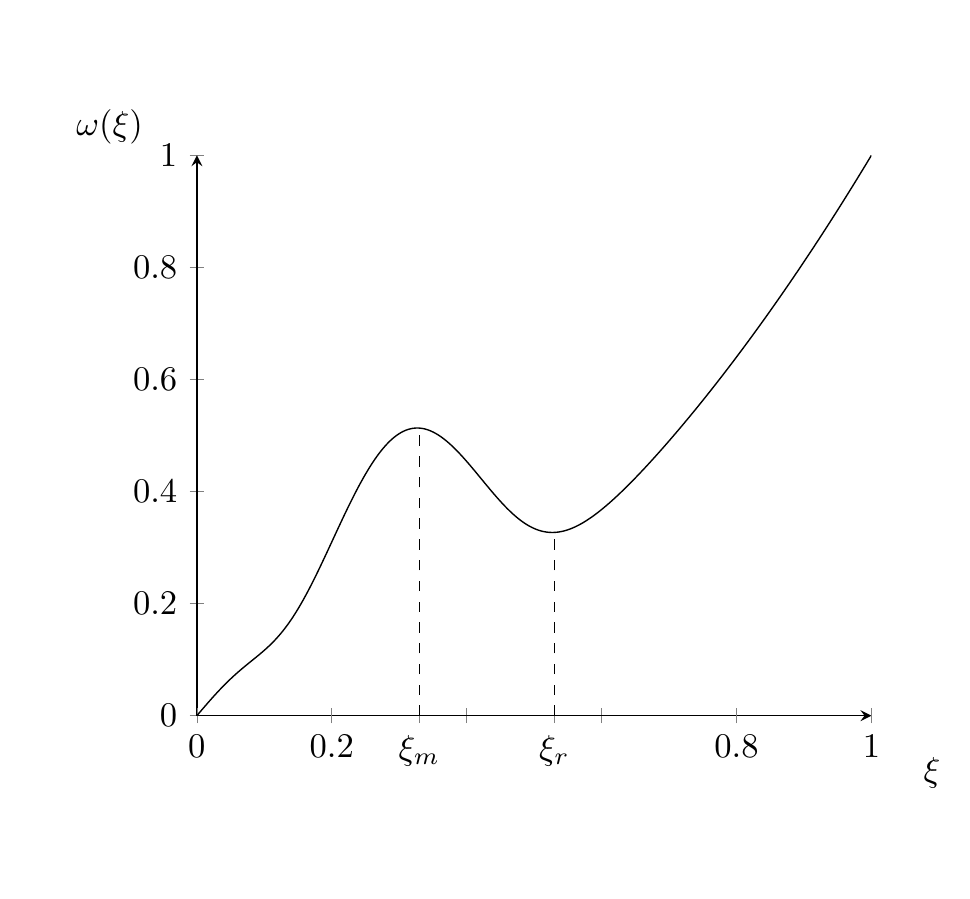}}}    
   	\caption{Dispersion curve associated with potential 
   		\eqref{W-roton}, with 	$a=-36$, $b=2687$, $c=30$. Here $\xi_m\sim 0.33$ and  $\xi_r\sim 0.53$.}
   	\label{disp-curve}
   \end{figure}
For these values,  \ref{H0} is satisfied, but not 
\ref{H-coer} nor \ref{H-residue-bis}. However, we observe 
in Figure~\ref{fig:roton} that the energy curve is still concave, 
and   that the straight line $\sqrt{2}\q$ is still a tangent to the curve. 
Moreover, we found the same critical value as before for the momentum, i.e.\ $\q_*\sim 1.55$.
   			\begin{figure}[h]
   	\begin{center}
   		\begin{tabular}{cc}
{\scalebox{0.85}{\includegraphics[trim={0.2cm 1cm 0.5cm 0.5cm}, clip]{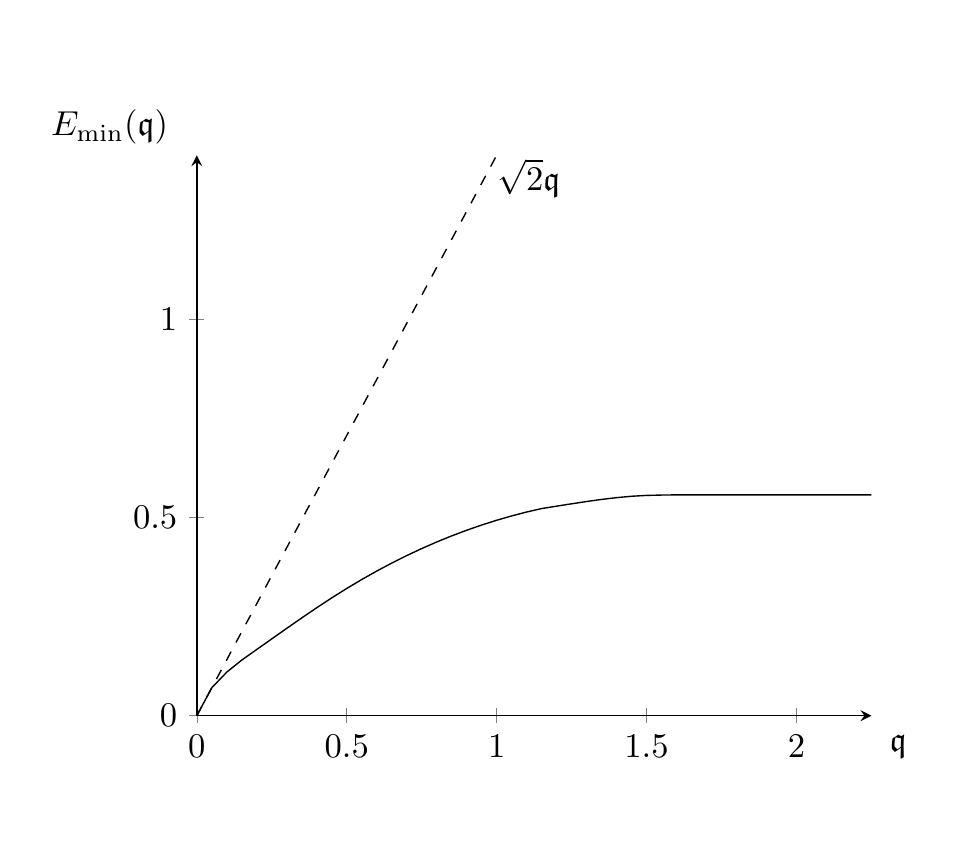}}}    
   			&\hspace*{-1cm}
{\scalebox{0.85}{\includegraphics[trim={0.2cm 1.1cm 0.3cm 0.5cm}, clip]{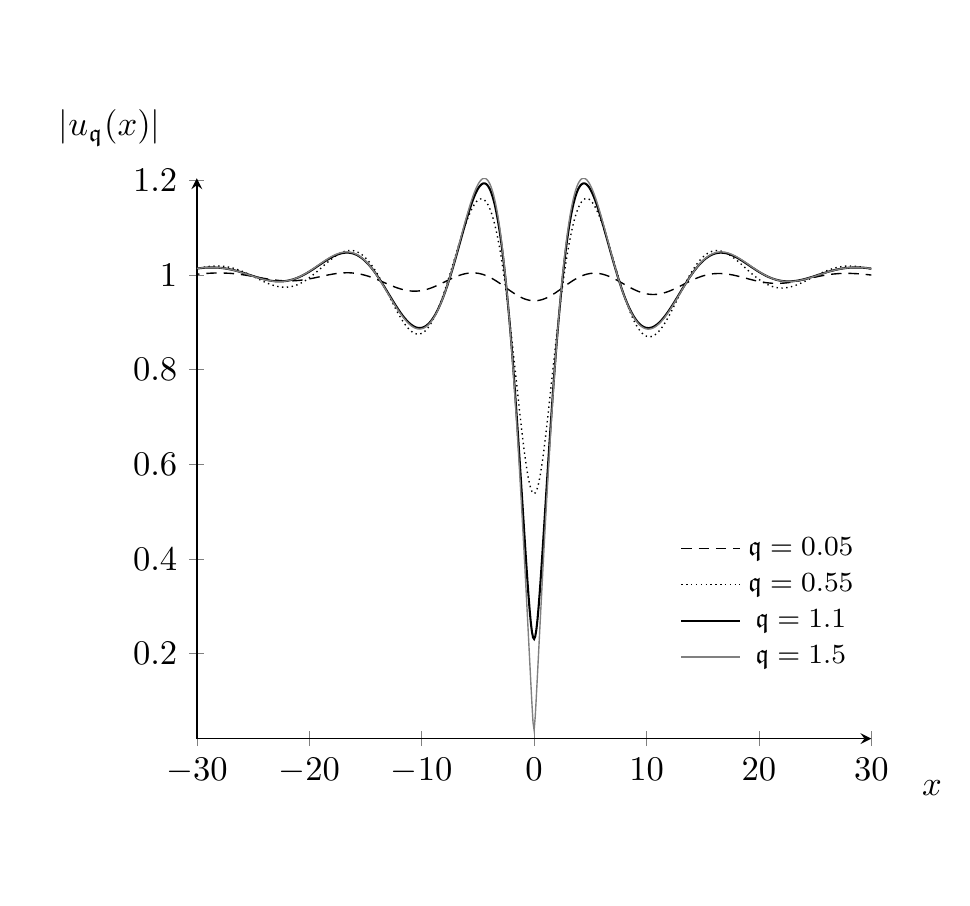}}}
   		\end{tabular}
   	\end{center}
   	\caption{Curves $\Emin$ and solitons for the potential in \eqref{W-roton}, with
  	$a=-36$, $b=2687$, $c=30$. }
   	\label{fig:roton}
   \end{figure}
\begin{merci}
	The authors acknowledge support from the Labex CEMPI (ANR-11-LABX-0007-01).
	A.~de Laire was also supported by the ANR project ODA (ANR-18-CE40-0020-01).
	 The authors are grateful to S.~De Bi\`evre and G.~Dujardin for interesting and helpful discussions.
		\end{merci}
\appendix
\section{Appendix}
\label{sec:appendix}
\begin{lmm}
	Let $R>0$ and $\mu >0.$ There exists a function $\chi \in C^{\infty}_{c}(\mathbb{R})$ such that 
for all $x\in \mathbb{R}$, 	 $0\leq \chi(x)\leq 1$, 
\bq
	\label{boundcutoff} \chi(x)=\begin{cases}
	1, &\text{ if }|x|\leq R, \\
	0, &\text{ if } |x| \geq R+\mu,
	\end{cases}
	\quad\text{and }\quad 
	|\chi'(x)|\leq 4e^{-2}e^{\frac{2}{\mu}}.
	\eq 
	
	\label{lmm:cutoff}
\end{lmm}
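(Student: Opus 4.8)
The plan is to construct $\chi$ by hand from the standard smooth building block $f(t)=e^{-1/t}$ for $t>0$ and $f(t)=0$ for $t\le0$, which is $C^\infty(\R)$ with all derivatives vanishing at $t=0$. Two elementary facts about $f$ will produce the two factors in the claimed bound. First, since $f'(t)=t^{-2}e^{-1/t}$ satisfies $f'(t)/f(t)=1/t^2$, a direct computation of $f''$ shows that $f'$ attains its global maximum $\max_{t>0}f'(t)=4e^{-2}$ at $t=1/2$. Second, $f(\mu/2)=e^{-2/\mu}$, so that $1/f(\mu/2)=e^{2/\mu}$.

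Next I would take $\chi$ to be even and, writing $s=\abs{x}-R$, set
\[
\chi(x)=\begin{cases} 1, & \abs{x}\le R,\\ \dfrac{f(\mu-s)}{f(s)+f(\mu-s)}, & R<\abs{x}<R+\mu,\\ 0, & \abs{x}\ge R+\mu.\end{cases}
\]
Because $f$ and all its derivatives vanish at $0$, the three pieces glue together to infinite order at $\abs{x}=R$ (where $s\to0$, so $f(s)\to0$ flatly and $\chi\to1$) and at $\abs{x}=R+\mu$ (where $f(\mu-s)\to0$ flatly and $\chi\to0$); hence $\chi\in C^\infty(\R)$. Since $R>0$ one has $\chi\equiv1$ near $x=0$, so the corner of $\abs{x}$ is harmless, and plainly $0\le\chi\le1$ with the prescribed boundary values.

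Finally, differentiating on the transition region and using $f'/f=1/t^2$ yields the closed form
\[
\abs{\chi'(x)}=\frac{f(s)f(\mu-s)}{\big(f(s)+f(\mu-s)\big)^2}\Big(\frac{1}{s^2}+\frac{1}{(\mu-s)^2}\Big),\qquad s=\abs{x}-R.
\]
By the symmetry $s\leftrightarrow\mu-s$ I may assume $s\le\mu/2$, whence $f(s)\le f(\mu-s)$. Bounding the denominator below by $f(\mu-s)^2$ and writing $f(s)/f(\mu-s)=e^{-1/s+1/(\mu-s)}$, the leading (first) term reduces to $f'(s)\,e^{1/(\mu-s)}\le 4e^{-2}e^{2/\mu}$, using $f'(s)\le4e^{-2}$ and $1/(\mu-s)\le2/\mu$. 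This is exactly where the two factors $4e^{-2}$ and $e^{2/\mu}$ originate.

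I expect the main obstacle to be obtaining the sharp constant $4e^{-2}e^{2/\mu}$ rather than a version carrying a spurious additive term: estimating the two summands in the bracket separately leaves a leftover of size $O(\mu^{-2})$ from the $(\mu-s)^{-2}$ contribution, since the maximum of $f'$ and the minimum of the normalizing denominator are not reached at the same $s$. I would circumvent this by treating $\abs{\chi'}$ as a single function of $s$ on $(0,\mu/2]$ and exploiting the cancellation between the flat decay of $e^{-1/s}$ near $s=0$ and the bounded growth $e^{1/(\mu-s)}\le e^{2/\mu}$, together with the monotonicity of $f$, so as to bound the whole one-variable expression by $4e^{-2}e^{2/\mu}$ throughout rather than summing the two contributions.
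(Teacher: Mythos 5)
Your construction is exactly the paper's cut-off (the quotient $f(R+\mu-|x|)/\bigl(f(R+\mu-|x|)+f(|x|-R)\bigr)$ with $f(t)=e^{-1/t}\mathds{1}_{t>0}$), your smoothness discussion is fine, and your closed form for the derivative, $|\chi'|=\frac{f(s)f(\mu-s)}{(f(s)+f(\mu-s))^2}\bigl(\frac{1}{s^2}+\frac{1}{(\mu-s)^2}\bigr)$ with $s=|x|-R$, is correct. The gap is in the only quantitative step of the lemma: the bound $|\chi'|\leq 4e^{-2}e^{2/\mu}$ is never actually proved. Your main route discards the sum structure of the denominator by replacing $(f(s)+f(\mu-s))^2$ with $f(\mu-s)^2$, and this step is already fatal: at $s=\mu/2$ it throws away exactly a factor of $4$ at the point where both numerator terms are largest. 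Concretely, for $\mu=1$ and $s=1/2$ the quantity you are left with equals $e^{0}\cdot 8/\mu^2=8$, while the target $4e^{-2}e^{2/\mu}=4$; so no subsequent one-variable analysis of that expression can recover the stated constant. The closing paragraph, which promises to "exploit cancellation" and "monotonicity of $f$" to bound the whole expression, is a hope rather than an argument — and splitting into two terms, as you note yourself, can only give $8e^{-2}e^{2/\mu}$ (the second term is bounded by $4/\mu^2\leq 4e^{-2}e^{2/\mu}$, using $1/\mu+\ln\mu\geq 1$).

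The repair is a one-line rearrangement of your own formula, and it is what the paper does. Write the two numerator terms as $\frac{f(s)f(\mu-s)}{s^2}=f'(s)\,f(\mu-s)$ and $\frac{f(s)f(\mu-s)}{(\mu-s)^2}=f(s)\,f'(\mu-s)$, and use your observation $\sup f'=4e^{-2}$ on \emph{each} of them, keeping the cofactors: the numerator is then at most $4e^{-2}\bigl(f(s)+f(\mu-s)\bigr)$, which cancels one full factor of the denominator, leaving $|\chi'|\leq \frac{4e^{-2}}{f(s)+f(\mu-s)}$. Since $s+(\mu-s)=\mu$, at least one of $s,\mu-s$ is $\geq \mu/2$, so by monotonicity of $f$ the denominator is at least $f(\mu/2)=e^{-2/\mu}$, giving $|\chi'|\leq 4e^{-2}e^{2/\mu}$ with no leftover term. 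The moral difference from your attempt: bound $f'$ by its maximum \emph{before} touching the denominator, so that the sum $f(s)+f(\mu-s)$ survives to be estimated once, rather than estimating the ratio $f(s)/f(\mu-s)$.
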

\begin{proof}
	Let $$ f(x):=\begin{cases}
	\exp(-\frac{1}{x}), &\text{ if }x\geq 0, \\
	0, &\text{ if }x< 0,
	\end{cases}
\quad\text{ and } 
\quad
	 \chi(x):=\frac{f(R+\mu-|x|)}{f(R+\mu-|x|)+f(|x|-R)}.
	$$
	Since 
	\bq
	\label{anex:ineq}
	f(|x|-R)+f(R+\mu-|x|)\geq f(\frac{\mu}{2})=2e^{-\frac{2}{\mu}},
	\eq the denominator of $\chi$ is always  positive, and thus $\chi$ is well defined. Moreover, 	$\chi \in C^{\infty}(\mathbb{R})$, since $f$ is smooth. Finally, for $|x|\leq R,$ we have $f(|x|-R)=0$, which implies that  $\chi(x)=1.$ For $|x|\geq R+\mu$, we have $f(R+\mu-|x|)=0$, so that $\chi(x)=0.$ 
	
	It remains to prove the bound in \eqref{boundcutoff}. Using that 
	$$ \chi'(x)= \frac{f'(R+\mu-|x|)f(|x|-R)+f'(|x|-R)f(R+\mu-|x|)}{(f(|x|-R)+f(R+\mu-|x|))^2},$$
and that $|f'(x)|\leq \frac{\exp(-1/x)}{x^2}\leq 4e^{-2},$
	we get 
	$$ |\chi'(x)|\leq \frac{8e^{-2}}{f(|x|-R)+f(R+\mu-|x|)}.$$
Combining with \eqref{anex:ineq}, we conclude that 	$|\chi'(x)|\leq 4e^{-2}e^{\frac{2}{\mu}}$.
\end{proof}

\bibliographystyle{abbrv}   
\bibliography{ref}

\begin{thebibliography}{10}

\bibitem{abid2003}
M.~Abid, C.~Huepe, S.~Metens, C.~Nore, C.~Pham, L.~Tuckerman, and M.~Brachet.
\newblock {G}ross-{P}itaevskii dynamics of {B}ose-{E}instein condensates and
  superfluid turbulence.
\newblock {\em Fluid Dynamics Research}, 33(5):509 -- 544, 2003.
\newblock Collection of Papers written by Regional Editors.

\bibitem{abram}
M.~Abramowitz and I.~A. Stegun.
\newblock {\em Handbook of mathematical functions with formulas, graphs, and
  mathematical tables}, volume~55 of {\em National Bureau of Standards Applied
  Mathematics Series}.
\newblock For sale by the Superintendent of Documents, U.S. Government Printing
  Office, Washington, D.C., 1964.

\bibitem{albert95}
J.~Albert.
\newblock Positivity properties and uniqueness of solitary wave solutions of
  the intermediate long-wave equation.
\newblock In {\em Evolution equations ({B}aton {R}ouge, {LA}, 1992)}, volume
  168 of {\em Lecture Notes in Pure and Appl. Math.}, pages 11--20. Dekker, New
  York, 1995.

\bibitem{antonelli2011}
P.~Antonelli and C.~Sparber.
\newblock Existence of solitary waves in dipolar quantum gases.
\newblock {\em Phys. D}, 240(4-5):426--431, 2011.

\bibitem{audiard2017}
C.~Audiard.
\newblock Small energy traveling waves for the {E}uler-{K}orteweg system.
\newblock {\em Nonlinearity}, 30(9):3362--3399, 2017.

\bibitem{becker2008}
C.~Becker, S.~Stellmer, P.~Soltan-Panahi, S.~D{\"o}rscher, M.~Baumert, E.-M.
  Richter, J.~Kronj{\"a}ger, K.~Bongs, and K.~Sengstock.
\newblock Oscillations and interactions of dark and dark--bright solitons in
  {B}ose-{E}instein condensates.
\newblock {\em Nature Physics}, 4(6):496, 2008.

\bibitem{jacopo2016}
J.~Bellazzini and L.~Jeanjean.
\newblock On dipolar quantum gases in the unstable regime.
\newblock {\em SIAM J. Math. Anal.}, 48(3):2028--2058, 2016.

\bibitem{berloff2008}
N.~G. Berloff.
\newblock Quantum vortices, travelling coherent structures and superfluid
  turbulence.
\newblock In {\em Stationary and time dependent {G}ross-{P}itaevskii
  equations}, volume 473 of {\em Contemp. Math.}, pages 27--54. Amer. Math.
  Soc., Providence, RI, 2008.

\bibitem{berloff0}
N.~G. Berloff and P.~H. Roberts.
\newblock Motions in a {B}ose condensate {VI}. {V}ortices in a nonlocal model.
\newblock {\em J. Phys. A}, 32(30):5611--5625, 1999.

\bibitem{bethuel2008existence}
F.~B\'ethuel, P.~Gravejat, and J.-C. Saut.
\newblock Existence and properties of travelling waves for the
  {G}ross-{P}itaevskii equation.
\newblock In {\em Stationary and time dependent {G}ross-{P}itaevskii
  equations}, volume 473 of {\em Contemp. Math.}, pages 55--103. Amer. Math.
  Soc., Providence, RI, 2008.

\bibitem{bethuel}
F.~B{\'e}thuel, P.~Gravejat, and J.-C. Saut.
\newblock Travelling waves for the {G}ross-{P}itaevskii equation. {II}.
\newblock {\em Comm. Math. Phys.}, 285(2):567--651, 2009.

\bibitem{blacksoliton}
F.~B\'ethuel, P.~Gravejat, J.-C. Saut, and D.~Smets.
\newblock Orbital stability of the black soliton for the {G}ross-{P}itaevskii
  equation.
\newblock {\em Indiana Univ. Math. J.}, 57(6):2611--2642, 2008.

\bibitem{bethuel-kdv-2009}
F.~B\'ethuel, P.~Gravejat, J.-C. Saut, and D.~Smets.
\newblock On the {K}orteweg-de {V}ries long-wave approximation of the
  {G}ross-{P}itaevskii equation. {I}.
\newblock {\em Int. Math. Res. Not. IMRN}, (14):2700--2748, 2009.

\bibitem{BGS-2015}
F.~Bethuel, P.~Gravejat, and D.~Smets.
\newblock Asymptotic stability in the energy space for dark solitons of the
  {G}ross-{P}itaevskii equation.
\newblock {\em Ann. Sci. \'{E}c. Norm. Sup\'{e}r. (4)}, 48(6):1327--1381, 2015.

\bibitem{orlandiun}
F.~Bethuel, G.~Orlandi, and D.~Smets.
\newblock Vortex rings for the {G}ross-{P}itaevskii equation.
\newblock {\em J. Eur. Math. Soc. (JEMS)}, 6(1):17--94, 2004.

\bibitem{bethuel2}
F.~B\'ethuel and J.-C. Saut.
\newblock Travelling waves for the {G}ross-{P}itaevskii equation {I}.
\newblock {\em Ann. Inst. H. Poincar\'e Phys. Th\'eor.}, 70(2):147--238, 1999.

\bibitem{bogdan1989}
M.~Bogdan, A.~Kovalev, and A.~Kosevich.
\newblock Stability criterion in imperfect {B}ose gas.
\newblock {\em Fiz. Nizk. Temp.}, 15(5):511--514, 1989.
\newblock In Russian.

\bibitem{bogo}
N.~N. Bogoliubov.
\newblock On the theory of superfluidity.
\newblock {\em J. Phys. USSR}, 11:23--32, 1947.
\newblock Reprinted in: D. Pines, The Many-Body Problem (W. A. Benjamin, New
  York, 1961), p. 292-301.

\bibitem{brezis-book}
H.~Brezis.
\newblock {\em Functional analysis, {S}obolev spaces and partial differential
  equations}.
\newblock Universitext. Springer, New York, 2011.

\bibitem{carles2008}
R.~Carles, P.~A. Markowich, and C.~Sparber.
\newblock On the {G}ross-{P}itaevskii equation for trapped dipolar quantum
  gases.
\newblock {\em Nonlinearity}, 21(11):2569--2590, 2008.

\bibitem{cazenave}
T.~Cazenave.
\newblock {\em Semilinear {S}chr\"odinger equations}, volume~10 of {\em Courant
  Lecture Notes in Mathematics}.
\newblock New York University Courant Institute of Mathematical Sciences, New
  York, 2003.

\bibitem{cazlions}
T.~Cazenave and P.-L. Lions.
\newblock Orbital stability of standing waves for some nonlinear
  {S}chr\"odinger equations.
\newblock {\em Comm. Math. Phys.}, 85(4):549--561, 1982.

\bibitem{chironexistence1d}
D.~Chiron.
\newblock Travelling waves for the nonlinear {S}chr\"odinger equation with
  general nonlinearity in dimension one.
\newblock {\em Nonlinearity}, 25(3):813--850, 2012.

\bibitem{chironstab}
D.~Chiron.
\newblock Stability and instability for subsonic traveling waves of the
  nonlinear {S}chr\"odinger equation in dimension one.
\newblock {\em Anal. PDE}, 6(6):1327--1420, 2013.

\bibitem{chironmaris}
D.~Chiron and M.~Mari{\c{s}}.
\newblock Traveling waves for nonlinear {S}chr\"odinger equations with nonzero
  conditions at infinity.
\newblock {\em Arch. Ration. Mech. Anal.}, 226(1):143--242, 2017.

\bibitem{chiron-rousset}
D.~Chiron and F.~Rousset.
\newblock The {K}d{V}/{KP}-{I} limit of the nonlinear {S}chr\"odinger equation.
\newblock {\em SIAM J. Math. Anal.}, 42(1):64--96, 2010.

\bibitem{delaire2009}
A.~de~Laire.
\newblock Non-existence for travelling waves with small energy for the
  {G}ross-{P}itaevskii equation in dimension {$N\geq 3$}.
\newblock {\em C. R. Math. Acad. Sci. Paris}, 347(7-8):375--380, 2009.

\bibitem{de2010global}
A.~de~Laire.
\newblock Global well-posedness for a nonlocal {G}ross-{P}itaevskii equation
  with non-zero condition at infinity.
\newblock {\em Comm. Partial Differential Equations}, 35(11):2021--2058, 2010.

\bibitem{de2011nonexistence}
A.~de~Laire.
\newblock Nonexistence of traveling waves for a nonlocal {G}ross-{P}itaevskii
  equation.
\newblock {\em Indiana Univ. Math. J.}, 61(4):1451--1484, 2012.

\bibitem{deLaGra1}
A.~de~Laire and P.~Gravejat.
\newblock Stability in the energy space for chains of solitons of the
  {Landau}-{Lifshitz} equation.
\newblock {\em J. Differential Equations}, 258(1):1--80, 2015.

\bibitem{delaire-gravejat-sine}
A.~de~Laire and P.~Gravejat.
\newblock The {S}ine-{G}ordon regime of the {L}andau-{L}ifshitz equation with a
  strong easy-plane anisotropy.
\newblock {\em Ann. Inst. H. Poincar\'e Anal. Non Lin\'eaire},
  35(7):1885--1945, 2018.

\bibitem{denschlag2000}
J.~Denschlag, J.~E. Simsarian, D.~L. Feder, C.~W. Clark, L.~A. Collins,
  J.~Cubizolles, L.~Deng, E.~W. Hagley, K.~Helmerson, W.~P. Reinhardt, et~al.
\newblock Generating solitons by phase engineering of a {B}ose-{E}instein
  condensate.
\newblock {\em Science}, 287(5450):97--101, 2000.

\bibitem{gallo}
C.~Gallo.
\newblock The {C}auchy problem for defocusing nonlinear {S}chr\"odinger
  equations with non-vanishing initial data at infinity.
\newblock {\em Comm. Partial Differential Equations}, 33(4-6):729--771, 2008.

\bibitem{gerard3}
P.~G\'erard.
\newblock The {G}ross-{P}itaevskii equation in the energy space.
\newblock In A.~Farina and J.-C. Saut, editors, {\em Stationary and time
  dependent {G}ross-{P}itaevskii equations. Wolfgang Pauli Institute 2006
  thematic program, January--December, 2006, Vienna, Austria}, volume 473 of
  {\em Contemporary Mathematics}, pages 129--148. American Mathematical
  Society.

\bibitem{gerard}
P.~G{\'e}rard.
\newblock The {C}auchy problem for the {G}ross-{P}itaevskii equation.
\newblock {\em Ann. Inst. H. Poincar\'e Anal. Non Lin\'eaire}, 23(5):765--779,
  2006.

\bibitem{gilbarg}
D.~Gilbarg and N.~S. Trudinger.
\newblock {\em Elliptic partial differential equations of second order}.
\newblock Classics in Mathematics. Springer-Verlag, Berlin, 2001.
\newblock Reprint of the 1998 edition.

\bibitem{ginibre1980}
J.~Ginibre and G.~Velo.
\newblock On a class of nonlinear {S}chr\"{o}dinger equations with nonlocal
  interaction.
\newblock {\em Math. Z.}, 170(2):109--136, 1980.

\bibitem{grafakos}
L.~Grafakos.
\newblock {\em Classical {F}ourier analysis}, volume 249 of {\em Graduate Texts
  in Mathematics}.
\newblock Springer, New York, second edition, 2008.

\bibitem{asymptstabblack}
P.~Gravejat and D.~Smets.
\newblock Asymptotic stability of the black soliton for the
  {G}ross-{P}itaevskii equation.
\newblock {\em Proc. Lond. Math. Soc. (3)}, 111(2):305--353, 2015.

\bibitem{gross1963hydrodynamics}
E.~P. Gross.
\newblock Hydrodynamics of a superfluid condensate.
\newblock {\em Journal of Mathematical Physics}, 4(2):195--207, 1963.

\bibitem{gustafson2007}
S.~Gustafson, K.~Nakanishi, and T.-P. Tsai.
\newblock Global dispersive solutions for the {G}ross-{P}itaevskii equation in
  two and three dimensions.
\newblock {\em Ann. Henri Poincar\'{e}}, 8(7):1303--1331, 2007.

\bibitem{gustafson2009}
S.~Gustafson, K.~Nakanishi, and T.-P. Tsai.
\newblock Scattering theory for the {G}ross-{P}itaevskii equation in three
  dimensions.
\newblock {\em Commun. Contemp. Math.}, 11(4):657--707, 2009.

\bibitem{kartashov07}
Y.~V. Kartashov and L.~Torner.
\newblock Gray spatial solitons in nonlocal nonlinear media.
\newblock {\em Opt. Lett.}, 32(8):946--948, 2007.

\bibitem{visan2012}
R.~Killip, T.~Oh, O.~Pocovnicu, and M.~Vi\c{s}an.
\newblock Global well-posedness of the {G}ross-{P}itaevskii and cubic-quintic
  nonlinear {S}chr\"{o}dinger equations with non-vanishing boundary conditions.
\newblock {\em Math. Res. Lett.}, 19(5):969--986, 2012.

\bibitem{lahaye2009}
T.~Lahaye, C.~Menotti, L.~Santos, M.~Lewenstein, and T.~Pfau.
\newblock The physics of dipolar bosonic quantum gases.
\newblock {\em Reports on Progress in Physics}, 72(12):126401, 2009.

\bibitem{lieb77}
E.~H. Lieb.
\newblock Existence and uniqueness of the minimizing solution of {C}hoquard's
  nonlinear equation.
\newblock {\em Studies in Appl. Math.}, 57(2):93--105, 1976/77.

\bibitem{linbubbles}
Z.~Lin.
\newblock Stability and instability of traveling solitonic bubbles.
\newblock {\em Adv. Differential Equations}, 7(8):897--918, 2002.

\bibitem{lions84}
P.-L. Lions.
\newblock The concentration-compactness principle in the calculus of
  variations. {T}he locally compact case. {I}.
\newblock {\em Ann. Inst. H. Poincar\'e Anal. Non Lin\'eaire}, 1(2):109--145,
  1984.

\bibitem{lopes-maris}
O.~Lopes and M.~{Mari\c{s}}.
\newblock Symmetry of minimizers for some nonlocal variational problems.
\newblock {\em J. Funct. Anal.}, 254(2):535--592, 2008.

\bibitem{luo2018}
Y.~Luo and A.~Stylianou.
\newblock Ground states for a nonlocal cubic-quartic {G}ross-{P}itaevskii
  equation.
\newblock Preprint \url{http://arxiv.org/abs/1806.00697}.

\bibitem{maris2013}
M.~Mari\c{s}.
\newblock Traveling waves for nonlinear {S}chr\"{o}dinger equations with
  nonzero conditions at infinity.
\newblock {\em Ann. of Math. (2)}, 178(1):107--182, 2013.

\bibitem{maris2016}
M.~Mari\c{s}.
\newblock On some minimization problems in {${\bf R}^N$}.
\newblock In {\em New trends in differential equations, control theory and
  optimization}, pages 215--230. World Sci. Publ., Hackensack, NJ, 2016.

\bibitem{maris-non}
M.~Mari{\c{s}}.
\newblock Nonexistence of supersonic traveling waves for nonlinear
  {S}chr\"odinger equations with nonzero conditions at infinity.
\newblock {\em SIAM J. Math. Anal.}, 40(3):1076--1103, 2008.

\bibitem{moroz2013}
V.~Moroz and J.~Van~Schaftingen.
\newblock Groundstates of nonlinear {C}hoquard equations: existence,
  qualitative properties and decay asymptotics.
\newblock {\em J. Funct. Anal.}, 265(2):153--184, 2013.

\bibitem{pecher2012}
H.~Pecher.
\newblock Global solutions for 3{D} nonlocal {G}ross-{P}itaevskii equations
  with rough data.
\newblock {\em Electron. J. Differential Equations}, pages No. 170, 34, 2012.

\bibitem{pitaevskii1961vortex}
L.~P. Pitaevskii.
\newblock Vortex lines in an imperfect {B}ose gas.
\newblock {\em Sov. Phys. JETP}, 13(2):451--454, 1961.

\bibitem{reneuve2018}
J.~Reneuve, J.~Salort, and L.~Chevillard.
\newblock Structure, dynamics, and reconnection of vortices in a nonlocal model
  of superfluids.
\newblock {\em Phys. Rev. Fluids}, 3(11):114602, 2018.

\bibitem{veskler2014}
H.~Veksler, S.~Fishman, and W.~Ketterle.
\newblock Simple model for interactions and corrections to the
  {G}ross-{P}itaevskii equation.
\newblock {\em Phys. Rev. A}, 90(2):023620, 2014.

\bibitem{vocke15}
D.~Vocke, T.~Roger, F.~Marino, E.~M. Wright, I.~Carusotto, M.~Clerici, and
  D.~Faccio.
\newblock Experimental characterization of nonlocal photon fluids.
\newblock {\em Optica}, 2(5):484--490, 2015.

\bibitem{zakharov86}
V.~Zakharov and E.~Kuznetsov.
\newblock Multi-scales expansion in the theory of systems integrable by the
  inverse scattering transform.
\newblock {\em Phys. D}, 18(1-3):455--463, 1986.

\bibitem{zhidkov2001korteweg}
P.~E. Zhidkov.
\newblock {\em Korteweg-de {V}ries and nonlinear {S}chr\"odinger equations:
  qualitative theory}, volume 1756 of {\em Lecture Notes in Mathematics}.
\newblock Springer-Verlag, Berlin, 2001.

\end{thebibliography}

\end{document}